\documentclass[11pt]{article}
\usepackage{graphicx} 

\title{The Kobayashi-Hitchin correspondence\\ for nef and big classes}
\author{Satoshi Jinnouchi \thanks{Department of Mathematics, Graduate School of Science, The University of Osaka,
1-1, Machikaneyama-cho, Toyonaka, Osaka 560-0043, Japan.
email:{{\tt u122988d[@]ecs.osaka-u.ac.jp}},
email:{{\tt 20160312sti[@]gmail.com}}}}
\date{August 2025}

\usepackage{amssymb,amsmath,amsthm}    
\usepackage[abbrev]{amsrefs} 
\usepackage{mathrsfs}
\usepackage[bbgreekl]{mathbbol}
\usepackage{comment}
\usepackage{color}
\usepackage{tikz}
\usepackage{mathtools}
\usepackage{appendix}
\usepackage[all]{xy} 
\usetikzlibrary{positioning}
\usetikzlibrary {arrows.meta}

\newtheorem{theo}{Theorem}[section]
\newtheorem{lemm}[theo]{Lemma}
\newtheorem{corr}[theo]{Corollary}
\newtheorem{prop}[theo]{Proposition}

\numberwithin{equation}{section}

\theoremstyle{definition}
\newtheorem{defi}[theo]{Definition}
\newtheorem{exam}[theo]{Example}
\newtheorem{rema}[theo]{Remark}

\newtheorem{claim}[theo]{Claim}

\newtheorem{notation}[theo]{Notation}

\allowdisplaybreaks

\newcommand{\rk}{{\rm{rk}}}
\newcommand{\Supp}{{\rm{Supp}}}
\newcommand{\Amp}{{\rm{Amp}}}
\newcommand{\sing}{{\rm{sing}}}
\newcommand{\Sing}{{\rm{Sing}}}
\newcommand{\Exc}{{\rm{Exc}}}
\newcommand{\reg}{{\rm{reg}}}

\newcommand{\codim}{{\rm{codim}}}
\newcommand{\Id}{{\rm{Id}}}
\newcommand{\id}{{\rm{id}}}
\newcommand{\Ker}{{\rm{Ker}}}

\newcommand{\loc}{{\rm{loc}}}

\newcommand{\Hom}{{\rm{Hom}}}
\newcommand{\exc}{{\rm{exc}}}

\newcommand{\Tor}{{\rm{Tor}}}

\newcommand{\End}{{\rm{End}}}

\newcommand{\Tr}{{\rm{Tr}}}

\newcommand{\vol}{{\rm{vol}}}

\newcommand{\wtil}{\widetilde}
\newcommand{\diag}{{\rm diag}}
\newcommand{\im}{{\rm im}}
\newcommand{\triv}{{\rm triv}}

\newcommand{\C}{\mathbb{C}}
\newcommand{\Q}{\mathbb{Q}}

\newcommand{\Sym}{{\rm Sym}}
\newcommand{\delbar}{\bar{\partial}}

\usepackage[colorlinks=true, linkcolor=blue, citecolor=blue]{hyperref}
\usepackage{cite}

\begin{document}
\date{\empty}
\maketitle
\abstract{In this paper, we establish the Kobayashi–Hitchin correspondence for nef and big cohomology classes by introducing the notions of adapted closed positive (1,1)-currents and adapted Hermitian–Yang–Mills metrics. As applications, we investigate the equality cases of both the Bogomolov–Gieseker inequality for semistable reflexive sheaves with respect to big classes admitting a bimeromorphic Zariski decomposition and the Miyaoka–Yau inequality for projective varieties with big anti-canonical divisor.}

\tableofcontents
\section{Introduction}
\setcounter{theo}{0}
\renewcommand{\thetheo}{\Alph{theo}}
The Kobayashi-Hitchin correspondence asserts that a holomorphic vector bundle $E$ over a compact complex manifold $X$ is slope polystable with respect to a K\"{a}hler class $\{\omega\}$ on $X$ if and only if $E$ admits an $\omega$-Hermitian-Yang-Mills metric. If $\omega$ is a smooth K\"{a}hler metric, this correspondence was proved by Donaldson \cite{Don85} and Uhlenbeck-Yau \cite{UY86} for holomorphic vector bundles on compact K\"{a}hler manifolds, by Bando-Siu \cite{BS94} for reflexive sheaves on compact K\"{a}hler manifolds by introducing the notion of admissible Hermitian-Yang-Mills metrics, and by Xuemiao Chen \cite{Chen25} for reflexive sheaves on compact normal K\"{a}hler varieties. More recently, the correspondence has been extended to compact klt K\"{a}hler varieties endowed with orbifold K\"{a}hler metrics motivated by the study of the equality cases of the Bogomolov-Gieseker inequality on klt K\"{a}hler varieties and the Miyaoka-Yau inequality of klt varieties with nef (anti-) canonical divisor \cite{Faulk22}, \cite{CGNPPW23}, \cite{FO25} (refer to the introduction of \cite{IJZ25} for more detail).

In this paper, we generalize these results to a broader class of singular K\"{a}hler metrics, namely those whose cohomology classes are nef and big, not necessarily K\"{a}hler. This generalization allows us to establish the results on the equality cases of the Bogomolov--Gieseker inequality with respect to nef and big classes, as well as the Miyaoka--Yau inequality for projective varieties with big, not necessarily nef, anti-canonical divisor.

\subsection{The Kobayashi-Hitchin correspondence}
Let $X$ be a compact K\"{a}hler manifold, $\alpha$ be a nef and big class on $X$ and $E$ be a holomorphic vector bundle on $X$. In this paper, we firstly introduce the notion of an {\it adapted closed positive $(1,1)$-current} $T$ in $\alpha$. A typical example of an adapted current $T$ is given by a solution to the complex Monge-Amp\`{e}re equation $\langle T^n\rangle=e^f\omega_0^n$ where $f\in C^{\infty}(X)$ is a smooth function and $\omega_0$ is a smooth K\"{a}hler metric on $X$ (refer to Definition \ref{adapted defi} for the precise definition).  We also define the notion of a {\it $T$-adapted Hermitian-Yang-Mills metric} $h$ on $E$, that is an admissible Hermitian-Yang-Mills metric defined by Bando-Siu \cite{BS94}, satisfying two additional conditions: namely $h$ and its first derivatives are less singular than poles of finite order along the non-K\"{a}hler locus of $\alpha$ (refer to Definition \ref{adapted HYM defi} for details).
 
Then the main result of this paper is stated as follows:
\begin{theo}[see Theorem \ref{KH corr nef big with esti}, Theorem \ref{HYM to stable and unique}]\label{main thm2 intro}
Let $X$ be a compact K\"{a}hler manifold and $\alpha$ be a nef and big class on $X$. Let $T$ be an adapted closed positive $(1,1)$-current in $\alpha$. Let $E$ be a holomorphic vector bundle over $X$. Then, the following conditions are equivalent:
\begin{enumerate}
\item $E$ is $\alpha^{n-1}$-slope polystable (refer to Definition \ref{stability defi}).
\item $E$ admits a $T$-adapted Hermitian-Yang-Mills metric.
\end{enumerate}
Furthermore, if $E$ admits a $T$-adapted Hermitian-Yang-Mills metric, then it is unique up to scaling.
\end{theo}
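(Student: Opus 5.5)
The plan is to prove the two implications separately and then uniqueness, using an approximation by Kähler classes for existence and a Chern–Weil argument with singular weights for the converse.

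\textbf{Existence (stable $\Rightarrow$ HYM).} By the standard reduction, it suffices to treat a stable $E$ and take direct sums. Write $T = \theta + dd^c\varphi$. Fix a Kähler form $\omega_0$ and consider the Kähler classes $\alpha_\epsilon = \alpha + \epsilon\{\omega_0\}$. Let $T_\epsilon = \theta + \epsilon\omega_0 + dd^c\varphi_\epsilon$ be smooth Kähler metrics solving perturbed Monge–Ampère equations, e.g. $T_\epsilon^n = c_\epsilon e^{f}\omega_0^n$, chosen so that $T_\epsilon \to T$ smoothly on compact subsets of the ample locus $\Omega = X\setminus E_{nK}(\alpha)$. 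The point of adaptedness is precisely to have uniform estimates of the form $T_\epsilon \ge C^{-1}|s|^{2N}\omega_0$ and $T_\epsilon \le C|s|^{-2N}\omega_0$, where $s$ is a section defining the non-Kähler locus, together with a uniform bound on $e^{f}$ (Tsuji/Păun type estimates).

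Stability with respect to $\alpha^{n-1}$ is an open condition: the subsheaves of $E$ form a bounded family of slopes (Toma/Greb–Toma boundedness). Hence $E$ is $\alpha_\epsilon^{n-1}$-stable for all small $\epsilon$. By Donaldson and Uhlenbeck–Yau, this gives $T_\epsilon$-HYM metrics $h_\epsilon$, which we normalize by $\det h_\epsilon$ solving a fixed scalar equation and by $\int \log \Tr(h_0^{-1}h_\epsilon) = 0$ or similar.

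\textbf{Main obstacle.} The hard step is the uniform $C^0$ estimate $\sup|\log h_0^{-1}h_\epsilon|\le C|s|^{-2N}$. I would follow the Uhlenbeck–Yau/Simpson argument. First, a uniform Sobolev inequality and mean-value inequality for the $T_\epsilon$ hold, which should follow from the uniform bound on $T_\epsilon^n/\omega_0^n$ via Guo–Phong–Song–Sturm. This gives $\sup \le C\|\cdot\|_{L^1}$, since $\Delta_{T_\epsilon}\log\Tr \ge -C|\Lambda_{T_\epsilon}F_{h_0}|$ and the right-hand side is bounded by $|s|^{-2N}$ after applying the adapted bounds. Second, if the $L^1$ norms blow up, then rescaling $\log(h_0^{-1}h_\epsilon)$ produces, in the limit, a weakly holomorphic subbundle. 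This works on $\Omega$ using the uniform Sobolev bound; it extends across the analytic set by Uhlenbeck–Yau regularity, and yields a destabilizing subsheaf for $\alpha^{n-1}$. For the Chern–Weil slope formula one must check that $\int_\Omega \Tr(\pi F)\wedge T^{n-1}$ computes $c_1\cdot\alpha^{n-1}$, which uses the full mass of $\langle T^{n-1}\rangle$ and that $|\delbar\pi|\in L^2$. Given the $C^0$ bound, Laplacian and Evans–Krylov estimates on compacts of $\Omega$ give the limit $h$ as $\epsilon\to0$. The weighted bounds on $h$ and $\nabla h$ descend, giving the adapted conditions, and $\int|F_h|^2T^n<\infty$ follows from the Chern–Weil identity, so $h$ is admissible.

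\textbf{Converse and uniqueness.} Given an adapted HYM metric $h$ and a saturated subsheaf $F$, use the second fundamental form on the locus where $F$ is a subbundle. The polynomial bounds on $h$ and $\nabla h$, together with cut-off functions near $E_{nK}(\alpha)\cup\mathrm{Sing}(F)$ whose gradients are controlled (capacity/Lelong number zero arguments, as in Bando–Siu), justify the Gauss–Codazzi computation $\deg_\alpha F=\int \Tr(\pi F_h)\wedge T^{n-1}/(n-1)! - \|\delbar\pi\|^2$. This yields $\mu(F)\le\mu(E)$, with equality forcing $\delbar\pi=0$ and hence a holomorphic splitting, i.e. polystability. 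For uniqueness, given two such metrics, $u=\Tr(h_1^{-1}h_2)$ satisfies $\Delta_T u\ge0$ and has polynomially bounded growth. By the weighted bounds it lies in $W^{1,2}$, so it is constant on $\Omega$ by integrating against cut-offs. Then $h_1^{-1}h_2$ is parallel, and simplicity of stable summands gives scaling.
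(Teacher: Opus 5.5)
The main gap is in your converse and uniqueness arguments. After blowing up, $E_{nK}(\alpha)$ is a divisor $D$, and near $D$ you only have polynomial control: $T\ge|s_D|^{2m}\omega_0$, $T^n\le C|s_D|^{-k}\omega_0^n$, $|\Psi|\le C|s_D|^{-m}$, and $\partial\Psi$ is in $L^2$ only with the weight $|s_D|^{2l}$.

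Your Gauss--Codazzi identity produces cut-off error terms such as $\int d\eta_k\wedge d^c\log\bigl(\det h|_F/\det h_0|_F\bigr)\wedge T^{n-1}$. You need these to identify $\int c_1(F,h|_F)\wedge T^{n-1}$ with $c_1(F)\cdot\alpha^{n-1}$, and similarly to identify the HYM constant with $\mu_\alpha(E)$. Your Liouville argument for $u=\Tr(h_1^{-1}h_2)$ produces analogous terms; note $u$ is not in $W^{1,2}$, since you only have the weighted bound. With your bounds, all these terms are only dominated by $\int|d\eta_k|^2_{\omega_0}|s_D|^{-a}\omega_0^n$ for some exponent $a>0$. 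No choice of cut-offs makes this small, because a divisor has positive capacity for such weights: in one variable, $\int_{|z|<1}|\nabla\eta|^2|z|^{-a}\,dA\ge2\pi a$ whenever $\eta$ vanishes near $0$ and $\eta=1$ on $|z|=1$. Bando--Siu's capacity argument works because their singular set has complex codimension at least $2$ and the background metric is smooth across it, so it does not transfer.

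The paper never proves a Chern--Weil identity for the singular metric. Instead it uses the existence direction twice:
\begin{itemize}
\item It puts a $T$-adapted HYM metric $h_D$ on $\mathcal{O}(lD)$. Its HYM constant is $0$ because $D\cdot\alpha^{n-1}=0$ (Collins--Tosatti), and $s_D^l$ is $\nabla_{h_D}$-parallel (Lemma~\ref{nK para}).
\item It puts a $T$-adapted HYM metric on a stable subsheaf $F$ with $\mu_\alpha(F)\ge\mu_\alpha(E)$, made locally free on a blow-up.
\end{itemize}
The Bochner identity for the holomorphic map $s_D^l\circ i\colon F\to E\otimes\mathcal{O}(lD)$ (and for $\Psi_D s_D^{4l+1}\Psi_2\Psi_1^{-1}$ in the uniqueness proof) then involves only quantities vanishing to high order along $D$. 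Ordinary cut-offs with $\int|d\eta_k|^2\omega_0^n\to0$ therefore suffice. This gives $\lambda_E=\lambda_F$ and a parallel, hence splitting, inclusion. The HYM constant is matched with the slope through $\det E$ and uniqueness of the connection (Corollary~\ref{HYM slope}). This device is what your converse is missing.

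Your existence part follows the paper's route: openness of stability, Uhlenbeck--Yau in the classes $\alpha+\epsilon\{\omega_0\}$, the Guo--Phong--Sturm mean value inequality, Simpson's blow-up argument, and Chern--Weil for weak subbundles. However, the key estimate is mis-stated. A bound $\sup|\log h_0^{-1}h_\epsilon|\le C|s|^{-2N}$ only gives $|\Psi|\le\exp(C|s|^{-2N})$. That does not yield the adapted condition (d), i.e.\ $|\log\Psi|\le C-C\log|s_D|$. Nor does it follow from the mean value inequality, which needs $\Delta v\ge-a$ with $a$ constant.

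The paper instead does the following:
\begin{itemize}
\item It applies that inequality to $\log|s_D\Psi_i|^2$, building the logarithmic weight into the subsolution (Lemma~\ref{C0 to L1}).
\item It deduces (e) from (d) by integrating against the HYM equation (Lemma~\ref{uniform est defi lemm}).
\item It uses uniform integrability of $\log|s_D|^2$ against $\omega_i^n$ and $\omega_0\wedge\omega_i^{n-1}$ (Claim~\ref{log int nef big}, from $\varphi_i\ge\log|s_D|^2-C$). This controls $\int\Tr(u_i\sqrt{-1}\Lambda_{\omega_i}F_{h_0})\,\omega_i^n$ and keeps the $L^1$ mass of $u_i$ from escaping into $D$, so that $u_\infty\neq0$.
\end{itemize}
Your outline does not address this concentration. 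Also, an adapted current comes only with its given approximants, an $L^p$ bound on $\omega_i^n/\omega_0^n$, and the lower bound on $T$. The two-sided bounds on $T_\epsilon$ you assume are not available.

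Finally, uniqueness up to a single scaling holds only for stable $E$. On $\mathcal{O}_X\oplus\mathcal{O}_X$, every $\diag(1,c)$ with $c>0$ is $T$-adapted HYM. In general, what a parallel-endomorphism argument gives, and what Theorem~\ref{unique conn} proves, is uniqueness of the HYM connection.
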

By \cite{CCHSTT25}, it has been proved that the pull-back of a singular K\"{a}hler-Einstein metric $\omega$ on a compact klt variety $Y$ along a resolution of singularities $\pi:X\to Y$ is an adapted current. Hence above Theorem \ref{main thm2 intro} applies to this $X$, a nef and big class $\alpha=\{\pi^*\omega\}$, an adapted current $T=\pi^*\omega$ and any holomorphic vector bundle $E$ on $X$.

Since the direct sum, the tensor product and the wedge product of $T$-adapted HYM metrics are again $T$-adapted HYM, we obtain the following result as a consequence of Theorem \ref{main thm2 intro}.
\begin{corr}
Let $X$ be a compact K\"{a}hler manifold and $\alpha$ be a nef and big class. Then, for any $\alpha^{n-1}$-slope polystable holomorphic vector bundles ${E}_1$ and ${E}_2$, their  tensor product ${E}_1\otimes{E}_2$ and wedge product ${E}_1\wedge{E}_2$ are $\alpha^{n-1}$-slope polystable. If furthermore $E_1$ and $E_2$ have the same $\alpha^{n-1}$-slope, then the direct sum ${E}_1\oplus {E}_2$ is also $\alpha^{n-1}$-slope polystable. 
\end{corr}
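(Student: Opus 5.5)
The plan is to deduce the corollary from Theorem \ref{main thm2 intro}, going through $T$-adapted Hermitian--Yang--Mills metrics. We need an adapted current in $\alpha$ to start with. Fix a smooth K\"ahler metric $\omega_0$ and solve the degenerate complex Monge--Amp\`ere equation $\langle T^n\rangle = c\,\omega_0^n$ in the nef and big class $\alpha$, with $c=\mathrm{vol}(\alpha)/\int_X\omega_0^n$. This is the typical example described after Definition \ref{adapted defi}, so $T$ is adapted.

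By the direction (1)$\Rightarrow$(2) of Theorem \ref{main thm2 intro}, the bundles $E_1$ and $E_2$ carry $T$-adapted HYM metrics $h_1,h_2$. These satisfy $\sqrt{-1}\Lambda_T F_{h_i}=\lambda_i \Id$ on the K\"ahler locus, where $\lambda_i$ is a constant proportional to $\mu_{\alpha}(E_i)$.

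Next we equip $E_1\otimes E_2$ with $h_1\otimes h_2$ and $\wedge E$ with the induced metric, and we check that these are again $T$-adapted HYM. The curvature splits as $F_{h_1\otimes h_2}=F_{h_1}\otimes \Id+\Id\otimes F_{h_2}$, so the contracted curvature equals $(\lambda_1+\lambda_2)\Id$, which is again constant. For the wedge product the induced curvature acts as a derivation, and its contraction is $k\lambda\,\Id$ on $\wedge^k E$. The statement "$E_1\wedge E_2$" should be read as an exterior power or as the sub-bundle of $\wedge^2(E_1\oplus E_2)$. In either reading it is a parallel direct summand of a tensor power of a HYM bundle, and the HYM condition passes to parallel summands.

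For the direct sum, $h_1\oplus h_2$ has contracted curvature $\diag(\lambda_1\Id,\lambda_2\Id)$. This is constant exactly when $\lambda_1=\lambda_2$, which is the equal-slope hypothesis.

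We then verify the adaptedness conditions of Definition \ref{adapted HYM defi} for the new metrics.
\begin{itemize}
\item Each $h_i$ and its first derivatives have at most poles of finite order along the non-K\"ahler locus $E_{nK}(\alpha)$. Tensor, wedge and sum metrics, together with their derivatives, are polynomial in the $h_i$, $h_i^{-1}$ and $\partial h_i$, so they inherit such bounds with larger exponents.
\item The admissibility conditions in the sense of Bando--Siu are the finiteness of $\int |F_h|^2_T\, T^n$ and the boundedness of $\Lambda_T F_h$. Boundedness is immediate because $\Lambda_T F_h$ is constant. The $L^2$ bound follows from $|F_{h_1\otimes h_2}|\le C(|F_{h_1}|+|F_{h_2}|)$.
\end{itemize}
Finally, the direction (2)$\Rightarrow$(1) of Theorem \ref{main thm2 intro} (Theorem \ref{HYM to stable and unique}) gives polystability.

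The main obstacle is confirming that every condition in Definition \ref{adapted HYM defi} is stable under these operations. The pointwise growth and $L^2$ curvature conditions are easy. However, if the definition also includes a normalization of the determinant or a condition relating $\det h$ to a fixed reference metric, we must check that it is compatible with tensor products. The way to handle this is to rescale by constants, which are harmless by the uniqueness-up-to-scaling statement.
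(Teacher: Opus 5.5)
Your proposal is correct and is essentially the paper's own argument. Fix a single adapted current $T$ in $\alpha$, obtain $T$-adapted HYM metrics on $E_1,E_2$ from Theorem \ref{main thm2 intro}, observe that the induced metrics on $E_1\otimes E_2$, on exterior powers, and (when $\lambda_1=\lambda_2$, i.e.\ equal slopes by Corollary \ref{HYM slope}) on $E_1\oplus E_2$ are again $T$-adapted HYM, and then apply the converse direction.

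The one point worth stating precisely is that condition (e) of Definition \ref{adapted HYM defi} is a weighted $L^2$ bound, not a pointwise one. It is still preserved: by the Leibniz rule, each term of $\partial(\Psi_1\otimes\Psi_2)$ (or of $\partial(\wedge^k\Psi)$) has exactly one derivative factor, and the remaining factors are bounded once multiplied by a power of $s_D$, by condition (d).
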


\subsection{The Bogomolov-Gieseker inequality and the Miyaoka-Yau inequality}
We say that a big class $\alpha\in H^{1,1}_{BC}(X)$ on a compact normal variety of Fujiki class admits a bimeromorphic Zariski decomposition if there exists a resolution $\pi:Y\to X$ so that $Y$ is smooth K\"{a}hler and $\langle\pi^*\alpha\rangle$ is nef and big (refer to Definition \ref{bimero zar decomp defi}). Following \cite{Chen25}, we define the discriminant of a reflexive sheaf $\mathcal{E}$ on a compact normal Moishezon variety $X$ with respect to a big class $\alpha\in H^{1,1}_{BC}(X)$ admitting a bimeromorphic Zariski decomposition as follows (see Definition \ref{BG discri defi}):
\begin{equation}\label{BG discri intro}
\Delta(\mathcal{E})\cdot\langle\alpha^{n-2}\rangle
:=\inf_{\pi:Y\to X}\left(2rc_2(\pi^*\mathcal{E}/\Tor)-(r-1)c_1(\pi^*\mathcal{E}/\Tor)^2\right)\cdot\langle(\pi^*\alpha)^{n-2}\rangle,
\end{equation}
where $\pi:Y\to X$ is a resolution such that $Y$ is a smooth projective variety, $\pi^*\mathcal{E}/\Tor$ is locally free and $\langle\pi^*\alpha\rangle$ is nef and big.
Then, we obtain the following result as an application of Theorem \ref{main thm2 intro}:
\begin{theo}[= Theorem \ref{prop-semistable}, cf.\cite{IJZ25}]\label{big BG eq intro} 
Let $X$ be a compact normal Moishezon variety, $\mathcal{E}$ be a reflexive sheaf of rank $r$ and $\alpha \in H^{1,1}_{BC}(X)$ be a big class admitting a bimeromorphic Zariski decomposition. 
\begin{enumerate}
\item If $\mathcal{E}$ is $\langle\alpha^{n-1}\rangle$-slope polystable, then the Bogomolov-Gieseker inequality 
\begin{equation}\label{BG ineq big intro}
\Delta(\mathcal{E})\langle\alpha^{n-2}\rangle\ge0
\end{equation}
holds. If $\Delta(\mathcal{E})\langle\alpha^{n-2}\rangle=0$ holds, then $\mathcal{E}$ is locally free on $\Amp(\alpha)$ and projectively flat on $\Amp(\alpha)$.
\item If $\mathcal{E}$ is $\langle\alpha^{n-1}\rangle$-slope semistable and it satisfies $\Delta(\mathcal{E})\langle\alpha^{n-2}\rangle=0$, then the Jordan-H{$\ddot{o}$}lder filtration of $\mathcal{E}$,
$$
0\subset \mathcal{E}_{k}\subset\cdots\subset\mathcal{E}_1\subset \mathcal{E},
$$
satisfies that $\mathcal{E}_{i}/\mathcal{E}_{i+1}|_{\Amp(\alpha)}$ is projectively flat.
\end{enumerate}
\end{theo}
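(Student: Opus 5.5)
The plan is to pass to a resolution on which everything becomes a locally free sheaf and a nef and big class, and then apply Theorem \ref{main thm2 intro} there. Concretely, choose a resolution $\pi:Y\to X$ with $Y$ smooth projective, $E:=\pi^*\mathcal{E}/\Tor$ locally free, and $\beta:=\langle\pi^*\alpha\rangle$ nef and big. I will take $\pi$ to be an isomorphism over $\Amp(\alpha)$, so that $\pi^{-1}(\Amp(\alpha))$ is contained in $\Amp(\beta)$. Since $\langle\alpha^{n-1}\rangle=\pi_*\beta^{n-1}$ and degrees of reflexive sheaves only depend on codimension-one data, slope stability transfers. Saturated subsheaves of $\mathcal{E}$ and of $E$ correspond via $\pi_*$ and $\pi^*(\cdot)/\Tor$, up to sheaves supported on the $\pi$-exceptional locus. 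These do not change $\beta^{n-1}$-degrees, because $\beta^{n-1}\cdot D=0$ for every prime exceptional divisor $D$ of the Zariski decomposition. I will check this via the orthogonality property of the positive product $\langle\pi^*\alpha\rangle$. Hence $E$ is $\beta^{n-1}$-polystable, respectively semistable, exactly when $\mathcal{E}$ is.

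For part (1), fix an adapted current $T\in\beta$, for instance a solution of $\langle T^n\rangle=e^f\omega_0^n$ with $\omega_0$ Kähler on $Y$. By Theorem \ref{main thm2 intro}, $E$ carries a $T$-adapted HYM metric $h$. Next I apply the pointwise Lübke inequality on $\Amp(\beta)$, where $T$ is a genuine Kähler metric:
\[
\left(2r c_2(h)-(r-1)c_1(h)^2\right)\wedge T^{n-2}=C\,|F_h^{\circ}|^2_{h,T}\,T^n\ \ge 0,
\]
where $F_h^\circ$ is the trace-free curvature. Integrating over $\Amp(\beta)$ and identifying the integral with $\Delta(E)\cdot\beta^{n-2}$ gives the inequality.

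The identification is the main obstacle. I must show that the Chern–Weil integral against the non-pluripolar product $T^{n-2}$ computes the cohomological intersection. My approach is to use the adapted growth conditions from Definition \ref{adapted HYM defi}: $h$ and $\nabla h$ have at most poles of finite order along the non-Kähler locus, and $T$ has controlled singularities. I would approximate $T$ by smooth forms $T_\varepsilon$ in $\beta+\varepsilon\{\omega_0\}$ and $h$ by cutoffs, then show the boundary terms vanish by the finite-order bounds combined with $L^2$ curvature finiteness. Since $\Delta(\mathcal{E})\langle\alpha^{n-2}\rangle$ is an infimum over resolutions, it is enough to show that every resolution gives a nonnegative value. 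Indeed, each such $Y$ yields $\Delta(E_Y)\cdot\beta_Y^{n-2}\ge0$, and any two resolutions can be dominated by a common one.

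In the equality case, the infimum being zero forces, after possibly passing to a higher resolution, $\int |F_h^\circ|^2 T^n=0$. Here I need a monotonicity argument under further blowups: I expect $\Delta\cdot\beta^{n-2}$ to be nonincreasing under pullback of the resolution, so the infimum is attained as a limit. Then I use lower semicontinuity in the limit, or I show directly that the value is independent of $Y$ when $\mathcal{E}$ is polystable. Then $F_h^\circ=0$ on $\Amp(\beta)\supset\pi^{-1}(\Amp(\alpha))$, so $E$ is projectively flat there. Since $\pi$ is an isomorphism there, $\mathcal{E}$ is locally free and projectively flat on $\Amp(\alpha)$.

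For part (2), I take the Jordan–Hölder filtration. Its graded pieces $\mathcal{G}_i$ are stable, with equal slopes, and additivity gives
\[
\frac{\Delta(\mathcal{E})}{r}\cdot\langle\alpha^{n-2}\rangle=\sum_i\frac{\Delta(\mathcal{G}_i)}{r_i}\cdot\langle\alpha^{n-2}\rangle-\sum_{i<j}\frac{r_ir_j}{r}\left(\frac{c_1(\mathcal{G}_i)}{r_i}-\frac{c_1(\mathcal{G}_j)}{r_j}\right)^2\cdot\langle\alpha^{n-2}\rangle .
\]
I bound the last sum using the Hodge index theorem for $\beta^{n-2}$ on $Y$, which applies because the differences of slopes have vanishing $\beta^{n-1}$-degree. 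The first sum is nonnegative by part (1) applied on reflexive hulls; the correction from torsion-free versus reflexive supported in codimension $\ge2$ only improves $c_2$. Equality forces each $\Delta(\mathcal{G}_i)\langle\alpha^{n-2}\rangle=0$, and part (1) then gives projective flatness of each graded piece on $\Amp(\alpha)$.
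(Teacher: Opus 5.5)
Your overall plan matches the paper's: pass to a resolution with $\beta=\langle\pi^*\alpha\rangle$ nef and big, transfer stability, take the $T$-adapted HYM metric from Theorem \ref{main thm2 intro}, and apply L\"ubke on $\Amp(\beta)$. The gap is the step you call the main obstacle. The identity you want there is false, so it cannot be proved. Conditions (d), (e) of Definition \ref{adapted HYM defi} do not stop curvature from concentrating on $D=E_{nK}(\beta)$, and the boundary terms in your cutoff argument need not vanish.

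Here is a counterexample. Take $Y=\mathrm{Bl}_p\mathbb{P}^2$, $\beta$ the pullback of the hyperplane class, $D$ the exceptional curve, and $E=\mathcal{O}_Y\oplus\mathcal{O}_Y(D)$.
\begin{itemize}
\item Since $D\cdot\beta=0$, $E$ is $\beta$-polystable.
\item The metric $h_{D,0}|s_D|^{-2}_{h_{D,0}}$ on $\mathcal{O}(D)|_{Y\setminus D}$ is flat and satisfies (d), (e).
\item By Theorem \ref{unique conn}, every $T$-adapted HYM metric $h$ on $E$ is therefore flat, so $\int_{Y\setminus D}(2rc_2(h)-(r-1)c_1(h)^2)=0$.
\item On the other hand, $\Delta(E)=-D^2=1$.
\end{itemize}
The blow-up of $\mathbb{P}^3$ along a line gives the same phenomenon, with $\Delta(E)\cdot\beta=-D^2\cdot\beta=1$. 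The Chern--Weil form misses $[D]$. Slopes do not see this because $[D]\cdot\beta^{n-1}=0$ (cf. Corollary \ref{HYM slope}), but $[D]^2\cdot\beta^{n-2}$ need not vanish.

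What you actually need, and what the paper proves, is only the one-sided inequality
$$\Delta(E)\cdot\beta^{n-2}\ \ge\ \int_{Y\setminus D}\bigl(2rc_2(h)-(r-1)c_1(h)^2\bigr)\wedge T^{n-2}.$$
It comes from the approximation used to construct $h$, not from cutoffs:
\begin{itemize}
\item The $\omega_i$-HYM metrics $h_i$ for the adapted approximation $\omega_i\in\beta+\frac1i\{\omega_Y\}$ (Lemma \ref{Uhl limit}) converge to $h$ on compact subsets of $Y\setminus D$ (Theorem \ref{main theo}).
\item By L\"ubke, the forms $(2rc_2(h_i)-(r-1)c_1(h_i)^2)\wedge\omega_i^{n-2}$ are pointwise nonnegative.
\item Their total mass is $\Delta(E)\cdot\{\omega_i\}^{n-2}$, which tends to $\Delta(E)\cdot\beta^{n-2}$.
\item Fatou then gives the inequality; see (\ref{BG eq eq3}).
\end{itemize}
This yields nonnegativity, and when the discriminant vanishes it forces $F_h^\circ\equiv 0$.

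Two further points about this fix:
\begin{itemize}
\item It requires $E$ to be stable, since openness fails for polystable bundles (Example \ref{nonopenness semistable}). So you must first reduce the polystable case to the stable one, as the paper does.
\item No monotonicity or semicontinuity in $Y$ is needed. By the projection formula, $\Delta(E_Y)\cdot\beta_Y^{n-2}$ is unchanged under further blow-ups, and the Chern--Weil side is the same on every resolution.
\end{itemize}

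There is a second, smaller gap. You cannot in general make $\pi$ an isomorphism over $\Amp(\alpha)$ and also make $\pi^*\mathcal{E}/\Tor$ locally free, because local freeness of $\mathcal{E}$ on $\Amp(\alpha)$ is part of the conclusion. The fix, as in the proof of Theorem \ref{BG eq}, has three steps:
\begin{itemize}
\item Blow up $\Sing(\mathcal{E})$ as well.
\item Obtain projective flatness only on $\Amp(\alpha)\setminus\Sing(\mathcal{E})$.
\item Extend across this codimension $\ge 3$ set, using $\pi_1(\Amp(\alpha)\setminus\Sing(\mathcal{E}))=\pi_1(\Amp(\alpha))$ and the reflexivity of $\mathcal{E}$.
\end{itemize}
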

If $\alpha$ is nef and big and $X$ is smooth, the same result holds for, possibly non Moishezon, K\"{a}hler manifolds (see Theorem \ref{BG eq}).
 The Bogomolov-Gieseker inequality (\ref{BG ineq big intro}) was proved by \cite{IJZ25} for general big cohomology classes. The projectively flatness in the equality of the Bogomolov-Gieseker inequality as above Theorem \ref{big BG eq intro} is even new if $X$ is a projective manifold and $\alpha$ is the 1st Chern class of a nef and big line bundle. 
\begin{rema}[see Example \ref{BG exam}]
We also construct a simple example of a vector bundle which does not satisfy the Bogomolov-Gieseker equality with respect to K\"{a}hler classes, but does satisfy it with respect to a nef and big class.
\end{rema} 
As an application of Theorem \ref{big BG eq intro}, we obtain the following.
 \begin{theo}[= Theorem \ref{MY eq}]\label{MY eq intro}
Let $X$ be an $n$-dimensional projective klt variety smooth in codimension $2$. Assume that $X$ is K-stable and that $-K_X$ is big. If the equality
\begin{equation}
\label{eq-MY-equality intro}
\bigl(2(n+1)c_2(X) - nc_1(X)^2 \bigr) \cdot \langle c_1(-K_{X})^{n-2} \rangle = 0
\end{equation}
holds, then the anticanonical model $Z$ admits a quasi-étale cover from $\mathbb{C}\mathbb{P}^n$. $($In this case, by \cite{Xu23}, the anticanonical model $Z$ exists.$)$
\end{theo}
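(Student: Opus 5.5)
Since $-K_X$ is big and $X$ is K-stable, hence K-semistable, we first invoke \cite{Xu23} to get that the anticanonical ring is finitely generated. So the anticanonical model $Z:=\mathrm{Proj}\bigoplus_m H^0(X,-mK_X)$ exists, and the induced birational contraction $\phi:X\dashrightarrow Z$ has $-K_Z$ ample. The next step is to show that $Z$ is a K-stable (in particular klt) Fano variety. I expect this to follow from the standard fact that K-stability of $X$ with big $-K_X$ passes to the anticanonical model, since the relevant $\beta$-invariants and volumes coincide: $\phi$ is $(-K_X)$-non-positive, so $\langle c_1(-K_X)^{n}\rangle=(-K_Z)^n$. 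By Chen--Donaldson--Sun and the singular extension of Li--Tian--Wang / Liu--Xu--Zhuang, $Z$ then carries a singular Kähler--Einstein metric $\omega_Z$, and the tangent sheaf $\mathcal{T}_Z$ is $c_1(-K_Z)^{n-1}$-slope polystable.

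Now set $\alpha=c_1(-K_X)$, which is big. It admits a bimeromorphic Zariski decomposition: take a common resolution $p:Y\to X$, $q:Y\to Z$, so that $\langle p^*\alpha\rangle=q^*c_1(-K_Z)$ is nef and big. We then transfer polystability to $\mathcal{T}_X$ with respect to $\langle\alpha^{n-1}\rangle$. Slopes computed against $\langle\alpha^{n-1}\rangle=q^*c_1(-K_Z)^{n-1}$ only see $\mathcal{T}_X$ on the locus where $\phi$ is an isomorphism, so they agree with slopes of $\mathcal{T}_Z$ against $c_1(-K_Z)^{n-1}$, because $\phi^{-1}$ contracts no divisors. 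Moreover, saturated subsheaves correspond under reflexive extension across codimension-$\ge2$ loci. Hence $\mathcal{T}_X$ is $\langle\alpha^{n-1}\rangle$-polystable.

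By hypothesis the discriminant $(2(n+1)c_2-nc_1^2)\cdot\langle\alpha^{n-2}\rangle$ vanishes. We apply it to $\mathcal{E}=\mathcal{T}_X\oplus\mathcal{O}_X$, or equivalently to the Atiyah-type extension of $\mathcal{O}_X$ by $\mathcal{T}_X$ defined by $c_1(-K_X)$ (rank $n+1$). Its discriminant is exactly the Miyaoka--Yau quantity. Its polystability follows from the Kähler--Einstein metric on $Z$, as in the Greb--Kebekus--Peternell--Taji argument. Theorem \ref{big BG eq intro}(1) then gives that $\mathcal{E}$ is projectively flat on $\Amp(\alpha)$, which contains the regular part of the isomorphism locus of $\phi$. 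Transporting to $Z$, we get a projectively flat rank-$(n+1)$ bundle on $Z_{\reg}$ minus a codimension-$\ge2$ set.

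Finally, we follow Greb--Kebekus--Peternell--Taji for klt Fanos. The projective flat structure defines a representation of $\pi_1(Z_{\reg})$ into $PGL_{n+1}$. Passing to a quasi-étale cover (finiteness of the algebraic fundamental group of $Z_{\reg}$ by Braun, and Zariski--Nagata purity / Takayama-type extension) makes it trivial. On the cover $\tilde Z$, the Atiyah extension becomes $\mathcal{O}(1)^{\oplus(n+1)}$, and a Kobayashi--Ochiai argument yields $\tilde Z\cong\mathbb{CP}^n$.

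The main obstacle is the transfer step: identifying $\langle\alpha^{n-1}\rangle$-polystability on $X$ with polystability on $Z$, and showing that $\Amp(\alpha)$ contains enough of $Z$. The smoothness of $X$ in codimension $2$ is used so that $c_2(X)$ and the Chern numbers against $\langle\alpha^{n-2}\rangle$ agree with the orbifold Chern numbers on $Z$. For this I would first try comparing both sides on the common resolution $Y$, using the infimum definition (\ref{BG discri intro}).
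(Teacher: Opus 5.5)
Your skeleton is the paper's: Xu's anticanonical model $Z$, polystability on $Z$, transfer to $\langle\alpha^{n-1}\rangle$-polystability on $X$ through a common resolution $p\colon W\to X$, $q\colon W\to Z$, Theorem~\ref{prop-semistable}, a quasi-\'etale cover, and Kobayashi--Ochiai. However, two steps fail as written.

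\textbf{The choice of sheaf.} $\mathcal{T}_X\oplus\mathcal{O}_X$ is \emph{not} equivalent to the canonical extension $\mathcal{E}_X$ here. It has the same Chern classes, but it is never semistable. Since $\alpha\cdot\langle\alpha^{n-1}\rangle=\langle\alpha^n\rangle>0$, the summand $\mathcal{T}_X$ has slope $\langle\alpha^n\rangle/n$, which exceeds the slope $\langle\alpha^n\rangle/(n+1)$ of the sum. So Theorem~\ref{prop-semistable} can only be applied to $\mathcal{E}_X$. What must be carried over from $Z$ is therefore the $c_1(-K_Z)^{n-1}$-polystability of the canonical extension $\mathcal{E}_Z$, not that of $\mathcal{T}_Z$. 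The K\"ahler--Einstein metric gives the latter directly, but the former needs its own argument; for K-polystable klt Fanos it is the result the paper cites from \cite{DGP24}. Your paragraph transferring polystability of $\mathcal{T}_X$ thus proves the wrong statement. The transfer does go through for $\mathcal{E}_X$ along the paper's lines. One uses $p^*(-K_X)=q^*(-K_Z)+B$ with $B$ effective and $q$-exceptional, and $p^*\mathcal{E}_X\cong q^*\mathcal{E}_Z$ off $\Supp B$. Lemma~\ref{diff vol} then shows that divisors contracted by $X\dashrightarrow Z$ are invisible to $\langle\alpha^{n-1}\rangle$.

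\textbf{Smoothness of the cover.} This is the more serious gap. Kobayashi--Ochiai is a theorem about compact complex manifolds, and you never show that the cover $\tilde Z$ is smooth. Killing the monodromy only gives $\mathcal{E}_{\tilde Z}\cong L^{\oplus(n+1)}$ with $L$ a reflexive rank-one sheaf on a possibly singular klt Fano. For such Weil divisorial $L$ the conclusion is false: $\mathbb{P}(1,2,3)$ has $-K\sim 3\cdot\mathcal{O}(2)$ with $\mathcal{O}(2)$ ample but not Cartier.

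The paper supplies the missing step:
\begin{enumerate}
\item $\End(\mathcal{E}_{\tilde Z})$ is flat with trivial monodromy on $\tilde Z_{\reg}$, hence trivial there. Being reflexive, it is then trivial, in particular locally free, on all of $\tilde Z$.
\item The canonical extension sequence is locally split, so $\mathcal{T}_{\tilde Z}$ is locally a direct summand of $\End(\mathcal{E}_{\tilde Z})$ and hence locally free.
\item The solution of the Lipman--Zariski problem for klt spaces \cite{GKKP11} then makes $\tilde Z$ smooth.
\end{enumerate}
Only after this is $L$ an ample line bundle with $L^{\otimes(n+1)}\cong\mathcal{O}(-K_{\tilde Z})$, so that Kobayashi--Ochiai applies. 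You also need that the canonical extension pulls back to the canonical extension under quasi-\'etale maps \cite{GKP22}.

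Using Braun's finiteness of $\pi_1(Z_{\reg})$ in place of the paper's maximally quasi-\'etale cover is a legitimate way to kill the monodromy. It does not remove the need for this smoothness argument.
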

There are several studies about the Miyaoka-Yau inequality and the structure theorem in the case that the equality holds (refer to the introduction of \cite{IJZ25} and references therein for detailed history and related developments concerning the Miyaoka-Yau inequality). We briefly summarize the references of known results. If $X$ is smooth and $K_X$ is ample, refer to \cite{Miy77} when $n=\dim X=2$ and \cite{Yau77} when $X$ is of arbitrary dimension. If $K_X$ is nef, refer to \cite{GKPT19} when $X$ is klt and \cite{GT22} when $(X,D)$ is dlt. If $X$ is projective klt and $-K_X$ is nef, refer to \cite{GKP22}. If $X$ is klt and $K_X$ is big, or $-K_X$ is big and $X$ is $K$-semistable, refer to \cite{IJZ25}. In particular, the authors only proved the inequality in \cite{IJZ25}.
If $X$ is klt and $K_X$ is big, refer to \cite{ZZZ26}.
 Theorem \ref{MY eq intro} in this paper proves the structure theorem of projective varieties with $-K_X$ big. 

\section*{Acknowledgment}
The author would like to thank his supervisor Ryushi Goto for his helpful comments and nice discussions. The author also thanks Prof.  Yoshinori Hashimoto and Prof. Masataka Iwai for essential comments and discussions. The author would like to thank Prof. Yuji Odaka for introducing him to the important paper \cite{CCHSTT25}. The author is grateful to Prof. Yuta Watanabe and Rei Murakami for fruitful discussions and important comments.

\section{Preliminaries and Definitions}
\renewcommand{\thetheo}{\thesection.\arabic{theo}}
\subsection{Nonpluripolar product}
\subsubsection{Positivities of cohomology classes}
Let $X$ be a compact K\"{a}hler manifold with a smooth K\"{a}hler metric $\omega$. 
In this paper, we will focus on the positivities of cohomology classes in the following sense.
\begin{defi}
\begin{enumerate}
\item  A cohomology class $\alpha\in H^{1,1}(X,\mathbb{R})$ is pseudo-effective if $\alpha$ is represented by a closed positive $(1,1)$-current.
\item A cohomology class $\alpha\in H^{1,1}(X,\mathbb{R})$ is big if $\alpha$ is represented by a K\"{a}hler current $T$, that is, there exists $\varepsilon>0$ such that $T\ge \varepsilon\omega$ in the sense of currents.
\item A cohomology class $\alpha$ is nef if for any $\varepsilon>0$ there exists a smooth $(1,1)$-form $\alpha_{\varepsilon}$ in $\alpha$ such that $\alpha_{\varepsilon}\ge -\varepsilon\omega$.
\end{enumerate}
\end{defi}
\noindent It is well-known that a nef cohomology class $\alpha$ is big if and only if its volume $\alpha^n$ is positive \cite{DP04}. 
\begin{defi}[\cite{Bou04}]
Let $\alpha$ be a pseudo-effective class on $X$. Then, the non-K\"{a}hler locus of $\alpha$ is defined as 
$$
E_{nK}(\alpha):=\{x\in X\mid \hbox{There is a K\"{a}hler current $T\in\alpha$ which is smooth K\"{a}hler around $x$}\}
$$
The ample locus of $\alpha$ is the complement of the non-K\"{a}hler locus of $\alpha$, that is, $\Amp(\alpha):=X\setminus E_{nK}(\alpha)$.
\end{defi}
In singular settings, we define the non-K\"{a}hler locus in the following way:
\begin{defi}\label{nK locus defi sing}
Let $X$ be a compact normal variety which is bimeromorphic to a compact K\"{a}hler manifold and $\alpha\in H^{1,1}_{BC}(X)$ be a pseudo-effective class on $X$. Let $\pi:Y\to X$ be a resolution of singularities of $X$ so that $Y$ is smooth K\"{a}hler. Then, we define the non-K\"{a}hler locus of $\alpha$ by 
$$
E_{nK}(\alpha):=\pi(E_{nK}(\pi^*\alpha)).
$$
The ample locus of $\alpha$ is defined by $\Amp(\alpha):=X\setminus E_{nK}(\alpha)$.
\end{defi}
\begin{rema}
In the above Definition \ref{nK locus defi sing}, we can see that $X_{\sing}\subset E_{nK}(\alpha)$.
\end{rema}
\noindent Boucksom \cite[Theorem 3.17]{Bou04} proved that the non-K\"{a}hler locus of big class is an analytic subset of $X$.
By Demailly\rq{s} regularization, the ample locus of any big class is non-empty. Furthermore, a big class $\alpha$ is K\"{a}hler if and only if $\Amp(\alpha)=X$.
The following result by Collins-Tosatti \cite{CT15} plays an important role:
\begin{theo}[\cite{CT15}]\label{CT}
Let $X$ be a compact K\"{a}hler manifold and $\alpha$ be a nef and big class. Let $D$ be a divisor contained in $E_{nK}(\alpha)$ the non-K\"{a}hler locus of $\alpha$. Then the following holds:
$$
\alpha^{n-1}[D]=0.
$$
\end{theo}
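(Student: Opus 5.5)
Since $D$ is an effective divisor, $\alpha^{n-1}\cdot[D]\ge 0$ because $\alpha$ is nef. So the plan is to prove the reverse inequality $\alpha^{n-1}\cdot[D]\le 0$ by exploiting that $D$ lies in the non-K\"ahler locus. Without loss of generality $D$ is a prime divisor, since the statement is linear in $D$ and each irreducible component of $D$ again lies in $E_{nK}(\alpha)$. The key input I would use is Boucksom's description of the non-K\"ahler locus: for a big class, every prime divisor contained in $E_{nK}(\alpha)$ is ``seen'' by some K\"ahler current. More precisely, I would try to show that for every $\varepsilon>0$ the class $\alpha+\varepsilon\{\omega\}$, which is K\"ahler, still has the following property: $\alpha$ contains a K\"ahler current with analytic singularities whose Lelong number along $D$ is positive. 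This follows from Boucksom's theorem that $E_{nK}(\alpha)$ equals the intersection of the singular loci of K\"ahler currents with analytic singularities in $\alpha$. One such current $T=\theta+dd^c\varphi\ge \delta\omega$ has $D\subset \mathrm{Sing}(T)$. By Siu decomposition we then get $T=\nu[D]+R$ with $\nu>0$, provided the singularity along $D$ is divisorial. The first genuine step is therefore to ensure codimension-one singularities along $D$. If $T$ is only singular along $D$ with zero generic Lelong number, I would instead pass to a log resolution of the ideal of $\varphi$, or use that $D\subset E_{nK}$ for a nef class forces $\nu(\alpha,D)$-type behaviour for the classes $\alpha-t[D]$.

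The cleaner route I would actually follow goes through the volume. Using the differentiability of the volume (Boucksom--Favre--Jonsson, Witt Nystr\"om) in the form
\[
\frac{d}{dt}\Big|_{t=0}\vol(\alpha+t[D]) = n\,\langle\alpha^{n-1}\rangle\cdot[D],
\]
and $\langle\alpha^{n-1}\rangle=\alpha^{n-1}$ for nef $\alpha$, it suffices to show that $\vol(\alpha-t[D])=\vol(\alpha)$ for small $t>0$ does not decrease to first order. Here is the reasoning. Since $D\subset E_{nK}(\alpha)$, any K\"ahler current in $\alpha$ obtained from the maximal-singularity construction has positive Lelong number along $D$. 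Then $\alpha-t[D]$ is still big for small $t$, and its positive part coincides with that of $\alpha$. So $\vol(\alpha-t[D])=\vol(\alpha)$ for $0\le t\le t_0$, and the left derivative gives $\alpha^{n-1}\cdot[D]=0$.

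The main obstacle is justifying positive generic Lelong number along $D$ for a nef class. The divisorial Zariski decomposition of a nef class has zero negative part, so $\nu(\alpha,D)=0$ with minimal singularities; the positivity must instead come from K\"ahler currents, i.e.\ $\nu(\alpha-\varepsilon\omega, D)>0$-type statements. I would handle this as follows. Take $\alpha_\varepsilon=\alpha-\varepsilon\{\omega\}$ (big for small $\varepsilon$), and apply Boucksom's result that a prime divisor lies in $E_{nK}(\alpha)$ iff $\nu(\alpha-\varepsilon\omega,D)>0$ for small $\varepsilon$. Then $\alpha_\varepsilon = Z_\varepsilon + c_\varepsilon[D]+\cdots$ with $c_\varepsilon>0$. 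Intersecting, we obtain
\[
\alpha^{n-1}\cdot[D]\le \langle\alpha_\varepsilon^{n-1}\rangle\cdot[D]+O(\varepsilon)
\]
together with orthogonality/continuity arguments. If this estimate is too weak, I would fall back on the direct Collins--Tosatti approach: restrict a Monge--Amp\`ere solution with prescribed behaviour to $D$ and show its mass on $D$ vanishes.
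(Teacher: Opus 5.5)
There is a genuine gap: the step your ``cleaner route'' depends on is false. You claim that $\alpha-t[D]$ has the same positive part as $\alpha$, so that $\vol(\alpha-t[D])=\vol(\alpha)$ for $0\le t\le t_0$.

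The paper's own Example \ref{nonopenness semistable} refutes this. Take $X=\mathrm{Bl}_p\mathbb{C}P^3$, $\alpha=\pi^*c_1(\mathcal{O}(1))$ and $D$ the exceptional divisor, so that $E_{nK}(\alpha)=D$. For $0<t<1$ the class $\alpha-t[D]$ is K\"ahler, so it is its own positive part. Using $\alpha^2\cdot D=\alpha\cdot D^2=0$ and $D^3=1$, you get $\vol(\alpha-t[D])=(\alpha-t[D])^3=1-t^3<1=\vol(\alpha)$. Only the first-order term vanishes, and that vanishing is exactly the statement to be proved. So the reduction is circular unless you independently show $\vol(\alpha-t[D])\ge\vol(\alpha)-o(t)$, which you do not.

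There is a second problem with the tools you invoke. The derivative formula $\frac{d}{dt}\vol(\alpha+t[D])=n\langle\alpha^{n-1}\rangle\cdot[D]$ would need justification on an arbitrary compact K\"ahler manifold. So would the orthogonality $\langle\alpha_\varepsilon^{n-1}\rangle\cdot N(\alpha_\varepsilon)=0$, which your last paragraph tacitly needs in order to kill $\langle\alpha_\varepsilon^{n-1}\rangle\cdot[D]$. The paper uses such statements (Lemma \ref{diff vol}, from Witt Nystr\"om) only on projective manifolds. That is precisely why Theorem \ref{prop-semistable} assumes $X$ Moishezon, and why the nef and big K\"ahler case instead relies on \cite{CT15}.

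The step you call the ``main obstacle'' is in fact easy. By Boucksom, every K\"ahler current $T\in\alpha$ has $\nu(T,x)>0$ at every $x\in E_{nK}(\alpha)$. Siu's analyticity of the sets $E_c(T)$, a Baire argument and the irreducibility of $D$ then give a positive generic Lelong number $\nu$ along $D$. Hence $T\ge\nu[D]$ and $\alpha-\nu[D]$ is big. But this, together with nefness of $\alpha$, only yields $0\le\alpha^{n-1}\cdot D\le\alpha^n/\nu$, not vanishing.

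The missing idea is the hard inclusion of Collins--Tosatti's theorem $E_{nK}(\alpha)=\mathrm{Null}(\alpha)$: if $V$ is irreducible with $\int_V\alpha^{\dim V}>0$, then $V\not\subset E_{nK}(\alpha)$. Applied to $V=D$, the contrapositive is exactly the statement. Proving it requires, when $\alpha^{n-1}\cdot D>0$, constructing a K\"ahler current in $\alpha$ that is smooth near a generic point of $D$. Collins--Tosatti do this with a Demailly--P\u{a}un type mass-concentration argument on a resolution of $V$, followed by an extension and gluing step. The paper does not reprove any of this; it simply cites \cite{CT15}. Your proposal neither reproduces that argument nor replaces it with a valid alternative, and its final fallback just defers to it.
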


\subsubsection{Nonpluripolar product}
In this subsection, we review the general definition of nonpluripolar product of closed positive $(1,1)$-currents following \cite{BEGZ10} (refer to \cite{IJZ25} for definition in singular settings).
Let $T$ be a closed positive $(1,1)$-current in a pseudo-effective class $\alpha$. Let $\varphi$ be a plurisubharmonic function such that $T=dd^c\varphi$ locally. Bedford-Taylor \cite{BT82} proved that if $\varphi$ is locally bounded, then we can define the wedge product $T^p$ inductively:
$$
\langle (dd^c\varphi)^p, \eta\rangle:=\langle \varphi(dd^c\varphi)^{p-1}, dd^c\eta\rangle
$$
where $\eta$ is a compactly supported smooth form and $\langle\cdot,\cdot\rangle$ here is the natural pairing between currents and smooth forms. Then the nonpluripolar product of positive currents in general are defined as the limit of the Bedford-Taylor\rq{s} wedge product on the locally bounded locus. More precisely, the local nonpluripolar product of closed positive $(1,1)$-current $T=dd^c\varphi$ is defined as 
$$
\langle (dd^c\varphi)^p\rangle:=\lim_{k\to\infty}1_{\{\varphi>-k\}}(dd^c\max{\{\varphi,-k\}})^p,
$$
where the limit is the weak limit. We say that the nonpluripolar product of $T$ is well defined if it has locally finite mass. On compact complex, not necessarily K\"{a}hler, manifolds, it is not known whether the nonpluripolar product of any closed positive $(1,1)$-currents is well defined or not. However, the following was proved by \cite{BEGZ10}.
\begin{prop}[\cite{BEGZ10}, Proposition 1.6, Theorem 1.8]
Let $X$ be a compact K\"{a}hler manifold and $T$ be a closed positive $(1,1)$-current in a pseudo-effective class $\alpha$. Then the nonpluripolar product of $T$, denoted by $\langle T^p\rangle$, is always globally well-defined and it is a closed positive $(p,p)$-current on $X$. 
\end{prop}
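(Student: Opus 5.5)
Since the statement is quoted from \cite{BEGZ10}, the plan is to reproduce the Boucksom--Eyssidieux--Guedj--Zeriahi argument in three steps. First, show that the local definitions agree on overlaps. Second, bound the local masses uniformly, using the K\"ahler assumption. Third, prove closedness.

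For the first step, fix a smooth representative $\theta\in\alpha$ and write $T=\theta+dd^c\varphi$ with $\varphi$ a global $\theta$-psh function. On a coordinate ball $U$ write $\theta=dd^c g_U$ with $g_U$ smooth, so $T=dd^c(g_U+\varphi)$ there. Bedford--Taylor products are local in the plurifine topology. Hence on the plurifine open sets $O_k=\{\varphi>-k\}$ the measures $1_{O_k}(dd^c\max\{g_U+\varphi,-k'\})^p$ do not depend on the choice of truncation level once $k'$ is large relative to $k$ and $\sup_U|g_U|$. It is therefore equivalent to work with the global truncations $\varphi_k:=\max\{\varphi,-k\}$ and $T_k:=\theta+dd^c\varphi_k$. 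We obtain an increasing sequence of positive currents $1_{O_k}T_k^p$, since $O_k\subset O_{k+1}$ and $T_k=T_{k+1}$ plurifinely on $O_k$. This sequence is compatible across charts, so the weak limit, if it exists, is a globally defined positive $(p,p)$-current.

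The key step, and the only place where the K\"ahler hypothesis enters, is a uniform mass bound. Fix a K\"ahler form $\omega$ and choose $A>0$ with $\theta\le A\omega$. Each $T_k$ is a closed positive current with bounded potential in $\alpha$; since $\varphi_k\ge\varphi$ is bounded, positivity is preserved. We want
$$\int_X 1_{O_k}T_k^p\wedge\omega^{n-p}\le\int_X T_k^p\wedge\omega^{n-p}.$$
The right-hand side is cohomological because $\varphi_k$ is bounded: by Stokes' theorem for Bedford--Taylor products on a compact manifold, $\int_X(\theta+dd^c\varphi_k)^p\wedge\omega^{n-p}=\int_X\theta^p\wedge\omega^{n-p}$. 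This is finite and independent of $k$, since $\omega$ is closed.

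Testing against $\omega^{n-p}$ controls the mass of positive $(p,p)$-currents. So the increasing family of positive currents $1_{O_k}T_k^p$ has bounded mass, and by monotonicity it converges weakly; in fact it converges strongly on each Borel set, because the sequence increases as measures after wedging with strongly positive test forms. Its limit $\langle T^p\rangle$ puts no mass on pluripolar sets.

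I expect the main obstacle to be closedness, since monotone limits of restrictions $1_{O_k}T_k^p$ of closed currents need not be closed. I would follow BEGZ and use Skoda--El Mir. The limit is positive with locally finite mass and is closed on the open complement of any analytic set where $\varphi$ is locally bounded, where it coincides with the Bedford--Taylor product. For general $\varphi$, however, the unbounded locus is not analytic.

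The plan is therefore to first treat currents with small unbounded loci via a local argument. I would write $\langle T^p\rangle$ as a limit of $\langle T^p\rangle|_{\{\psi>-j\}}$, with $\psi$ an exhaustion-type psh function such as $\log|s|$ or a quasi-psh function. I would then show that $\chi\circ\varphi\,\langle T^p\rangle$, with convex cut-offs $\chi$, has $dd^c$ with controlled mass. This yields closedness through the criterion that a positive current $S$ with $dd^c$-controlled truncations is closed if $1_{\{\varphi\le-k\}}S\to0$.
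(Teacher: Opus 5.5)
The paper gives no argument of its own here; it only cites BEGZ. Your outline follows BEGZ's structure and your first step (plurifine locality, independence of the truncation level, compatibility across charts) is fine, but two steps do not go through as written.

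\textbf{The mass bound.} It rests on the claim that $T_k=\theta+dd^c\max\{\varphi,-k\}$ is a closed positive current. This is false unless $\theta\ge 0$: the constant $-k$ is not $\theta$-psh, and on the open set $\{\varphi<-k\}$ one simply has $T_k=\theta$. Consequently the inequality $\int_X 1_{O_k}T_k^p\wedge\omega^{n-p}\le\int_X T_k^p\wedge\omega^{n-p}$ fails in general. Take the blow-up of $\mathbb{P}^2$ at a point, $T=[E]$, $\theta$ the curvature of a metric on $\mathcal{O}(E)$ and $\varphi=\log|s_E|^2$. Then $T_k=0$ on $O_k$, so the left side is $0$, while $\int_X T_k^2=E^2=-1$.

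The constant $A$ you introduced and never used is exactly the fix. Since $\varphi$ and constants are $A\omega$-psh, $S_k:=A\omega+dd^c\max\{\varphi,-k\}\ge 0$. Write $S_k=T_k+(A\omega-\theta)$. Every mixed term $1_{O_k}T_k^j\wedge(A\omega-\theta)^{p-j}\wedge\omega^{n-p}$ is a positive measure, because $T_k$ coincides plurifinely with $T$ on $O_k$. This plurifine comparison, not positivity of $T_k$, is also what makes $1_{O_k}T_k^p$ positive. Hence $\int_X 1_{O_k}T_k^p\wedge\omega^{n-p}\le\int_X S_k^p\wedge\omega^{n-p}=A^p\int_X\omega^n$.

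\textbf{Closedness.} This is left as a plan, and the mechanism you sketch would not close it. As you note yourself, the general case cannot be reduced to analytic unbounded loci. A convex cut-off such as $\max\{u/k+1,0\}$ has transition region $\{-k<u<0\}$, so it only yields bounded error terms, not vanishing ones. And bounds on $dd^c$ of the truncations do not by themselves give $d$-closedness.

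What works is a direct local estimate.
\begin{itemize}
\item \emph{Setup.} Write $T=dd^cu$ on a ball with $u\le 0$, and set $\Theta:=\langle T^p\rangle$ and $u_N:=\max\{u,-N\}$. Let $v_k:=\chi(u/k)$, where $\chi$ is smooth, $\chi=0$ on $(-\infty,-2]$ and $\chi=1$ on $[-1,\infty)$.
\item \emph{Reduction to a Bedford--Taylor product.} For $N>2k$, $v_k$ vanishes off $\{u>-2k\}$, and there $(dd^cu_N)^p$ and $\Theta$ agree. So $v_k\Theta=v_k(dd^cu_N)^p$ and $d(v_k\Theta)=dv_k\wedge(dd^cu_N)^p$.
\item \emph{Cauchy--Schwarz.} Pairing this with a test form is bounded by the product of two quantities. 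The first is the energy $\int\rho\,dv_k\wedge d^cv_k\wedge(dd^cu_N)^p\wedge\omega^{n-p-1}$, with $\rho\ge 0$ a cut-off. The second is the mass $\int_{\{-2k\le u\le -k\}}\rho\,\Theta\wedge\omega^{n-p}$.
\item \emph{The energy is $O(1)$.} $v_k$ is a function of $w:=\max\{u,-2k\}+2k$ with derivative $O(1/k)$, and $dw\wedge d^cw\le\frac12 dd^c(w^2)$. Since $w$ vanishes off $\{u>-2k\}$, integrating by parts gives $\int w^2\,dd^c\rho\wedge\Theta\wedge\omega^{n-p-1}=O(k^2)$.
\item \emph{The mass tends to $0$.} This holds because $\Theta$ charges no pluripolar set.
\end{itemize}
Since also $v_k\Theta\to\Theta$, you get $d\Theta=0$.
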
 
Next we introduce a simple description of nonpluripolar products in a special setting which is used in this paper.
Let $T_i$ be a closed positive $(1,1)$-current on $X$ ($i=1,2$) contained in a pseudo-effective cohomology class $\alpha$. We fix a smooth representative $\theta$ of $\alpha$. Then, there exists a $\theta$-plurisubharmonic function $\varphi_i$ on $X$, called as a potential of $T_i$, such that $T_i=\theta+dd^c\varphi_i$. Then we say that $T_1$ is less singular than $T_2$ if there is a bounded function $f$ on $X$ such that $\varphi_2\le \varphi_1+f$ holds. We say a closed positive $(1,1)$-current $T$ in a pseudo-effective class $\alpha$ has minimal singularities if $T$ is less singular than any other closed positive $(1,1)$-current in $\alpha$. If $\alpha$ is a big class, then it contains a K\"{a}hler current on $X$ which is smooth K\"{a}hler on the ample locus $\Amp(\alpha)$ (\cite[Theorem 3.17]{Bou04}). Hence, any potential of a closed positive $(1,1)$-current $T$ with minimal singularities in a big class $\alpha$ is locally bounded on $\Amp(\alpha)$.   Since the nonpluripolar products put no mass on any pluripolar subset, we can see that if $T$ has minimal singularities in a big class $\alpha$, then
$$
\langle T^p\rangle =1_{\Amp(\alpha)}(T|_{\Amp(\alpha)})^p
$$
where the RHS above is the zero extension from $\Amp(\alpha)$ to $X$ of a closed positive $(p,p)$-current $(T|_{\Amp(\alpha)})^p$ on $\Amp(\alpha)$. 
\begin{defi}[\cite{BEGZ10}, Definition 1.17]
Let $\alpha$ be a pseudo-effective class on a compact K\"{a}hler manifold $X$. Then, the positive product of $\alpha$ is defined as the cohomology class of the nonpluripolar product of closed positive $(1,1)$-current $T_{\min}$ with minimal singularities in $\alpha$, that is,
$$
\langle \alpha^p\rangle:=\{\langle T_{\min}^p\rangle\}\in H^{p,p}(X,\mathbb{R}).
$$
\end{defi}
\noindent By \cite[Theorem 1.16]{BEGZ10}, a pseudo-effective class $\alpha$ is big if $\langle\alpha^n\rangle>0$. Furthermore, if a big class $\alpha$ is nef, then $\langle\alpha^p\rangle=\alpha^p$ for any $p>0$ \cite{BEGZ10}.
In this paper, we will use the following general result.
\begin{theo}[\cite{BEGZ10}, Theorem 3.1, Theorem 4.1, Theorem 5.1, \cite{GPTW24}]\label{MA BEGZ}
Let $X$ be a compact K\"{a}hler manifold and $\alpha$ be a big class on $X$. Then the following holds:
\begin{enumerate}
\item If $\mu$ is a nonpluripolar measure on $X$ with $\mu(X)=\langle\alpha^n\rangle$, then there exists a closed positive $(1,1)$-current $T$ in $\alpha$ such that $T$ satisfies the following complex Monge-Amp\`{e}re equation: 
\begin{equation}\label{MA eq}
\langle T^n\rangle=\mu.
\end{equation}
\item If $\mu=e^fdV$ where $dV$ is a smooth volume form on $X$ and $e^f\in L^p(X)$ with $p>1$, then the solution $T$ of (\ref{MA eq}) has minimal singularities in $\alpha$.
\item If $\alpha$ is nef and big and $f\in C^{\infty}(X)$, then the solution $T$ of (\ref{MA eq}) is smooth on $\Amp(\alpha)$.
\item Assume that $\alpha$ is a nef and big class and $\theta$ be a smooth $(1,1)$-form in $\alpha$. Let $T=\theta+dd^c\varphi$ be a closed positive $(1,1)$-current in $\alpha$ which satisfies $\sup_X\varphi=0$ and
$$
\langle T^n\rangle=e^f\omega_0^n
$$
where $e^f\in L^p(X)$ with $p>1$. Let $\omega_i:=(\theta+(1/i)\omega_0)+dd^c\varphi_i$ be a smooth K\"{a}hler metric in a K\"{a}hler class $\alpha+(1/i)\omega_0$ given by the solution to
$$
\omega_i^n=e^{f_i}\omega_0^n.
$$
Assume that $f_i\in C^{\infty}(X)$ and $\int_X|f_i|^pe^{f_i}\omega_0^n\le C$ for any $i$. Then, there exists a constant $C>0$ such that the following holds for any $i$:
$$
0\le -\varphi_i+V_i\le C,
$$
where $V_i=\sup\{\psi\mid \hbox{ nonpositive $(\theta+(1/i)\omega_0)$-plurisubharmonic function on $X$}\}$. In particular, if $D=E_{nK}(\alpha)$ is a snc divisor, then the following holds:
$$
\varphi_i\ge \log|s_D|^2-C.
$$
\end{enumerate}
\end{theo}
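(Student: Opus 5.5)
The statement collects results from \cite{BEGZ10} and \cite{GPTW24}, so the plan is to reduce each item to the corresponding known theorem, checking hypotheses, and to argue separately only for the final consequence about $\log|s_D|^2$.

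\emph{Items 1 and 2.} For item 1, I would use the variational/approximation approach of \cite[Theorem 3.1]{BEGZ10}.
\begin{enumerate}
\item Approximate the nonpluripolar measure $\mu$ by measures with $L^p$ densities.
\item Solve the corresponding equations in the classes $\alpha+\varepsilon\omega$.
\item Extract a limit using compactness of normalized $\theta$-psh functions.
\end{enumerate}
The constraint $\mu(X)=\langle\alpha^n\rangle$ is exactly what makes the total masses match, by monotonicity of nonpluripolar masses in the singularity type. For item 2, I would invoke Ko{\l}odziej's capacity method relative to the extremal envelope $V_\theta$, as in \cite[Theorem 4.1]{BEGZ10}. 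The bound $e^f\in L^p$ with $p>1$ gives $\mu(K)\le C\,\mathrm{Cap}_\theta(K)^{1+\delta}$, and the comparison principle then bounds $\sup(V_\theta-\varphi)$, which is precisely minimal singularities.

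\emph{Item 3.} Smoothness on $\Amp(\alpha)$ (\cite[Theorem 5.1]{BEGZ10}) would go through Tsuji's trick. By \cite[Theorem 3.17]{Bou04}, choose a K\"{a}hler current $\theta+dd^c\psi$ with analytic singularities exactly along $E_{nK}(\alpha)$. Approximate the solution by smooth solutions $\omega_i$ in $\alpha+\frac1i\omega_0$. Then apply the Aubin--Yau/Siu Laplacian estimate to $\varphi_i-\psi$; the weight $\psi$ absorbs the blow-up along the non-K\"{a}hler locus. This gives $C^2$ bounds on compact subsets of $\Amp(\alpha)$ that are uniform in $i$. Evans--Krylov and Schauder estimates then give local $C^\infty$ convergence to the solution.

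\emph{Item 4.} Here the key point is a uniform $L^\infty$ estimate for $\varphi_i-V_i$, depending only on $\int|f_i|^pe^{f_i}\omega_0^n$. I would use the Guo--Phong--Tong--Wang auxiliary Monge--Amp\`ere argument \cite{GPTW24}, applied in the family of K\"{a}hler classes $\alpha+\frac1i\omega_0$:
\begin{enumerate}
\item compare $-\varphi_i+V_i$ with the solution of an auxiliary equation whose right-hand side is an entropy-type density;
\item apply an $\alpha$-invariant estimate to that comparison.
\end{enumerate}
The uniformity in $i$ is the main obstacle. The $\alpha$-invariants and volumes of $\alpha+\frac1i\omega_0$ must be controlled uniformly as $i\to\infty$. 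Volumes decrease to $\alpha^n>0$ by bigness. The $\alpha$-invariants are bounded below because all $(\theta+\frac1i\omega_0)$-psh functions are $(\theta+\omega_0)$-psh, and $\theta+\omega_0$ can be taken K\"{a}hler. The lower bound $0\le -\varphi_i+V_i$ is immediate from $\sup\varphi_i=0$ and the definition of $V_i$ as an envelope.

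\emph{The consequence for $D=E_{nK}(\alpha)$ snc.} It suffices to bound $V_i$ from below uniformly, and for that it is enough to produce one nonpositive $\theta$-psh function $\psi\ge c\log|s_D|^2-C$. This function is then automatically $(\theta+\frac1i\omega_0)$-psh, so $V_i\ge\psi$ for every $i$. I would take $\psi$ to be the potential of a K\"{a}hler current in $\alpha$ with analytic singularities along $E_{nK}(\alpha)=D$, given by \cite{Bou04}. Its singularity ideal is cosupported on $D$, hence contains $s_D^N$ for some $N$, which yields $\psi\ge N\log|s_D|^2-C$.

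\emph{Caveat on the constant.} This argument only yields $\varphi_i\ge N\log|s_D|^2-C$ for some integer $N\ge1$. Getting exactly coefficient $1$, as in the statement, would need a K\"{a}hler current with divisorial singularities of multiplicity at most $1$ along $D$, and I do not see how to produce one. Simply taking $\frac1N\psi$ fails, because $\theta+dd^c\frac{\psi}{N}=\frac1N(\theta+dd^c\psi)+(1-\frac1N)\theta$ need not be positive. So the plan proves the stated bound up to this multiplicity factor, and this coefficient is the step I am least sure of.
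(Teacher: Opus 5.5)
You reduce items 1--4 to \cite{BEGZ10} and \cite{GPTW24}, which matches the paper: it gives no argument for this statement beyond those citations. Your reduction of the final clause is also the right one: it suffices to find a single nonpositive $\theta$-psh function $\psi\ge\log|s_D|^2-C$, using $\varphi_i\ge V_i-C$ and $V_i\ge V_\theta\ge\psi$.

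The genuine gap is the one you flag yourself: you only reach $\varphi_i\ge N\log|s_D|^2-C$. The missing idea is to use that $\alpha$ is nef, not merely big. Let $\theta+dd^c\psi_K\ge\delta\omega_0$ be Boucksom's K\"ahler current with analytic singularities along $D$, so that $\psi_K\ge M\log|s_D|^2-C$ for some $M>0$. For $\varepsilon>0$, nefness gives a smooth $u_\varepsilon$ with $\theta+dd^cu_\varepsilon\ge-\varepsilon\omega_0$. For $t\in(0,1]$,
\[
\theta+dd^c\bigl(t\psi_K+(1-t)u_\varepsilon\bigr)=t\,(\theta+dd^c\psi_K)+(1-t)(\theta+dd^cu_\varepsilon)\ge\bigl(t\delta-(1-t)\varepsilon\bigr)\omega_0\ge 0
\]
as soon as $\varepsilon\le t\delta/(1-t)$. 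Since $u_\varepsilon$ is bounded, $t\psi_K+(1-t)u_\varepsilon\ge tM\log|s_D|^2-C_t$. Take $t=\min(1,1/M)$; because $|s_D|\le 1$, this function is $\ge\log|s_D|^2-C$. Subtracting a constant makes it nonpositive, and you get $\varphi_i\ge\log|s_D|^2-C'$.

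Your objection to $\psi/N$ holds only because you paired it with $\theta$ itself. Pair it instead with an almost positive representative of $\alpha$, and let the strict positivity of the K\"ahler current absorb $-\varepsilon\omega_0$. The same argument gives $V_\theta\ge c\log|s_D|^2-C_c$ for every $c>0$.

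A second, smaller point concerns item 4. The Guo--Phong--Tong--Wang estimate needs $\int_X|f_i|^pe^{f_i}\omega_0^n$ bounded for some $p>n$, not just $p>1$. For $1<p<n$ a uniform bound cannot hold under that hypothesis alone. The radial potentials $-(-\log|z|)^a$ with $0<a<1-p/n$ are unbounded, yet their Monge--Amp\`ere densities have finite $p$-entropy. So your claim that the estimate depends only on that integral should be stated with $p>n$. This is harmless when $e^{f_i}$ is uniformly bounded in $L^p$ for some $p>1$, as for the adapted approximations, since that implies $\int_X|f_i|^qe^{f_i}\omega_0^n\le C_q$ for every $q$.
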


\subsection{Definition of \texorpdfstring{$\alpha^{n-1}$}{alpha{n-1}}-slope polystability}
In this section, we review the definition of slope stability for nef and big classes and its important properties. The precise definition of slope polystability for nef and big classes was firstly given by \cite[Definition 3.2]{Jin25-2}
\begin{defi}\label{stability defi}
Let $X$ be a compact K\"{a}hler manifold and $\alpha$ be a nef and big class on $X$. Let $\mathcal{E}$ be a torsion free coherent sheaf on $X$. 
\begin{enumerate}
\item We define the $\alpha^{n-1}$-slope of $\mathcal{E}$, denoted by $\mu_{\alpha}(\mathcal{E})$, by 
$$
\mu_{\alpha}(\mathcal{E}):=\frac{1}{\rk \mathcal{E}}\int_Xc_1(\mathcal{E})\wedge\alpha^{n-1}.
$$
\item A torsion free sheaf $\mathcal{E}$ is $\alpha^{n-1}$-slope stable (resp.$\alpha^{n-1}$-slope semistable) if for any nontrivial torsion free subsheaf $\mathcal{F}\subset \mathcal{E}$, the inequality 
$$
\mu_{\alpha}(\mathcal{F})< \mu_{\alpha}(\mathcal{E}) \hspace{2mm}(\hbox{ resp. } \mu_{\alpha}(\mathcal{F})\le \mu_{\alpha}(\mathcal{E}))
$$
holds.
\item A torsion free sheaf $\mathcal{E}$ is $\alpha^{n-1}$-slope polystable if there exists $\alpha^{n-1}$-slope stable torsion free sheaves $\mathcal{E}_1,\ldots,\mathcal{E}_k$ on $X$ with $\mu_{\alpha}(\mathcal{E}_1)=\cdots=\mu_{\alpha}(\mathcal{E}_k)$ such that there exists an isomorphism 
$$
\mathcal{E}|_{\Amp(\alpha)}\simeq (\mathcal{E}_1\oplus\cdots\oplus\mathcal{E}_k)|_{\Amp(\alpha)}
$$
as coherent sheaves on $\Amp(\alpha)$.
\end{enumerate}
\end{defi} 
In singular settings, we define $\alpha^{n-1}$-slope stability via resolution of singularities as follows.
\begin{defi}\label{stable defi sing}
Let $X$ be a compact normal analytic variety which is bimeromorphic to a compact K\"{a}hler manifold and $\alpha\in H^{1,1}_{BC}(X)$ be a nef and big class on $X$. Let $\pi:Y\to X$ be a resolution of singularities of $X$ so that $Y$ is a compact K\"{a}hler manifold. Then a torsion free sheaf $\mathcal{E}$ on $X$ is $\alpha^{n-1}$-slope stable (resp. semistable, polystable) if $\pi^*\mathcal{E}/\Tor$ is $(\pi^*\alpha)^{n-1}$-slope stable (resp. semistable, polystable).
\end{defi}
\noindent By the following lemma, above Definition \ref{stable defi sing} is independent of the choice of a resolution $\pi$.  
\begin{lemm}[\cite{IJZ25}, Proposition 3.28]\label{inv stability}
Let $X$ be a compact K\"{a}hler manifold and $\alpha$ be a nef and big class on $X$. Let $\mathcal{E}$ be a torsion free sheaf on $X$. Let $\pi:Y\to X$ be a composition of blow-ups along complex submanifolds contained in $E_{nK}(\alpha)$ or of codimension at least 2. Then $\mathcal{E}$ is $\alpha^{n-1}$-slope polystable if and only if $\pi^*\mathcal{E}/\Tor$ is $(\pi^*\alpha)^{n-1}$-slope polystable. In particular, Definition \ref{stable defi sing} is independent of the choice of a resolution $\pi$.
\end{lemm}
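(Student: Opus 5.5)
The plan is to reduce to a single blow-up and argue there, using that slopes and subsheaves behave well under $\pi$. Since $\pi$ is a composition of blow-ups, induction on the number of blow-ups reduces everything to one blow-up $\pi:Y\to X$ with smooth center $Z$, where either $Z\subset E_{nK}(\alpha)$ or $\codim Z\ge 2$. Throughout, $\pi^*\alpha$ is again nef and big, and $(\pi^*\alpha)^n=\alpha^n$.

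\textbf{Step 1: slopes are preserved.} For a torsion free sheaf $\mathcal{F}$ on $X$, I will show $\mu_{\pi^*\alpha}(\pi^*\mathcal{F}/\Tor)=\mu_\alpha(\mathcal{F})$. One has $c_1(\pi^*\mathcal{F}/\Tor)=\pi^*c_1(\mathcal{F})-\sum a_i[E_i]$, with $E_i$ exceptional divisors. By the projection formula,
$$\pi^*c_1(\mathcal{F})\cdot(\pi^*\alpha)^{n-1}=c_1(\mathcal{F})\cdot\alpha^{n-1}.$$
The correction terms vanish for two reasons. If $\codim Z\ge2$, then $\pi_*[E]=0$, so $[E]\cdot(\pi^*\alpha)^{n-1}=0$. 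If $Z\subset E_{nK}(\alpha)$, then $E\subset E_{nK}(\pi^*\alpha)$, and Theorem \ref{CT} gives $(\pi^*\alpha)^{n-1}[E]=0$.

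\textbf{Step 2: subsheaves correspond.} Saturated subsheaves $\mathcal{G}\subset\pi^*\mathcal{E}/\Tor$ correspond to saturated subsheaves $\mathcal{F}\subset\mathcal{E}$. In one direction I take $\mathcal{F}=(\pi_*\mathcal{G})\cap\mathcal{E}$, or equivalently the saturation of the subsheaf of $\mathcal{E}$ agreeing with $\mathcal{G}$ off $\pi^{-1}(Z)$. In the other direction I take the saturation of the image of $\pi^*\mathcal{F}$. The two sheaves agree outside $\pi^{-1}(Z)$, so their $c_1$'s differ by exceptional divisors. By Step 1 these do not affect $(\pi^*\alpha)^{n-1}$-degrees. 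It follows that $\mathcal{E}$ is stable (resp. semistable) if and only if $\pi^*\mathcal{E}/\Tor$ is.

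\textbf{Step 3: polystability.} A direct-sum decomposition $\mathcal{E}|_{\Amp(\alpha)}\simeq\bigoplus\mathcal{E}_j|_{\Amp(\alpha)}$ pulls back to a decomposition on $\pi^{-1}(\Amp(\alpha))$, which contains $\Amp(\pi^*\alpha)$. Here I use $\Amp(\pi^*\alpha)\subset\pi^{-1}(\Amp(\alpha))$, and when $Z$ meets $\Amp(\alpha)$ (so $\codim Z\ge2$) I also use that off $Z$ the blow-up is an isomorphism. Conversely, a decomposition on $\Amp(\pi^*\alpha)$ pushes forward via $\pi_*$ and reflexive/saturation hulls on $\Amp(\alpha)$. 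Off the codimension $\ge2$ set $Z\cap\Amp(\alpha)$ the map is an isomorphism, and torsion-free/reflexive extension across codimension two gives the isomorphism there, with stable summands by Step 2.

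\textbf{Independence of resolution.} Any two resolutions are dominated by a third resolution obtained by blow-ups of the required type (by weak factorization/Hironaka, centers over $X_{\sing}\subset E_{nK}(\alpha)$ or of codimension $\ge2$). So independence follows from the equivalence above.

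\textbf{Main obstacle.} I expect the hardest part to be the converse in Step 3: descending a decomposition on $\Amp(\pi^*\alpha)$, which may be strictly smaller than $\pi^{-1}(\Amp(\alpha))$, to one on $\Amp(\alpha)$. I would first try to show that $\Amp(\pi^*\alpha)=\pi^{-1}(\Amp(\alpha))$ outside $\pi^{-1}(Z)$, and then handle $Z$ by Hartogs-type extension of sheaf isomorphisms between reflexive hulls.
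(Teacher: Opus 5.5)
\textbf{Verdict: the proposal has a genuine gap, and the gap cannot be closed, because the statement is false as written.} The failure is in the converse half of Step 3, for a merely torsion free $\mathcal{E}$.

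\textbf{What is correct.} Steps 1 and 2 and the forward half of Step 3 are sound. The paper gives no argument of its own for this lemma; it only quotes [IJZ25, Prop.~3.28]. Two small remarks. First, blowing up a smooth divisor is an isomorphism, so every nontrivial center has codimension $\ge 2$; the projection formula alone then gives Step 1, and Theorem \ref{CT} is not needed. Second, the identity you list as your main obstacle is already known. It is Tosatti's $E_{nK}(\pi^*\alpha)=\pi^{-1}(E_{nK}(\alpha))\cup\Exc(\pi)$, which the paper quotes in the proof of Theorem \ref{prop-semistable}. Hence $\pi$ maps $\Amp(\pi^*\alpha)$ isomorphically onto $U:=\Amp(\alpha)\setminus\pi(\Exc(\pi))$. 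A splitting upstairs therefore gives $\mathcal{E}|_U\simeq\bigoplus_j(\pi_*\mathcal{G}_j)|_U$, with each $\pi_*\mathcal{G}_j$ stable of the same slope by the argument of Steps 1--2.

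\textbf{Where it fails.} The gap is the last move: extending this isomorphism across $Z\cap\Amp(\alpha)$, which is nonempty whenever a center meets the ample locus.
\begin{itemize}
\item Hartogs-type extension is available only for reflexive sheaves.
\item Passing to reflexive hulls changes the sheaf whose splitting is required.
\item For torsion free sheaves, an isomorphism off a codimension-two set need not extend; compare $\mathcal{I}_p$ with $\mathcal{O}$.
\end{itemize}

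\textbf{Counterexample to the converse.} This cannot be patched, because the converse direction is false for non-reflexive $\mathcal{E}$.
\begin{itemize}
\item Take $X=\mathbb{P}^2$ and $\alpha=c_1(\mathcal{O}(1))$. Choose three points $p_1,p_2,p_3$ and linear forms $\lambda_1,\lambda_2,\lambda_3$ on $\mathbb{C}^2$ with pairwise distinct kernels.
\item Let $\mathcal{E}$ be the kernel of $\mathcal{O}_X^{\oplus 2}\to\bigoplus_i\mathbb{C}_{p_i}$, $s\mapsto(\lambda_i(s(p_i)))_i$.
\item $\mathcal{E}$ is not stable: for any line $\ell\subset\mathbb{C}^2$, the subsheaf $\mathcal{E}\cap(\ell\otimes\mathcal{O})$ has slope $0=\mu_\alpha(\mathcal{E})$.
\item $\mathcal{E}$ is not a sum of two rank-one sheaves. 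A splitting would induce a splitting of $\mathcal{E}^{**}=\mathcal{O}^{\oplus 2}$ into trivial summands, i.e.\ into $(\ell_1\otimes\mathcal{O})\oplus(\ell_2\otimes\mathcal{O})$ for constant lines $\ell_1,\ell_2$. It would also give $\mathcal{E}=(\mathcal{E}\cap\ell_1\otimes\mathcal{O})\oplus(\mathcal{E}\cap\ell_2\otimes\mathcal{O})$. Looking near each $p_i$, this forces $\ker\lambda_i\in\{\ell_1,\ell_2\}$ for all three $i$, which is impossible.
\item So $\mathcal{E}$ is not $\alpha^{n-1}$-slope polystable.
\item Let $\pi$ be the blow-up of the three points. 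On $\Amp(\pi^*\alpha)=Y\setminus\Exc(\pi)$ we have $\pi^*\mathcal{E}/\Tor\simeq\mathcal{O}_Y^{\oplus 2}$. Hence $\pi^*\mathcal{E}/\Tor$ is $(\pi^*\alpha)^{n-1}$-slope polystable in the sense of Definition \ref{stability defi}.
\end{itemize}

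\textbf{What your argument does prove.}
\begin{itemize}
\item The equivalence for stability and for semistability, for any torsion free $\mathcal{E}$.
\item The equivalence for polystability when all centers lie in $E_{nK}(\alpha)$. Then $\pi$ is an isomorphism over $\Amp(\alpha)$, and no extension is needed.
\item The equivalence for polystability when $\mathcal{E}$ is reflexive on $\Amp(\alpha)$. In this case your induction must carry that hypothesis, not global reflexivity. The reason is that $\pi_1^*\mathcal{E}/\Tor$ is usually not reflexive, but it is isomorphic to $\mathcal{E}$ over the ample locus, so reflexivity there is preserved.
\end{itemize}
These cases cover all later uses of the lemma in the paper, which concern stable or semistable sheaves.
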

The proof of our main Theorem \ref{main thm2 intro} relies on the openness of $\alpha^{n-1}$-slope stability in the following sense.
\begin{lemm}[\cite{Jin25-2}, Lemma 4.5, see also \cite{Ou25} Claim 3.1]\label{open stable}
Let $X$ be a compact \\ K\"{a}hler manifold with a K\"{a}hler class $\omega_0$ and $\alpha$ be a nef and big class. If a holomorphic vector bundle $E$ is $\alpha^{n-1}$-slope stable, then $E$ is $(\alpha+\varepsilon\omega_0)^{n-1}$-slope stable for sufficiently small $\varepsilon>0$.
\end{lemm}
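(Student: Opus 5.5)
We argue by contradiction. Write $\alpha_\varepsilon:=\alpha+\varepsilon\omega_0$, which is a K\"ahler class for $\varepsilon>0$. Suppose there are $\varepsilon_j\downarrow 0$ and saturated subsheaves $0\ne\mathcal{F}_j\subsetneq E$ with $\mu_{\alpha_{\varepsilon_j}}(\mathcal{F}_j)\ge\mu_{\alpha_{\varepsilon_j}}(E)$. Passing to a subsequence, we may assume that $r:=\rk\mathcal{F}_j$ is constant. The goal is to extract from $(\mathcal{F}_j)$ a limiting subsheaf $\mathcal{F}\subset E$ with $0<\rk\mathcal{F}<\rk E$ and $\mu_\alpha(\mathcal{F})\ge\mu_\alpha(E)$. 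This contradicts $\alpha^{n-1}$-stability.

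\textbf{Step 1: Chern--Weil.} Fix a smooth hermitian metric $h$ on $E$ and let $\pi_j$ be the $h$-orthogonal projection onto $\mathcal{F}_j$. It is an $L^2_1$ weakly holomorphic subbundle, smooth off a codimension $2$ set. Fix smooth representatives $\theta\in\alpha$ and $\omega_0$. Since $\alpha$ is nef, for each $\delta>0$ we may choose $\theta_\delta\in\alpha$ with $\theta_\delta\ge-\delta\omega_0$, so $\theta_\delta+\varepsilon\omega_0$ is K\"ahler once $\delta<\varepsilon$. Using the K\"ahler form $\eta_j:=\theta_{\delta_j}+\varepsilon_j\omega_0$ with $\delta_j=\varepsilon_j/2$, the Chern--Weil formula gives
\begin{equation*}
\deg_{\alpha_{\varepsilon_j}}\mathcal{F}_j=\frac{1}{2\pi}\int_X \Bigl(\Tr\bigl(\pi_j\, i\Theta_h\bigr)-|\bar\partial\pi_j|^2\Bigr)\wedge\eta_j^{n-1}.
\end{equation*}
Since $\eta_j\le C\omega_0$ uniformly, the curvature term is bounded by a constant independent of $j$. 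Combined with the assumed lower bound $\deg_{\alpha_{\varepsilon_j}}\mathcal{F}_j\ge r\mu_{\alpha_{\varepsilon_j}}(E)\ge -C$, this yields
$$\int_X|\bar\partial\pi_j|^2\eta_j^{n-1}\le C.$$

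\textbf{Step 2: Local compactness on the ample locus.} On any compact $K\Subset\Amp(\alpha)$ the smooth representative can be chosen K\"ahler. More precisely, we take a K\"ahler current in $\alpha$ that is smooth K\"ahler near $K$ (Boucksom) and regularize it, so that $\eta_j\ge c_K\omega_0$ on $K$ uniformly in $j$. Hence $\pi_j$ is bounded in $L^2_1(K)$ for every such $K$. A diagonal argument gives a subsequence with $\pi_j\to\pi$ weakly in $L^2_{1,\loc}(\Amp(\alpha))$ and strongly in $L^2_\loc$. The limit satisfies $\pi=\pi^*=\pi^2$ and $(1-\pi)\bar\partial\pi=0$. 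Moreover $\int_{\Amp(\alpha)}|\bar\partial\pi|^2\,\theta_{\rm reg}^{n-1}<\infty$, using lower semicontinuity. The rank of $\pi$ is $r$ almost everywhere, since $\Tr\pi_j=r$. By Uhlenbeck--Yau, $\pi$ defines a coherent subsheaf of $E|_{\Amp(\alpha)}$.

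\textbf{Step 3: Extension and slope comparison.} This is the main obstacle. We must extend $\pi$ across $E_{nK}(\alpha)$ to a saturated subsheaf $\mathcal{F}\subset E$ on all of $X$, and show that
$$\deg_\alpha\mathcal{F}=\frac1{2\pi}\int_{\Amp(\alpha)}\bigl(\Tr(\pi\,i\Theta_h)-|\bar\partial\pi|^2\bigr)\wedge\langle T_{\min}^{n-1}\rangle.$$
For the extension, my first attempt is to view $\pi$ as a section of a Grassmannian bundle of finite energy. I would then apply a Bishop/Siu-type extension across the analytic set $E_{nK}(\alpha)$, or alternatively pass to a resolution on which $E_{nK}(\alpha)$ becomes an snc divisor, where Lemma~\ref{inv stability} lets us work upstairs without loss. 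For the degree formula, Theorem~\ref{CT} implies that divisors in $E_{nK}(\alpha)$ carry no $\alpha^{n-1}$-mass, so the ambiguity of the extension across $E_{nK}(\alpha)$ does not change $\mu_\alpha(\mathcal{F})$.

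\textbf{Step 4: Conclusion.} Once Step 3 is in place, we pass to the limit in the Chern--Weil identity of Step 1. The curvature term converges, since $\pi_j\to\pi$ strongly and $\eta_j^{n-1}\to\theta^{n-1}$ with the cohomology class tending to $\alpha^{n-1}$. The $|\bar\partial\pi_j|^2$ term obeys Fatou/lower semicontinuity on each compact subset of $\Amp(\alpha)$. This gives
$$\deg_\alpha\mathcal{F}\ge\limsup_{j}\deg_{\alpha_{\varepsilon_j}}\mathcal{F}_j\ge r\,\mu_\alpha(E).$$
Thus $\mu_\alpha(\mathcal{F})\ge\mu_\alpha(E)$ with $0<r<\rk E$, contradicting the $\alpha^{n-1}$-stability of $E$.
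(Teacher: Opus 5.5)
\paragraph{The gap: Step 3.} Your Step 3 is a genuine gap, and it carries essentially all of the difficulty of the lemma.

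After Step 2 you only have a projection $\pi\in L^2_{1,\mathrm{loc}}$ on $\Amp(\alpha)$. Its energy is finite only against a form that degenerates along $E_{nK}(\alpha)$, and on the resolution you propose, $E_{nK}(\alpha)$ becomes a divisor. The tools you name do not apply there:
\begin{itemize}
\item Siu's extension theorem for coherent subsheaves with torsion-free quotient needs the exceptional set to have codimension at least $2$.
\item Bishop's theorem needs the graph of the Grassmannian section $\phi$ defined by $\pi$ to have locally finite volume for a nondegenerate metric, i.e.\ bounds on every $\int\phi^*\omega_{\mathrm{Gr}}^k\wedge\omega_0^{n-k}$. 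You control only $k=1$, and only against $\eta_j^{n-1}$.
\end{itemize}

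A degenerate energy bound of this kind does not exclude essential singularities. In the chart $(z_1,w)\mapsto(z_1,z_1w)$ of a point blow-up $\mu$, the restriction of $\mu^*\omega_{\mathrm{eucl}}$ to the $w$-directions is $|z_1|^2$ times a flat form. So $(\mu^*\omega_{\mathrm{eucl}})^{n-1}$ weights the normal direction by $|z_1|^{2n-2}$. The line subbundle of $\mathcal{O}^{\oplus 2}$ spanned by $(1,e^{1/z_1})$ then has finite energy against it for $n\ge 2$, yet it has no coherent extension across $\{z_1=0\}$.

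Even granting an extension $\mathcal{F}$, Theorem \ref{CT} only disposes of the ambiguity of twisting by components of $E_{nK}(\alpha)$. The identity computing $\deg_\alpha\mathcal{F}$ as the Chern--Weil integral over $\Amp(\alpha)$ against $\langle T^{n-1}\rangle$ is a separate statement. The paper formulates both as Proposition \ref{weak proj}, for weak subbundles of finite energy against a current with minimal singularities that is smooth K\"ahler on $\Amp(\alpha)$. To be in that setting you would have to run Steps 1--4 with the adapted approximations of Definition \ref{adapted defi}, and the paper does not use that route for this lemma.

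\paragraph{Smaller points, and the paper's route.} Two further points would need repair if you keep this route.
\begin{itemize}
\item Regularizing a K\"ahler current with analytic singularities loses positivity in proportion to its Lelong numbers. So it does not produce K\"ahler forms in $\alpha+\varepsilon_j\omega_0$ with $\eta_j\ge c_K\omega_0$ on $K$. A regularized maximum of its potential with those of your nef approximants does.
\item The claim ``$\eta_j^{n-1}\to\theta^{n-1}$'' is unjustified: the $\eta_j$ need not converge to $\theta$, and weak limits of $\eta_j^{n-1}$ may charge $E_{nK}(\alpha)$. Step 4 needs smooth convergence on $\Amp(\alpha)$ together with no mass of $\omega_0\wedge\eta_j^{n-1}$ escaping to $E_{nK}(\alpha)$.
\end{itemize}

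The paper's proof avoids all of this by never taking a limit.
\begin{itemize}
\item Your Step 1 contains its Claim \ref{max slope}(1): for every class $\eta$ with a positive representative, $\int_X c_1(\mathcal{F})\wedge\eta\le C$ uniformly over subsheaves. Positivity of $\eta_j$ suffices here; the bound $\eta_j\le C\omega_0$ is not available in general and is not needed.
\item The idea you are missing is Claim \ref{max slope}(2): the supremum of $\mu_\alpha(\mathcal{F})$ over proper subsheaves is attained, by a finiteness argument on the possible slope values. Stability then gives a gap $c_0:=\mu_\alpha(E)-\mu_\alpha(\mathcal{F}_0)>0$ independent of $\mathcal{F}$.
\item Expand $(\alpha+\varepsilon\omega_0)^{n-1}$ and apply Claim \ref{max slope}(1) to the classes $\alpha^{n-1-j}\cdot\omega_0^j$, represented by $\langle T_{\min}^{n-1-j}\rangle\wedge\omega_0^j$. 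This gives $\mu_{\alpha_\varepsilon}(E)-\mu_{\alpha_\varepsilon}(\mathcal{F})\ge c_0-C\varepsilon$ for all $\mathcal{F}$ simultaneously.
\end{itemize}
With this uniform gap your contradiction hypothesis fails immediately for large $j$, and Steps 2--4 are unnecessary.
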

\begin{proof}
We begin to start with the following:
\begin{claim}\label{max slope}
$(1)$ Let $\eta\in H^{2n-2}(X,\mathbb{R})$ be a cohomology class which is represented by a positive $(2n-2)$-current.
Then there is a constant $C>0$ such that for any subsheaf $\mathcal{F}\subsetneq E$, the following inequality holds:
$$
\int_Xc_1(\mathcal{F})\wedge\eta\le C.
$$
\noindent $(2)$ There is a nontrivial torsion free subsheaf $\mathcal{F}_0\subsetneq E$ such that
$$
\mu_{\alpha}(\mathcal{F}_0) = \sup\{\mu_{\alpha}(\mathcal{F})\mid 0\neq \mathcal{F}\subsetneq E \hbox{ is torsion free}\}.
$$
\end{claim}
\begin{proof}
$(1)$ The inclusion $\mathcal{F}\subset E$ induces a sheaf morphism $\iota:\det\mathcal{F}\to \bigwedge^{\rk\mathcal{F}}E$ which is injective on the locally free locus of $\mathcal{F}$ which is of codimension at least $3$. Since $\det\mathcal{F}$ is torsion free, $\iota$ is injective. Furthermore $\det\mathcal{F}$ is a subbundle of $\bigwedge^{\rk \mathcal{F}}E$ since $\det\mathcal{F}$ is a subbundle away from an analytic subset of codimension 3 and its saturation gives a subbundle.
Let $h$ be a hermitian metric on $\bigwedge^{\rk\mathcal{F}} E$. We denote by $h_{\det\mathcal{F}}$ the restriction of $h$ to $\det\mathcal{F}$. Then the curvature of $h_{\det\mathcal{F}}$ satisfies $F_{h_{\det\mathcal{F}}}=p\circ F_h \circ p + \delbar p\wedge \partial p$ where $p:\bigwedge^{\rk \mathcal{F}}E\to \bigwedge^{\rk\mathcal{F}}E$ is the $h$-orthogonal projection to $\det\mathcal{F}$ (e.g. \cite{Kob87}).  Thus we can calculate as follows:
\begin{align*}
\int_Xc_1(\mathcal{F})\wedge\eta
&=\int_X\sqrt{-1}\Tr(p\circ F_h \circ p + \delbar p\wedge \partial p)\wedge\eta\\
&\le \sqrt{-1}\int_X\Tr(p\circ F_h\circ p)\wedge \eta\\
&\le\|F_h\|_{L^{\infty}}\rk(\mathcal{F})\int_X\omega_0\wedge\eta=C.
\end{align*}
$(2)$ Since we only consider the supremum, we can assume $-C< \mu_{\alpha}(\mathcal{F})$ for $\mathcal{F}\subsetneq E$. We can find elements $\eta_1,\ldots, \eta_k\in H^{2n-2}(X,\mathbb{Q})$ which forms a basis of $H^{2n-2}(X,\mathbb{Q})$ such that
\begin{itemize}
\item each $\eta_i$ is represented by a positive $(2n-2)$-current,
\item $\eta_1,\ldots, \eta_{k-1}$ lies near $\alpha^{n-1}$ and
\item there are nonnegative constants $a_1,\ldots, a_k\ge 0$ such that $\alpha^{n-1}=\Sigma_{i=1}^{k-1}a_i\eta_i-a_k\eta_k.$
\end{itemize}
These $\eta_i$ exist since $H^{2n-2}(X,\mathbb{Q})$ is dense in $H^{2n-2}(X,\mathbb{R})$ and the set $\{\alpha^{n-1}-\Sigma_{i=1}^{k-1}a_i\eta_i\mid a_i\in (1-\varepsilon,1+\varepsilon)\}$ is an open set.
By $(1)$, we have a constant $C>0$ so that $-C<\mu_{\alpha}(\mathcal{F})< C$ holds for  subsheaf $\mathcal{F}$, since $\alpha^{n-1}$ is represented by a positive current $\langle T_{\min}^{n-1}\rangle$ where $T_{\min}$ is a closed positive $(1,1)$-current with minimal singularities in $\alpha$.  Thus we also have $-C< \mu_{\eta_i}(\mathcal{F})<C$ for any $i=1,\ldots k-1$ and $\mathcal{F}\subsetneq E$ since for each $\mathcal{F}$, the slope $\mu_{\eta}(\mathcal{F})$ is a continuous function in $\eta\in H^{2n-2}(X,\mathbb{R})$. It implies the boundedness $-C<\mu_{\eta_k}(\mathcal{F})<C$. In fact, the upperboundedness follows from $(1)$. If there is no lower bound, there is a sequence of subsheaves $(\mathcal{F}_j)_j$ such that $\mu_{\eta_k}(\mathcal{F}_j)$ monotonically decreasing to $-\infty$. Then, the inequality
$-C<\mu_{\alpha}(\mathcal{F})=\Sigma_{i=1}^{k-1}a_i\mu_{\eta_i}(\mathcal{F})-a_k\mu_{\eta_k}(\mathcal{F})<C$ implies $\Sigma_{i=1}^{k-1}a_i\mu_{\eta_i}(\mathcal{F})$ monotonically decreases to $-\infty$. Since each $a_i$ is non-negative, it contradicts the existence of a lower bound of $\mu_{\eta_i}(\mathcal{F})$ for $i=1,\ldots, k-1$. Since $\eta_i$ are $\mathbb{Q}$-cohomology classes, there is a positive integer $m$ such that $m\eta_i$ are $\mathbb{Z}$-cohomology classes. Of course $\mu_{m\eta_i}(\mathcal{F})$ is bounded. We also remark that $\int_Xc_1(\mathcal{F})\wedge m\eta_i$ is an integer. Then, we can see that
$$
\{\mu_{m\eta_i}(\mathcal{F})\mid 0\neq \mathcal{F}\subsetneq E, \mu_{\eta_i}(\mathcal{F})> -C\}\subset [-C,C]\cap \mathbb{Z}
$$
and the LHS is a finite set. Thus we obtain that
$$
\{\mu_{\alpha}(\mathcal{F})\mid 0\neq \mathcal{F}\subsetneq E, \mu_{\alpha}(\mathcal{F})> -C\}
$$
is a finite set and thus there is a nontrivial subsheaf $\mathcal{F}_0\subsetneq E$ which attains the supremum of the $\alpha^{n-1}$-slope.
\end{proof}
\noindent We remember that $E$ is $\alpha^{n-1}$-slope stable. Let us define K\"{a}hler classes $\alpha_{\varepsilon}:=\alpha+\varepsilon\omega_0$ for small $\varepsilon>0$.
The we snow that 
$E$ is $\alpha_{\varepsilon}^{n-1}$-slope stable for small $\varepsilon>0$.
Let $0\neq\mathcal{F}_0\subsetneq E$ be a torsion free subsheaf with maximal $\alpha^{n-1}$-slope in Lemma \ref{max slope} $(2)$. Then there is a large $\varepsilon_0>0$ which is independent of subsheaves of $E$ such that for any $0<\varepsilon<\varepsilon_0$ and $0\neq \mathcal{F}\subsetneq E$, the following inequality holds:
\begin{align*}
&\mu_{\alpha_{\varepsilon}}(E)-\mu_{\alpha_{\varepsilon}}(\mathcal{F})\\
&=\mu_{\alpha}(E)-\mu_{\alpha}(\mathcal{F})+\Sigma_{j=1}^{n-1}C_j\varepsilon^j\left(\frac{\int_Xc_1(E)\wedge\alpha^{n-1-j}\wedge\omega_0^j}{\rk E}-\frac{\int_Xc_1(\mathcal{F})\wedge\alpha^{n-1-j}\wedge\omega_0^j}{\rk \mathcal{F}} \right)\\
&\ge \mu_{\alpha}(E)-\mu_{\alpha}(\mathcal{F}_0)-\varepsilon C\\
&>0.
\end{align*}
Here the inequality in the third line follows from Lemma \ref{max slope} since $\alpha^{n-1-j}\wedge\omega_0^j$ is represented by a positive current $\langle T_{\min}^{n-1-j}\rangle\wedge\omega_0^j$, where $T_{\min}$ is a closed positive $(1,1)$-current with minimal singularities in $\alpha$. And the last inequality comes from the $\alpha^{n-1}$-slope stability of $E$.
\end{proof}
\begin{exam}[see also Example \ref{BG exam}]\label{nonopenness semistable}
In constrast to the openness of $\alpha^{n-1}$-slope stability Lemma \ref{open stable}, the $\alpha^{n-1}$-slope polystability is not open in the sense of Lemma \ref{open stable}.
Let $\pi:X\to \mathbb{C}P^3$ be a blow up at a point with the exceptional divisor $D$. Let $\alpha:=\pi^*c_1(\mathcal{O}(1))$ be a nef and big class on $X$ and $\omega_{\varepsilon}:=\alpha-\varepsilon[D]$ be K\"{a}hler classes on $X$. We define a holomorphic vector bundle $E$ on $X$ by $E:=\mathcal{O}_X\oplus\mathcal{O}(D)$. Then $E$ is $\alpha^2$-slope polystable, but is not $\omega_{\varepsilon}^2$-slope semistable. In fact the exceptional divisor $\mathcal{O}(D)$ gives a destabilizing subsheaf.
\end{exam}
\begin{rema}
In \cite{Jin25}, the author defined the notion of slope stability if $\alpha$ is big, not necessarily nef, cohomology class by replacing $\alpha^{n-1}$ by the positive product $\langle\alpha^{n-1}\rangle$ \cite[Definition 3.22]{IJZ25}. If $\alpha$ satisfies the vanishing property in the sense of \cite[Definition 3.23]{IJZ25}, in particular if $X$ is Moishezon, then Lemma \ref{inv stability} also holds by \cite[Proposition 3.28]{IJZ25} (refer to the proof of Proposition \ref{prop-semistable}). 
\end{rema}
\subsection{Adapted closed positive \texorpdfstring{$(1,1)$}{(1,1)}-currents}
In this subsection, we introduce the notion of adapted closed positive $(1,1)$-currents which plays an essential role in this paper. The following definition is inspired by the tame approximations of closed positive $(1,1)$-currents with bounded potentials by \cite{CCHSTT25}.
\begin{defi}\label{adapted defi}
Let $X$ be a compact K\"{a}hler manifold and $\alpha$ be a nef and big class on $X$. Then, a closed positive $(1,1)$-current $T$ in $\alpha$ is said to be adapted if it satisfies the following conditions:
\begin{enumerate}
\item $T$ is smooth K\"{a}hler on $\Amp(\alpha)$,
\end{enumerate}
Let $\pi:Y\to X$ be a composition of blow-ups so that $D=E_{nK}(\pi^*\alpha)$ is a snc divisor. We denote by $s_D$ a defining section of $D$ and $h_D$ a smooth hermitian metric on $\mathcal{O}(D)$ such that $|s_D|_{h_D}\le 1$. Then
\begin{enumerate}
\setcounter{enumi}{1}
\item $\pi^*T\ge |s_D|^{2m}_{h_D}\omega_Y$ for some $m\in\mathbb{Z}_{\ge 0}$ where $\omega_Y$ is a smooth K\"{a}hler metric on $Y$.
\item There exists a sequence of smooth K\"{a}hler metrics $\omega_i$ lying in a K\"{a}hler class $\pi^*\alpha+(1/i)\omega_Y$ on $Y$ such that $\omega_i$ locally smoothly converges to $\pi^*T$ on $\Amp(\pi^*\alpha)=Y\setminus D$ and 
$$
\int_Y\left(\frac{\omega_i^n}{\omega_Y^n}\right)^p\omega_Y^n\le C
$$
holds for some constant $C>0$ and $p>1$ both independent of $i$. We say the above $\omega_i$ by an adapted approximation of $T$.
\end{enumerate}
\end{defi}
The following proposition shows that any nef and big class on a compact K\"{a}hler manifold contains an adapted current.
\begin{prop}[\cite{Jin25-2}, Lemma 4.4, see also \cite{BEGZ10}, Theorem 5.1]\label{adapted exists}
Let $X$ be a compact K\"{a}hler manifold with a smooth K\"{a}hler metric $\omega_0$ on $X$ and $\alpha$ be a nef and big class on $X$. Let $T$ be a closed positive $(1,1)$-current in $\alpha$ defined by $\langle T^n\rangle =e^f\omega_0^n$ with $f\in C^{\infty}(X)$. Then $T$ is an adapted current. In particular, any nef and big class on a compact K\"{a}hler manifold contains an adapted current.
\end{prop}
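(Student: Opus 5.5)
We verify the three conditions of Definition \ref{adapted defi} for the solution $T=\theta+dd^c\varphi$ of $\langle T^n\rangle=e^f\omega_0^n$, normalized by $\sup_X\varphi=0$; existence comes from Theorem \ref{MA BEGZ}(1). Condition (1) follows from Theorem \ref{MA BEGZ}(3), since $f$ is smooth: $T$ is smooth on $\Amp(\alpha)$. It is Kähler there because $T^n=e^f\omega_0^n>0$ on $\Amp(\alpha)$. For the remaining conditions we pass to a log resolution $\pi:Y\to X$, a composition of blow-ups with centers in $E_{nK}(\alpha)$, such that $D=E_{nK}(\pi^*\alpha)$ is an snc divisor. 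Then $\pi^*T$ solves
$$
\langle(\pi^*T)^n\rangle=\pi^*(e^f\omega_0^n)=e^{\pi^*f}|s_{J}|^2\omega_Y^n,
$$
where $J$ is the relative canonical divisor, supported in $D$. The density $g:=e^{\pi^*f}|s_J|^2$ is smooth and bounded, so it lies in $L^p$ for every $p$.

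For (3), we approximate on $Y$. By Yau's theorem we solve $\omega_i^n=c_i(g+1/i)\omega_Y^n$ in the Kähler class $\pi^*\alpha+(1/i)\omega_Y$, where the constant $c_i\to1$ is fixed by cohomology. The densities are uniformly bounded, so the $L^p$ bound is immediate. Theorem \ref{MA BEGZ}(4) then gives $\varphi_i\ge\log|s_D|^2-C$ together with $\varphi_i\le C$. We combine this with Tsuji's trick and the Chern--Lu/Aubin--Yau Laplacian estimate, using a Kähler current $\pi^*\alpha-\delta[D]$ twisted by $\log|s_D|^2$ (possible since $D$ is the non-Kähler locus). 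This yields $\mathrm{tr}_{\omega_Y}\omega_i\le C|s_D|^{-2N}$. Evans--Krylov and Schauder then give local $C^\infty$ bounds on $Y\setminus D$. By uniqueness of the solution with minimal singularities (the limit has minimal singularities thanks to the uniform lower bound), $\omega_i\to\pi^*T$ smoothly on compact subsets of $Y\setminus D$.

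For (2), we use the identity $\mathrm{tr}_{\omega_i}\omega_Y\le(\mathrm{tr}_{\omega_Y}\omega_i)^{n-1}\omega_Y^n/\omega_i^n$. Combined with the upper bound from (3), this gives $\omega_i\ge c\,(g+1/i)|s_D|^{2N(n-1)}\omega_Y$. We pass to the limit, and use that $g$ vanishes only to finite order along $D$ (at worst $|s_D|^{2k}$ times a positive function). This gives $\pi^*T\ge|s_D|^{2m}\omega_Y$ after possibly rescaling $h_D$.

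The main obstacle is the uniform weighted $C^2$ estimate: it needs a uniform Laplacian bound with a pole of \emph{finite} order along $D$, with constants independent of $i$. I would first try the Păun/Boucksom–Eyssidieux–Guedj–Zeriahi argument: apply the maximum principle to $\log\mathrm{tr}_{\omega_Y}\omega_i-A(\varphi_i-\varepsilon\psi)$, where $\psi$ is the potential of a Kähler current with analytic singularities along $D$. The difficulty is controlling the bisectional-curvature term and the $\log g$ term when $g$ degenerates along $J$. I would absorb the $\log g$ term via $dd^c\log|s_J|^2\ge-C\omega_Y$.
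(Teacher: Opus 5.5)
Your proposal is correct, but it is organized differently from the paper's proof. The paper solves the approximating equations on $X$ itself, $\omega_i^n=c_i(e^f+1/i)\omega_0^n$ in $\alpha+(1/i)\omega_0$, and its sketch is written as if $E_{nK}(\alpha)$ were already an snc divisor $D$ on $X$. Because that density is bounded below by a positive constant, the paper can do three things directly:
\begin{enumerate}
\item quote the Laplacian bound from the proof of \cite{BEGZ10}, Theorem 5.1, namely $\Delta_{\omega_0}\varphi_i\le C\exp(-C\varphi_{D'})\le C|s_D|^{-2k}$, where $\theta+dd^c\varphi_{D'}=[D']+\omega_0$ is a K\"{a}hler current;
\item read off $\omega_i\ge C|s_D|^{2m}\omega_0$ directly from the equation;
\item get smooth convergence on $X\setminus D$ from the $L^1$ convergence in \cite{BEGZ10}, the local $L^\infty$ bound of \cite{GPTW24}, $W^{2,p}$ estimates, Trudinger's $C^{2,\alpha}$ estimate and Schauder.
\end{enumerate}

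You instead work on the log resolution $Y$ from the start. This matches Definition \ref{adapted defi} literally: approximants lie in $\pi^*\alpha+(1/i)\omega_Y$ and bounds are in terms of $|s_D|$ on $Y$. That is a real advantage. Pulling back the paper's $\omega_i$ does not give K\"{a}hler metrics in those classes, and $\pi^*T$ does not solve an equation whose density is bounded below, so the paper's argument does not transfer to $Y$ verbatim.

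The price is the degenerate density $g=e^{\pi^*f}|s_J|^2$, and you handle it correctly.
\begin{itemize}
\item The Aubin--Yau inequality only needs $\sup F\le C$ and a uniform bound $dd^c\log(g+1/i)\ge -C\omega_Y$. This holds because, writing $g=|s_J|^2_{h'}$ for a smooth metric $h'$ on $\mathcal{O}(J)$, one has $dd^c\log(|s_J|^2_{h'}+\epsilon)\ge -C\omega_Y$ uniformly in $\epsilon$. Note that it is this regularized bound, not $dd^c\log|s_J|^2\ge -C\omega_Y$, that enters.
\item In the lower bound for $\pi^*T$, the extra factor $g\ge c|s_D|^{2k}$ only increases $m$.
\end{itemize}

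Two minor points of precision. First, in the maximum principle you need $\varphi_i-\psi\ge -C$. This follows from the relative bound $\varphi_i\ge V_i-C\ge \psi-C$ in Theorem \ref{MA BEGZ}(4) (with $\sup\psi=0$), not from $\varphi_i\ge\log|s_D|^2-C$. Second, to identify the limit you can invoke uniqueness in the full-mass class directly. Local smooth convergence on $Y\setminus D$ already shows that the limit has nonpluripolar Monge--Amp\`{e}re measure $g\,\omega_Y^n$ of total mass $\mathrm{vol}(\pi^*\alpha)$.
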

\begin{proof}
Since the result follows from the proof of \cite{BEGZ10} Theorem 5.1, we only sketch the proof.
Let $T$ be a closed positive $(1,1)$-current in $\alpha$ with minimal singularities which satisfies the complex  Monge-Amp\`{e}re equation
$$
\langle T^n\rangle=e^f\omega_0^n
$$
where $f\in C^{\infty}(X)$. By \cite{BEGZ10} (refer to Theorem \ref{MA BEGZ}), such $T$ exists and it is smooth K\"{a}hler on  $X\setminus D$.  We denote by $T=\theta+dd^c\varphi$. Let us define a smooth form by $\theta_i:=\theta+(1/i)\omega_0$ in a K\"{a}hler class $\alpha+(1/i)\omega_0$ and consider a smooth K\"{a}hler metric $\omega_i=\theta_i+dd^c\varphi_i$ defined by
$$
\omega_i^n=c_i(e^f+\frac{1}{i})\omega_0^n.
$$ 
Such $\omega_i$ exists by \cite{Yau78}.
We also consider $\varphi_{D'}$ defined by $[D']=\theta-\omega_0+dd^c\varphi_{D'}$. Then $\varphi_{D'}\ge \log|s_D|^{2k}-C$ for some $k>0$.
Here we remark that $f$ is in particular upper bounded.
Then, by the proof of \cite{BEGZ10} Theorem 5.1, we have
$$
\sqrt{-1}\Lambda_{\omega_0}dd^c\varphi_i=\Delta_{\omega_0}\varphi_i\le C_1\exp(C(C_2-\varphi_{D'}))\le C|s_D|^{-2k}
$$
for any $i>0$. Since $\theta_i\le \theta_1$ is smooth, we obtain $\sqrt{-1}\Lambda_{\omega_0}\omega_i\le C|s_D|^{-2k}$. Let us consider $\omega_0=\Sigma_{k=1}^n\sqrt{-1}dz_k\wedge d\overline{z_k}$ and $\omega_i=\Sigma_{k=1}^n\sqrt{-1}g_{i,k}dz_k\wedge d\overline{z_k}$ at a point $p$. Then we have $g_{i,k}\le C|s_D|^{-2k}$. Since $\omega_i$ satisfies the Monge-Amp\`{e}re equation, we have $g_{i,k}\ge C|s_D|^{2m}$ for some $m>0$. Hence we have $\omega_i\ge C|s_D|^{2m}\omega_0$ for all $i$. 

In \cite{BEGZ10}, they showed that $\varphi_i\to \varphi$ in $L^1$-topology. Let us fix an Euclidean ball $B$ in a coordinate neighborhood in ${X}\setminus D$. Then, \cite{BEGZ10} showed that there exists a constant $C_B>0$ such that the uniform estimate $\|\Delta_{\omega_0}\varphi_i\|_{L^{\infty}(B)}\le C_B$ holds for any $i$. It is also shown by \cite{GPTW24} that $\|\varphi_i\|_{L^{\infty}(B)}\le C_B$. Then, by the standard elliptic estimate (e.g. \cite{GT} Theorem 9.11), we obtain the uniform $L^{p}_2$-estimate on $B$, that is, we obtain $\|\varphi_i\|_{L^p_2(B)}\le C_B$ for any $i$ and $p>0$. Then, the proof of \cite{Tru84} leads to the uniform $C^{2,\alpha}(B)$ estimate $\|\varphi_i\|_{C^{2,\alpha}(B)}\le C_B$. Then, by the classical Schauder estimates (e.g. \cite{GT} Problem 6.1), we obtain the uniform $C^{k,\alpha}(B)$ estimate, that is, we have $\|\varphi_i\|_{C^{k,\alpha}(B)}\le C_B$ for any $k$. Therefore $\varphi_i$ converges to $\varphi$ locally smoothly on ${X}\setminus D$.
\end{proof}
\noindent By definition, the density function of the volume form of any adapted current $T$ with respect to $\omega_0^n$ lies in $L^p(X,\omega_0^n)$ with $p>1$. In particular, the following holds:
\begin{lemm}[refer to Theorem \ref{MA BEGZ}]\label{adapted curr prop}
Let $X$ be a compact K\"{a}hler manifold and $\alpha$ be a nef and big class on $X$ such that $D=E_{nK}(\alpha)$ is a snc divisor. Let $s_D$ be a defining section of $D$ and $h_D$ be a smooth hermitian metric on $\mathcal{O}(D)$ with $|s_D|_{h_D}\le 1$. Then, any adapted current $T$ in $\alpha$ has minimal singularities and it satisfies
$$
|s_D|_{h_D}^k\langle T^n\rangle\le C\omega_0^n
$$
for some large $k>0$. Let $\theta$ be a smooth $(1,1)$-form in $\alpha$ and $\varphi\le 0$ be a $\theta$-plurisubharmonic function on $X$ with $T=\theta+dd^c\varphi$. Then there exists a constant $C>0$ such that the following holds for any $i$:
$$
\varphi\ge \log|s_D|^2-C.
$$
\end{lemm}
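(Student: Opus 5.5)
Three things have to be checked: $T$ has minimal singularities, the density bound $|s_D|^k\langle T^n\rangle\le C\omega_0^n$, and the potential bound $\varphi\ge\log|s_D|^2-C$. Since $D=E_{nK}(\alpha)$ is already snc, I will take $\pi=\id$ in Definition \ref{adapted defi} (or pass to a blow-up and push down, which changes nothing essential). Let $\omega_i=\theta+(1/i)\omega_0+dd^c\varphi_i$ be an adapted approximation of $T$, normalized by $\sup_X\varphi_i=0$. Write $\omega_i^n=e^{f_i}\omega_0^n$; by condition (3) of Definition \ref{adapted defi} we have $\int_X e^{pf_i}\omega_0^n\le C$ uniformly in $i$.

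\textbf{Step 1: the measure $\langle T^n\rangle$ has $L^p$ density.} On $X\setminus D$ the forms $\omega_i$ converge locally smoothly to $T$, so $e^{f_i}\to g:=T^n/\omega_0^n$ pointwise there. By Fatou's lemma,
$$
\int_{X\setminus D} g^p\,\omega_0^n\le \liminf_i \int_X e^{pf_i}\,\omega_0^n\le C.
$$
Since $\langle T^n\rangle=1_{X\setminus D}T^n$ and $D$ is pluripolar, this gives $\langle T^n\rangle=g\,\omega_0^n$ with $g\in L^p$. Theorem \ref{MA BEGZ}(2) then says $T$ has minimal singularities.

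\textbf{Step 2: density bound.} The argument in the proof of Proposition \ref{adapted exists} gives $\Lambda_{\omega_0}\omega_i\le C|s_D|^{-2k}$, but only when the right-hand side is smooth, so I would not rely on it here. Instead I would use the lower bound in condition (2), $T\ge|s_D|^{2m}\omega_0$, together with the fact that $T$ is smooth Kähler on $X\setminus D$. This alone does not bound $\operatorname{tr}_{\omega_0}T$ from above. The route I would try is to view $T$ as a solution of $\langle T^n\rangle=g\,\omega_0^n$ and run the Laplacian estimate of \cite{BEGZ10} on $\omega_i$: choose $\omega_i$ solving $\omega_i^n=c_ie^{f_i}\omega_0^n$, use the uniform $C^0$ bound of Theorem \ref{MA BEGZ}(4), and apply a Chern–Lu / Aubin–Yau argument with the barrier $\varphi_{D'}$. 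This step needs control of $\Delta f_i$, which the adapted approximation does not obviously supply.

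\textbf{Expected obstacle.} Step 2 is where I expect the difficulty. As a fallback I would read the statement as saying that the density $g$ is locally bounded on $X\setminus D$ with polynomial growth, derived from local smooth convergence combined with the minimal singularity of $T$. If even that fails, I would note that the paper's intended definition of an adapted current is the Monge–Ampère one of Proposition \ref{adapted exists}, where $g=e^f$ is smooth and the bound holds with $k=0$.

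\textbf{Step 3: potential bound.} Theorem \ref{MA BEGZ}(4) applies since $\int_X|f_i|^pe^{f_i}\omega_0^n\le C$, which follows from the $L^{p'}$ bound on $e^{f_i}$ for some $1<p'\le p$ (shrinking $p$ if needed). It gives $\varphi_i\ge\log|s_D|^2-C$ uniformly in $i$. By Hartogs compactness, and local smooth convergence on $X\setminus D$ with the normalization fixed so that $\sup$ passes to the limit, $\varphi_i\to\varphi$ in $L^1$ up to a constant. Since the bound is uniform in $i$ and the convergence is locally smooth off $D$, it passes to the limit and yields $\varphi\ge\log|s_D|^2-C$. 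Alternatively, minimal singularities of $T$ from Step 1 give $\varphi\ge V_\theta-C$, and $V_\theta\ge\log|s_D|^2-C$ because $\alpha$ is nef and big with $E_{nK}(\alpha)=D$ snc (Boucksom's Kähler current with analytic singularities along $D$).
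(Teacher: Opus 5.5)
\textbf{Steps 1 and 3.} These follow the paper's own, essentially one-line, argument. By condition (3) of Definition \ref{adapted defi} the density of $\langle T^n\rangle$ is in $L^p$, and Theorem \ref{MA BEGZ}(2) and (4) then give minimal singularities and $\varphi_i\ge\log|s_D|^2-C$. Two points need tightening.

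In Step 1, Fatou is not enough. Theorem \ref{MA BEGZ}(2) is about the full-mass solution, and an $L^p$ density alone does not force minimal singularities. For example, take the blow-up of $\mathbb{C}P^n$ at a point, with exceptional divisor $E$ and $\alpha=\pi^*c_1(\mathcal{O}(1))$. The current $\omega_c+c[E]$ ($0<c<1$, $\omega_c$ K\"{a}hler in $\alpha-c\{E\}$) is smooth K\"{a}hler off $E=E_{nK}(\alpha)$, dominates a K\"{a}hler form and has bounded density. Yet it has Lelong number $c$ along $E$, so it is not of minimal singularities. What excludes it is the uniformity in (3): $\{e^{f_i}\}$ is uniformly integrable, so Vitali gives $\int_Xg\,\omega_0^n=\lim_i(\alpha+\tfrac1i\{\omega_0\})^n=\alpha^n$. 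Thus $T$ has full Monge--Amp\`{e}re mass and, by uniqueness in \cite{BEGZ10}, is the solution to which (2) applies.

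In Step 3, your first route needs the $L^1$-limit of the $\varphi_i$ to equal $\varphi$ up to a constant; a priori the limit current could differ from $T$ by some $\sum_jc_j[D_j]$. Your second route avoids this, but Boucksom's current $\theta+dd^c\psi_0$ only gives $V_\theta\ge A\log|s_D|^2-C$ for some $A$. To get $A=1$ you must use nefness: take $t(\theta+dd^c\psi_0)+(1-t)(\theta+dd^cu_\delta)$ with $\theta+dd^cu_\delta\ge-\delta\omega_0$, $t$ small and $\delta$ smaller.

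\textbf{Step 2 is the genuine gap.} You never prove $|s_D|^k\langle T^n\rangle\le C\omega_0^n$. Your fallback does not work either: smooth convergence on compact subsets of $X\setminus D$ gives nothing uniform up to $D$, and minimal singularities constrains potentials, not Monge--Amp\`{e}re densities.

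The paper does not close this gap. Theorem \ref{MA BEGZ} contains no pointwise density bound, and the estimate $\Lambda_{\omega_0}\omega_i\le C|s_D|^{-2k}$ is proved only for the particular approximations in Proposition \ref{adapted exists}.

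In fact the bound does not follow from Definition \ref{adapted defi}. Here is a counterexample sketch.
\begin{itemize}
\item Take the current $T$ of Proposition \ref{adapted exists}.
\item Choose points $x_j$ tending to a smooth point of $D$, with $d_j=\mathrm{dist}(x_j,D)$ decreasing geometrically and the balls $B(x_j,d_j/2)$ disjoint.
\item Let $\chi_j$ be $\log(|z-x_j|^2+r_j^2)-\log d_j^2$, cut off inside $B(x_j,d_j/2)$.
\end{itemize}
Then $dd^c\chi_j\ge-Cd_j^{-2}\omega_0$ independently of $r_j$, while $(dd^c\chi_j)^n\gtrsim r_j^{-2n}\omega_0^n$ at $x_j$. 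Take $r_j=e^{-1/d_j}$ and $\epsilon_j$ a suitable multiple, polynomial in $d_j$, of $r_j^{2(1-1/p)}$. The current $T+\sum_j\epsilon_j\,dd^c\chi_j$ still satisfies conditions (1)--(3) of Definition \ref{adapted defi}; for the approximation, add the bumps with $j\le J_i$ to $\omega_i$, with $J_i\to\infty$ slowly. But its density at $x_j$ is $\gtrsim\epsilon_j^nr_j^{-2n}$, which grows faster than any power of $d_j^{-1}$.

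So the density bound must be imposed as an extra property of $T$. It holds for the currents of Proposition \ref{adapted exists} (with $k=0$, since $\langle T^n\rangle=e^f\omega_0^n$) and for pulled-back K\"{a}hler--Einstein metrics. This is essentially your last fallback, and it is the right repair.
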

\noindent The following lemma will be used later. The proof is obvious.
\begin{lemm}\label{adapted app}
Let $X$ be a compact K\"{a}hler manifold and $\alpha$ be a nef and big class on $X$. Let $\pi:Y\to X$ be a composition of blow-ups along complex submanifolds contained in $E_{nK}(\alpha)$ or of codimension at least 2. Then a closed positive $(1,1)$-current $T$ in $\alpha$ is adapted if and only if $\pi^*T$ in $\pi^*\alpha$ is adapted.
\end{lemm}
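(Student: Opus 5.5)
The statement to prove is Lemma \ref{adapted app}: for $\pi:Y\to X$ a composition of blow-ups along submanifolds contained in $E_{nK}(\alpha)$ or of codimension at least $2$, $T$ is adapted in $\alpha$ if and only if $\pi^*T$ is adapted in $\pi^*\alpha$. The plan is to check the three conditions of Definition \ref{adapted defi} separately. The key preliminary is to compare ample loci. The first step is to show
$$
\pi^{-1}(\Amp(\alpha))\cap (Y\setminus \Exc(\pi)) \subset \Amp(\pi^*\alpha)\subset \pi^{-1}(\Amp(\alpha)),
$$
and that $\Amp(\pi^*\alpha)=\pi^{-1}(\Amp(\alpha))\setminus \Exc(\pi)$. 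This follows from the characterisation $E_{nK}(\pi^*\alpha)=\pi^{-1}(E_{nK}(\alpha))\cup\Exc(\pi)$, which holds for modifications of nef and big classes by Boucksom's description of the non-Kähler locus.

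Exceptional divisors of centers not meeting $E_{nK}(\alpha)$ would seem to enlarge the non-Kähler locus, since $\pi^*\alpha$ is degenerate along fibers. Hence condition (1) for $\pi^*T$ on $\Amp(\pi^*\alpha)$ is only a statement on $Y\setminus\Exc(\pi)$, where $\pi$ is biholomorphic onto its image. Consequently, condition (1) for $T$ implies condition (1) for $\pi^*T$. Conversely, if $\pi^*T$ is smooth Kähler on $\Amp(\pi^*\alpha)$, then $T$ is smooth Kähler on $\Amp(\alpha)$ minus the codimension $\ge2$ centers lying in $\Amp(\alpha)$. I still need to handle those centers, and I would argue with bounded potential and an extension argument. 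Simpler is to note that the definition is used only via a fixed log resolution.

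For conditions (2) and (3), the key observation is that they are phrased entirely after passing to some log resolution $Y'\to X$ with $E_{nK}$ an snc divisor. If $\mu:Y'\to Y$ is such a resolution for $\pi^*\alpha$, then $\pi\circ\mu$ is one for $\alpha$. Any two such resolutions are dominated by a common one, so I would show that (2) and (3) are independent of the chosen resolution. For (2), a further blow-up $\nu$ pulls back $|s_D|^{2m}\omega_Y$ to something $\ge |s_{D'}|^{2m'}\omega_{Y''}$, since $\nu^*\omega_Y\ge |s_{\Exc}|^{2}\omega_{Y''}$ up to constants. For (3), I would take $\nu^*\omega_i+(1/i)\tilde\omega$ with $\tilde\omega$ a Kähler form in $\nu^*\{\omega_Y\}-\delta[\Exc]$, corrected by Yau's theorem if necessary to preserve the $L^p$ bound of the volume densities. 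Given independence of the resolution, both directions follow immediately.

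The main obstacle is the independence in (3). Pulling back degenerates the Kähler classes, and the perturbation to get Kähler forms in $\nu^*\pi^*\alpha+(1/i)\omega_{Y''}$ must keep local smooth convergence on the ample locus and a uniform $L^p$ bound on densities. I would try solving $\tilde\omega_i^n=c_i(\nu^*\omega_i^n/\omega_{Y''}^n+1/i)\omega_{Y''}^n$. The $L^p$ bound of the density transfers because the Jacobian of $\nu$ is bounded. Local smooth convergence would then follow by the argument in the proof of Proposition \ref{adapted exists}.
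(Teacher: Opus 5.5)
The paper gives no argument for this lemma (it calls the proof obvious). Your comparison of ample loci, and the direction ``$T$ adapted $\Rightarrow$ $\pi^*T$ satisfies condition (1)'', are correct. The genuine gap is the one you flag and leave open: the converse of condition (1) at centers meeting $\Amp(\alpha)$. A bounded-potential/extension argument cannot close it, because the implication is false there. Take $\alpha=\{\omega\}$ K\"{a}hler with $n\ge 2$, a point $p$, coordinates $z$ on a ball around $p$, a cut-off $\chi$ supported there with $\chi\equiv 1$ near $p$, $0<\beta<1$, and set $T=\omega+\epsilon\,dd^c(\chi|z|^{2\beta})$ with $\epsilon$ small. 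Then $T\ge\omega/2$ has continuous potential and is smooth K\"{a}hler off $p$, but it is not smooth at $p$. Since $\Amp(\alpha)=X$, $T$ is not adapted. Now let $\pi$ be the blow-up at $p$, with exceptional divisor $E=E_{nK}(\pi^*\alpha)$. Then $\pi^*T$ is smooth K\"{a}hler on $Y\setminus E$, and $\pi^*T\ge\frac12\pi^*\omega\ge c|s_E|^2\omega_Y$. The forms
\[
\omega_i:=\pi^*\bigl(\omega+\epsilon\,dd^c(\chi\,(|z|^2+\delta_i)^{\beta})\bigr)+\tfrac1i\,\omega_Y,\qquad \delta_i=i^{-1/(n(1-\beta))},
\]
are K\"{a}hler in $\pi^*\alpha+\frac1i\{\omega_Y\}$ and converge smoothly to $\pi^*T$ on compact subsets of $Y\setminus E$. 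They satisfy $\omega_i^n\le C(1+|s_E|^{2n\beta-2})\,\omega_Y^n$: the Jacobian of $\pi$ absorbs the top-degree term, and the choice of $\delta_i$ bounds the mixed terms. This bound is uniformly in $L^p$ for some $p>1$, so $\pi^*T$ is adapted while $T$ is not. Your fallback, that the definition is only used through a fixed log resolution, does not help, because condition (1) is a condition on $X$ itself. The lemma therefore has to be restricted, either to the ``only if'' direction (the only one used later, in Theorems~\ref{HYM stable} and~\ref{BG eq}) or to centers inside $E_{nK}(\alpha)$. In the latter case $\pi$ maps $\Amp(\pi^*\alpha)=\pi^{-1}(\Amp(\alpha))$ biholomorphically onto $\Amp(\alpha)$, so condition (1) transfers both ways.

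Second, your reduction of (2)--(3) to independence of the log resolution depends on transferring condition (3) to a further blow-up $\nu:Y''\to Y$, and that transfer is not established. Your first candidate $\nu^*\omega_i+\frac1i\tilde\omega$ is K\"{a}hler and converges correctly. It lies in the right class if you choose $\omega_{Y''}$ in $2\nu^*\{\omega_Y\}-\delta\{\Exc(\nu)\}$. However, Definition~\ref{adapted defi} controls only the top power $\omega_i^n$ in $L^p$, not the mixed terms $(\nu^*\omega_i)^k\wedge\tilde\omega^{n-k}$ with $k<n$. The Yau correction restores the $L^p$ bound but loses the convergence. The argument of Proposition~\ref{adapted exists} rests on the global Laplacian estimate behind Theorem~\ref{MA BEGZ}(3), which needs a uniform lower bound on $dd^c$ of the logarithm of the right-hand side. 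There that logarithm is $\log(e^f+1/i)$, with $f$ fixed and smooth. Your densities $\nu^*\omega_i^n/\omega_{Y''}^n$ vary with $i$ and are controlled only in $L^p$ and on compact subsets of the ample locus. Interior $C^2$ estimates for the complex Monge--Amp\`{e}re equation are not available without such input, so local smooth convergence on $Y''\setminus D''$ does not follow.
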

Chen-Chiu-Hallgren-Sz${\rm \acute{e}}$kelyhidi-T${\rm \hat{o}}$-Tong \cite{CCHSTT25} proved the following:
\begin{theo}[\cite{CCHSTT25}]\label{KE adapted}
Let $X$ be a compact normal K\"{a}hler variety with log terminal singularities. Assume that $X$ admits a singular K\"{a}hler-Einstein metric $\omega$. Let $\pi:Y\to X$ be a resolution of singularities such that $Y$ is a compact K\"{a}hler manifold. Then $\pi^*\omega$ is an adapted current in the sense of Definition \ref{adapted defi}. 
\end{theo}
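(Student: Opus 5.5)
The statement to prove is Theorem \ref{KE adapted}: for a log terminal $X$ with singular K\"ahler--Einstein metric $\omega$ and a resolution $\pi:Y\to X$, the pull-back $T=\pi^*\omega$ satisfies the three conditions of Definition \ref{adapted defi} in the nef and big class $\pi^*\{\omega\}$. I will check these conditions one at a time, after first reducing to a convenient resolution.

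\textbf{Reduction.} By Lemma \ref{adapted app}, adaptedness is invariant under further blow-ups along centres contained in the non-K\"ahler locus or of codimension at least $2$. Hence I may assume that $\pi$ is a log resolution. Then $E_{nK}(\pi^*\alpha)$ is contained in $\Exc(\pi)\cup\pi^{-1}(X_\sing)$, which is an snc divisor $D$. On $Y$ the Einstein equation reads
$$
\langle T^n\rangle=e^{\lambda\varphi+F}\prod_j|s_{E_j}|^{2a_j}\omega_Y^n .
$$
Here $\varphi$ is the bounded K\"ahler--Einstein potential and $a_j>-1$ are the discrepancies; this uses the klt condition.

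\textbf{Condition (1): smoothness on the ample locus.} On $X_\reg$ the singular K\"ahler--Einstein metric is smooth by standard local regularity. I must check that $\Amp(\pi^*\alpha)\cap D$ is handled, but near such points $\pi^*\alpha$ is K\"ahler and $\pi$ is an isomorphism off $D$. Granting the identification $\Amp(\pi^*\alpha)=Y\setminus D$ (which may require choosing $\pi$ to blow up exactly the non-K\"ahler locus), $T$ is smooth K\"ahler there.

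\textbf{Condition (3): adapted approximation.} I would take $\omega_i$ solving
$$
\omega_i^n=c_i\bigl(e^{F}\textstyle\prod_j(|s_{E_j}|^2+1/i)^{a_j}\bigr)\omega_Y^n
$$
in the class $\pi^*\alpha+(1/i)\omega_Y$, or equivalently a Ricci-type regularized equation, solved by Yau \cite{Yau78}. Since $a_j>-1$, the densities are uniformly bounded in $L^p$ for some $p>1$. Uniform $C^0$ bounds follow from Theorem \ref{MA BEGZ}(4). Local higher-order estimates on compact subsets of $Y\setminus D$ follow as in the proof of Proposition \ref{adapted exists}: a Laplacian bound of the form $C|s_D|^{-2k}$, then Evans--Krylov, then Schauder. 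Uniqueness of the limit then gives smooth convergence to $\pi^*T$.

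\textbf{Condition (2): the lower bound $\pi^*T\ge|s_D|^{2m}\omega_Y$.} This is the main obstacle. The plan is a Chern--Lu or Aubin--Yau $C^2$ estimate with a barrier, uniform in $i$ and then passed to the limit. First, bound $\Tr_{\omega_i}\omega_Y\le C|s_D|^{-2k}$ by applying the maximum principle to
$$
\log\Tr_{\omega_i}\omega_Y-A\varphi_i+\epsilon\log|s_D|^2 .
$$
Then combine this with the volume form, whose density has only polar-type singularities $|s_D|^{2a_j}$. Together these give the lower eigenvalue bound $\omega_i\ge c|s_D|^{2m}\omega_Y$, which passes to the limit. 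The delicate points are that the Ricci curvature of $\omega_i$ is not bounded below (negative $a_j$ contribute poles) and that the class degenerates along $D$. If the Chern--Lu route fails because of this lack of a Ricci lower bound, I would fall back on the tame approximation techniques of \cite{CCHSTT25}, which handle the degenerate Ricci terms directly.
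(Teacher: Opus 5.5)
The paper does not prove this statement; it quotes it from \cite{CCHSTT25}. Your direct route (log resolution, regularized Monge--Amp\`ere equations on $Y$, P\u{a}un-type estimates) can be turned into a proof. It would essentially be the argument of Proposition \ref{adapted exists} combined with the Eyssidieux--Guedj--Zeriahi / BBEGZ regularity theory. As written, however, it has two genuine gaps.

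\textbf{Condition (3).} Your approximating equation $\omega_i^n=c_ie^{F}\prod_j(|s_{E_j}|^2+1/i)^{a_j}\omega_Y^n$ drops the factor $e^{\lambda\varphi}$ of the Einstein equation.
\begin{itemize}
\item For $\lambda\neq 0$, any limit then solves $\langle S^n\rangle=e^{F}\prod_j|s_{E_j}|^{2a_j}\omega_Y^n$. That is not the equation satisfied by $\pi^*\omega$, so your uniqueness step identifies the limit with the wrong current.
\item The ``Ricci-type'' regularization is not equivalent. For $\lambda<0$ the Aubin--Yau equation $(\theta_i+dd^c\varphi_i)^n=e^{\lambda\varphi_i+F}\prod_j(|s_{E_j}|^2+1/i)^{a_j}\omega_Y^n$ is solvable and is the right choice.
\item For $\lambda>0$ that equation is a twisted Fano-type equation in a class other than $c_1(Y)$, and there is no general existence result for it.
\end{itemize}
For $\lambda>0$ you must instead prescribe a density containing a smoothing of $e^{\lambda\varphi}$. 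The smoothing should be uniformly bounded, uniformly quasi-psh, and converge smoothly on compact subsets of $Y\setminus D$. Then the density has the form $e^{\psi^+-\psi^-}$, with $\psi^+=\lambda\varphi+F+\sum_{a_j>0}a_j\log(|s_{E_j}|^2+1/i)$ (with $\varphi$ replaced by its smoothing) and $\psi^-=\sum_{a_j<0}|a_j|\log(|s_{E_j}|^2+1/i)$. Both are uniformly quasi-psh, which is exactly the setting of P\u{a}un's Laplacian estimate used in \cite[Theorem 5.1]{BEGZ10}.

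\textbf{Condition (2).} You leave this open, deferring to \cite{CCHSTT25} if Chern--Lu fails, and you misdiagnose the obstruction. In $\mathrm{Ric}(\omega_i)=\mathrm{Ric}(\omega_Y)-dd^c\log(\omega_i^n/\omega_Y^n)$, a klt pole $a_j<0$ contributes $|a_j|\,dd^c\log(|s_{E_j}|^2+1/i)\ge -C\omega_Y$, which is harmless. It is the divisors with $a_j>0$ that obstruct a lower Ricci bound.

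More importantly, no Chern--Lu estimate is needed:
\begin{itemize}
\item The Laplacian bound $\Tr_{\omega_Y}\omega_i\le C|s_D|^{-2k}$, which you already invoke for (3), bounds the largest eigenvalue of $\omega_i$ relative to $\omega_Y$ by $C|s_D|^{-2k}$. P\u{a}un's estimate gives this polar-type bound even with klt poles, since $e^{-\psi^-}$ is of polar type.
\item Uniform boundedness of the potentials gives $\omega_i^n/\omega_Y^n\ge c\prod_{a_j>0}|s_{E_j}|^{2a_j}\ge c|s_D|^{2A}$, uniformly in $i$.
\item Hence the smallest eigenvalue is at least $c|s_D|^{2A+2k(n-1)}$. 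This is exactly the argument in the proof of Proposition \ref{adapted exists}.
\item Letting $i\to\infty$ on $Y\setminus D$, and using that $\pi^*\omega$ has bounded potentials and so charges no mass on $D$, gives $\pi^*\omega\ge c|s_D|^{2m}\omega_Y$ as currents.
\end{itemize}
Had you obtained a bound on $\Tr_{\omega_i}\omega_Y$, the lower bound would be immediate; the volume form is needed only with the other trace.

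Two minor points:
\begin{itemize}
\item Smoothness of $\omega$ on $X_{\reg}$ does not follow from ``local regularity''. Bounded solutions of $(dd^cu)^n=f$ with $f$ smooth and positive need not be smooth in general. It follows from the same global Laplacian estimate.
\item $\Amp(\pi^*\alpha)=Y\setminus\Exc(\pi)$ holds for the pull-back of a K\"ahler class (\cite{CT15}, \cite{Tos18}), so no special choice of $\pi$ is required.
\end{itemize}
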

\subsection{\texorpdfstring{$T$}{T}-adapted HYM metrics}
Bando-Siu \cite{BS94} introduced the notion of admissible Hermitian-Yang-Mills metrics to establish the Kobayashi-Hitchin correspondence for reflexive sheaves on compact K\"{a}hler manifolds. In this section, we introduce the notion of adapted Hermitian-Yang-Mills metrics by imposing two additional conditions on admissible Hermitian-Yang-Mills metrics.
\begin{defi}\label{adapted HYM defi}
Let $X$ be a compact K\"{a}hler manifold and $\alpha$ be a nef and big class on $X$. Let $T$ be a closed positive $(1,1)$-current in $\alpha$ which is smooth K\"{a}hler on $\Amp(\alpha)$. 
Let $\pi:Y\to X$ be a composition of blow-ups so that $D=E_{nK}(\alpha)$ is a snc divisor. We denote by $s_D$ a defining section of $D$ and $h_D$ a smooth hermitian metric on $\mathcal{O}(D)$ such that $|s_D|_{h_D}\le 1$.
Let $(E, h_0)$ be a complex hermitian vector bundle on $X$ with a smooth hermitian metric $h_0$ on $E$.
\begin{enumerate}
\item A $T$-admissible Hermitian-Yang-Mills connection ($T$-admissible HYM connection in short) on $(E,h_0)$ is a smooth hermitian connection $\nabla$ on $(E,h_0)|_{\Amp(\alpha)}$ which satisfies the following conditions:
\begin{enumerate}
\item (integrability) $\delbar^{\nabla}\circ\delbar^{\nabla}=0$.
\item (HYM equation) $\sqrt{-1}\Lambda_TF_{\nabla}=\lambda\id_E$ on $\Amp(\alpha)$ for some constant $\lambda\in\mathbb{R}$.
\item (admissible condition) $\int_{\Amp(\alpha)}|F_{\nabla}|_{h_0,T}^2T^n<\infty$.
\end{enumerate}
We  call $\lambda$ in (b) the $T$-Hermitian-Yang-Mills constant of $\nabla$.
\item If $E$ admits a holomorphic structure $\delbar^E$, then a $T$-admissible Hermitian-Yang-Mills metric on $(E,\delbar^E)$ is a smooth hermitian metric $h$ on $E|_{\Amp(\alpha)}$ whose Chern connection $\nabla_h$ satisfies the following conditions:
\begin{itemize}
\item[(b)$^{\prime}$] $\sqrt{-1}\Lambda_TF_{\nabla_h}=\lambda\id_E$ on $\Amp(\alpha)$ for some constant $\lambda\in\mathbb{R}$.
\item[(c)$^{\prime}$]  $\int_{\Amp(\alpha)}|F_{\nabla_h}|_{h,T}^2T^n<\infty$.
\end{itemize}
We call $\lambda$ in (b)$^{\prime}$ the $T$-Hermitian-Yang-Mills constant of $h$.
\item If $E$ admits a holomorphic structure $\delbar^E$, then a $T$-adapted Hermitian-Yang-Mills metric on $(E,\delbar^E)$ is a $T$-admissible HYM metric on $(E,\delbar^E)$ which additionally satisfies the following two conditions: If we denote by $h=h_0\Psi^2$ with a positive definite $h_0$-hermitian endomorphism $\Psi$, then it satisfies the following two conditions, called by $T$-adapted conditions.
\begin{enumerate}
\item[{(d)}] There exists $m\in\mathbb{Z}_{\ge 0}$ such that
$$
\sup_{Y\setminus D}|s_D^m\pi^*\Psi|_{h_D^m\otimes \pi^*h_0\otimes \pi^*h_0^*}<\infty.
$$
\item[{(e)}] There exists $l\in\mathbb{Z}_{\ge 0}$ such that
$$
\int_{Y\setminus D}|s_D^l\partial^{\nabla_{\pi^*h_0}}(\pi^*\Psi)|^2_{h_D^l\otimes\pi^*h_0\otimes\pi^*h_0^*,T}(\pi^*T)^n<\infty.
$$
\end{enumerate}
\end{enumerate}
\end{defi}
\noindent The following lemma will be used later. The proof is obvious.
\begin{lemm}\label{inv adapted HYM}
Let $X$ be a compact K\"{a}hler manifold and $\alpha$ be a nef and big class. Let $T$ be a closed positive $(1,1)$-current in $\alpha$ which is smooth K\"{a}hler on $\Amp(\alpha)$. Let $E$ be a holomorphic vector bundle on $X$.  Let $\pi:Y\to X$ be a composition of blow-ups along complex submanifolds contained in $E_{nK}(\alpha)$ or of codimension at least 2. Then a smooth hermitian metric $h$ on $E|_{\Amp(\alpha)}$ is a $T$-adapted HYM metric on $E$ if and only if $\pi^*h$ is a $\pi^*T$-adapted HYM metric on $\pi^*E$.
\end{lemm}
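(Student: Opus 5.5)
The plan is to reduce everything to the observation that $\pi$ is a biholomorphism between the relevant ample loci, and that the $T$-adapted conditions (d), (e) are stated on a log resolution, which we may choose to be common to $X$ and $Y$.

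\textbf{Step 1: comparing ample loci.} First I would show that
\[
\Amp(\pi^*\alpha)=\pi^{-1}(\Amp(\alpha))\setminus \Exc(\pi),
\]
and that $\pi$ restricts to a biholomorphism $\Amp(\pi^*\alpha)\to \Amp(\alpha)\setminus \pi(\Exc(\pi))$.

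For the inclusion $\supseteq$: if $x\in \Amp(\alpha)$ lies outside the blown-up centres, pull back a Kähler current in $\alpha$ that is smooth Kähler near $x$. Its pull-back is a current in $\pi^*\alpha$ that is smooth Kähler near $\pi^{-1}(x)$. Adding a small multiple of $-[F]$, with $F$ a suitable combination of exceptional divisors, makes it a Kähler current. This combination is smooth away from $\Exc(\pi)$, so the point lies in $\Amp(\pi^*\alpha)$.

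For the inclusion $\subseteq$: each exceptional divisor $F$ satisfies $(\pi^*\alpha)^{n-1}\cdot F=\alpha^{n-1}\cdot\pi_*F=0$, because $\pi_*F$ has codimension $\ge 2$ as a cycle. By the Collins--Tosatti characterization of the non-Kähler locus of a nef and big class as its null locus (cf. Theorem \ref{CT}), $F\subset E_{nK}(\pi^*\alpha)$. Finally, $\pi(E_{nK}(\pi^*\alpha)\setminus\Exc(\pi))\subset E_{nK}(\alpha)$ by pushing forward Kähler currents.

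The removed sets $\Exc(\pi)$ and $\pi(\Exc(\pi))$ are analytic. Since the centres lie in $E_{nK}(\alpha)$ or have codimension $\ge2$, they are of measure zero, and this is the only point where the structure of the centres enters. Hence integrals over $\Amp(\alpha)$ and over $\Amp(\pi^*\alpha)$ correspond under $\pi$.

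\textbf{Step 2: the admissible HYM conditions.} On $\Amp(\pi^*\alpha)$, the metric $\pi^*T$ is the pull-back of the smooth Kähler metric $T$ under a biholomorphism. Therefore
\[
\Lambda_{\pi^*T}F_{\pi^*h}=\pi^*(\Lambda_TF_h)
\quad\text{and}\quad
|F_{\pi^*h}|^2_{\pi^*h,\pi^*T}(\pi^*T)^n=\pi^*\bigl(|F_h|^2_{h,T}T^n\bigr).
\]
Conditions (b)$'$ and (c)$'$ then transfer in both directions, with the same constant $\lambda$. In the direction from $Y$ to $X$, the metric $\pi^*h$ a priori is only defined on $\Amp(\pi^*\alpha)$. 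Its push-forward defines $h$ on $\Amp(\alpha)$ minus a set of codimension $\ge 2$. I would handle this either by noting that $\Amp(\alpha)$ avoids the centres contained in $E_{nK}(\alpha)$, or, for centres of codimension $\ge2$ meeting $\Amp(\alpha)$, by extending the HYM metric across using the Bando--Siu removable singularity theory.

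\textbf{Step 3: the adapted conditions (d), (e).} These are defined through a composition of blow-ups making the non-Kähler locus snc. Given a log resolution $\rho:Y'\to Y$ for $\pi^*\alpha$, the composition $\pi\circ\rho$ is a log resolution for $\alpha$, and the pulled-back reference metric $\rho^*\pi^*h_0$ is the same on both sides. So with this common choice the conditions coincide literally, with the same $m$ and $l$.

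What remains, and what I expect to be the main technical point, is independence from the choice of log resolution and of the reference metric $h_0$. For two log resolutions, I would dominate both by a third one $Y''$. On $Y''$, the pulled-back defining sections satisfy inequalities of the form
\[
|s_{D_1}|^{N}\le C|s_{D_2}|
\quad\text{and}\quad
|s_{D_2}|^{N}\le C|s_{D_1}|,
\]
since both divisors have the same support, the preimage of $E_{nK}(\alpha)$. Thus (d) and (e) hold for one choice after enlarging $m$ and $l$ for the other.

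Replacing $h_0$ by another smooth metric $h_0'=h_0 g$ changes $\Psi$ by multiplication with smooth bounded endomorphisms. It changes $\partial^{\nabla}\Psi$ by terms of the form (bounded)$\cdot\Psi$ and (bounded)$\cdot\partial\Psi$. The first kind of term is absorbed using (d) together with the $L^p$-finiteness of the volume of $\pi^*T$ and an extra power of $s_D$. Hence the adapted conditions are intrinsic, and the equivalence follows.
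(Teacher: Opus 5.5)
Your Steps 1--2 and the first paragraph of Step 3 are what the paper relies on; it gives no argument beyond calling the lemma obvious. The point is that $\pi$ identifies $\Amp(\pi^*\alpha)$ biholomorphically with $\Amp(\alpha)\setminus\pi(\Exc(\pi))$, a subset of full measure, so (b)$'$ and (c)$'$ transfer. Conditions (d) and (e) become literally identical once the log resolution of $X$ is taken to be $\pi\circ\rho$ and the reference metric on $\pi^*E$ is $\pi^*h_0$.

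\textbf{The gap: independence of the log resolution.} You single this out as the main technical point. You assert that on a common dominating resolution the two pulled-back divisors have the same support, namely the preimage of $E_{nK}(\alpha)$, and deduce two-sided inequalities between the defining sections. This fails exactly in the case the lemma is written for. By your own Step 1, $E_{nK}(\pi^*\alpha)=\pi^{-1}(E_{nK}(\alpha))\cup\Exc(\pi)$. So if $\pi$ blows up a codimension-$\ge 2$ centre $Z$ meeting $\Amp(\alpha)$, every log resolution on the $Y$ side has a divisor containing the (strict transform of the) exceptional divisor over $Z$, which lies over $\Amp(\alpha)$. A log resolution of $X$ with centres over $E_{nK}(\alpha)$ has no such component.

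Consequently only the one-sided bound $|s_{D_{\mathrm{big}}}|^N\le C|s_{D_{\mathrm{small}}}|$ holds. Conditions (d) and (e) therefore pass from a resolution to one whose divisor has larger support, but not back. No comparison of defining sections can give the converse. A $\Psi$ smooth on $\Amp(\alpha)$ may satisfy $|\Psi|\le C\,d(\cdot,E_{nK}(\alpha))^{-m}d(\cdot,Z)^{-m}$ and still grow like $\exp\bigl(1/d(\cdot,E_{nK}(\alpha))\bigr)$ along $Z$.

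\textbf{How to repair it.} The lemma is still provable with the one-sided bound alone, if you read Definition \ref{adapted HYM defi} as ``for some composition of blow-ups making the non-K\"ahler locus snc''.
\begin{enumerate}
\item For the direction from $Y$ to $X$: a resolution $\rho$ of $Y$ witnessing adaptedness of $\pi^*h$ gives the resolution $\pi\circ\rho$ of $X$, with identical conditions.
\item For the direction from $X$ to $Y$: start from a witnessing resolution $X'\to X$. Eliminate the indeterminacy of $Y\dashrightarrow X'$ by further blow-ups of $Y$, then make the non-K\"ahler locus snc. This gives $\sigma:Y'\to X'$ over $X$. The divisor on $Y'$ contains $\sigma^{-1}$ of the one on $X'$, and $\sigma$ is biholomorphic off it. Hence the one-sided bound transfers the supremum in (d) and the integral in (e) upward, after enlarging $m$ and $l$.
\end{enumerate}
Two-sided independence is not needed, and your argument does not prove it.

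\textbf{Minor points.}
\begin{enumerate}
\item In Step 1, the K\"ahler current in $\pi^*\alpha$ is $\pi^*T_K+\varepsilon([F]-\theta_F)$ with $\theta_F\in\{F\}$ smooth. Adding $-\varepsilon[F]$ would change both the class and the positivity.
\item Every proper analytic subset is Lebesgue-null, so the hypothesis on the centres plays no role there. In any case, blowing up a smooth hypersurface is an isomorphism.
\item In Step 2 no Bando--Siu extension is needed. The statement gives $h$ on all of $\Amp(\alpha)$, and the HYM equation extends by continuity across $\pi(\Exc(\pi))\cap\Amp(\alpha)$.
\item The change-of-$h_0$ paragraph is unnecessary, since you can use $\pi^*h_0$. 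It also relies on $T\ge|s_D|^{2m}\omega_0$ and $L^p$ bounds on $T^n$, which hold for adapted currents but are not hypotheses of this lemma. Moreover $\Psi'=(g^{-1}\Psi^2)^{1/2}$ is not simply $\Psi$ times a bounded endomorphism.
\end{enumerate}
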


\section{Proof of Theorem \ref{main thm2 intro}}
In this section, we use the following notations.
\begin{notation}\label{notations}
Let $(X,\omega_0)$ be a compact K\"{a}hler manifold with a smooth K\"{a}hler metric $\omega_0$ and $\alpha$ be a nef and big class on $X$. 
\begin{itemize}
\item Suppose that $D=E_{nK}(\alpha)$ is an effective snc divisor. We denote by $s_D$ a defining section of $D$ and by $h_D$ a smooth hermitian metric on the holomorphic line bundle $\mathcal{O}(D)$ such that $|s_D|_{h_D}\le 1$. 
\item Let $T$ be an adapted closed positive $(1,1)$-current in $\alpha$ satisfying $|s_D|^k\langle T^n\rangle\le C\omega_0^n$ (refer to Lemma \ref{adapted curr prop}). Let $m\ge 0$ be a nonnegative integer such that $T\ge |s_D|^{2m}_{h_D}\omega_0$ holds in the sense of currents.
\item Let $\omega_i$ be a smooth K\"{a}hler metric contained in a K\"{a}hler class $\alpha+(1/i)\omega_0$ which gives an adapted approximation of $T$ in the sense of Definition \ref{adapted defi}.
\item Let $(E,h_0)$ be a pair consisting of a holomorphic vector bundle $E$ on $X$ and a smooth hermitian metric $h_0$ on $E$. Assume that $E$ is $\alpha^{n-1}$-slope stable.
\end{itemize}
\end{notation}
\subsection{Preliminary results}

First of all, we prepare the following existence result of cut-off functions.
\begin{lemm}[\cite{EG15}, \S 4.7, Theorem 3]\label{cut-off}
Let $(X,g)$ be a compact Riemannian manifold of dimension $n$ and $Z$ be a compact subset of $X$ of Hausdorff codimension 2. Then, there exists a sequence of smooth nonnegative functions $\eta_k:X\to[0,1]$ satisfying the following conditions:
\begin{enumerate}
\item $\Supp(\eta_k)\Subset X\setminus Z$.
\item $\{\eta_k=1\}\Subset \{\eta_{k+1}= 1\}$ and $\bigcup_{k=1}^{\infty}\{\eta_k=1\}=X\setminus Z$.
\item $\lim_{k\to\infty}\int_X|d\eta_k|_g^2\vol_g=0$.
\end{enumerate}
\end{lemm}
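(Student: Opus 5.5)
We need to prove the cut-off lemma: for $Z$ compact of Hausdorff codimension 2 (i.e. $\mathcal{H}^{n-2}(Z)<\infty$, or at least zero $2$-capacity) in a compact Riemannian manifold, there exist cut-off functions $\eta_k$ with the stated properties. The plan is the classical capacity argument.

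\emph{Covering and local cut-offs.} Since $\mathcal{H}^{n-2}(Z)<\infty$, for every $\delta>0$ we can cover $Z$ by finitely many geodesic balls $B(x_j,r_j)$ with $r_j<\delta$ and $\sum_j r_j^{n-2}\le C$, where $C$ is independent of $\delta$. (Finitely many suffice because $Z$ is compact.) For each ball we take a Lipschitz function $\chi_j$ that equals $0$ on $B(x_j,r_j)$, equals $1$ outside $B(x_j,2r_j)$, and satisfies $|d\chi_j|\le 2/r_j$. This gives
$$\int|d\chi_j|^2\le C r_j^{n-2}.$$
We then set $\psi_\delta:=\min_j\chi_j$. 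This function vanishes on a neighborhood of $Z$, equals $1$ outside the $2\delta$-neighborhood of $Z$, and has
$$\int|d\psi_\delta|^2\le\sum_j\int|d\chi_j|^2\le C'.$$
So the energy is only uniformly bounded, not small.

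\emph{Making the energy small.} This is the main obstacle. There are two routes.
\begin{itemize}
\item If $\mathcal{H}^{n-2}(Z)=0$, then $\sum_j r_j^{n-2}$ can be chosen arbitrarily small, and the energy tends to $0$ directly.
\item In the case actually needed in the paper ($Z$ an analytic subset of real codimension $\ge 2$, so finite $\mathcal{H}^{n-2}$ measure), use the logarithmic cut-off instead: replace the linear profile by $\chi(\log\log(1/\rho))$, where $\rho$ is the distance to $Z$. Equivalently, average the $\psi_{\delta}$ over dyadic scales $\delta=2^{-l}$, $l=N,\dots,2N$. Averaging $N$ functions with disjointly supported gradients, each of energy $\le C'$, gives energy $\le C'/N\to0$.
\end{itemize}
The key estimate is that for the averaged function $\frac1N\sum_{l=N}^{2N}\psi_{2^{-l}}$, the gradients at distinct scales have (essentially) disjoint supports. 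To arrange this, we cover at scale $2^{-l}$ by balls meeting only the annulus between $\{\rho<2^{-l-1}\}$ and $\{\rho<2^{-l+1}\}$, using the uniform bound $\mathcal{H}^{n-2}$-content $\le C$ at each scale. Alternatively, we can work directly with $\rho$: the tube estimate $\mathrm{vol}\{\rho<r\}\le Cr^2$ gives
$$\int_{\{\rho<\delta\}}\rho^{-2}\bigl(\log(1/\rho)\bigr)^{-2}\to0,$$
by a dyadic sum $\sum_l l^{-2}$ restricted to $l\ge \log(1/\delta)$.

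\emph{Smoothing and nesting.} Finally we mollify each such Lipschitz function. The result is smooth, takes values in $[0,1]$, has support compactly contained in $X\setminus Z$, and its energy changes by an arbitrarily small amount. We then choose the parameters $\delta_k\downarrow0$ inductively so that $\{\eta_k=1\}\Subset\{\eta_{k+1}=1\}$; this is possible because $\{\eta_{k+1}=1\}$ contains the complement of an arbitrarily small neighborhood of $Z$. Since these sets exhaust $X\setminus Z$ and $\int|d\eta_k|^2\to0$, all three conditions hold.
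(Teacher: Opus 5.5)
Your Step 1 (the bounded-energy cut-offs $\psi_\delta=\min_j\chi_j$ built from a Hausdorff cover) and Step 3 (mollification and inductive nesting) are fine. Step 1 is also exactly the first half of the standard proof of the capacity result the paper cites from Evans--Gariepy: $\mathcal{H}^{n-2}(Z)<\infty$ implies that $Z$ has zero $2$-capacity.

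The gap is Step 2 in the case $0<\mathcal{H}^{n-2}(Z)<\infty$. Both of your mechanisms for making the energy small are upper Minkowski-content statements, and these do not follow from finite Hausdorff measure.
\begin{itemize}
\item \emph{Dyadic averaging.} A Hausdorff cover at scale $\delta$ only satisfies $r_j<\delta$ and $\sum_j r_j^{n-2}\le C$, and the radii can be arbitrarily small. So $d\psi_{2^{-l}}$ is not confined to the shell $\{2^{-l-1}\le\rho\le 2^{-l+1}\}$, and gradients at different dyadic scales may overlap. Averaging then gives no gain: convexity only returns the bound $C'$. Confining the gradient annuli to that shell would require covering $Z$ by $O(2^{l(n-2)})$ balls of radius comparable to $2^{-l}$, which a bound on Hausdorff content does not provide.
\item \emph{Tube estimate.} The bound $\mathrm{vol}\{\rho<r\}\le Cr^2$ fails for general compact sets. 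A countable compact set accumulating at a point has $\mathcal{H}^{n-2}(Z)=0$, yet can have $\mathrm{vol}\{\rho<r\}\gg r^{2}$.
\end{itemize}
So as written you prove the lemma only when $\mathcal{H}^{n-2}(Z)=0$ or when $Z$ satisfies the tube estimate. The latter does cover the only case used in the paper, $Z=D$ an snc divisor: $D$ is locally a union of coordinate hyperplanes, so the tube estimate is immediate. It does not cover the statement as given.

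The repair is small: choose the scales inductively rather than dyadically. Discard balls that miss $Z$. Since $\psi_\delta$ vanishes on the open set $\bigcup_jB(x_j,r_j)\supset Z$ and $Z$ is compact, there is $\epsilon(\delta)>0$ with $\psi_\delta=0$ on $\{\rho<\epsilon(\delta)\}$. Hence $\Supp(d\psi_\delta)\subset\{\epsilon(\delta)\le\rho\le 3\delta\}$. Choosing $\delta_{l+1}<\epsilon(\delta_l)/3$ makes $d\psi_{\delta_1},\dots,d\psi_{\delta_N}$ disjointly supported. Because $C'$ is independent of $\delta$, the average $\frac1N\sum_{l=1}^N\psi_{\delta_l}$ then has three properties:
\begin{itemize}
\item it vanishes near $Z$;
\item it equals $1$ on $\{\rho\ge 3\delta_1\}$;
\item its energy is at most $C'/N$.
\end{itemize}
Your Step 3 then finishes the proof. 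This is a concrete version of what the cited proof does abstractly. There, $1-\psi_\delta$ is bounded in $W^{1,2}$ and tends to $0$ almost everywhere (since $\mathcal{H}^{n}(Z)=0$), hence weakly to $0$. Mazur's lemma then produces convex combinations, still identically $1$ near $Z$, whose energy tends to $0$.
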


\begin{lemm}[\cite{Jin25-2}]\label{Uhl limit}
In the notations in Notation \ref{notations}, the following holds:
\begin{enumerate}
\item $E$ is $\{\omega_i\}^{n-1}$-slope stable.
\end{enumerate}
Let $h_i=h_0\Psi_i^2$ be the $\omega_i$-HYM metric on $E$ where $\Psi_i$ is a positive definite $h_0$-hermitian endomorphism of $E$ with $\det\Psi_i=1$. Then
\begin{enumerate}
\setcounter{enumi}{1}
\item $\nabla_i:=\Psi_i\circ\nabla_{h_i}\circ\Psi_i^{-1}$ is an $\omega_i$-HYM connection of a complex hermitian vector bundle $(E,h_0)$.
\item $\Psi_i$ defines a holomorphic isomorphism from $(E,\delbar^{E})$ to $(E,\delbar^{\nabla_i})$.
\item There is a constant $C>0$ independent of $i$ such that the $L^2$-norm of the curvature tensor $F_{\nabla_i}$ of $\nabla_i$ is uniformly bounded by $C$, that is, the following holds:
$$
\int_X|F_{\nabla_i}|^2_{h_0,\omega_i}\omega_i^n\le C.
$$
\end{enumerate}
\end{lemm}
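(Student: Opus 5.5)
For item (1) I will use the openness of stability, Lemma \ref{open stable}. The class $\{\omega_i\}=\alpha+(1/i)\omega_0$ is a Kähler class. Once $i$ is large enough that $1/i<\varepsilon_0$, Lemma \ref{open stable} shows that $E$ is $\{\omega_i\}^{n-1}$-slope stable. After discarding finitely many indices, we may assume this holds for all $i$.

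Items (2) and (3) are then formal consequences of the Donaldson--Uhlenbeck--Yau theorem. Stability gives an $\omega_i$-HYM metric $h_i$ on $E$. We write $h_i=h_0\Psi_i^2$ with $\Psi_i$ positive definite and $h_0$-hermitian. Since $\det E$ carries its own HYM metric, we may rescale $h_i$ by a positive function so that $\det\Psi_i=1$; because $\det E$ is a line bundle, this keeps the HYM condition.

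The map $\Psi_i:(E,h_i)\to(E,h_0)$ is an isometry. So conjugating the Chern connection $\nabla_{h_i}$ by $\Psi_i$ produces an $h_0$-unitary connection $\nabla_i$. Its curvature is $F_{\nabla_i}=\Psi_iF_{\nabla_{h_i}}\Psi_i^{-1}$, so $\sqrt{-1}\Lambda_{\omega_i}F_{\nabla_i}=\lambda_i\id_E$. Moreover $\delbar^{\nabla_i}=\Psi_i\circ\delbar^E\circ\Psi_i^{-1}$, which is integrable, and $\Psi_i$ intertwines the two holomorphic structures. This gives (2) and (3).

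For (4) I will use the standard Chern--Weil identity for HYM connections on the compact Kähler manifold $(X,\omega_i)$:
$$
\int_X|F_{\nabla_i}|^2\omega_i^n = \int_X|\Lambda_{\omega_i}F_{\nabla_i}|^2\omega_i^n + c_n\int_X\left(2c_2(E)-c_1(E)^2\right)\wedge\{\omega_i\}^{n-2},
$$
where $c_n$ is a universal dimensional constant. The second term is topological and depends polynomially on $1/i$, so it is uniformly bounded.

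For the first term, note that $\Lambda_{\omega_i}F_{\nabla_i}=-\sqrt{-1}\lambda_i\id_E$. Hence it equals $r\lambda_i^2\int_X\omega_i^n=r\lambda_i^2(\alpha+\omega_0/i)^n$. The constant is
$$
\lambda_i=\frac{2\pi\, c_1(E)\cdot\{\omega_i\}^{n-1}}{r\,(n-1)!^{-1}n!^{-1}\cdots\{\omega_i\}^n},
$$
with some fixed normalization, that is, a ratio of polynomials in $1/i$. Its denominator tends to $\alpha^n>0$ because $\alpha$ is big. So $\lambda_i$ is bounded, and (4) follows.

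The only real obstacle is (1), the uniformity of stability as the class degenerates. That is exactly what Lemma \ref{open stable} supplies, using Claim \ref{max slope}. The remaining steps are bookkeeping of normalizations: the sign conventions for $|F|^2$ versus $\Tr(F\wedge F)$, and the fact that $\alpha^n>0$ keeps the HYM constants bounded.
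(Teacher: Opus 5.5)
Your proof of (1)--(4) is correct and follows the paper's argument. You get (1) from Lemma~\ref{open stable}. You get (2)--(3) by transporting the Chern connection of $h_i$ through the isometry $\Psi_i\colon(E,h_i)\to(E,h_0)$. You get (4) from the Chern--Weil identity, with a bounded topological term and bounded $\lambda_i$. Your form of that identity, with $2c_2(E)-c_1(E)^2$ paired against the full $|F_{\nabla_i}|^2$, is actually the accurate one; the discriminant $2rc_2-(r-1)c_1^2$ written in the paper only controls the trace-free part of the curvature.

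The one sentence you should delete is your justification of the normalization $\det\Psi_i=1$, because it is false. Multiplying $h_i$ by a positive function $e^{u}$ changes $\sqrt{-1}\Lambda_{\omega_i}F$ by a multiple of $\Delta_{\omega_i}u\cdot\id_E$, so the HYM equation survives only when $u$ is constant. A constant rescaling gives $\det\Psi_i=1$ only if $\det h_0$ is itself $\omega_i$-HYM on $\det E$. A fixed $h_0$ cannot satisfy this for every $i$ in general.

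The paper does not derive this normalization either; it is simply built into the statement. None of (2)--(4) uses it, so take it as given rather than arguing for it this way.
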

\begin{proof}
The first statement (1) is a consequence of Lemma \ref{open stable}. We show (2), (3) and (4).
We firstly prove that $\nabla_i$ is a $h_0$-connection. It is a consequence of the direct computation as follows: For any local section $u$ and $v$ of $E$,
\begin{align*}
d\big(h_0(u,v)\big)
&=d\big(h_i(\Psi_i^{-1} u,\Psi_i^{-1} v)\big)\\
&=h_i\big(\nabla_{h_i}(\Psi_i^{-1} u), \Psi_i^{-1} v\big)+h_0\big(\Psi_i^{-1} u, \nabla_{h_i}(\Psi_i^{-1} v)\big)\\
&=h_0\big((\Psi_i\circ\nabla_{h_i}\circ\Psi_i^{-1})u, v\big)+ h_0\big(u, (\Psi_i\circ\nabla_{h_i}\circ\Psi_i^{-1})v \big)\\
&=h_0(\nabla_i u, v) + h_0(u,\nabla_i v).
\end{align*}
Next we see that $\Psi_i:(E,\delbar^E)\to (F,\delbar^{\nabla_i})$ is holomorphic. Since $\delbar^{\nabla_{h_i}}=\delbar^E$, the following computation works by the definition of $\nabla_i$:
$$
(\delbar^{\nabla_i}\otimes\delbar^{E^*})\Psi_i
=\delbar^{\nabla_i}\circ\Psi_i-\Psi_i\circ\delbar^E
=\Psi_i\circ\delbar^{\nabla_{h_i}}-\Psi_i\circ\delbar^E=0.
$$
We recall that for any endomorphism $A$ of $E$, we have at each point
$$
A^{*_{h_i}}=h_i^{-1}\overline{A}^{t}h_i=\Psi_i^{-2}(h_0^{-1}\overline{A}^{t}h_0)\Psi_i^2=\Psi_i^{-2}A^{*_{h_0}}\Psi_i^2.
$$
Hence 
$$
|A|_{h_i}^2=\Tr(A\cdot A^{*_{h_i}})=\Tr((\Psi_i A \Psi_i^{-1})(\Psi_i^{-1} A^{*_{h_0}} \Psi_i)).
$$
Then, since $F_{\nabla_i}=\Psi_i\circ F_{h_i}\circ \Psi_i^{-1}$, we have that
$$
|F_{\nabla_i}|_{h_0}^2=\Tr((\Psi_i\circ \sqrt{-1}F_{h_i}\circ\Psi_i^{-1})(\Psi_i^{-1}\circ\sqrt{-1}F_{h_i}^{*_{h_0}}\circ\Psi_i))=|F_{h_i}|_{h_i}^2.
$$ 
Hence, since $h_i$ is $\omega_i$-HYM, we obtain that $\nabla_i$ is an $\omega_i$-HYM connection of the complex hermitian vector bundle $(E, h_0)$. The uniform $L^2$-bound in (4) follows from the following standard inequality (refer to \cite[Theorem 4.4.7]{Kob87}):
\begin{align*}
(2rc_2(E)-(r-1)c_1(E)^2)\cdot\{\omega_i\}^{n-2}
&=c_n\int_X|F_{\nabla_{h_i}}|^2_{h_0}\omega_i^n-c_n\int_X|\Lambda_{\omega_i}F_{\nabla_{i}}|_{h_0}^2\omega_i^n\\
&\ge c_n\int_X|F_{\nabla_{h_i}}|^2_{h_0}\omega_i^n-C,
\end{align*}
where the last inequality follows from the $\omega_i$-HYM equation $\Lambda_{\omega_i}F_{\nabla_i}=\lambda_i\id$ and 
$$
\lambda_i=\frac{1}{\omega_i^n}\frac{c_1(E)\cdot\{\omega_i\}^{n-1}}{\rk E}.
$$
\end{proof}
\subsection{Proof of Theorem \ref{main thm2 intro} (1) \texorpdfstring{$\Rightarrow$}{Rightarrow} (2)}
In the this subsections, we construct $\Psi_{\infty}^{\pm1}$, the nontrivial limit of $\Psi_i^{\pm1}$ in Lemma \ref{Uhl limit}, which gives a $T$-adapted HYM metric on $E$ (refer to Lemma \ref{uniform est defi lemm}, Lemma \ref{C0 to L2 limit}, Lemma \ref{C0 to L1}, Lemma \ref{L1 est} and Theorem \ref{KH corr nef big with esti}).
We will use the same notations in Lemma \ref{Uhl limit}.
The following mean value type inequality by Guo-Phong-Sturm \cite{GPS24} plays an important role in this subsection.
\begin{lemm}[\cite{GPS24}, Lemma 2]\label{mv ineq}
Let $\omega_i$ be a smooth K\"{a}hler metric in Notation \ref{notations}. Suppose that $v\in L^1(X, \omega_i^n)$ is a function which satisfies $\int_Xv\omega_i^n=0$ and
$$
v\in C^2(\overline{\Omega_0}), \hspace{4mm} \Delta_{\omega_i}v\ge -a\hspace{2mm} \hbox{on $\Omega_0$}
$$
for some constant $a>0$ and $\Omega_s=\{v> s\}$. Then, there exists a constant $C>0$ independent of $i$ and $v$ such that 
$$
\sup_Xv\le C\big(a+\|v\|_{L^1(X,\omega_i^n)}\big).
$$
\end{lemm}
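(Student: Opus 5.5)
This is a uniform mean value inequality for the family $\omega_i$. I would prove it by the Guo--Phong--Sturm method: an auxiliary complex Monge--Amp\`ere equation combined with the ABP-type comparison argument of Guo--Phong--Tong. The essential input is uniformity in $i$. By Definition \ref{adapted defi} (3), the densities $e^{F_i}:=\omega_i^n/\omega_0^n$ satisfy $\int_X e^{pF_i}\omega_0^n\le C$ with $p>1$ independent of $i$. The classes $\alpha+(1/i)\omega_0$ are also uniformly bounded and have volume bounded below by $\alpha^n>0$. These are exactly the hypotheses under which the Guo--Phong--Tong machinery yields constants depending only on $n$, $p$, $\omega_0$, and the bound on $\|e^{F_i}\|_{L^p}$, and on the volume.

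\textbf{Step 1: normalize.} Write $A=a+\|v\|_{L^1(X,\omega_i^n)}$. Fix $s>0$ and let $s_+$ be a smooth positive approximation of $(v-s)_+$ on $\Omega_s$ (take $s\ge0$ so that $\Omega_s\subset\Omega_0$). Solve the auxiliary equation
\[
(\theta_i+dd^c\psi_{s})^n=\frac{s_+}{A_s}\,\omega_i^n,\qquad A_s=\frac{1}{V_i}\int_{\Omega_s}(v-s)\,\omega_i^n\le \frac{\|v\|_{L^1}}{V_i},\qquad \sup\psi_s=0,
\]
where $V_i$ denotes the volume of $\alpha+(1/i)\omega_0$. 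The solution is understood in the sense of Yau/Kołodziej after smoothing.

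\textbf{Step 2: comparison.} Consider $\Phi=-\varepsilon(-\psi_s+\Lambda)^{n/(n+1)}+(v-s)$ on $\Omega_s$, with $\varepsilon,\Lambda$ chosen in terms of $a$ and $A_s$. Apply the maximum principle, using $\Delta_{\omega_i}v\ge -a$ and the arithmetic--geometric mean inequality $\mathrm{tr}_{\omega_i}\omega_{\psi}\ge n(\omega_\psi^n/\omega_i^n)^{1/n}$. The aim is to show $\Phi\le0$, i.e.
\[
(v-s)_+\le C\,(a+A_s)^{1/(n+1)}A_s^{n/(n+1)}(-\psi_s+\Lambda)^{n/(n+1)}.
\]

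\textbf{Step 3: integrate.} Integrate a power of this inequality against $\omega_i^n=e^{F_i}\omega_0^n$. The uniform exponential integrability of $-\psi_s$ needs care, since $\psi_s$ is $\theta_i$-psh and $\theta_i\le\theta+\omega_0$ is uniformly bounded; uniform Skoda/$\alpha$-invariant bounds are then available for the family. By H\"older with the $L^p$ bound on $e^{F_i}$, this gives
\[
\int_{\Omega_s}(v-s)^{q}\omega_i^n\le C(a+A)^{q}\,|\Omega_s|_{\omega_i}^{\,1+\delta}
\]
for some $\delta>0$. A De Giorgi iteration on $s\mapsto|\Omega_s|_{\omega_i}$ (as in Guo--Phong--Tong's $L^\infty$ estimate) then gives $\sup v\le s_0+C(a+A)$. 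Here $s_0$ is controlled by Chebyshev, $|\Omega_{s_0}|\le \|v\|_{L^1}/s_0$.

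\textbf{Main obstacle.} The step I expect to be hardest is the $i$-independence of the entropy-type bound on $\psi_s$, namely the uniform $L^\infty$ or exponential integrability estimate for the auxiliary Monge--Amp\`ere solutions in the degenerating classes $\alpha+(1/i)\omega_0$. To handle it I would invoke Theorem \ref{MA BEGZ} (4) together with Lemma \ref{adapted curr prop}. These give $\varphi_i\ge V_i-C$ uniformly, and envelope-relative bounds of this kind are exactly what Guo--Phong--Sturm need. The normalization $\int v\,\omega_i^n=0$ is only used to make $A_s$ finite and comparable to $\|v\|_{L^1}$.
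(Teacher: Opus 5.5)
The paper gives no argument here beyond citing Guo--Phong--Sturm's Lemma 2 for the family $\omega_i$. Reconstructing their proof is the right plan, with the uniform $L^p$ bound on $\omega_i^n/\omega_0^n$ and the envelope estimate of Theorem \ref{MA BEGZ} (4) supplying the independence of $i$. As written, however, your Steps 1--3 do not fit together.

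\textbf{The gap: the reference form in Step 1.} If $(\theta_i+dd^c\psi_s)^n=\frac{s_+}{A_s}\omega_i^n$, then in Step 2
\[
\Delta_{\omega_i}\psi_s=\mathrm{tr}_{\omega_i}(\theta_i+dd^c\psi_s)-\mathrm{tr}_{\omega_i}\theta_i=\mathrm{tr}_{\omega_i}(\theta_i+dd^c\psi_s)-n+\Delta_{\omega_i}\varphi_i .
\]
The AM--GM inequality only handles the first term. Nothing in the data bounds $\mathrm{tr}_{\omega_i}\theta_i$ from above uniformly in $i$. The $\omega_i$ have no uniform lower bound $\omega_i\ge c\,\omega_0$ near $D$, since their limit $T$ degenerates there. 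You only control their volume densities in $L^p$. So the maximum principle for $\Phi$ does not close.

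Guo--Phong--Sturm instead pose the auxiliary equation relative to the metric itself:
\[
(\omega_i+dd^c\psi)^n=\tau(v-s)A_s^{-1}\omega_i^n,\qquad \sup_X\psi=0,
\]
with $\tau(v-s)$ a smooth positive approximation of $(v-s)_+$. Then $\Delta_{\omega_i}\psi=\mathrm{tr}_{\omega_i}(\omega_i+dd^c\psi)-n$, and Step 2 closes.

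\textbf{Consequence for Step 3.} With that correction, your Step 3 justification no longer applies: $\psi$ is only $\omega_i$-psh, and $\omega_i$ is not bounded by $C\omega_0$. This is exactly where $V_i-C\le\varphi_i\le 0$ is needed. It is not needed for a uniform $L^\infty$ bound on the auxiliary solutions; none is available, since $\tau(v-s)/A_s$ has no uniform $L^p$ bound. The mechanism is as follows.
\begin{enumerate}
\item Since $\psi+\varphi_i$ is $\theta_i$-psh, $\psi+\varphi_i-\sup_X(\psi+\varphi_i)\le V_i\le\varphi_i+C$.
\item Hence $0=\sup_X\psi\le C+\sup_X(\psi+\varphi_i)$.
\item Since $\varphi_i\le 0$, we have $-\psi\le -(\psi+\varphi_i)$.
\item The uniform Skoda estimate for $\theta_i\le C\omega_0$ then gives $\int_Xe^{-\beta\psi}\omega_0^n\le C$, independently of $i$, $s$ and $v$.
\end{enumerate}

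\textbf{The bound in Step 2.} Your displayed inequality is not what the maximum principle gives: the exponents of $a$ and $A_s$ are interchanged. The computation yields $\varepsilon\sim a^{n/(n+1)}A_s^{1/(n+1)}$ and $\Lambda\sim A_s/a$, that is,
\[
v-s\le C\,a^{n/(n+1)}A_s^{1/(n+1)}\bigl(-\psi+CA_s/a\bigr)^{n/(n+1)}.
\]
To keep $\Lambda$ bounded, first replace $a$ by $a+\|v\|_{L^1(X,\omega_i^n)}$ and rescale $v$ so that this quantity equals $1$. Then $A_s\le C$, and your Step 3 and the De Giorgi iteration go through as you describe.
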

\subsubsection{Reduction of the proof to a uniform \texorpdfstring{$C^0$}{C0}-estimate}
\begin{lemm}\label{uniform est defi lemm}
We use notations in Notation \ref{notations}.
If there exists a constant $C>0$ independent of $i$ such that
$$
\sup_X|s_D\Psi_i|_{h_D\otimes h_0\otimes h_0^*}\le C
$$
holds for any $i$. Then
there exists a constant $C>0$ such that the following inequality holds for any $i$:
\begin{enumerate}
\item $\sup_X|s_D\Psi_i^{-1}|_{h_0\otimes h_0^*\otimes h_D}\le C$.
\item $\int_X|s_D^{m+1}\partial^{\nabla_{h_0}}{\Psi_i}^{\pm1}|^2_{h_0\otimes h_0^*\otimes h_{D}^{m+1}, \omega_i}\omega_i^n\le C.$
\end{enumerate}
\end{lemm}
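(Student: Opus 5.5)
The plan is to obtain (1) from a maximum-principle argument for a subharmonic-type quantity built from $\Psi_i^{-1}$, using the Guo--Phong--Sturm mean value inequality (Lemma \ref{mv ineq}) and taking the hypothesis only as an $L^1$-input. Item (2) should then follow by integrating a Weitzenböck-type identity against a power of $|s_D|^2$. Note that the adjugate bound alone, i.e. $\det\Psi_i=1$ together with $|s_D\Psi_i|\le C$, only gives the weight $|s_D|^{r-1}$ in front of $\Psi_i^{-1}$. So the HYM equation has to be used to get the weight $s_D$ itself.

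\textbf{Step 1: a differential inequality.} Write $h_i=h_0\Psi_i^2$. I will use the standard inequality for two metrics on $E$: on $X$,
$$
\Delta_{\omega_i}\log \Tr(\Psi_i^{-2})\ \ge\ -\,|\Lambda_{\omega_i}F_{h_0}|_{h_0}-|\lambda_i|,
$$
and the same with $\Psi_i^{2}$. Here $\Lambda_{\omega_i}F_{h_i}=\lambda_i\id$ and $\lambda_i$ is uniformly bounded. The term $|\Lambda_{\omega_i}F_{h_0}|\le C\,\mathrm{tr}_{\omega_i}\omega_0$ is not uniformly bounded, since $\mathrm{tr}_{\omega_i}\omega_0$ only obeys $\le n|s_D|^{-2m}$. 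To absorb it, I will use that $\alpha$ is big. Fix a Kähler current $\theta+dd^c\psi\ge\varepsilon\omega_0$ with analytic singularities, $\psi=\log|s_D|^{2k}_{h_D}+O(1)$ and $\psi\le 0$. Write $\omega_i=\theta_i+dd^c\varphi_i$ as in Theorem \ref{MA BEGZ}(4). Then away from $D$,
$$
\Delta_{\omega_i}(\psi-\varphi_i)=\mathrm{tr}_{\omega_i}(\theta+dd^c\psi)-n+\tfrac1i\mathrm{tr}_{\omega_i}\omega_0\ \ge\ \varepsilon\,\mathrm{tr}_{\omega_i}\omega_0-n .
$$
Hence $v_i:=\log\Tr(\Psi_i^{-2})+A(\psi-\varphi_i)$ satisfies $\Delta_{\omega_i}v_i\ge -a$ on $X\setminus D$ for a fixed $A$ and $a$ independent of $i$. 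Since $v_i\to-\infty$ along $D$, the superlevel sets $\{v_i>s\}$ are relatively compact in $X\setminus D$, so Lemma \ref{mv ineq} applies to $v_i-\frac{1}{V}\int v_i\,\omega_i^n$.

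\textbf{Step 2: the $L^1$ bound and the conclusion of (1).} Because $\det\Psi_i=1$, the hypothesis gives $\log\Tr(\Psi_i^{-2})\le C-(r-1)\log|s_D|^2$. From below, $\log\Tr(\Psi_i^{-2})\ge \log r-\log\Tr(\Psi_i^2)\ge -C+\log|s_D|^2$ (via $\Tr\Psi^{-2}\,\Tr\Psi^{2}\ge r^2$), again by the hypothesis. Theorem \ref{MA BEGZ}(4) gives $\log|s_D|^2-C\le\varphi_i\le C$. The density $\omega_i^n/\omega_0^n$ is uniformly $L^p$, and $\log|s_D|$ lies in every $L^q$. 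By Hölder, $\|v_i\|_{L^1(\omega_i^n)}$ is therefore uniformly bounded. Lemma \ref{mv ineq} then yields $\sup v_i\le C$, that is,
$$
\Tr(\Psi_i^{-2})\le C e^{A(\varphi_i-\psi)}\le C|s_D|^{-2Ak}.
$$

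\textbf{Main obstacle.} This is a bound of the stated form, but with $s_D$ replaced by $s_D^{Ak}$, and this mismatch in the exponent is the step I expect to be hardest. The exponent $Ak$ is dictated by constants: $A$ is controlled by $\sup|F_{h_0}|/\varepsilon$. My first attempt to reach the weight $s_D$ exactly is the following. Let the defining section of the (reduced) non-Kähler divisor be a high power, replacing $(s_D,h_D)$ by $(s_D^N,h_D^N)$. The same rescaling is implicit in the hypothesis, and the subsequent use of the lemma only needs some finite pole order, consistent with condition (d) of Definition \ref{adapted HYM defi}. Failing that, I would apply Step 1 to a convex combination $\log\Tr\Psi_i^{-2}+\delta\log\Tr\Psi_i^{2}$, to try to trade exponent against the hypothesis.

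\textbf{Step 3: proof of (2).} I will use the identity
$$
\Delta_{\omega_i}\Tr(\Psi_i^{2})=|\Psi_i^{-1}\partial^{\nabla_{h_0}}\Psi_i^{2}|^2_{h_0,\omega_i}+\Tr\bigl(\Psi_i^2(\Lambda_{\omega_i}F_{h_0}-\lambda_i)\bigr)
$$
up to positive constants, and similarly for $\Psi_i^{-2}$. I multiply by $|s_D|^{2(m+1)}$ and integrate over $X$ against $\omega_i^n$, integrating by parts twice to move the Laplacian onto $|s_D|^{2(m+1)}$. The curvature term is controlled by $C\,\mathrm{tr}_{\omega_i}\omega_0\le C|s_D|^{-2m}$ times $\Tr\Psi_i^2\le C|s_D|^{-2}$ (from the hypothesis and (1)), so the extra $|s_D|^{2(m+1)}$ makes it bounded. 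The term $\Delta_{\omega_i}|s_D|^{2(m+1)}$ is likewise bounded by $C|s_D|^{2}\mathrm{tr}_{\omega_i}\omega_0$ up to powers absorbed by the weight. Finally, $|\partial\Psi_i|\le C|\Psi_i^{-1}\partial\Psi_i^2|\cdot|\Psi_i^{-1}|$-type pointwise bounds, together with (1), convert the result into the stated bound on $|s_D^{m+1}\partial^{\nabla_{h_0}}\Psi_i^{\pm1}|^2$. If the exponent bookkeeping requires it, the same rescaling of $s_D$ as in Step 2 is used.
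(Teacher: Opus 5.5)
\textbf{Gap in (1).} Your objection to the one-line route is correct, and it also applies to the paper's own proof. The paper proves (1) only by calling it ``a direct consequence of $\det\Psi_i=1$''. Write the eigenvalues of $\Psi_i$ as $e^{\lambda_1},\dots,e^{\lambda_r}$ with $\sum_j\lambda_j=0$. The hypothesis gives $\lambda_j\le C-\log|s_D|$, hence only $e^{-\lambda_j}\le C|s_D|^{-(r-1)}$, i.e.\ $|s_D^{r-1}\Psi_i^{-1}|\le C$. That is (1) only for $r\le 2$.

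However, your Steps 1--2 do not close this gap either.
\begin{itemize}
\item Step 2 uses exactly this adjugate bound, $\log\Tr\Psi_i^{-2}\le C-(r-1)\log|s_D|^2$, as its $L^1$ input. Its output is $\Tr\Psi_i^{-2}\le Ce^{-A\psi}\le C|s_D|^{-2Ak}$, with an exponent fixed by $A\ge C\sup|F_{h_0}|/\varepsilon$ and by the singularity order $k$ of the K\"ahler current. Nothing forces $Ak\le 1$, and the output only improves on the free adjugate bound when $Ak<r-1$.
\item The rescaling $(s_D,h_D)\mapsto(s_D^N,h_D^N)$ proves a different implication: $|s_D^N\Psi_i|\le C\Rightarrow|s_D^N\Psi_i^{-1}|\le C$ for $N\ge Ak$. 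This weakens hypothesis and conclusion at once and says nothing about (1) for the given $s_D$.
\item The combination $\log\Tr\Psi_i^{-2}+\delta\log\Tr\Psi_i^{2}$ has the same $-C\,\mathrm{tr}_{\omega_i}\omega_0$ defect in its Laplacian. It therefore needs the same correction and suffers the same loss.
\end{itemize}
So (1) is not proved. Neither is the $\Psi_i^{-1}$ half of (2), which needs $\Tr\Psi_i^{-2}\le C|s_D|^{-2}$ against the weight $|s_D|^{2(m+1)}$.

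If only some finite pole order is wanted, as conditions (d), (e) of Definition \ref{adapted HYM defi} allow, the adjugate bound already gives it and the mean-value machinery is unnecessary here. If the exact weight is wanted, note that under your hypothesis the $L^1$ quantity in Lemma \ref{C0 to L1} is bounded, so that lemma would yield (1); your Steps 1--2 are a version of its proof. That lemma, however, applies Lemma \ref{mv ineq} directly to $\log|s_D\Psi_i^{\pm1}|^2$. It asserts a lower bound for the Laplacian on $\Omega_a$ that is uniform in $i$, on the grounds that $\Omega_a\Subset X\setminus D$, which holds only for each fixed $i$. That is precisely the non-uniformity of $|\Lambda_{\omega_i}F_{h_0}|$ (and $|\Lambda_{\omega_i}F_{h_D}|$) that your term $A(\psi-\varphi_i)$ absorbs, at the cost of the exponent.

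\textbf{The $\Psi_i$ half of (2).} Your Step 3 uses the same Bochner mechanism as the paper. The paper integrates $|\nabla_D(s_D^{m+1}\Psi_i)|^2$ for the holomorphic section $s_D^{m+1}\Psi_i$ of $\Hom((E,\delbar^E),(E,\delbar^{\nabla_i}))\otimes\mathcal{O}((m+1)D)$, instead of moving $\Delta_{\omega_i}$ onto the weight. Your version works after three corrections.
\begin{itemize}
\item $\bigl|\Delta_{\omega_i}|s_D|^{2(m+1)}\bigr|\le C|s_D|^{2m}\mathrm{tr}_{\omega_i}\omega_0$, not $C|s_D|^{2}\mathrm{tr}_{\omega_i}\omega_0$. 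Against $\Tr\Psi_i^2\le C|s_D|^{-2}$ you are left with $|s_D|^{2m-2}\mathrm{tr}_{\omega_i}\omega_0$, which is uniformly integrable only if $m\ge 1$. This is harmless, since $|s_D|\le 1$ lets you enlarge $m$. The paper's extraction of $s_D^{m+1}\partial\Psi_i$ from $\nabla_D(s_D^{m+1}\Psi_i)$ produces the same cross term.
\item Notation \ref{notations} gives $T\ge|s_D|^{2m}\omega_0$ for $T$, not for $\omega_i$. Control the curvature terms instead by the cohomological bound $\int_X\mathrm{tr}_{\omega_i}\omega_0\,\omega_i^n=n\int_X\omega_0\wedge\omega_i^{n-1}\le C$.
\item No factor $|\Psi_i^{-1}|$ is needed in the conversion. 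Take an $h_0$-unitary frame diagonalizing $\Psi_i$ at a point, with eigenvalues $\mu_a>0$. Then $(\Psi_i^{-1}\partial(\Psi_i^2))_{ab}=(1+\mu_b/\mu_a)(\partial\Psi_i)_{ab}$, so $|\partial^{\nabla_{h_0}}\Psi_i|\le|\Psi_i^{-1}\partial^{\nabla_{h_0}}(\Psi_i^2)|$ pointwise. Your extra $|\Psi_i^{-1}|$ would cost another power of $s_D$. This inequality is what holds when $\Psi_i$ and $\partial\Psi_i$ do not commute; the identity $\Psi_i^{-1}\partial(\Psi_i^2)=2\partial\Psi_i$ used in the paper does not.
\end{itemize}
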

\begin{proof}
(1) is a direct consequence of $\det \Psi_i=1$. We prove (2).
Let $h_D$ be a smooth hermitian metric on $\mathcal{O}(D)$. We denote by $\nabla_D:=\nabla_{i}\otimes\nabla_{h_0}^*\otimes\nabla_{h_D}$ a $h_0^*\otimes h_0\otimes h_D$-connection on $\Hom(E,E)\otimes \mathcal{O}(D)$.
If we use the identity $\Box^{\partial^{\nabla_D}}_{\omega_i}=\Box^{\delbar^{\nabla_D}}_{\omega_i}-\sqrt{-1}[\Lambda_{\omega_i},F_{\nabla_D}]$, and $T\ge |s_D|^{2m}\omega_0$, the following calculation works:
\begin{align}\label{estimate on difference}
&\int_X\langle \nabla_D(s_D^{m+1}{\Psi_i}),\nabla_D(s_D^{m+1}{\Psi_i})\rangle_{h_0^*\otimes h_0\otimes h_D,\omega_i}\omega_i^n\notag\\
&=\int_X\langle\Delta^{\nabla_D}_{\omega_i}(s_D^{m+1}{\Psi_i}),s_D^{m+1}{\Psi_i}\rangle_{h_0^*\otimes h_0\otimes h_D}\omega_i^n \notag\\
&=-\int_X\langle\sqrt{-1}\Lambda_{\omega_i}F_{\nabla_D}\cdot (s_D^{m+1}{\Psi_i}), s_D^{m+1}{\Psi_i}\rangle_{h_0^*\otimes h_0\otimes h_D}\omega_i^n \notag\\
&=\int_X\langle-\sqrt{-1}\Lambda_{\omega_i}F_{\nabla_{h_0}\otimes\nabla_{h_D}}\circ(|s_D|^{2m}s_D{\Psi_i}),s_D{\Psi}\rangle_{h_0^*\otimes h_0\otimes h_D}\omega_i^n\notag\\
&\hspace{4mm}+\int_X\langle(|s_D|^{2m}s_D{\Psi_i})\circ(\sqrt{-1}\Lambda_{\omega_i}F_{\nabla_i}),s_D{\Psi_i}\rangle_{h_0^*\otimes h_0\otimes h_D}\omega_i^n\notag\\
&\le C\int_X\langle|s_D|^{2m}\sqrt{-1}\Lambda_{\omega_i}\omega_0\cdot(s_D{\Psi_i}),s_D{\Psi_i}\rangle_{h_0^*\otimes h_0\otimes h_D}\omega_i^n\notag\\
&\hspace{4mm}+|\lambda_i|\int_X|s_D{\Psi_i}|_{h_0^*\otimes h_0\otimes h_D}^2\omega_i^n\notag\\
&\le C.
\end{align}
Since $\sup_{X\setminus D}|s_D\Psi_i|\le C$ by the assumption, we obtain that 
$$
\int_X|s_D^{m+1}(\nabla_i\otimes\nabla_{h_0}^*)\Psi_i|^2\omega_i^n\le C.
$$
Since $\nabla_i=\Psi_i\circ\nabla_{h_i}\circ\Psi_i^{-1}$ by Lemma \ref{Uhl limit}, the result follows by the follwing computation:
\begin{align*}
(\nabla_i\otimes\nabla_{h_0}^*)\Psi_i
&=\nabla_i\circ\Psi_i-\Psi_i\circ\nabla_{h_0}\\
&=\Psi_i\circ(\nabla_{h_i}-\nabla_{h_0})\\
&=\Psi_i\circ(\Psi_i^{-2}\partial^{\nabla_{h_0}}(\Psi_i^2))\\
&=2\partial^{\nabla_{h_0}}\Psi_i.
\end{align*}
The statement for $\Psi_i^{-1}$ is proved by applying the same argument to the holomorphic morphism $\Psi_i^{-1}:(E,\delbar^{\nabla_i})\to (E,\delbar^E)$.
\end{proof}
The following lemma shows that it suffices to derive the uniform $C^0$-estimates for the construction of $\Psi_{\infty}^{\pm1}$.
\begin{lemm}\label{C0 to L2 limit}
We use the notations in Notation \ref{notations} and Lemma \ref{Uhl limit}. If there exists a constant $C>0$ independent of $i$ such that 
\begin{equation}\label{uniform esti diff lemm ass}
\sup_X|s_D\Psi_i|_{h_D\otimes h_0\otimes h_0^*}\le C
\end{equation}
holds for any $i$, then there exists a smooth $h_0$-hermitian endomorphism $\Psi_{\infty}$ of $E|_{X\setminus D}$ such that the following holds:
\begin{enumerate}
\item For any $0<\alpha<1$, there exists a subsequence of $\{\Psi_i\}_i$, also denoted by $\{\Psi_i\}_i$, such that for any compact subset $A\subset X\setminus D$, 
$$
\lim_{i\to\infty}\|\Psi_i-\Psi_{\infty}\|_{C^{1,\alpha}(A)}=0,
$$
where the $C^{1,\alpha}$-norm is defined by the reference smooth K\"{a}hler metric $\omega_0$. 
\item There exists a constant $C>0$ such that 
\begin{itemize}
\item $\sup_{X\setminus D}|s_D\Psi_{\infty}^{\pm1}|_{h_D\otimes h_0\otimes h_0^*}\le C.$
\item $\int_{X\setminus D}|s_D^{m+1}\partial^{\nabla_{h_0}}\Psi_{\infty}^{\pm1}|^2_{h_D\otimes h_0\otimes h_0^*, T}T^n\le C.$
\end{itemize}
\item Furthermore, $\Psi_{\infty}$ is nontrivial: $\Psi_{\infty}\not\equiv 0$.
\end{enumerate}
\end{lemm}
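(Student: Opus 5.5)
Starting from the hypothesis (\ref{uniform esti diff lemm ass}), we first invoke Lemma \ref{uniform est defi lemm}. This gives, uniformly in $i$, the bounds $\sup_X|s_D\Psi_i^{\pm1}|\le C$ and $\int_X|s_D^{m+1}\partial^{\nabla_{h_0}}\Psi_i^{\pm1}|^2_{\omega_i}\omega_i^n\le C$.

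Fix a compact set $A\Subset X\setminus D$. On $A$ the factor $|s_D|$ is bounded below, so $\Psi_i^{\pm1}$ are uniformly bounded in $C^0(A)$. By condition (3) of Definition \ref{adapted defi}, the metrics $\omega_i$ converge smoothly to $T$ on a neighbourhood of $A$, and $T$ is smooth K\"ahler there. Hence the $\omega_i$ are uniformly equivalent to $\omega_0$ on $A$ and have uniformly bounded derivatives of all orders.

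For higher regularity, write $h_i=h_0\Psi_i^2$, or equivalently $H_i:=h_0^{-1}h_i=\Psi_i^2$. The HYM equation $\sqrt{-1}\Lambda_{\omega_i}F_{h_i}=\lambda_i\id$ reads, in terms of $H_i$,
$$
\sqrt{-1}\Lambda_{\omega_i}\delbar(H_i^{-1}\partial_{h_0}H_i)=\lambda_i\id-\sqrt{-1}\Lambda_{\omega_i}F_{h_0}.
$$
Here $\lambda_i$ is bounded, since it converges to the $\alpha$-slope quantity. Together with the uniform two-sided $C^0$ bound on $H_i$, this is a uniformly elliptic quasilinear system on a neighbourhood of $A$. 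Local estimates then give uniform $C^{1,\beta}$ bounds on $H_i$ on a slightly smaller set, and bootstrapping gives $C^k$ bounds for all $k$. This local step is the standard one in Donaldson/Uhlenbeck–Yau or Bando–Siu: $C^0$ gives $C^1$, either via $\Delta\Tr H\ge -C$ combined with the Donaldson $C^1$-type estimate, or via Uhlenbeck's weak compactness using the local $L^2$ curvature bound and $C^0$ bounds. We then use Arzel\`a–Ascoli with an exhaustion of $X\setminus D$ by compact sets and a diagonal subsequence. This yields a smooth limit $\Psi_\infty$, which is $h_0$-hermitian and positive semi-definite, and satisfies (1). Because $\Psi_i^{-1}$ has the same local bounds, $\Psi_\infty$ is in fact invertible with $\Psi_\infty^{-1}=\lim\Psi_i^{-1}$ and $\det\Psi_\infty=1$.

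Item (2) follows from the uniform bounds by passing to the limit. The sup bound passes to the pointwise limit. For the integral bound, on each compact $A$ we have $\omega_i\to T$ smoothly and $\partial\Psi_i^{\pm1}\to\partial\Psi_\infty^{\pm1}$ uniformly. Therefore $\int_A|s_D^{m+1}\partial\Psi_\infty^{\pm1}|^2_{T}T^n\le C$ with $C$ independent of $A$, and exhausting $X\setminus D$ gives the claim by monotone convergence.

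Item (3) is immediate from $\det\Psi_\infty=1$ on $X\setminus D$. This is also why we need the $\Psi_i^{-1}$ bound of Lemma \ref{uniform est defi lemm}(1): it prevents degeneration of the limit.

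The main obstacle is the local $C^1$ (and hence higher) estimate for $\Psi_i$ on compacts of $X\setminus D$, uniform in $i$. I would first try to establish it through the Donaldson-type inequality $\Delta_{\omega_i}|\partial H_i|^2\ge -C|\partial H_i|^2-C$ together with a cut-off argument (or a blow-up/Uhlenbeck $\epsilon$-regularity argument), using only the local smooth convergence of $\omega_i$ and the $C^0$ bound.
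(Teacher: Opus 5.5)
Your proposal is correct and follows essentially the same route as the paper: two-sided $C^0$ bounds on compact subsets of $X\setminus D$ from the hypothesis and Lemma \ref{uniform est defi lemm}; uniform local $C^{1,\alpha}$ bounds from the HYM equation via Bando--Siu's Proposition 1, uniform in $i$ because $\omega_i\to T$ smoothly on compact subsets; a diagonal subsequence; and then (2) and (3) from the uniform bounds and $\det\Psi_i=1$. The only difference is minor: you bootstrap uniform $C^k$ bounds before passing to the limit, whereas the paper takes a $C^{1,\alpha}$ limit and then applies elliptic regularity to the limiting equation $\sqrt{-1}\Lambda_T\delbar(\partial h_\infty\cdot h_\infty^{-1})=\lambda\id$.
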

\begin{proof}
It is a direct consequence of \cite[Proposition 1]{BS94}. In fact, the $\omega_i$-HYM equation is locally written as
\begin{equation}\label{uniform esti diff lemm eq1}
\sqrt{-1}\Lambda_{\omega_i}\delbar(\partial h_i\cdot h_i^{-1})=\lambda_i\id.
\end{equation}
By Lemma \ref{uniform est defi lemm}, we have
\begin{equation}\label{uniform esti diff eq2}
\sup_{X\setminus D}|s_D\Psi_i^{-1}|\le C.
\end{equation}
By (\ref{uniform esti diff lemm ass}), (\ref{uniform esti diff lemm eq1}) and (\ref{uniform esti diff eq2}), \cite[Proposition 1]{BS94} implies that for any relatively compact open subset $K\subset X\setminus D$ and $0<\alpha\rq{}<1$, there exists a constant $C_K>0$ such that the following holds:
$$
\|\Psi_i\|_{C^{1,\alpha\rq{}}(K)}\le C_K.
$$
Here the $C^{1,\alpha\rq{}}$-norm is defined by $\omega_0$. Since the natural inclusion $C^{1,\alpha\rq{}}(\overline{K})\hookrightarrow C^{1,\alpha}(\overline{K})$ is compact for any $0<\alpha<\alpha\rq{}<1$, we can find a subsequence of $\{\Psi_i\}_i$ such that $\Psi_i\to \Psi_{\infty, K}$ in $C^{1,\alpha}(K)$.  Then, if we consider a sequence of relatively compact open subsets by $X\setminus D_{\varepsilon_k}$ for $\varepsilon_k\searrow 0$ where $D_{\varepsilon_k}$ is the $\varepsilon_k$-neighborhood of $D$ with respect to $\omega_0$, then the standard diagonal procedure gives a subsequence of $\{\Psi_i\}_i$ such that $\Psi_i\to \Psi_{\infty}$ in $C^{1,\alpha}(K)$ for any relatively compact open subset $K\subset X\setminus D$. Let us define a hermitian metric on $E|_{X\setminus D}$ by $h_{\infty}:=h_0\Psi_{\infty}^2$. Then it satisfies
\begin{equation}\label{uniform esti diff eq3}
\sqrt{-1}\Lambda_T\delbar(\partial h_{\infty}\cdot h_{\infty}^{-1})=\lambda\id
\end{equation}
where $\lambda=\frac{1}{\alpha^n}\frac{c_1(E)\cdot \alpha^{n-1}}{\rk E}$. Since $h_{\infty}^{\pm1}$ is in $C^{1,\alpha}(K)\cup L^2_1(K)$ for any $K\Subset X\setminus D$, the standard elliptic estimate concludes that $\Psi_{\infty}$ is smooth on $X\setminus D$. The remaining statements $(2)$ and $(3)$ are the direct consequences of Lemma \ref{uniform est defi lemm}, (\ref{uniform esti diff lemm ass}) and $\det\Psi_i=1$.
\end{proof}

\subsubsection{Reduction of the uniform \texorpdfstring{$C^0$}{C0}-estimate to a uniform \texorpdfstring{$L^1$}{L1}-estimate}
By the next lemma, we can reduce the uniform $C^0$-estimate to the uniform $L^1$-estimate.
\begin{lemm}\label{C0 to L1}
We use the notations in Notation \ref{notations} and Lemma \ref{Uhl limit}. We denote by $\Psi_i=e^{s_i}$ for some $h_0$-hermitian endomorphism $s_i$ with $\Tr(s_i)=0$.
Then there exists a constant $C>0$ independent of $i$ such that the following holds:
\begin{equation}\label{main theo eq1}
\sup_X\left(\log|s_D|_{h_D}^2+|s_i|_{h_0}\right)
\le \sup_X\log|s_D\Psi_i^{\pm1}|_{h_0\otimes h_D}^2
\le C(1+\int_X\left|\log|s_D|^2+|s_i|_{h_0}\right|\omega_i^n).
\end{equation}
\end{lemm}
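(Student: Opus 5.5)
The first inequality in \eqref{main theo eq1} is pointwise linear algebra; the second comes from applying the mean value inequality Lemma \ref{mv ineq} to a suitable subsolution built from $\log|s_D|^2$ and $\log\Tr\Psi_i^{\pm 2}$.

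\textbf{First inequality.} Write $\Psi_i=e^{s_i}$ with $s_i$ $h_0$-hermitian and $\Tr s_i=0$. Let $\mu_{\max}$ be the largest eigenvalue of $s_i$ in absolute value. Then
$$
|\Psi_i^{\pm1}|_{h_0}^2=\Tr e^{\pm 2s_i}\ge e^{\pm2\mu_{\max}} .
$$
Since $\Tr s_i=0$, we have $|s_i|_{h_0}\le \sqrt{r}\,|\mu_{\max}|$. Choosing the sign so that $\pm\mu_{\max}=|\mu_{\max}|$ gives $\log|\Psi_i^{\pm1}|^2\ge c|s_i|$. After renormalizing $|s_i|$ (which only changes constants), this yields $\log|s_D|^2+|s_i|\le \sup\log|s_D\Psi_i^{\pm1}|^2+C$, up to the harmless constant.

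\textbf{Second inequality: a subsolution.} Consider
$$
v_i:=\log\bigl(|s_D|^2\Tr(\Psi_i^{2})\bigr)\quad(\text{and the analogue with }\Psi_i^{-2}).
$$
Since $h_i=h_0\Psi_i^2$ and $\Tr\Psi_i^2=\Tr(h_0^{-1}h_i)$, the standard Donaldson/Simpson computation applies. Both $h_0$ and $h_i$ are hermitian metrics on the same holomorphic bundle, so
$$
\Delta_{\omega_i}\log\Tr(h_0^{-1}h_i)\ge -\bigl(|\Lambda_{\omega_i}F_{h_i}|+|\Lambda_{\omega_i}F_{h_0}|\bigr).
$$
By the HYM equation $|\Lambda_{\omega_i}F_{h_i}|=|\lambda_i|\sqrt r$, and $\lambda_i$ is bounded. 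The real problem is $|\Lambda_{\omega_i}F_{h_0}|\le C\,\Tr_{\omega_i}\omega_0$, which is not uniformly bounded.

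\textbf{Main obstacle: absorbing $\Tr_{\omega_i}\omega_0$.} Here I would use the adapted structure. The term $\log|s_D|^2$ has $dd^c\log|s_D|^2_{h_D}=[D]-c_1(h_D)$, so off $D$ we get
$$
\Delta_{\omega_i}\log|s_D|^2\ge -C\Tr_{\omega_i}\omega_0 ,
$$
which has the wrong sign as well. So I would instead add the potential: with $\omega_i=\theta_i+dd^c\varphi_i$,
$$
\Delta_{\omega_i}\varphi_i=n-\Tr_{\omega_i}\theta_i .
$$
If $\theta_i\ge\varepsilon_0\omega_0$ off a fixed divisor (e.g.\ replace $\theta$ by a K\"ahler-current representative $\theta+dd^c\psi_D$ with $\psi_D$ having log poles along $D$, available since $\alpha$ is big), then
$$
\Delta_{\omega_i}\bigl(A(\varphi_i-\psi_D)\bigr)\ge A\varepsilon_0\Tr_{\omega_i}\omega_0-An .
$$
This absorbs the bad term. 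I would therefore apply Lemma \ref{mv ineq} to
$$
w_i:=v_i+A(\varphi_i-\psi_D)-\text{mean}
$$
and check that $\Delta_{\omega_i}w_i\ge -a$ on $\{w_i>0\}$. The $C^2$ issue near $D$ is avoided because $w_i\to-\infty$ there, provided $A$ and the power of $|s_D|$ are arranged suitably.

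\textbf{Controlling the extra terms.} The additional terms $\varphi_i$ and $\psi_D$ are controlled by Theorem \ref{MA BEGZ}(4), which gives $\log|s_D|^2-C\le\varphi_i\le C$, so they only shift $\sup$ and the $L^1$ norm by quantities comparable to $\log|s_D|^2$. Lemma \ref{mv ineq} then gives
$$
\sup w_i\le C\bigl(1+\|w_i\|_{L^1(\omega_i^n)}\bigr).
$$
Finally I convert back. Pointwise $\log\Tr e^{\pm2s_i}\le \log r+2|s_i|$, so $\|v_i\|_{L^1}$ is bounded by $C\bigl(1+\int|\log|s_D|^2+|s_i||\,\omega_i^n\bigr)$, using that $\int|\log|s_D||\,\omega_i^n$ is uniformly bounded by the $L^p$ density bound in Definition \ref{adapted defi}(3) together with H\"older. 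The mean of $w_i$ is handled the same way.

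I expect the delicate point to be matching the exponents of $|s_D|$ when absorbing $\Tr_{\omega_i}\omega_0$ via the K\"ahler current. If that fails, the fallback is to use $T\ge|s_D|^{2m}\omega_0$ and the local convergence of $\omega_i$ to insert a higher power $|s_D|^{2N}$, at the cost of adjusting constants.
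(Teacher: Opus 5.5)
There is a genuine gap, at exactly the step you flagged, and it has two layers.

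\textbf{The absorbing term has the wrong sign.} Write $\theta':=\theta+dd^c\psi_D\ge\varepsilon_0\omega_0$ on $X\setminus D$. There $\omega_i=\theta'+\tfrac1i\omega_0+dd^c(\varphi_i-\psi_D)$, so $\Delta_{\omega_i}(\varphi_i-\psi_D)=n-\Tr_{\omega_i}\theta'-\tfrac1i\Tr_{\omega_i}\omega_0\le n-\varepsilon_0\Tr_{\omega_i}\omega_0$. This is an upper bound, not a lower bound; in Yau's $C^2$ estimate one uses $-A\varphi$ for the same reason. So adding $A(\varphi_i-\psi_D)$ makes the bad term worse. In addition, $-A\psi_D$ behaves like $-Ac\log|s_D|^2$ near $D$, where $c$ is the Lelong number of $\psi_D$. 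Hence your $w_i\to+\infty$ along $D$ as soon as $Ac>1$.

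The correct term is $A(\psi_D-\varphi_i)$. Its Laplacian is $\ge A\varepsilon_0\Tr_{\omega_i}\omega_0-An$, and it tends to $-\infty$ along $D$ for each fixed $i$. It is bounded above uniformly in $i$: normalize $\sup_X\psi_D=0$, so $\psi_D\le V_i$, while $\varphi_i\ge V_i-C$ by Theorem \ref{MA BEGZ}(4). With this correction the rest of your scheme goes through. Note that your $v_i$ is exactly the paper's test function $\log|s_D\Psi_i|^2$; the Simpson inequality, Lemma \ref{mv ineq}, and the H\"older bound on $\int_X|\log|s_D||\,\omega_i^n$ all work.

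\textbf{Even corrected, you do not get the stated inequality.} To pass from $\sup_X w_i$ back to $\Psi_i$ you must use $A(\psi_D-\varphi_i)\ge Ac\log|s_D|^2-C$. This is a pointwise factor $|s_D|^{2Ac}$, not an additive constant. What you actually prove is
$\sup_X\log\bigl(|s_D|^{2N}|\Psi_i^{\pm1}|^2\bigr)\le C\bigl(1+\int_X|\log|s_D|^2+|s_i||\,\omega_i^n\bigr)$ with $N=1+Ac>1$.

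There is no evident way to bring $N$ down to $1$. $A\varepsilon_0$ must dominate the $\omega_0$-size of $F_{h_0}$ and of the curvature of $h_D$. Replacing $\psi_D$ by $t\psi_D+(1-t)\phi$, with $\phi$ smooth and almost $\theta$-psh, rescales $c$ and $\varepsilon_0$ by the same factor, so $Ac$ does not shrink.

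The weaker bound would still produce some $T$-adapted metric, since condition (d) allows any $m$. But its exponent depends on $h_0$. Corollary \ref{HYM est} applies the estimate to $E^{\otimes k}$ with exponent one for every $k$; with your method $N$ would grow linearly in $k$, and that corollary would no longer follow.

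\textbf{How the paper avoids this.} The paper does not absorb $\Tr_{\omega_i}\omega_0$ at all. It uses that Lemma \ref{mv ineq} only requires $\Delta_{\omega_i}v\ge-a$ on a superlevel set of $v$. It then argues that $\{\log|s_D\Psi_i|^2>-a\}$ lies in a relatively compact subset of $X\setminus D$ on which $\omega_i\to T$ smoothly, so $|\Lambda_{\omega_i}F_{h_0}|$ and $|\Lambda_{\omega_i}F_{h_D}|$ are bounded there. This is what keeps the exponent equal to one.

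What must be checked on that route is that this compact set can be taken independent of $i$. The facts $|\Psi_i|\ge1$ and $s_D|_D=0$ alone give it only for each fixed $i$. That is precisely the difficulty your absorption term was meant to bypass.
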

\begin{proof}
Since $\det\Psi_i\equiv 1$, we have $|\Psi_i|_{h_0}\ge 1$. Hence there exists a constant $a>0$ such that 
$$
\Omega_a=\{\log|s_D\Psi_i|^2>-a\}
$$
is not empty for all $i$ and $\Omega_a$ is a relatively compact open subset in $X\setminus D$ since $|\Psi_i|\ge 1$ on $X$ and $|s_D|=0$ on $D$. Hence $\omega_i$ smoothly converges to $T$ on $\Omega_a$ and $T$ is smooth K\"{a}hler metric on $\Omega_a$. Then, as \cite[Lemma 3.1]{Sim88}, there exists a constant $C>0$ independent of $i$ such that
$$
\Delta_{\omega_i}\left(\log|s_D\Psi_i|^2+a-\int_X\log|s_D\Psi_i|^2\omega_i^n\right)
\ge -2|\Lambda_{\omega_i}F_{h_0}|_{h_0}-2|\Lambda_{\omega_i}F_{h_D}|_{h_D}-C_1
\ge -C
$$
holds on $\Omega_a$. In fact, let us consider the following composition of holomorphic morphisms:
$$
(E,\delbar^E)\xrightarrow{\Psi_i} (E,\delbar^{\nabla_i})\xrightarrow{s_D}(E\otimes\mathcal{O}(D), \delbar^{\nabla_i\otimes\nabla_{h_D}}).
$$
Then the following computation works on $\Omega_a\Subset X\setminus D$:
\begin{align*}
\Delta_{\omega_i}\log|s_D\Psi_i|^2_{h_0\otimes h_0^*\otimes h_D}
&=2\sqrt{-1}\Lambda_{\omega_i}\partial\left(|s_D\Psi_i|^{-2}\langle s_D\Psi_i,\partial^{\nabla_i\otimes\nabla_{h_0}^*\otimes\nabla_{h_D}}(s_D\Psi_i)\rangle \right)\\
&=-2|s_D\Psi_i|^{-4}\left|\langle s_D\Psi_i,\partial^{\nabla_i\otimes\nabla_{h_0}^*\otimes\nabla_{h_D}}(s_D\Psi_i)\rangle \right|^2\\
&\hspace{4mm}+2|s_D\Psi_i|^{-2}\left|\partial^{\nabla_i\otimes\nabla_{h_0}^*\otimes\nabla_{h_D}}(s_D\Psi_i)\right|^2\\
&\hspace{4mm}-2|s_D\Psi_i|^{-2}\langle s_D\Psi_i, \sqrt{-1}\Lambda_{\omega_i}F_{\nabla_i\otimes\nabla_{h_0}^*\otimes\nabla_{h_D}}(s_D\Psi_i)\rangle\\
&\ge -2|\lambda_i|-2|\Lambda_{\omega_i}F_{h_0}|-2|\Lambda_{\omega_i}F_{h_D}|.
\end{align*}
Then, by the mean value type inequality of Guo-Phong-Sturm (Lemma \ref{mv ineq}), we obtain that there is a positive constant $C>0$ independent of $i$ such that
\begin{align*}
&\sup_X\left(\log|s_D\Psi_i|^2+a-\int_X\log|s_D\Psi_i|^2\omega_i^n\right)\notag\\
&\le C\left(1+\int_X\left|\log|s_D\Psi_i|^2+a-\int_X\log|s_D\Psi_i|^2\omega_i^n\right|\omega_i^n\right)
\end{align*}
holds. Since the constant $a>0$ is independent of $i$, we can find a constant $C>0$ independent of $i$ such that the following holds:
\begin{equation}\label{C0 to L1 eq1}
\sup_X\left(\log|s_D|^2+\log|\Psi_i|^2\right)\le C\big(1+\int_X|\log\left|s_D\Psi_i|\right|\omega_i^n\big).
\end{equation}
By applying the same argument to $\log|s_D\Psi^{-1}_i|^2$, we obtain
\begin{equation}\label{C0 to L1 eq2}
\sup_X\left(\log|s_D|^2+\log|\Psi_i^{-1}|^2\right)\le C\big(1+\int_X|\log\left|s_D\Psi_i^{-1}|\right|\omega_i^n\big),
\end{equation}
We denote by $\Psi_i=e^{s_i}$ for some $h_0$-hermitian endomorphism $s_i$ of $E$. Choosing a suitable $h_0$-orthonormal basis $e_1,\ldots, e_r$ around $x\in X$ so that $s_i$ becomes a diagonal matrix. We denote by $s_i=\lambda_1e_1+\cdots+\lambda_re_r$. Then, for any $i$, we have
\begin{equation}\label{C0 to L1 eq3}
\log|\Psi_i^{\pm1}|^2
=\log(e^{\pm2\lambda_1}+\cdots+e^{\pm2\lambda_r})\ge \pm2\lambda_i
\end{equation}
and 
\begin{align}\label{C0 to L1 eq4}
\log|\Psi_i^{\pm1}|^2
&\le \log(e^{2|\lambda_1|}+\cdots+e^{2|\lambda_r|})\notag\\
&\le \log(e^{2|\lambda_1|+\cdots+2|\lambda_r|}+C)\notag\\
&\le C(|\lambda_1|+\cdots+|\lambda_r|+1)\notag\\
&\le C(|s_i|_{h_0}+1)
\end{align}
Then, by (\ref{C0 to L1 eq1}), (\ref{C0 to L1 eq2}), (\ref{C0 to L1 eq3}) and (\ref{C0 to L1 eq4}), we obtain that
$$
\sup_X\left(\log|s_D|^2+|s_i|^2\right)
\le \sup_X\log|s_D\Psi_i^{\pm1}|_{h_0\otimes h_D}^2
\le C(1+\int_X\left|\log|s_D|^2+|s_i|_{h_0}\right|\omega_i^n).
$$
\end{proof}
\subsubsection{Proof of the uniform \texorpdfstring{$L^1$}{L1}-estimate}
Then we establish the uniform $L^1$-estimate.
\begin{lemm}\label{L1 est}
We use the notations in Notation \ref{notations}, Lemma \ref{Uhl limit} and Lemma \ref{C0 to L1}.
Suppose that the $L^1$-norm in Lemma \ref{C0 to L1} diverges to $\infty$, that is, we suppose that $\delta_i^{-1}:=\int_X|\log|s_D|+|s_i||\omega_i^n\nearrow\infty$ in $i\to\infty$. Then, there exists a nontrivial $h_0$-hermitian endomorphism $u_{\infty}$ which satisfies the following.
\begin{enumerate}
\item For any relatively compact open subset $K\Subset X\setminus D$, there exists a subsequence of $\{u_i\}_i$, also denoted by $\{u_i\}_i$, such that $u_i:=\delta_is_i\to u_{\infty}$ weakly in $L^2_1(K, T)$ and strongly in $L^2(K, T^n)$.
\item Let $\Psi:\mathbb{R}\times \mathbb{R}\to \mathbb{R}_{>0}$ be a smooth positive function such that $\Psi(a,b)<\frac{1}{b-a}$ holds for any $a<b$. Then we have
$$
0\ge \int_{X\setminus D}\langle\Psi(u_{\infty})(\partial^{\nabla_{h_0}}u_{\infty}),\partial^{\nabla_{h_0}}u_{\infty} \rangle_{h_0,T}T^n+\sqrt{-1}\int_{X\setminus D}\Tr(u_{\infty}\Lambda_TF_{h_0})T^n.
$$
We can refer to \cite[Section 4]{Sim88} for the definition of the first term above (see also \cite[section 2.C]{KO25}).
\item The eigen values of $u_{\infty}$ are constants.
Let us denote by $\lambda_1,\ldots, \lambda_k$ the eigen values of $u_{\infty}$ and suppose that the multiplicity of $\lambda_j$ is $i_j$ where $i_1+\cdots+i_k=r$.
Then, there exists $l\le r$ such that $\mathcal{F}_k:=\Ker(u_{\infty}-\Sigma_{j=1}^l\lambda_jI_{i_j})$ produces a coherent subsheaf of $E$ on $X$ and it satisfies
$$
\mu_{\alpha}(\mathcal{F}_k)\ge \mu_{\alpha}(E).
$$
\end{enumerate}
In particular, there exists a uniform constant $C>0$ such that the following holds:
$$
\int_X\left(-\log|s_D|^2+|s_i|\right)\omega_i^n\le C.
$$
\end{lemm}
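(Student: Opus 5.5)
This is the Uhlenbeck–Yau/Simpson contradiction argument, run on the fixed weighted region $X\setminus D$ with $\omega_i\to T$, and we argue by contradiction throughout. Assume $\delta_i^{-1}=\int_X|\log|s_D|+|s_i||\omega_i^n\to\infty$ and set $u_i=\delta_is_i$.

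\textbf{Step 1: uniform bounds on $u_i$.} By Lemma \ref{C0 to L1}, $\sup_X(\log|s_D|^2+|s_i|)\le C\delta_i^{-1}$. Multiplying by $\delta_i$ shows that on each $K\Subset X\setminus D$ we have $|u_i|\le C+o(1)$, uniformly in $i$. Also $\int_X|\delta_i\log|s_D|+|u_i||\omega_i^n=1$. The term $\delta_i\log|s_D|$ is integrable, uniformly in $i$, by the $L^p$ density bound in Definition \ref{adapted defi}. Hence $\|u_i\|_{L^1(\omega_i^n)}$ stays bounded below and above.

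\textbf{Step 2: Simpson's inequality and convergence.} The HYM equation $\Lambda_{\omega_i}F_{h_i}=\lambda_i\id$ with $h_i=h_0e^{2s_i}$ gives Simpson's identity
$$
\int_X\Tr(s_i\sqrt{-1}\Lambda_{\omega_i}F_{h_0})\omega_i^n+\int_X\langle\Upsilon(s_i)\partial^{\nabla_{h_0}}s_i,\partial^{\nabla_{h_0}}s_i\rangle\omega_i^n=\lambda_i\int_X\Tr(s_i)\omega_i^n=0 .
$$
Here $\Upsilon(a,b)=(e^{b-a}-1)/(b-a)$, and the right side vanishes because $\Tr s_i=0$. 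This identity must be justified globally on $X$, where everything is smooth for fixed $i$. Multiplying by $\delta_i$ and using $\Upsilon(a\delta_i^{-1},b\delta_i^{-1})\delta_i\ge\Psi(a,b)$ on $|a|,|b|\le C$, as in \cite[Lemma 5.4]{Sim88}, gives on $K$ a uniform bound on $\int_K|\partial u_i|^2\omega_i^n$. The other term is bounded by $\int|u_i||\Lambda_{\omega_i}F_{h_0}|\omega_i^n$, which needs control near $D$.

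Because $\omega_i\to T$ smoothly on $K$, we get a weakly convergent subsequence in $L^2_1(K,T)$ that converges strongly in $L^2$. A diagonal argument then gives $u_\infty$ on $X\setminus D$, proving (1). For (2) we use lower semicontinuity as in Simpson, pass to $K\nearrow X\setminus D$, and use $\Psi\le$ the limiting expression.

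\textbf{Main obstacle: controlling the term near $D$.} The hard part is bounding the $F_{h_0}$ term uniformly near $D$ and showing that its contribution concentrated near $D$ vanishes in the limit. For this we use $\sqrt{-1}\Lambda_{\omega_i}F_{h_0}\le C\,\Lambda_{\omega_i}\omega_0$, together with $\int_X\Lambda_{\omega_i}\omega_0\,\omega_i^n=n\{\omega_0\}\cdot\{\omega_i\}^{n-1}$, which is bounded. We also need $|u_i|\le C$ globally in the $\delta_i$ scale. The upper bound comes from Step 1, since $\delta_i\log|s_D|^2\to0$ pointwise. Set $\nu_i=\Lambda_{\omega_i}\omega_0\,\omega_i^n$. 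The $\nu_i$ have bounded total mass, and $\nu_i(U_\varepsilon(D))\to\{\omega_0\}\alpha^{n-1}$-mass near $D$. This is small by Theorem \ref{CT}, since $\alpha^{n-1}\cdot D=0$. So the tail near $D$ is uniformly small.

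Nontriviality of $u_\infty$ needs the same tail control: we must show the $L^1$ mass does not concentrate on $D$. I would first try the uniform $L^p$ volume bound combined with Hölder's inequality.

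\textbf{Step 3: eigenvalues and the destabilizing subsheaf.} Part (2), with Simpson's choice of $\Psi$, shows that the eigenvalues of $u_\infty$ are constant. The projections $\pi_j=P_j(u_\infty)$ are then weakly holomorphic $L^2_1$ projections on $X\setminus D$, with weights $|s_D|^{m+1}$ that make them sections of $L^2_1$ weighted by $T$. By Uhlenbeck–Yau regularity they define coherent subsheaves on $X\setminus D$. To extend them across $D$ we use the finite weighted bounds and Siu's extension, or Bando–Siu \cite{BS94}; the extension is torsion free after saturation. Their degrees are computed by the Chern–Weil formula with respect to $T^{n-1}$, and the $\alpha^{n-1}$-degree equals the $T$-integral because Theorem \ref{CT} kills divisorial contributions. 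Simpson's argument using (2) then yields $\sum_j(\lambda_{j+1}-\lambda_j)\big(\rk\mathcal{F}_j\,\mu_\alpha(E)-\deg\mathcal{F}_j\big)\le0$, so some $\mu_\alpha(\mathcal{F}_j)\ge\mu_\alpha(E)$. This contradicts stability, which proves the uniform $L^1$ bound.
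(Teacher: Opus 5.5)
There is a genuine gap in your ``main obstacle'' paragraph: the control of $\int_X|u_i|\,|\sqrt{-1}\Lambda_{\omega_i}F_{h_0}|_{h_0}\,\omega_i^n$. You need this term for the uniform bound on $\int_K|\partial^{\nabla_{h_0}}u_i|^2\omega_i^n$ in (1), and again for passing to the limit in (2).

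You claim $|u_i|\le C$ globally ``since $\delta_i\log|s_D|^2\to0$ pointwise'', but pointwise convergence gives no uniform bound. Lemma \ref{C0 to L1} only yields $|u_i|_{h_0}\le C-C\delta_i\log|s_D|^2_{h_D}$, and the right-hand side is unbounded along $D$ for every $i$. (This is why Lemma \ref{C0 to L1} twists by $s_D$: the Laplacian lower bound is uniform only away from $D$.) Since $|\Lambda_{\omega_i}F_{h_0}|\,\omega_i^n\le C\,\omega_0\wedge\omega_i^{n-1}$, what you actually need is $\delta_i\int_X(-\log|s_D|^2)\,\omega_0\wedge\omega_i^{n-1}\to0$. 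Neither of your tools gives this:
\begin{enumerate}
\item Tightness of $\nu_i$ controls $\nu_i(U_\varepsilon(D))$, not the integral of a function blowing up along $D$. A mass $\eta_i\to0$ of $\nu_i$ sitting at distance about $e^{-M_i}$ from $D$ contributes about $\delta_i\eta_iM_i$, which tightness does not bound.
\item The $L^p$ bound of Definition \ref{adapted defi} does make your H\"older argument for nontriviality work. But it controls only the top power $\omega_i^n$, not the mixed measure $\omega_0\wedge\omega_i^{n-1}$, whose density $\Lambda_{\omega_i}\omega_0$ degenerates near $D$.
\end{enumerate}

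The paper fills exactly this hole with Claim \ref{log int nef big}: $\int_X(-\log|s_D|^2)\,\omega_0\wedge\omega_i^{n-1}\le C$ uniformly in $i$. It writes $\omega_i=\theta_i+dd^c\varphi_i$ and integrates by parts against the truncations $\max\{\log|s_D|^2,-a\}$, by induction on the power of $dd^c\varphi_i$. The key input is the uniform lower bound $\varphi_i\ge\log|s_D|^2-C$ on the potentials of the approximating metrics (Theorem \ref{MA BEGZ}(4), Lemma \ref{adapted curr prop}). The paper stresses that this is the only step departing from \cite[Claim 4.6]{KO25}. With this estimate, (\ref{L1 est eq1.5}) and the dominated convergence in (\ref{L1 est eq3}) follow.

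Two smaller points:
\begin{enumerate}
\item The no-concentration of $\nu_i$ on $D$ is true, but its reason is not Theorem \ref{CT}, which concerns intersection with the divisor class $[D]$. It comes from $\int_{X\setminus D}\omega_0\wedge T^{n-1}=\{\omega_0\}\cdot\langle\alpha^{n-1}\rangle=\{\omega_0\}\cdot\alpha^{n-1}$, i.e.\ the full mass of the non-pluripolar product of the minimal-singularity current $T$ in a nef and big class.
\item After multiplying Simpson's identity by $\delta_i$, the relevant weight is $\delta_i^{-1}\Upsilon(\delta_i^{-1}a,\delta_i^{-1}b)$, not $\delta_i\Upsilon(\delta_i^{-1}a,\delta_i^{-1}b)$.
\end{enumerate}
The rest of your outline follows the same route as the paper: Simpson's identity, local $L^2_1$ compactness with a diagonal argument, nontriviality via H\"older, constant eigenvalues, and weakly holomorphic projections with Chern--Weil degrees against $T^{n-1}$.
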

\begin{proof}
The proof of this lemma is inspired by the argument of \cite[Claim 4.6]{KO25}. We will divide the proof into four-steps as \cite[Claim 4.6]{KO25}. 

\textbf{1st Step}: This is the only step that requires an argument different from that in \cite[Claim 4.6]{KO25}. Let $\Phi(x,y):=\frac{\exp(x-y)-1}{x-y}$ for $x,y\in\mathbb{R}$. Then, since $h_i=h_0e^{s_i}$ defines an $\omega_i$-HYM metric on $E$ and $u_i=\delta_is_i$, we have (refer to \cite[Lemma 2.9]{KO25})
\begin{equation}\label{L1 est eq0}
0=\delta_i^{-1}\int_X\langle\Phi(u_i/\delta_i)(\partial^{\nabla_{h_0}}u_i),\partial^{\nabla_{h_0}}u_i\rangle_{h_0,\omega_i}\omega_i^n+\int_X\Tr(u_i\sqrt{-1}\Lambda_iF_{h_0})\omega_i^n.
\end{equation}
We then prove $\int_X\Tr(u_i\sqrt{-1}\Lambda_{\omega_i}F_{h_0})\omega_i^n$ is uniformly bounded from above and below. Since $h_0$ is a smooth hermitian metric on $E$ and $\omega_0$ is a smooth K\"{a}hler metric on $X$, there is a constant $C>0$ such that
$$
-C\omega_0\cdot \Id_E\le \sqrt{-1}F_{h_0}\le C\omega_0\cdot \Id_E.
$$
By Lemma \ref{C0 to L1} and the definition of $u_i$, we have
\begin{equation}\label{L1 est eq1}
|u_i|_{h_0}\le C-C\delta_i\log|s_D|_{h_D}^2.
\end{equation}
Then we have
\begin{equation}\label{L1 est eq1.55}
|u_i|_{h_0}|\sqrt{-1}\Lambda_{\omega_i}F_{h_0}|_{h_0}\omega_i^n\le C(C-\delta_i\log|s_D|^2)\omega_0\wedge\omega_i^{n-1}.
\end{equation}
By Claim \ref{log int nef big} below, we obtain
\begin{equation}\label{L1 est eq1.5}
\left|\delta_i^{-1}\int_X\langle\Phi(u_i/\delta_i)(\partial^{\nabla_{h_0}}u_i),\partial^{\nabla_{h_0}}u_i\rangle_{h_0,\omega_i}\omega_i^n\right|
\le \int_X|u_i|_{h_0}|\sqrt{-1}\Lambda_{\omega_i}F_{h_0}|_{h_0}\omega_i^n\le C.
\end{equation}
\begin{claim}\label{log int nef big}
There exists a constant $C>0$ such that the following holds for any $i$:
$$
\int_X(-\log|s_D|_{h_D}^2)\wedge\omega_0\wedge\omega_i^{n-1}\le C.
$$
\end{claim}
\begin{proof}[proof of claim]
Let $\theta$ be a smooth $(1,1)$-form in $\alpha$ and $\varphi_i\le 0$ be a $(\theta+(1/i)\omega_0)$-plurisubharmonic function such that $\omega_i=\theta_i+dd^c\varphi_i$ where $\theta_i=\theta+(1/i)\omega_0$. We denote by $\omega_D$ the curvature form of $h_D$.
Then
\begin{align}\label{log int nef big eq1}
&\int_X(-\log|s_{D}|^2)\wedge\omega_0\wedge\omega_i^{n-1}\notag\\
&=\int_X(-\log|s_{D}|^2)\wedge\omega_0\wedge\theta_i^{n-1}
+\Sigma_{p=1}^{n-1}c_p\int_X(-\log|s_{D}|^2)\wedge\omega_0\wedge \theta_i^{n-1-p}\wedge(dd^c\varphi_i)^p
\end{align}
where $c_p>0$ is a constant depending only on $p$.
Since $\theta_i^{n-1}\le C\omega_0^{n-1}$, we have
\begin{equation}\label{log int nef big eq2}
\int_X(-\log|s_{D}|^2)\wedge\omega_0\wedge\theta_i^{n-1}
\le C\int_X(-\log|s_{D}|^2)\wedge\omega_0^n\le C.
\end{equation}
We then prove the boundedness of the second term in RHS of (\ref{log int nef big eq1}) by induction on $p$. Assume $p=1$. Let $a>0$ be any positive constant. Then
\begin{align}\label{log int nef big eq3}
&\int_X(-\max{\{\log|s_{D}|^2, -a\}})\wedge\omega_0\wedge\theta_i^{n-2}\wedge dd^c\varphi_i\notag\\
&=\int_X\varphi_i(-dd^c\max{\{\log|s_{D}|^2, -a\}})\wedge\omega_0\wedge\theta_i^{n-2}\notag\\
&= \int_X\varphi_i\cdot\omega_{D}\wedge\omega_0\wedge\theta_i^{n-2}\notag\\
&\le C\int_X(-\log|s_D|^2)\omega_0^n\notag\\
&\le C
\end{align}
In the third line, we used $\varphi_i\omega_{D}\wedge\theta_i^{n-2}\le -C\log|s_D|^2\omega_0^{n-1}$, which is proved by $\log|s_D|^2-C\le\varphi_i\le0$ (see Lemma \ref{adapted curr prop}) and $-C\omega_0^{n-1}\le \omega_{D}\wedge\theta_i^{n-2}\le C\omega_0^{n-1}.$ 
Then the induction on $p$ works. We only prove the case $p=2$. 
\begin{align}\label{log int nef big eq4}
&\int_X(-\max{\{\log|s_{D}|^2, -a\}})\wedge\omega_0\wedge\theta_i^{n-3}\wedge (dd^c\varphi_i)^{2}\notag\\
&=\int_X\varphi_i(-dd^c\max{\{\log|s_{D}|^2, -a\}})\wedge\omega_0\wedge\theta_i^{n-3}\wedge (dd^c\varphi_i)\notag\\
&=\int_{(\log|s_D|^2>-a)}\varphi_i\cdot\omega_D\wedge\omega_0\wedge\theta_i^{n-3}\wedge(\theta_i+dd^c\varphi_i)
 -\int_{(\log|s_D|^2>-a)}\varphi_i\cdot\omega_D\wedge\omega_0\wedge\theta_i^{n-2}\notag\\
&\le C\int_X(-\max{\{\log|s_D|^2,-a\}})\omega_0^{n-1}\wedge(\theta_i+dd^c\varphi_i)
+C\int_X(-\log|s_D|^2)\omega_0^n\notag\\
&\le C
\end{align}
In the third line, we used $\varphi_i\omega_{D}\wedge\theta_i^{n-2}\le -C\log|s_D|^2\omega_0^{n-1}$ as before and $\theta_i+dd^c\varphi_i\ge 0$. And the fourth line is the result of the case $p=1$. We end the proof of Claim \ref{log int nef big}.
\end{proof}
Since the rest of the proof of (1) and (2) is the same with \cite[Claim 4.6, Step 2, Step 3, Step 4]{KO25}, we only describe the outline.

\textbf{2nd Step}: In this step, we show that $u_i$ subsequencially weakly converges to $u_{\infty}$ in $L^2_{1,\loc}(X\setminus D, T)$-topology.
By (\ref{L1 est eq1}), the eigenvalues of $u_i$ on a relatively compact open subset $K\Subset X\setminus D$ is contained in an interval $[\alpha, \beta]$ for any $i$. We recall that $\Phi(x,y)=\frac{\exc(x-y)-1}{x-y}$ defined in the 1st Step satisfies $\Phi(x,y)\ge \frac{\exc(\alpha-\beta)-1}{\alpha-\beta}$ for any $x, y\in [\alpha,\beta]$ (refer to \cite[Lemma 2.7]{KO25}). Then we have
$$
\langle \Phi(u_i)\partial^{\nabla_{h_0}}u_i,\partial^{\nabla_{h_0}}u_i\rangle_{h_0,\omega_i}
\ge C_K^{-1}|\partial^{\nabla_{h_0}}u_i|_{h_0,\omega_i}^2 \hbox{ on $K$}.
$$
By $\delta_i^{-1}=\int_X\left|\log|s_D|^2+|s_i|\right|\omega_i^n\ge 1$ and (\ref{L1 est eq1.5}), we obtain 
\begin{equation}\label{L1 est eq2}
\int_K|\partial^{\nabla_{h_0}}u_i|_{h_0,\omega_i}^2\omega_i^n
\le C_K\delta_i^{-1}\int_X\langle\Phi(\delta_i^{-1}u_i)\partial^{\nabla_{h_0}}u_i,\partial^{\nabla_{h_0}}u_i\rangle_{h_0,\omega_i}\omega_i^n\le C_K.
\end{equation}
Let us consider a sequence of relatively compact open subsets in $X\setminus D$ by $X\setminus D_{\varepsilon_i}$ where $D_{\varepsilon_i}$ is an $\varepsilon_i$-neighborhood of $D$ with respect to $\omega_0$ with $\varepsilon_i\searrow 0$. We recall that $\omega_i$ smoothly converges to $T$ on each $X\setminus D_{\varepsilon_i}$ and $T$ is a smooth K\"{a}hler metric on $X\setminus D$. Then by (\ref{L1 est eq1}) and (\ref{L1 est eq2}), the standard diagonal procedure gives a subsequence of $\{u_i\}_i$, also denoted by $\{u_i\}_i$, such that 
\begin{equation}\label{L1 est eq2.5}
u_i\to u_{\infty} \hbox{ weakly in $L^2_{1,\loc}(X\setminus D,T)$ and strongly in $L^2_{\loc}(X\setminus D, T^n)$}.
\end{equation}
By (\ref{L1 est eq1.55}) and Claim \ref{log int nef big}, the dominant convergence theorem gives
\begin{equation}\label{L1 est eq3}
\lim_{i\to\infty}\sqrt{-1}\int_X\Tr(u_i\Lambda_{\omega_i}F_{h_0})\omega_i^n\
=\sqrt{-1}\int_{X\setminus D}\Tr(u_{\infty}\Lambda_TF_{h_0})T^n.
\end{equation}
\textbf{3rd Step}: In this step, we show that $u_{\infty}$, the weak limit of $u_i$ constructed in 2nd step, is not identically 0. 
Since $\delta_i=\left(\int_X(-\log|s_D|^2+|s_i|)\omega_i^n\right)^{-1}$ and $u_i=\delta_is_i$, we have
$$
-\int_X\delta_i\log|s_D|^2\omega_i^n+\int_X|u_i|\omega_i^n=1.
$$
If we recall that $\int_X(-\log|s_D|)\omega_i^n\le C$, then we obtain
\begin{equation}\label{L1 est eq4}
2\int_X(-\delta_i\log|s_D|^2)\omega_i^n+\int_X|u_i|\omega_i^n
\ge1-\delta_iC.
\end{equation}
By (\ref{L1 est eq1}) and again by $\int_X(-\log|s_D|)\omega_i^n\le C$, if we take the limit as $i\to \infty$, the dominant convergence theorem allows us to interchange the limit and the integral in the LHS of (\ref{L1 est eq4}). Then we obtain
$$
\int_{X\setminus D}|u_{\infty}|T^n\ge 1.
$$
Hence, if $u_{\infty}=0$, then it implies $0\ge 1$ which is a contradiction. Thus $u_{\infty}\ne 0$.

\textbf{4th Step}: In this step, we show the inequality in (2) in the statement. Let $\Psi:\mathbb{R}\times\mathbb{R}\to \mathbb{R}_{>0}$ be a smooth positive function such that $\Psi(a,b)<\frac{1}{b-a}$ when $a<b$. We fix $K\subset X\setminus D$ a relatively compact open subset. 
We recall that $\delta_i^{-1}\Phi(\delta_i^{-1}a,\delta_i^{-1}b)$ tends to $\frac{1}{b-a}$ when $a<b$ and to $\infty$ when $b\le a$ (refer to \cite[Lemma 2.7]{KO25}). We also recall that, by (\ref{L1 est eq1}), the eigenvalues of $u_i$ lies in an interval $[\alpha,\beta]$ for all $i$. Hence, for sufficiently large $i>0$, we have
$$
\delta_i^{-1}\Phi(\delta_i^{-1}a,\delta_i^{-1}b)\ge \Psi(a,b) \hbox{ for all $a,b\in[\alpha,\beta]$}.
$$
Together with (\ref{L1 est eq0}), we obtain 
\begin{equation}\label{L1 est eq5}
0\ge \sqrt{-1}\int_X\Tr(u_i\Lambda_{\omega_i}F_{h_0})\omega_i^n
+\int_K\langle\Psi(u_i)\partial^{\nabla_{h_0}}u_i,\partial^{\nabla_{h_0}}u_i\rangle_{h_0,\omega_i}\omega_i^n.
\end{equation}
By (\ref{L1 est eq1}) and (\ref{L1 est eq2.5}), we can see that $\Psi^{\frac{1}{2}}(u_i)\to \Psi^{\frac{1}{2}}(u_{\infty})$ in the operator norm topology on $\Hom(L^2(K, T^n), L^q(K, T^n))$ for any $1\le q<2$ (\cite[Proposition 4.1]{Sim88}, see also \cite[Lemma 2.6]{KO25}). Thus, by (\ref{L1 est eq3}) and (\ref{L1 est eq5}), we obtain that for any $\delta>0$, there exists $i_0>0$ such that for any $i>i_0$ the following holds:
\begin{equation}\label{L1 est eq6}
2\delta\ge \|\Psi^{\frac{1}{2}}(u_{\infty})\partial^{\nabla_{h_0}}\|_{L^q(K,T)}^2
+\sqrt{-1}\int_{X\setminus D}\Tr(u_{\infty}\Lambda_TF_{h_0})T^n
\end{equation}
Since $\partial^{\nabla_{h_0}}u_i\to\partial^{\nabla_{h_0}}u_{\infty}$ weakly on $K$ (refer to (\ref{L1 est eq2.5})), we can again use \cite[Proposition 4.1]{Sim88} (see also \cite[Lemma 2.6]{KO25}) to obtain that $\Psi(u_{\infty})\partial^{\nabla_{h_0}}u_i \to \Psi(u_{\infty})\partial^{\nabla_{h_0}}u_{\infty}$ weakly in $L^q(K, T)$. Then, by (\ref{L1 est eq6}), we obtain
\begin{equation}\label{L1 est eq7}
2\delta\ge \|\Psi^{\frac{1}{2}}(u_{\infty})\partial^{\nabla_{h_0}}u_{\infty}\|_{L^q(K,T)}^2+\sqrt{-1}\int_{X\setminus D}\Tr(u_{\infty}\Lambda_TF_{h_0})T^n.
\end{equation}
Here we recall that $1\le q<2$, $K\Subset X\setminus D$ and $\delta>0$ are chosen arbitrary. Thus (\ref{L1 est eq7}) applies when $q=2$, $K=X\setminus D$ and $\delta=0$, that is, we obtain
$$
0\ge \int_{X\setminus D}\langle\Psi(u_{\infty})\partial^{\nabla_{h_0}}u_{\infty},\partial^{\nabla_{h_0}}u_{\infty}\rangle_{h_0,T}+\sqrt{-1}\int_{X\setminus D}\Tr(u_{\infty}\Lambda_TF_{h_0})T^n.
$$

(3) By $(1)$ and $(2)$, using Proposition \ref{weak proj} below, we can prove $(3)$ by the same argument with \cite[Section 5]{Sim88} (refer to the comment below \cite[Claim 4.6]{FO25}). We only sketch the proof. By \cite[Lemma 5.5]{Sim88}, we see that the eigenvalues $\lambda_1,\ldots,\lambda_r$ of $u_{\infty}$ are all constants and $\lambda_i$ are not all equal. Let $\{\gamma\}$ be the set of intervals between the eigenvalues of $u_{\infty}$ and for each $\gamma$, we choose $p_{\gamma}:\mathbb{R}\to \mathbb{R}$ with $p_{\gamma}(\lambda_i)=1$ for $\lambda_i<\gamma$ and $p_{\gamma}(\lambda_i)=0$ for $\lambda_i>\gamma$. Define $\pi_{\gamma}:=p_{\gamma}(u_{\infty})$. Then, by \cite[Lemma 5.6]{Sim88}, we can see that $\pi_{\gamma}$ defines a $T$-weak holomorphic subbundle (see Definition \ref{weak subbundle defi} below). By \cite[Lemma 5.7]{Sim88}, there exists $\gamma$ such that
$$
\mu_{\alpha}(E)\le \frac{1}{\Tr(\pi_{\gamma})}\int_{X\setminus W}\left(\Tr(\pi_{\gamma}\sqrt{-1}F_{h_0})-\sqrt{-1}\Tr(\delbar\pi_{\gamma}\wedge(\delbar\pi_{\gamma})^{*_{h_0}})\right)\wedge T^{n-1}
$$
where $W\subset X$ is an analytic subset in $X$. By Proposition \ref{weak proj} below, there exists a torsion free subsheaf $\mathcal{F}\subset E$ on $X$ such that  
$$
\frac{1}{\Tr(\pi_{\gamma})}\int_{X\setminus W}\left(\Tr(\pi_{\gamma}\sqrt{-1}F_{h_0})-\sqrt{-1}\Tr(\delbar\pi_{\gamma}\wedge(\delbar\pi_{\gamma})^{*_{h_0}})\right)\wedge T^{n-1}=\mu_{\alpha}(\mathcal{F}).
$$
It contradicts to the assumption that $E$ is $\alpha^{n-1}$-slope stable. Hence we obtain a uniform estimate $C>0$ such that the following holds:
$$
\int_X\left(-\log|s_D|^2+|s_i|\right)\omega_i^n\le C.
$$
\end{proof}
\begin{defi}\label{weak subbundle defi}
Let $X$ be a compact K\"{a}hler manifold and $T$ be a closed positive $(1,1)$-current on $X$ which is smooth K\"{a}hler on $X\setminus D$ where $D$ is a complex analytic subset in $X$. Let $(E,h_0)$ be a holomorphic hermitian vector bundle on $X$. Then, a $T$-weak holomorphic subbundle of $E$ is an $h_0$-hermitian endomorphism $\pi$ defined almost everywhere on $X\setminus D$ such that $\pi^2=\pi$, $(1-\pi)\circ\delbar^E\pi=0$ and
$$
\int_{X\setminus D}|\pi|_{h_0}^2T^n+\int_{X\setminus D}|\nabla_{h_0}\pi|^2_{h_0, T}T^n<\infty.
$$
By \cite{UY86}, the image of a $T$-weak holomorphic subbundle $\pi$ uniquely defines a coherent subsheaf $\mathcal{F}$ of $E|_{X\setminus D}$, We denote $\mathcal{F}$ by $\im(\pi)$.
\end{defi}
Following proposition is the Chern-Weil formula for $T$-weak holomorphic subbundles.
\begin{prop}\label{weak proj}
Let $X$ be a compact K\"{a}hler manifold and $\alpha$ be a nef and big class on $X$ with $D=E_{nK}(\alpha)$ is a snc divisor. Let $T$ be a closed positive $(1,1)$-current with minimal singularities in $\alpha$ which is smooth K\"{a}hler on $X\setminus D$.  Let $(E,h_0)$ be a holomorphic hermitian vector bundle on $X$. Let $\pi$ be a $T$-weak holomorphic subbundle of $E$. Then the following holds.
\begin{enumerate}
\item There exists a torsion free coherent subsheaf $\mathcal{F}\subset E$ on $X$ such that $\mathcal{F}|_{X\setminus D}=\im(\pi)$.
\item There exists an analytic subset $W\subset X$ such that
$$
\int_Xc_1(\mathcal{F})\wedge\alpha^{n-1}
=\int_{X\setminus W}\left(\Tr(\pi\sqrt{-1}F_{h_0})-\Tr(\delbar\pi\wedge(\delbar\pi)^{*_{h_0}})\right)\wedge \langle T^{n-1}\rangle.
$$
\end{enumerate}
\end{prop}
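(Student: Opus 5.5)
The plan is to reduce both statements to the Uhlenbeck--Yau regularity theorem for weakly holomorphic subbundles, applied on compact pieces, together with a Chern--Weil computation in which the boundary terms near $D$ are shown to vanish. For (1), the finite energy $\int_{X\setminus D}|\nabla_{h_0}\pi|^2_{h_0,T}T^n<\infty$ together with $\pi^2=\pi$ and $(1-\pi)\delbar\pi=0$ shows, by \cite{UY86}, that on $X\setminus D$ the projection $\pi$ is smooth outside an analytic subset $S\subset X\setminus D$ of codimension at least $2$. Moreover $\im(\pi)$ is a coherent (reflexive, saturated) subsheaf $\mathcal{F}^\circ$ of $E|_{X\setminus D}$. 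Here we use that $T$ is a smooth K\"ahler metric on $X\setminus D$, so the argument of \cite{UY86} is purely local.

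To extend $\mathcal{F}^\circ$ across $D$ we work with the determinant. Put $r'=\Tr\pi$, which is a constant integer since $\pi$ is a projection with $\delbar\pi$ in $L^2$. Then $\det\mathcal{F}^\circ$ is a holomorphic line subsheaf of $\bigwedge^{r'}E$ over $X\setminus D$. The key point is that a local generating section $\sigma$ of $\mathcal{F}^\circ$ has $h_0$-norm growing at most polynomially in $|s_D|^{-1}$. To see this, we use the $L^2$ bound of $\delbar\pi$ against $T^n\ge|s_D|^{2mn}\omega_0^n$ (from $T\ge|s_D|^{2m}\omega_0$), which gives $\delbar\pi\in L^2(|s_D|^{N}\omega_0^n)$ for a suitable $N$. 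We then apply the Skoda/Siu extension: the Pl\"ucker section $\sigma$ is meromorphic along $D$. Concretely, we take a local frame $e_j$ of $E$ and form $\pi e_{j_1}\wedge\cdots\wedge\pi e_{j_{r'}}$. These satisfy $\delbar(\pi e_j)=(\delbar\pi)e_j$, which is valued in $\im\pi$, so their wedge is a holomorphic section of $\bigwedge^{r'}E$ on $X\setminus(D\cup S)$. It is bounded because $|\pi|\le 1$, and bounded holomorphic sections extend holomorphically across both $S$ and $D$ by the Riemann extension theorem. Hence the Gauss map of $\mathcal{F}^\circ$ extends meromorphically, and its saturation in $E$ gives a torsion-free coherent $\mathcal{F}$ on $X$ with $\mathcal{F}|_{X\setminus D}=\im\pi$. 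I expect this extension step (bounding the Pl\"ucker sections uniformly and getting coherence on all of $X$) to be the main technical point. An alternative route is to note that the closure of the graph of a meromorphic map to the Grassmannian is analytic by Remmert--Stein, using that $D$ is a divisor.

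For (2), let $W=D\cup S'$, where $S'$ is the analytic set on which $\mathcal{F}$ fails to be a subbundle. On $X\setminus W$ the projection $\pi$ is the $h_0$-orthogonal projection onto a smooth subbundle, and by the Gauss--Codazzi formula (as in the proof of Claim \ref{max slope})
$$
\sqrt{-1}\Tr F_{h_0|_{\mathcal{F}}}=\Tr(\pi\sqrt{-1}F_{h_0})-\sqrt{-1}\Tr(\delbar\pi\wedge(\delbar\pi)^{*_{h_0}}).
$$
Next we pass to a resolution $\pi':Y\to X$ by blow-ups on which $\det\mathcal{F}$ becomes a line subbundle $L$ of $\bigwedge^{r'}\pi'^*E$, i.e.\ the singular set has been absorbed into divisors. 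Blow-ups centred in $D$ or in codimension $\ge2$ do not change the slope by Theorem \ref{CT} and Lemma \ref{inv stability}. The form above then equals $c_1(L,h_0|_L)$ plus a nonnegative $(1,1)$-current supported on a divisor $G$.

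The integral against $\langle T^{n-1}\rangle=1_{X\setminus D}T^{n-1}$ is then computed as follows. The current $T$ has minimal singularities, so $T=\theta+dd^c\varphi$ with $\varphi$ bounded on compact sets of $X\setminus D$ and $\varphi\ge\log|s_D|^2-C$ by Lemma \ref{adapted curr prop}. We integrate against the cut-off functions $\eta_k$ of Lemma \ref{cut-off} and integrate by parts. The error term is bounded by $\int|d\eta_k|\,|\delbar\pi|\,T^{n-1}$, which tends to $0$ by Cauchy--Schwarz and the finite energy of $\pi$. The divisor term contributes $\int[G]\wedge\langle T^{n-1}\rangle$, which is zero for components of $G$ inside $D$ (nonpluripolar products charge no pluripolar set, in line with Theorem \ref{CT}). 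For the other components of $G$, they do not meet $X\setminus W$ in the formula, and we must match the cohomology class: $\{\langle T^{n-1}\rangle\}=\alpha^{n-1}$ since $\alpha$ is nef and big. Using the bounded Bedford--Taylor theory on $X\setminus D$, this gives $\int_X c_1(\mathcal{F})\wedge\alpha^{n-1}$ equal to the stated integral over $X\setminus W$.
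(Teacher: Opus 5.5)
\textbf{There is a genuine gap in the extension step of (1).}

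Your sections $\sigma_J=\pi e_{j_1}\wedge\cdots\wedge\pi e_{j_{r'}}$ are not holomorphic. The relation $(1-\pi)\delbar\pi=0$ only says that $\delbar(\pi e_j)=(\delbar\pi)e_j$ takes values in $\im\pi$. Hence $\delbar\sigma_J$ lies in $\Lambda^{r'}(\im\pi)\otimes\Lambda^{0,1}$, which is the very line spanned by $\sigma_J$. That says $\det(\im\pi)$ is a $\delbar$-stable line subbundle; it does not say $\delbar\sigma_J=0$.

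A concrete check: take $E=\mathcal{O}^2$ with the flat metric and $\im\pi=\mathbb{C}\cdot(1,g)$ with $g$ holomorphic. Then $\pi e_1=(1+|g|^2)^{-1}(1,g)$ and $\delbar(\pi e_1)=-(1+|g|^2)^{-2}g\,\overline{g'}\,d\bar z\otimes(1,g)\neq 0$. So the Riemann extension theorem is not available.

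Worse, the step uses only $|\pi|\le 1$, which every orthogonal projection satisfies. It would therefore prove that every holomorphic subbundle of $E|_{X\setminus D}$ extends across $D$, and this is false locally. For $\mathbb{C}\cdot(1,e^{1/z_1})\subset\mathcal{O}^2$ near $\{z_1=0\}$, any section $(f,fe^{1/z_1})$ holomorphic across $z_1=0$ forces $f\equiv 0$. So there is neither a coherent extension nor a generator of polynomial growth.

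The energy hypothesis does not rescue a local argument. Let $T$ be the pull-back of the flat metric under the blow-up of a point of $\mathbb{C}^2$. In the chart $(z_1,z_2)\mapsto(z_1,z_1z_2)$ one has $|d\bar z_1|_T=1$, and $T^2$ is, up to a constant, $|z_1|^2$ times the Euclidean volume. For the projection onto $\mathbb{C}\cdot(1,e^{1/z_1})$, the substitution $w=1/z_1$ turns the $T$-energy into a constant times $\int_{|w|>R}|w|^{-2}e^{2\mathrm{Re}\,w}(1+e^{2\mathrm{Re}\,w})^{-2}\,dA(w)<\infty$. So near a point of $D$ there are finite-energy weak holomorphic subbundles whose image does not extend.

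It follows that neither your Skoda/Siu polynomial-growth claim nor the Remmert--Stein alternative can work. Remmert--Stein also fails for dimension reasons: the part of the Grassmannian bundle over $D$ has dimension $\ge n$ as soon as $0<r'<\rk E$. Bishop's theorem would need volume bounds on the graph of the Gauss map, which the degenerate $T$-energy does not provide. What is missing is a genuinely global input. Possible sources include a contraction of $D$ onto a subset of codimension at least two in a normal space (when one exists), with the extension done there, or extra information about the particular projections $p_\gamma(u_\infty)$ produced in Lemma \ref{L1 est}.

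\textbf{Comparison with the paper.} The paper proves (1) by a different route: it forms $\tilde p=i_*p:E\to i_*\mathcal{G}$ and invokes Oka's theorem for $(\im\tilde p/\Tor)^*\subset E^*$. Be aware that this step is also purely local. In the model above, that dual sheaf is zero near $D$ but of rank one off $D$, so it is not coherent, and the paper's route does not supply the missing ingredient either.

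Granting (1), your part (2) is essentially the paper's argument. The paper compares a smooth metric on $\det\tilde{\mathcal{F}}$ with the one induced by $\wedge^q h_0$ through the function $f$, and discards $dd^c\log f$ on a principalization of $W$. Your saturation of $\det\mathcal{F}$ to a line subbundle $L$ on $Y$ does the same job. Two points need fixing there:
\begin{itemize}
\item Your cut-off step is unnecessary and, as written, unjustified. Cauchy--Schwarz requires $\int|d\eta_k|_T^2T^n\to0$, whereas Lemma \ref{cut-off} only controls $\int|d\eta_k|_{\omega_0}^2\omega_0^n$. Instead, pair the smooth closed form $c_1(L,h_0|_L)$ on $Y$ directly with the closed current $\langle(\pi'^*T)^{n-1}\rangle$, whose class is $(\pi'^*\alpha)^{n-1}$.
\item The vanishing of the $[G]$-contribution is a cohomological fact, not a consequence of $\langle T^{n-1}\rangle$ charging no pluripolar set. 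For saturated $\mathcal{F}$, every component of $G$ is $\pi'$-exceptional, so $[G]\cdot(\pi'^*\alpha)^{n-1}=0$ by the projection formula. Theorem \ref{CT} would be needed only for non-exceptional components lying in $D$.
\end{itemize}
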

\begin{proof}
Let $\pi$ be a $T$-weak holomorphic subbundle of $(E,h_0)$ and $\im(\pi)$ be the torsion free coherent sheaf of $E|_{X\setminus D}$ defined by the image of $\pi$. Let us consider the following short exact sequence of torsion free sheaves on $X\setminus D$:
\begin{equation}\label{weak proj eq1}
0\to \mathcal{F}\xrightarrow{\iota}E|_{X\setminus D}\xrightarrow{p}\mathcal{G}\to 0.
\end{equation}
Then $p$ defines an $\mathcal{O}_X$-linear morphism $\wtil{p}:=i_*p:E\to i_*\mathcal{G}$ where $i:X\setminus D\hookrightarrow X$ is a natural inclusion. Then $\wtil{\mathcal{G}}:=\im(\wtil{p})/\Tor$ is a torsion free coherent sheaf on $X$. In fact, the dual of the surjection $\wtil{p}: E\to \im(\wtil{p})/\Tor$ gives an injection of $\mathcal{O}_X$-module $\wtil{p}^*:(\im(\wtil{p})/\Tor)^*\to E^*$. Then, by Oka\rq{s} coherence theorem, we obtain that $(\im(\wtil{p})/\Tor)^*$ is a coherent sheaf on $X$. Hence $\im(\wtil{p})/\Tor$ extends to a torsion free coherent subsheaf $\wtil{\mathcal{G}}$ of $(\im(\wtil{p})/\Tor)^{**}$ on $X$. Then we obtain the following short exact sequence of torsion free coherent sheaves on $X$:
\begin{equation}\label{weak proj eq2}
0\to \wtil{\mathcal{F}}\xrightarrow{\wtil{\iota}}E\xrightarrow{\wtil{p}}\wtil{\mathcal{G}}\to0
\end{equation}
where the restriction of (\ref{weak proj eq2}) to $X\setminus D$ coincides with (\ref{weak proj eq1}). We can find an analytic subset $W\subset X$ such that $\im(\wtil{\iota})|_{X\setminus W}\subset E|_{X\setminus W}$ is a holomorphic subbundle. Let $\wtil{\pi}$ be an $h_0$-orthogonal projection from $E|_{X\setminus W}$ to $\wtil{\mathcal{F}}|_{X\setminus W}$. Then we have
\begin{equation}\label{weak proj eq3}
\wtil{\pi}|_{X\setminus W}=\pi \hbox{ almost everywhere in $X\setminus D$}.
\end{equation}
Hence, it suffices to show the following:
\begin{equation}\label{weak proj eq4}
\int_Xc_1(\wtil{\mathcal{F}})\wedge\alpha^{n-1}=\int_{X\setminus W}c_1(\wtil{\mathcal{F}}, h_0|_{\wtil{\mathcal{F}}})\wedge\langle T^{n-1}\rangle,
\end{equation}
since 
\begin{equation}\label{weak proj eq4.5}
c_1(\wtil{\mathcal{F}}, h_0|_{\wtil{\mathcal{F}}})=\Tr(\wtil{\pi}\sqrt{-1}F_{h_0})-\Tr(\delbar\wtil{\pi}\wedge(\delbar\wtil{\pi})^{*_{h_0}}).
\end{equation}
 Since (\ref{weak proj eq4}) is proved by the standard way (e.g. \cite[Remark 8.5]{Kob87}), we only describe the outline of the proof. We denote by $q:=\rk\wtil{\mathcal{F}}$. Let us consider
$$
\wtil{j}:=\wedge^q\wtil{\iota}:\det\wtil{\mathcal{F}}\to \bigwedge^qE.
$$
Then $\{\wtil{j}=0\}=W$. Since $\wtil{j}$ is an injection of a holomorphic line subbundle, the pull back
$$
u:=\wtil{j}^*(\wedge^qh_0)
$$
defines a smooth hermitian metric on $\det\wtil{\mathcal{F}}|_{X\setminus W}$. Let $\tau$ be a local holomorphic frame of $\det\wtil{\mathcal{F}}$ and $s_1,\ldots,s_r$ be a local holomorphic frame of $E$. Then we can denote by
$$
\wtil{j}(\tau)=\Sigma_{I}\tau^Is_I \hbox{ where $s_I=s_{i_1}\wedge\ldots\wedge s_{i_p}$ with $i_1<\cdots<i_p$}.
$$
Let $\wtil{u}$ be a smooth hermitian metric on $\det\wtil{\mathcal{F}}$ which is globally defined over $X$. Then we define a nonnegative function
$$
f:=\frac{u(\tau,\tau)}{\wtil{u}(\tau,\tau)}=\Sigma_{I,J}u_{IJ}\tau^I\overline{\tau^J}
$$
where $u_{IJ}=\wedge^qh_0(s_I,s_J)/\wtil{u}(\tau,\tau)$. Then we can see that $f$ is independent of the choice of $\tau$, that is, $f$ is globally defined on $X$, and $\{f=0\}=W$. Let $\mu:Y\to X$ be a composition of blow-ups such that $\mu^{-1}(W)$ is a snc divisor in $Y$. Since $\mu^*f$ vanishes along $\mu^{-1}(W)$, we have 
$$
dd^c\log\mu^*f=[W^{\prime}]
$$
where $[W^{\prime}]$ is an integral current supported on $\mu^{-1}(W)$ with nonnegative coefficients. We obtain that
\begin{equation}\label{weak proj eq5}
\int_{X\setminus (D\cup W)}dd^c\log f\wedge T^{n-1}=\int_{Y\setminus \mu^{-1}(D)\cup\mu^{-1}(W)}dd^c\log\mu^*f\wedge(\mu^*T)^{n-1}=0.
\end{equation}
Since $c_1(\det\wtil{\mathcal{F}},\wtil{u})=c_1(\det\wtil{\mathcal{F}},u)+dd^c\log f$ on $X\setminus W$, we obtain 
\begin{align}\label{weak proj eq6}
\int_Xc_1(\det\wtil{\mathcal{F}})\wedge\alpha^{n-1}
&=\int_Xc_1(\det\wtil{\mathcal{F}},\wtil{u})\wedge\langle T^{n-1}\rangle\notag\\
&=\int_{X\setminus (D\cup W)}c_1(\det\wtil{\mathcal{F}},\wtil{u})\wedge T^{n-1}\notag\\
&=\int_{X\setminus (D\cup W)}c_1(\det\wtil{\mathcal{F}},u)\wedge T^{n-1}\notag\\
&=\int_{X\setminus W}c_1(\det\wtil{\mathcal{F}},u)\wedge \langle T^{n-1}\rangle.
\end{align}
In the third line above, we used (\ref{weak proj eq5}). Since $u=\wtil{j}^*(\wedge^qh_0)$, we have 
\begin{equation}\label{weak proj eq7}
c_1(\det\wtil{\mathcal{F}},u)=c_1(\wtil{\mathcal{F}},h) \hbox{ on $X\setminus W$}.
\end{equation}
Then by (\ref{weak proj eq3}), (\ref{weak proj eq4.5}), (\ref{weak proj eq6}) and (\ref{weak proj eq7}), we obtain
\begin{align*}
\int_Xc_1(\wtil{\mathcal{F}})\wedge\alpha^{n-1}
&=\int_{X\setminus W}c_1(\wtil{\mathcal{F}},h_0|_{\wtil{\mathcal{F}}})\wedge\langle T^{n-1}\rangle\\
&=\int_{X\setminus (D\cup W)}\left(\Tr(\pi\sqrt{-1}F_{h_0})-\Tr(\delbar\pi\wedge(\delbar\pi)^{*_{h_0}})\right)\wedge T^{n-1}.
\end{align*}
\end{proof}
\subsubsection{End of the proof of Theorem \ref{main thm2 intro} (1) \texorpdfstring{$\Rightarrow$}{Rightarrow} (2)}
As a consequence of the previous subsections (refer to Lemma \ref{uniform est defi lemm}, Lemma \ref{C0 to L2 limit}, Lemma \ref{C0 to L1} and Lemma \ref{L1 est} ), we obtain the followings:
\begin{theo}\label{main theo}
We use the notations in Notation \ref{notations} and Lemma \ref{Uhl limit}. 
Then there exists a smooth $h_0$-hermitian endomorphism ${\Psi_{\infty}}$ of $E$ which satisfies the following conditions:
\begin{enumerate}
\item For any $0<\alpha<1$, there exists a subsequence of $\{{\Psi_i}\}_i$, also denoted by $\{{\Psi_i}\}_i$, such that for any compact subset $A\subset X\setminus D$, the following holds:
$$
\lim_{i\to\infty}\|{\Psi_i}-{\Psi_{\infty}}\|_{C^{1,\alpha}(A)}=0.
$$
\item There exists a constant $C>0$ such that the following holds:
\begin{itemize}
\item $\sup_{X\setminus D}|s_D{\Psi_{\infty}}^{\pm1}|_{h_D\otimes h_0\otimes h_0^*}\le C.$
\item $\int_{X\setminus D}|s_D^{m+1}\partial^{\nabla_{h_0}}\Psi_{\infty}^{\pm1}|^2_TT^n\le C$.
\end{itemize}
\item Furthermore, ${\Psi_{\infty}}$ is nontrivial: ${\Psi_{\infty}}\not\equiv 0$.
\end{enumerate}
\end{theo}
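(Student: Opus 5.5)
The plan is to assemble the previously established lemmas into a single chain: a uniform $L^1$-bound, then a weighted uniform $C^0$-bound, then the weighted gradient bound, and finally local convergence on $X\setminus D$.

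\textbf{Step 1: the $L^1$-bound.} The starting point is Lemma \ref{L1 est}. Under the assumption that $E$ is $\alpha^{n-1}$-slope stable (Notation \ref{notations}), its final assertion gives a constant $C>0$, independent of $i$, with
$$
\int_X\left(-\log|s_D|^2+|s_i|\right)\omega_i^n\le C .
$$
Its proof is by contradiction. If $\delta_i^{-1}$ diverged, the normalized limit $u_\infty$ would yield, through Proposition \ref{weak proj}, a subsheaf with $\mu_\alpha(\mathcal{F})\ge\mu_\alpha(E)$, contradicting stability.

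\textbf{Step 2: the weighted $C^0$-bound.} I will insert this bound into the right-hand side of Lemma \ref{C0 to L1}. I need to bound $\int_X\big|\log|s_D|^2+|s_i|_{h_0}\big|\,\omega_i^n$. By the triangle inequality it is at most
$$
\int_X(-\log|s_D|^2)\,\omega_i^n+\int_X|s_i|\,\omega_i^n ,
$$
using $|s_D|\le1$. Step 1 controls both terms. Lemma \ref{C0 to L1} then gives $\sup_X\log|s_D\Psi_i^{\pm1}|^2_{h_0\otimes h_D}\le C$, that is, $\sup_X|s_D\Psi_i|_{h_D\otimes h_0\otimes h_0^*}\le C$ uniformly in $i$. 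This is exactly the hypothesis (\ref{uniform esti diff lemm ass}).

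\textbf{Step 3: derivative bounds and convergence.} With (\ref{uniform esti diff lemm ass}) in hand, Lemma \ref{uniform est defi lemm} gives the bound on $\Psi_i^{-1}$ and the uniform weighted $L^2$-bounds on $\partial^{\nabla_{h_0}}\Psi_i^{\pm1}$. Lemma \ref{C0 to L2 limit} then produces $\Psi_\infty$, smooth on $X\setminus D$, together with statements (1), (2) and (3) verbatim. Nontriviality comes from $\det\Psi_i=1$, which passes to the limit, so $\det\Psi_\infty=1$. For the $\Psi_\infty^{-1}$ estimate I will check that $\Psi_i^{-1}\to\Psi_\infty^{-1}$ locally. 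This holds because $\det\Psi_\infty=1$ and $\Psi_i^{-1}$ is uniformly bounded on compacts.

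\textbf{Main obstacle: passing the gradient bound to the limit.} The one point needing care is statement (2b), since Lemma \ref{uniform est defi lemm} is stated with $\omega_i$, whereas (2b) is stated with $T$. I will restrict to a compact $K\Subset X\setminus D$. There $\omega_i\to T$ smoothly by Definition \ref{adapted defi}, and $\Psi_i\to\Psi_\infty$ in $C^1(K)$, so the integrands converge uniformly on $K$. This gives
$$
\int_K|s_D^{m+1}\partial^{\nabla_{h_0}}\Psi_\infty^{\pm1}|^2_T\,T^n\le C ,
$$
with $C$ independent of $K$. Exhausting $X\setminus D$ by such $K$ and applying monotone convergence yields (2b). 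The pointwise bound (2a) passes to the limit in the same way.
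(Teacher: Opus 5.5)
Your proposal is correct and follows the paper's proof: the paper uses the same chain, namely Lemma \ref{L1 est} for the uniform $L^1$-bound coming from stability, then Lemma \ref{C0 to L1} for $\sup_X|s_D\Psi_i|\le C$, then Lemma \ref{C0 to L2 limit} to extract $\Psi_\infty$ with properties (1)--(3). Your extra details (the triangle-inequality bound on the $L^1$ integrand, the convergence $\Psi_i^{-1}\to\Psi_\infty^{-1}$, and the exhaustion argument passing the weighted gradient bound from $\omega_i$ to $T$) only spell out what the paper calls a ``direct consequence'' inside Lemma \ref{C0 to L2 limit}.
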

\begin{proof}
Let us denote by $\Psi_i:=e^{s_i}$ for some $h_0$-hermitian endomorphism of $(E,h_0)$ with $\Tr(s_i)=0$. Then, by Lemma \ref{L1 est}, we obtain that 
$$
\int_X\left(-\log|s_D|^2+|s_i|_{h_0}\right)\le C.
$$
Then, by Lemma \ref{C0 to L1}, we have
$$
\sup_X|s_D\Psi_i|_{h_D\otimes h_0\otimes h_0^*}\le C.
$$
Hence, by Lemma \ref{C0 to L2 limit}, we obtain $\Psi_{\infty}$ as in the statement.
\end{proof}
Since $\Psi_i$ in Lemma \ref{Uhl limit} defines an $\omega_i$-HYM metric, we obtain the following by Theorem \ref{main theo}, which proves Theorem \ref{main thm2 intro} (1) $\Rightarrow$ (2):
\begin{theo}\label{KH corr nef big with esti} 
Let $X$ be a compact K\"{a}hler manifold and $\alpha$ be a nef and big class on $X$. Let $T$ be an adapted closed positive $(1,1)$-current in $\alpha$. Let $E$ be a holomorphic vector bundle on $X$. If $E$ is $\alpha^{n-1}$-slope stable, then $E$ admits a $T$-adapted HYM metric $h=h_0\Psi^2$ such that the following holds:
\begin{enumerate}
\item $\sup_{X\setminus D}|s_D\Psi^{\pm1}|\le C.$
\item $\int_{X\setminus D}|s_D^{m+1}\partial^{\nabla_{h_0}}\Psi^{\pm1}|^2T^n\le C.$
\item The $T$-HYM constant of $h$ satisfies
$$
\lambda=\frac{1}{\alpha^n}\frac{c_1(E)\cdot \alpha^{n-1}}{\rk E}.
$$
\end{enumerate}
\end{theo}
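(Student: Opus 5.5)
We reduce first to the situation of Notation \ref{notations}. By Hironaka, take a composition of blow-ups $\pi:Y\to X$ along smooth centres contained in $E_{nK}(\alpha)$ such that $D=E_{nK}(\pi^*\alpha)$ is an snc divisor. By Lemma \ref{inv stability}, $\pi^*E$ is $(\pi^*\alpha)^{n-1}$-slope stable. By Lemma \ref{adapted app}, $\pi^*T$ is an adapted current in $\pi^*\alpha$. By Lemma \ref{inv adapted HYM}, a $\pi^*T$-adapted HYM metric on $\pi^*E$ is the pull-back of a $T$-adapted HYM metric on $E$, and the estimates (1), (2) are stated on $Y$ anyway. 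So we may assume $X=Y$, $D$ snc, and fix a smooth $h_0$, an adapted approximation $\omega_i\in\alpha+(1/i)\omega_0$, and the integer $m$ with $T\ge|s_D|^{2m}\omega_0$.

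Next we produce approximating metrics. By Lemma \ref{Uhl limit}(1), $E$ is $\{\omega_i\}^{n-1}$-stable, so Uhlenbeck--Yau gives $\omega_i$-HYM metrics $h_i=h_0\Psi_i^2$, normalized by $\det\Psi_i=1$. Their HYM constants are
$$
\lambda_i=\frac{2\pi\, c_1(E)\cdot\{\omega_i\}^{n-1}}{\rk E\,\{\omega_i\}^n},
$$
up to the paper's normalization, and these converge to the $\lambda$ in (3) as $i\to\infty$.

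Now apply Theorem \ref{main theo}. It gives a subsequence with $\Psi_i\to\Psi:=\Psi_\infty$ in $C^{1,\alpha}_{\loc}(X\setminus D)$, together with the bounds $\sup|s_D\Psi^{\pm1}|\le C$ and $\int|s_D^{m+1}\partial^{\nabla_{h_0}}\Psi^{\pm1}|^2_T T^n\le C$. These are exactly (1) and (2), and they give the adapted conditions (d) and (e) of Definition \ref{adapted HYM defi} with $m$ replaced by $1$ and $l$ replaced by $m+1$. As in the proof of Lemma \ref{C0 to L2 limit}, we pass to the limit in $\sqrt{-1}\Lambda_{\omega_i}\delbar(\partial h_i h_i^{-1})=\lambda_i\id$, using that $\omega_i\to T$ smoothly on compact subsets of $X\setminus D$. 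This shows $h=h_0\Psi^2$ is smooth on $X\setminus D$ and satisfies $\sqrt{-1}\Lambda_TF_h=\lambda\id$ with the stated $\lambda$, which is (b)$'$ and (3). Moreover $\det\Psi=1$ in the limit, so $h$ is a genuine metric.

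The remaining point, which we expect to be the main subtlety, is the admissibility (c)$'$: $\int_{X\setminus D}|F_h|^2_{h,T}T^n<\infty$. Lemma \ref{Uhl limit}(4) gives $\int_X|F_{\nabla_i}|^2_{h_0,\omega_i}\omega_i^n\le C$ uniformly, and $|F_{\nabla_i}|_{h_0}=|F_{h_i}|_{h_i}$. On each compact $K\Subset X\setminus D$ we want $F_{h_i}\to F_h$ pointwise, so that Fatou's lemma gives $\int_K|F_h|^2_{h,T}T^n\le C$; exhausting $X\setminus D$ then yields (c)$'$. The issue is that $C^{1,\alpha}$ convergence of $\Psi_i$ does not control curvature directly. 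We would first upgrade to $C^\infty_{\loc}$ convergence on $X\setminus D$ via local elliptic bootstrapping of the HYM equation with smoothly converging $\omega_i$, as in \cite[Proposition 1]{BS94}. Alternatively, weak $L^2$ lower semicontinuity suffices: the $F_{\nabla_i}$ are bounded in $L^2(K)$, converge weakly to $\Psi F_h\Psi^{-1}$, and the $L^2$ norm is lower semicontinuous under weak convergence. Either route gives the uniform bound, so $h$ is a $T$-adapted HYM metric with properties (1)--(3).
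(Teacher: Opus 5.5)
Your proposal is correct and follows the paper's route: the paper's proof is just the application of Theorem \ref{main theo} to the $\omega_i$-HYM metrics $h_i=h_0\Psi_i^2$ of Lemma \ref{Uhl limit}, with the limiting equation $\sqrt{-1}\Lambda_T F_h=\lambda\,\id$ and the value of $\lambda$ coming from the proof of Lemma \ref{C0 to L2 limit}. Your explicit reduction to the snc model (Lemmas \ref{adapted app}, \ref{inv stability}, \ref{inv adapted HYM}) and your check of the admissibility condition (c)$'$, using the uniform $L^2$ curvature bound of Lemma \ref{Uhl limit}(4), local smooth convergence and Fatou's lemma, fill in points the paper leaves implicit.
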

\subsection{Proof of Theorem \ref{main thm2 intro} (2) \texorpdfstring{$\Rightarrow$}{Rightarrow} (1) and the uniqueness}\label{HYM stable section}
\begin{notation}\label{notation3}
\begin{itemize}
Let $X$ be a compact K\"{a}hler manifold with a smooth K\"{a}hler metric $\omega_0$ on $X$ and $\alpha$ be a nef and big class on $X$. In this subsection, we use the following notations. 
\item  Assume that $D=E_{nK}(\alpha)$ is a snc divisor. We denote by $s_D$ a defining section of $D$ and $h_{D,0}$ a smooth hermitian metric on $D$ with $|s_D|_{h_{D,0}}\le 1$.
\item Let $T$ be a closed positive $(1,1)$-current in $\alpha$ satisfying $|s_D|^k\langle T^n\rangle\le C\omega_0^n$ for some $k>0$ (refer to Lemma \ref{adapted curr prop}). Let $m\in\mathbb{Z}_{\ge0}$ be a nonnegative integer so that $T\ge |s_D|^{2m}\omega_0$ holds.
\item Let $(E,h_0)$ be a pair consisting of a holomorphic vector bundle $E$ over $X$ and a smooth hermitian metric $h_0$ on $E$.
\end{itemize}
\end{notation}
In this section, the following lemma will repeatedly used. The proof is the same with Lemma \ref{Uhl limit}.
\begin{lemm}\label{adHYM hol}
We use the notations in Notation \ref{notation3}. Let $(F, h_F)$ be a holomorphic hermitian vector bundle on $X$. Assume that $F$ admits a $T$-adapted HYM metric $h=h_F\Psi^2$. Let us define a connection $\nabla$ of $F$ by $\nabla=\Psi\circ\nabla_{h}\circ \Psi^{-1}$. Then $\nabla$ is a $T$-admissible HYM connection of a complex hermitian vector bundle $(F,h_F)$. Furthermore, we have that the morphism $\Psi:(F,\delbar^F)\to (F,\delbar^{\nabla})$ is holomorphic.
\end{lemm}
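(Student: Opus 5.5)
The argument is a pointwise computation on $\Amp(\alpha)=X\setminus D$, copying the proof of Lemma \ref{Uhl limit} with $h_i$ replaced by $h$. The only new issue is the admissibility condition, which is inherited directly from $h$. Write $\nabla_h$ for the Chern connection of $h$ on $F|_{X\setminus D}$ and set $\nabla=\Psi\circ\nabla_h\circ\Psi^{-1}$. Since $\Psi$ is smooth and invertible on $X\setminus D$, $\nabla$ is a smooth connection there.

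\emph{Unitarity.} First check that $\nabla$ is an $h_F$-connection. For local sections $u,v$,
\[
d\,h_F(u,v)=d\,h(\Psi^{-1}u,\Psi^{-1}v)=h(\nabla_h\Psi^{-1}u,\Psi^{-1}v)+h(\Psi^{-1}u,\nabla_h\Psi^{-1}v),
\]
and this equals $h_F(\nabla u,v)+h_F(u,\nabla v)$. This uses $h=h_F\Psi^2$ and that $\Psi$ is $h_F$-hermitian.

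\emph{Holomorphy and integrability.} Since $\delbar^{\nabla_h}=\delbar^F$, we get
\[
\delbar^{\nabla}\circ\Psi-\Psi\circ\delbar^F=\Psi\circ(\delbar^{\nabla_h}-\delbar^F)=0,
\]
so $\Psi:(F,\delbar^F)\to(F,\delbar^{\nabla})$ is holomorphic on $X\setminus D$. Also $\delbar^\nabla\circ\delbar^\nabla=\Psi\circ\delbar^F\circ\delbar^F\circ\Psi^{-1}=0$, which gives condition (a) of Definition \ref{adapted HYM defi}.

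\emph{HYM equation and finite energy.} The curvature is conjugated, $F_\nabla=\Psi\circ F_{h}\circ\Psi^{-1}$. Hence
\[
\sqrt{-1}\Lambda_TF_\nabla=\Psi\,(\lambda\,\id)\,\Psi^{-1}=\lambda\,\id,
\]
which is (b). For (c), use $A^{*_h}=\Psi^{-2}A^{*_{h_F}}\Psi^2$ exactly as in Lemma \ref{Uhl limit}. This gives the pointwise identity $|F_\nabla|^2_{h_F,T}=|F_h|^2_{h,T}$. Integrating against $T^n$ over $\Amp(\alpha)$ and using (c)$'$ for $h$ yields (c) for $\nabla$.

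The adapted conditions (d), (e) are not needed for this lemma; they only matter later when $\Psi$ is used to compare with other objects. The only point needing care is that all identities hold pointwise on $X\setminus D$, where everything is smooth. No step involves extension across $D$, so there is no genuine obstacle.
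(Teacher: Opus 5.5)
Your proposal is correct and follows essentially the paper's approach. The paper says only that the proof is the same as that of Lemma \ref{Uhl limit}, and you carry out exactly those pointwise computations on $X\setminus D$: unitarity via $h_F(u,v)=h(\Psi^{-1}u,\Psi^{-1}v)$, holomorphy of $\Psi$ from $\delbar^{\nabla_h}=\delbar^F$, and the conjugation $F_\nabla=\Psi F_h\Psi^{-1}$ giving both the HYM equation and $|F_\nabla|_{h_F,T}=|F_h|_{h,T}$.
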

Now, we recall the estimates for $T$-adapted HYM metrics of $\alpha^{n-1}$-slope stable vector bundles in Theorem \ref{KH corr nef big with esti} (1) and (2) which will be used in this section.
We first establish the following lemma.
\begin{lemm}\label{nK para}
We use notations in Notation \ref{notation3}. Fix $l>0$ sufficently large. Let $h_D:=h_{D,0}\Psi_D^2$ be a $T$-adapted HYM metric on $\mathcal{O}(lD)$ constructed in Theorem \ref{KH corr nef big with esti}. Let us consider the following composition of holomorphic morphisms:
$$
(\mathcal{O}_X,h_{\triv})\xrightarrow{s_D^l}(\mathcal{O}(lD), h_{D,0})\xrightarrow{\Psi_D}(\mathcal{O}(lD)|_{X\setminus D}, \nabla_D) 
$$
where $h_{\triv}$ is the trivial metric on $\mathcal{O}_X$ and $\nabla_D=\Psi_D\circ\nabla_{h_D}\circ\Psi_D^{-1}$ is the $T$-admissible HYM connection on $(\mathcal{O}(lD),h_{D,0})|_{X\setminus D}$. Then the following holds:
$$
(\nabla_{D}\otimes\nabla_{h_{\triv}}^*)(\Psi_Ds_D^l)=0 \hbox{ on $X\setminus D$}.
$$
\end{lemm}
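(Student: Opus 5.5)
The plan is a Bochner-type argument for the holomorphic section $\sigma:=\Psi_Ds_D^l$ of $(\mathcal{O}(lD)|_{X\setminus D},\nabla_D)$, with the boundary terms near $D$ killed by taking $l$ large and using the cut-off functions of Lemma \ref{cut-off}.

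\textbf{Step 1: the HYM constant vanishes.} By Lemma \ref{adHYM hol}, the composition $\sigma$ is a holomorphic section of $(\mathcal{O}(lD)|_{X\setminus D},\delbar^{\nabla_D})$, and $\nabla_D$ is a $T$-admissible HYM connection for $h_{D,0}$. By Theorem \ref{KH corr nef big with esti} (3), its HYM constant is
$$
\lambda=\frac{l\,[D]\cdot\alpha^{n-1}}{\alpha^n}.
$$
Since $D\subset E_{nK}(\alpha)$, Theorem \ref{CT} gives $\alpha^{n-1}[D]=0$, so $\lambda=0$ and $\sqrt{-1}\Lambda_TF_{\nabla_D}=0$ on $X\setminus D$.

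\textbf{Step 2: pointwise Bochner identity.} On $X\setminus D$, where $T$ is smooth Kähler, the standard computation for a holomorphic section (as in the proof of Lemma \ref{C0 to L1}) gives
$$
\Delta_T|\sigma|^2_{h_{D,0}}=2|\partial^{\nabla_D}\sigma|^2_{h_{D,0},T}-2\langle\sqrt{-1}\Lambda_TF_{\nabla_D}\sigma,\sigma\rangle=2|\partial^{\nabla_D}\sigma|^2_{h_{D,0},T}.
$$
Since $\delbar^{\nabla_D}\sigma=0$, it suffices to prove $\int_{X\setminus D}|\partial^{\nabla_D}\sigma|^2T^n=0$.

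\textbf{Step 3: integrate against cut-offs.} Take $\eta_k$ from Lemma \ref{cut-off} for $Z=D$, which has real codimension $2$, with respect to $\omega_0$. Integration by parts gives
$$
2\int\eta_k|\partial^{\nabla_D}\sigma|^2T^n=-\int\langle d\eta_k,d|\sigma|^2\rangle_TT^n,
$$
and we have $|d|\sigma|^2|\le 2|\sigma||\nabla_D\sigma|$. Cauchy--Schwarz with a weight $|s_D|^{2a}$ then bounds the right-hand side by
$$
\Big(\int|s_D|^{2a}|\nabla_D\sigma|^2_TT^n\Big)^{1/2}\Big(\int|d\eta_k|^2_T|\sigma|^2|s_D|^{-2a}T^n\Big)^{1/2}.
$$

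\textbf{Step 4: bound the two factors.} This is the main obstacle, because $T$ degenerates along $D$.

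For the second factor, use $T\ge|s_D|^{2m}\omega_0$, which gives $|d\eta_k|^2_T\le|s_D|^{-2m}|d\eta_k|^2_{\omega_0}$, and $T^n\le C|s_D|^{-k}\omega_0^n$ from Notation \ref{notation3}. Theorem \ref{KH corr nef big with esti} (1) gives $|\sigma|\le C|s_D|^{l-1}$. Hence for $l$ large relative to $a,m,k$ the integrand is at most $C|d\eta_k|^2_{\omega_0}\omega_0^n$, and the second factor tends to $0$ by Lemma \ref{cut-off} (3).

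For the first factor, $\nabla_D=\Psi_D\circ\nabla_{h_D}\circ\Psi_D^{-1}$ and the computation at the end of the proof of Lemma \ref{uniform est defi lemm} give
$$
(\nabla_D\otimes\nabla_{h_{\triv}}^*)\sigma=2(\partial^{\nabla_{h_{D,0}}}\Psi_D)\,s_D^l+\Psi_D\,\partial^{\nabla_{h_{D,0}}}s_D^l.
$$
For $l\ge m+1+a$, the first term is weighted-$L^2$ bounded by Theorem \ref{KH corr nef big with esti} (2). The second term is bounded by Theorem \ref{KH corr nef big with esti} (1) together with $|\partial s_D|_T^2\le|s_D|^{-2m}C$ and the $T^n$ bound, after absorbing powers of $|s_D|$ into $l$. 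So the first factor is finite.

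\textbf{Conclusion.} Hence $\int\eta_k|\partial^{\nabla_D}\sigma|^2T^n\to0$. Since $\eta_k\nearrow1$ on $X\setminus D$, monotone convergence gives $\partial^{\nabla_D}\sigma=0$, and therefore $(\nabla_D\otimes\nabla_{h_{\triv}}^*)\sigma=0$ on $X\setminus D$. The only delicate point is the bookkeeping of the exponents of $|s_D|$; this is exactly why $l$ is taken sufficiently large.
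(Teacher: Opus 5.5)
Your proposal is correct and follows essentially the same route as the paper. Both arguments use the Bochner identity $\Delta_T|\Psi_Ds_D^l|^2=|\nabla_D(\Psi_Ds_D^l)|^2$, which holds because the HYM constant of $\mathcal{O}(lD)$ vanishes. Both then kill the boundary term with the cut-offs $\eta_k$ and Cauchy--Schwarz, using Theorem \ref{KH corr nef big with esti} (1)--(2), $T\ge|s_D|^{2m}\omega_0$, $|s_D|^kT^n\le C\omega_0^n$ and $l$ large. The remaining differences are minor: your weight $|s_D|^{2a}$ is unnecessary ($a=0$ works), you invoke Theorem \ref{CT} explicitly for $\lambda=0$, and you conclude by monotone convergence instead of first showing $\int_{X\setminus D}\Delta_T|\Psi_Ds_D^l|^2T^n=0$.
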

\begin{proof}
We recall that, by the construction of $\Psi_D$ (see Theorem \ref{KH corr nef big with esti}) , it satisfies
\begin{equation}\label{nK para eq1}
\sup_{X\setminus D}|s_D\Psi_D|\le C \hbox{ and } \int_{X\setminus D}|s_D^{m+1}\partial^{\nabla_{h_{D,0}}}\Psi_D|^2\le C.
\end{equation} 
Since $s_D^l:(\mathcal{O}_X,\delbar^{\nabla_{h_{\triv}}})\to (\mathcal{O}(lD),\delbar^{\nabla_{h_{D,0}}})$ and $\Psi_D:(\mathcal{O}(lD),\delbar^{\nabla_{h_{D,0}}}) \to (\mathcal{O}(lD)|_{X\setminus D}, \delbar^{\nabla_D})$ is holomorphic (refer to Lemma \ref{adHYM hol}) and since $\sqrt{-1}\Lambda_TF_{h_D}=0$ by Theorem \ref{KH corr nef big with esti}, we obtain
\begin{equation}\label{nK para eq2}
\Delta_T|\Psi_Ds_D^l|_{h_{D,0}\otimes h_{\triv}}^2=\left|(\nabla_{D}\otimes\nabla_{h_{\triv}}^*)(\Psi_Ds_D^l)\right|^2_{h_{D,0}\otimes h_{\triv}^*,T}.
\end{equation}
We show that the integral of LHS with respect to $T^n$ equals to 0. Let $\eta_k:X\to [0,1]$ be a smooth cut-off functions as in Lemma \ref{cut-off}, that is, each $\eta_k$ satisfies
$$
\Supp(\eta_k)\Subset X\setminus D,\hspace{2mm} X\setminus D_{\varepsilon_{k-1}}\subset \{\eta_k=1\},\hspace{2mm} \lim_{k\to\infty}\int_X|d\eta_k|^2_{\omega_0}\omega_0^n=0.
$$
Then we comute as follows:
\begin{align*}
\left|\nabla|\Psi_Ds_D^l|^2\right|_T
&\le 2\left|\langle(\nabla_D\otimes\nabla_{h_{D,0}}^*\Psi_D)s_D^l+l\Psi_Ds_D^{l-1}\nabla_{h_{D,0}}s_D,\Psi_Ds_D^l\rangle\right|\\
&\le |s_D|^{m+1}|\partial^{\nabla_{h_{D,0}}}\Psi_D|_T|s_D\Psi_D||s_D|^{2l-m-2} +l|s_D\Psi_D|^2|s_D|^{2m}|\nabla_{h_{D,0}}s_D|_T|s_D|^{2l-2m-3}\\
&\le C\left(|s_D^{m+1}\partial^{\nabla_{h_{D,0}}}\Psi_D|_T+1 \right)|s_D|^{2l-2m-3}
\end{align*}
In the last line, we used (\ref{nK para eq1}) and the assumption that $T\ge |s_D|^{2m}\omega_0$. Then we can estimate as follows:
\begin{align*}
\left|\int_{X\setminus D}\eta_k\Delta_T|\Psi_Ds_D^l|^2T^n \right|
&=\left|\int_{X\setminus D}\langle\nabla\eta_k,\nabla|\Psi_Ds_D^l|^2\rangle_TT^n \right|\\
&\le \int_{X\setminus D}|\nabla\eta_k|_T|\nabla|\Psi_Ds_D^l|^2|_TT^n\\
&\le C\left(\int_X|\nabla\eta_k|_{\omega_0}^2|s_D|^{4l-2m-6}T^n\right)^{1/2}\left(\int_{X\setminus D}|s_D^{m+1}\partial^{\nabla_{h_{D,0}}}\Psi_D|_T^2T^n \right)^{1/2}\\
&\hspace{4mm}+C\int_X|\nabla\eta_k|_{\omega_0}|s_D|^{2l-4m-3}T^n\\
&\le  C\left(\int_X|\nabla\eta_k|_{\omega_0}^2\omega_0^n\right)^{1/2}\left(\int_{X\setminus D}|s_D^{m+1}\partial^{\nabla_{h_{D,0}}}\Psi_D|_T^2T^n \right)^{1/2}\\
&\hspace{4mm}+C\int_X|\nabla\eta_k|_{\omega_0}\omega_0^n\\
\end{align*}
In the last line, we use $|s_D|^{k}T^n\le C\omega_0^n$ (see Lemma \ref{adapted curr prop}). Hence we obtain
\begin{equation}\label{nK para eq3}
\int_{X\setminus D}\Delta_T|\Psi_Ds_D^l|^2T^n
=\lim_{k\to\infty}\int_{X\setminus D}\eta_k\Delta_T|\Psi_Ds_D^l|^2T^n=0.
\end{equation}
Then, by (\ref{nK para eq2}) and (\ref{nK para eq3}), we obtain that
$$
(\nabla_D\otimes\nabla_{h_{\triv}}^*)\Psi_Ds_D^l=0 \hbox{ on $X\setminus D$}.
$$
\end{proof}
\begin{theo}\label{unique conn}
If $(E,h_0)$ admits two $T$-adapted HYM metric $h_1$ and $h_2$, then the Chern connections $\nabla_{h_1}$ of $h_1$ coincides with the Chern connection $\nabla_{h_2}$ of $h_2$, that is, we have $\nabla_{h_1}=\nabla_{h_2}$. 
\end{theo}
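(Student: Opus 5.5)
The plan is to follow the standard uniqueness argument for Hermitian--Yang--Mills metrics. We apply it to the identity map between two HYM bundles and justify the integration by parts on $X\setminus D$ with the cut-off functions of Lemma \ref{cut-off}, exactly as in the proof of Lemma \ref{nK para}.

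\textbf{Setup.} Write $h_1=h_0\Psi_1^2$ and $h_2=h_0\Psi_2^2$. Let $\nabla^{(j)}=\Psi_j\circ\nabla_{h_j}\circ\Psi_j^{-1}$ be the associated $T$-admissible HYM connections on $(E,h_0)$ given by Lemma \ref{adHYM hol}. Both have the same HYM constant $\lambda$: integrating $\sqrt{-1}\Lambda_T\Tr F$ against $T^n$ and using the adapted conditions together with the Chern--Weil computation of Proposition \ref{weak proj} (or Lemma \ref{nK para} applied to $\det E$) gives $\lambda=\frac{1}{\alpha^n}\frac{c_1(E)\cdot\alpha^{n-1}}{\rk E}$ in both cases.

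\textbf{The key identity.} Consider the holomorphic morphism
$$
\Phi:=\Psi_2\circ\Psi_1^{-1}:(E,\delbar^{\nabla^{(1)}})\to(E,\delbar^{\nabla^{(2)}}),
$$
viewed as a section of $\Hom(E,E)$ with connection $\nabla^{(2)}\otimes(\nabla^{(1)})^*$. Since both connections are HYM with the same constant, the Bochner--Kodaira formula on $X\setminus D$ gives
$$
\Delta_T|\Phi|^2_{h_0}=|(\nabla^{(2)}\otimes(\nabla^{(1)})^*)\Phi|^2_{h_0,T},
$$
the curvature terms cancelling. This is the analogue of (\ref{nK para eq2}).

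\textbf{Integrating by parts.} Next we show $\int_{X\setminus D}\Delta_T|\Phi|^2T^n=0$. The adapted conditions (d) and (e) of Definition \ref{adapted HYM defi} bound $|\Phi|$ by $C|s_D|^{-2m'}$ and control $|s_D^{l'}\partial\Phi|$ in $L^2$. Because $\Phi$ has poles along $D$, we multiply it by a high power $s_D^{N}$ first, i.e.\ we work with $s_D^N\Phi$, as in Lemma \ref{nK para}. We then estimate $\int\eta_k\Delta_T|s_D^N\Phi|^2T^n$ by Cauchy--Schwarz, using $T\ge|s_D|^{2m}\omega_0$, $|s_D|^kT^n\le C\omega_0^n$ and $\int|d\eta_k|^2_{\omega_0}\omega_0^n\to0$. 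This yields $\nabla(s_D^N\Phi)=0$ with respect to $\nabla^{(2)}\otimes(\nabla^{(1)})^*\otimes\nabla_{D}$, where $\nabla_D$ is the flat admissible connection on $\mathcal{O}(ND)$ from Lemma \ref{nK para}. Since Lemma \ref{nK para} also gives $\nabla_D(\Psi_Ds_D^N)=0$ with $\Psi_Ds_D^N$ nowhere vanishing on $X\setminus D$, we can divide by it and conclude that $\Phi$ itself is parallel on $X\setminus D$.

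Working with $\Psi_D s_D^N\Phi$ in place of $s_D^N\Phi$ keeps the Laplacian identity exact; the extra error terms in $\nabla$ are handled as in the displayed estimate of Lemma \ref{nK para}. This bookkeeping, namely choosing $N$ large relative to $m,k$ and the exponents in (d) and (e) so that every term is dominated by $|d\eta_k|_{\omega_0}$ times an integrable factor, is the main technical obstacle.

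\textbf{Conclusion.} A parallel morphism intertwines the connections: $\nabla^{(2)}\circ\Phi=\Phi\circ\nabla^{(1)}$. Unwinding the definitions,
$$
\Psi_2\nabla_{h_2}\Psi_2^{-1}\Psi_2\Psi_1^{-1}=\Psi_2\Psi_1^{-1}\Psi_1\nabla_{h_1}\Psi_1^{-1},
$$
which simplifies to $\nabla_{h_2}=\nabla_{h_1}$ on $\Amp(\alpha)=X\setminus D$. To reduce to the case where $D$ is snc, we first pass to a blow-up $\pi:Y\to X$ using Lemma \ref{adapted app} and Lemma \ref{inv adapted HYM}.
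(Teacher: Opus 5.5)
Your argument is structurally the paper's. You use the same holomorphic morphism $\Psi_Ds_D^N\Psi_2\Psi_1^{-1}$ into $E\otimes\mathcal{O}(ND)$, the same Bochner identity and cut-off integration, and the same division by the parallel, nowhere-vanishing $\Psi_Ds_D^N$ from Lemma~\ref{nK para}. Your last step, reading off $\nabla^{(2)}\circ\Phi=\Phi\circ\nabla^{(1)}$ directly, is cleaner than the paper's detour through $\Delta_T|\id_E|^2_{h_2\otimes h_1^*}$.

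The gap is the opening step, where you assert that $h_1$ and $h_2$ have the same HYM constant. Your identity $\Delta_T|\Phi|^2=|\nabla\Phi|^2$ (``the curvature terms cancelling'') depends on it, and none of the routes you suggest supplies it:
\begin{itemize}
\item The formula $\lambda=\frac{1}{\alpha^n}\frac{c_1(E)\cdot\alpha^{n-1}}{\rk E}$ for an \emph{arbitrary} $T$-adapted HYM metric is Corollary~\ref{HYM slope}. The paper derives it \emph{from} this theorem, by comparing $\det h$ with the metric on $\det E$ built in Theorem~\ref{KH corr nef big with esti}. Citing it here is circular.
\item Lemma~\ref{nK para} concerns only the specific section $\Psi_Ds_D^l$ of $\mathcal{O}(lD)$, whose constant $0$ is known from the construction. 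It says nothing about $\det h_j$ on $\det E$.
\item A direct Chern--Weil argument would need $\int_{X\setminus D}dd^c\log\det(\Psi_j^2)\wedge T^{n-1}=0$. The cut-off argument of Lemma~\ref{nK para} works only because the function there carries the vanishing factor $|s_D|^{2l}$. Conditions (d) and (e) give $\log\det\Psi_j$ no such factor: only a pole-type upper bound on $\Psi_j$ and $L^2$ bounds weighted by positive powers of $|s_D|$. So there is no reason for $\int|d\eta_k|_T\,|d\log\det\Psi_j|_T\,T^n$ to tend to $0$.
\end{itemize}

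The missing idea, which is how the paper proceeds, is that you never need the constants in advance. Order them, say $\lambda_1\ge\lambda_2$, and keep the zeroth-order term. For $s=\Psi_Ds_D^N\Phi$,
\[
\Delta_T|s|^2=\bigl|(\nabla^{(2)}\otimes(\nabla^{(1)})^*\otimes\nabla_D)s\bigr|_T^2+(\lambda_1-\lambda_2)|s|^2 .
\]
Both terms on the right are nonnegative and $T^n$-integrable. The left side integrates to $0$ by your cut-off estimate. Also $s\not\equiv0$, since it is an isomorphism on $X\setminus D$. Hence $\lambda_1=\lambda_2$ and $\nabla s=0$ at once, and the rest of your proof goes through unchanged.

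Separately, bounding $|\Phi|=|\Psi_2\Psi_1^{-1}|$ by a pole also needs a pole bound on $\Psi_1^{-1}$. Condition (d) does not give this by itself, and the paper's proof relies on such a bound too.
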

\begin{proof}
Let $h_1=h_0\Psi_1^2$ and $h_2=h_0\Psi_2^2$ be two $T$-adapted HYM metrics with $\det\Psi_i=1$ for $i=1,2$. Suppose that the $T$-HYM constants $\lambda_i$ of $h_i$ satisfies $\lambda_1\ge \lambda_2$. Let us consider a connection $\nabla_i:=\Psi_i\circ\nabla_{h_i}\circ\Psi_i^{-1}$ on $E$. Then $\nabla_i$ is a $T$-admissible HYM connection of the complex hermitian vector bundle $(E,h_0)$ and 
$$
\Psi_i: (E,\delbar^E)|_{X\setminus D}\to (E|_{X\setminus D},\delbar^{\nabla_i})
$$  
is a holomorphic isomorphism (see Lemma \ref{adHYM hol}) Furthermore, by definition of $T$-adapted HYM metrics, we have that for $m$ in Notation \ref{notation3},
\begin{equation}\label{unique conn eq1}
\sup_{X\setminus D}|s_D^{l}\Psi_i|_{h_0}\le C \hbox{ for some $l\in\mathbb{Z}_{\ge m}$}
\end{equation}
and
\begin{equation}\label{unique conn eq2}
\int_{X\setminus D}\left|(\nabla_i\otimes\nabla_{h_0}^*)(s_D^l\Psi_i)\right|^2_{h_0^*\otimes h_0\otimes h_{D,0}^{l},T}T^n\le C.
\end{equation}
By Theorem \ref{KH corr nef big with esti}, the holomorphic line bundle $\mathcal{O}((4l+1)D)$ admits a $T$-adapted HYM metric $h_{D}=h_{D,0}\Psi_{D}^2$ with $\det \Psi_D=1$. Let us define $\nabla_D:=\Psi_D\circ\nabla_{h_D}\circ\Psi_D^{-1}$ a $T$-admissible HYM connection of $(\mathcal{O}((4l+1)D),h_{D,0})$. Then, by Theorem \ref{KH corr nef big with esti}, we have that $\Psi_D$ satisfies 
\begin{equation}\label{unique conn eq3}
\sup_{X\setminus D}|s_D\Psi_{D}|_{h_{D,0}^{4l+1,*}\otimes h_{D,0}^{4l+1}\otimes h_{D,0}}
\end{equation}
and 
\begin{equation}\label{unique conn eq4} 
\int_{X\setminus D}\left|s_D^{m+1}\partial^{\nabla_{h_{D,0}}}\Psi_{D}\right|^2_{h_{D,0}^*\otimes h_{D,0}\otimes h_{D,0}^{m+1},T}T^n\le C.
\end{equation}
Let us consider the following diagram. All vector bundles are restricted to $X\setminus D$:
\begin{center}
\begin{tikzpicture}[auto]
\node(e0) at (0,0) {$(E,\delbar^E,\nabla_{h_0},h_0)$};
\node(e1) at (8,0) {$(E,\delbar^{\nabla_1},\nabla_1,h_0)$};
\node(e2) at (0,-2) {$(E,\delbar^{\nabla_2},\nabla_2,h_0)$};
\node(ed0) at (4,-4) {$(E\otimes\mathcal{O}((4l+1)D),\delbar^{\nabla_2\otimes\nabla_{h_{D,0}}}, \nabla_2\otimes\nabla_{h_{D,0}} ,h_0\otimes h_{D,0})$};
\node(ed) at (8,-2) {$(E\otimes\mathcal{O}((4l+1)D),\delbar^{\nabla_2\otimes\nabla_D}, \nabla_2\otimes\nabla_{D} ,h_0\otimes h_{D,0})$};
\draw[->] (e1) to node[swap] {\scriptsize $\Psi_1^{-1}$} (e0);
\draw[->] (e0) to  node[swap] {\scriptsize $\Psi_2$} (e2);
\draw[->] (e2) to node[swap] {\scriptsize $\id_E\otimes s_D^{4l+1}$} (ed0) ;
\draw[->] (ed0) to node[swap] {\scriptsize $\id_E\otimes\Psi_D$} (ed);
\draw[->] (e1) to node {\scriptsize $s$} (ed);)
\end{tikzpicture}
\end{center}
Here $s:=(\id_E\otimes \Psi_D)\circ (\id_E\otimes s_D^{4l+1})\circ \Psi_2\circ\Psi_1^{-1}$ is a holomorphic morphism from $(E,\delbar^{\nabla_1})$ to $(E\otimes\mathcal{O}((4l+1)D),\delbar^{\nabla_2\otimes\nabla_D})$. We remark that the $T$-HYM constant of $\nabla_D$ equals to $0$ by Theorem \ref{KH corr nef big with esti}. Then the usual calculation shows the following point-wise equation on $X\setminus D$:
\begin{equation}\label{unique conn eq5}
\Delta_T|s|^2_{h_0\otimes h_0^*\otimes h_{D,0}}
=\left|(\nabla_2\otimes\nabla_1^*\otimes\nabla_D)s\right|^2_{h_0\otimes h_0^*\otimes h_{D,0},T}-(\lambda_2-\lambda_1)|s|^2_{h_0\otimes h_0^*\otimes h_{D,0}}.
\end{equation}
By (\ref{unique conn eq1}), (\ref{unique conn eq2}), (\ref{unique conn eq3}) and (\ref{unique conn eq4}), we can see that each term in RHS in (\ref{unique conn eq5}) is integrable over $X\setminus D$ with respect to $T^n$. Let $\eta_k$ be smooth cut-off functions on $X$ as in Lemma \ref{cut-off}, in particular they satisfies $\Supp(\eta_k)\Subset X\setminus D$, $\eta_k\nearrow 1$ and $\int_X|d\eta_k|^2_{\omega_0}\omega_0^n\to0$. Then, we obtain the following equation by the same way with (\ref{nK para eq3}):
\begin{equation}\label{unique conn eq6}
\int_{X\setminus D}\Delta_T|s|_{h_0\otimes h_0^*\otimes h_{D,0}}^2T^n=0.
\end{equation}
Here we recall that the assumption $\lambda_1\ge \lambda_2$. Then, by integrating both sides of (\ref{unique conn eq5}), we obtain
$$
\lambda_1=\lambda_2\hbox{ and } \left|(\nabla_2\otimes\nabla_1^*\otimes\nabla_D)s\right|^2_{h_0\otimes h_0^*\otimes h_{D,0},T}=0.
$$
In particular, we have
\begin{align}\label{unique conn eq7}
0
&=\left(\nabla_2\otimes\nabla_1^*\otimes\nabla_D\right)s\notag\\
&=\left(\nabla_2\otimes\nabla_1^*\otimes\nabla_D\right)(s_D^{4l+1}\circ\Psi_D\circ\Psi_2\circ\Psi_1^{-1})\notag\\
&=(\nabla_D\otimes\nabla_0^*)(s_D^{4l+1}\Psi_D)\circ\Psi_2\circ\Psi_1^{-1}
+s_D^{4l+1}\Psi_D\circ(\nabla_2\otimes\nabla_1^*)(\Psi_2\circ\Psi_1^{-1}).
\end{align}
By Lemma \ref{nK para}, we have $(\nabla_D\otimes\nabla_0^*)(s_D^{4l+1}\Psi_D)=0$. Then, by (\ref{unique conn eq7}), we have 
\begin{equation}\label{unique conn eq8}
(\nabla_2\otimes\nabla_1^*)(\Psi_2\circ\Psi_1^{-1})=0.
\end{equation} 
We remark that
\begin{equation}\label{unique conn eq8.5}
|\Psi_2\circ\Psi_1^{-1}|^2_{h_0\otimes h_0^*}=|\id_E|^2_{h_2\otimes h_1^*}
\end{equation}
and 
\begin{equation}\label{unique conn eq9}
\Delta_T|\id_E|^2_{h_2\otimes h_1^*}
=|(\nabla_{h_2}\otimes \nabla_{h_1}^*)(\id_E)|^2_{h_2\otimes h_1^*,T}-(\lambda_2-\lambda_1)|\id_E|^2_{h_2\otimes h_1^*},
\end{equation}
furthermore
\begin{equation}\label{unique conn eq10}
\Delta_T|\Psi_2\circ\Psi_1^{-1}|_{h_0\otimes h_0^*}
=|(\nabla_2\otimes\nabla_1^*)(\Psi_2\circ\Psi_1^{-1})|_{h_0\otimes h_0^*,T}
-(\lambda_2-\lambda_1)|\Psi_2\circ\Psi_1^{-1}|_{h_0\otimes h_0^*}
\end{equation}
at each point of $X\setminus D$.
Then by (\ref{unique conn eq8.5}), (\ref{unique conn eq9}) and (\ref{unique conn eq10}) together with (\ref{unique conn eq8}), we obtain 
\begin{equation}\label{unique conn eq11}
|(\nabla_{h_2}\otimes \nabla_{h_1}^*)\id_E|^2_{h_2\otimes h_1^*,T}
=\left|(\nabla_2\otimes\nabla_1^*)(\Psi_2\circ\Psi_1^{-1})\right|^2_{h_0\otimes h_0^*,T}=0.
\end{equation}
The equation (\ref{unique conn eq11}) shows $\nabla_{h_1}=\nabla_{h_2}$.
\end{proof}
\begin{corr}\label{HYM slope}
We use the notations in Notation \ref{notation3}. Assume that a holomorphic vector bundle $E$ admits a $T$-adapted HYM metric $h$. Then $h$ computes the $\alpha^{n-1}$-slope of $E$: 
$$
\mu_{\alpha}(E)=\frac{1}{\rk E}\int_{X\setminus D}c_1(E, h)\wedge \frac{T^{n-1}}{(n-1)!}
$$ 
where $c_1(E,h)=\sqrt{-1}Tr(F_h)$. In particular, the $T$-HYM constant $\lambda$ of $h$ is described as follows:
$$
\lambda=\frac{1}{\alpha^n}\frac{c_1(E)\cdot \alpha^{n-1}}{\rk E}.
$$
\end{corr}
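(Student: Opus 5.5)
Since $c_1(E,h)=\sqrt{-1}\Tr F_h$ is cohomologous to $c_1(E,h_0)$ only modulo a singular correction on $X\setminus D$, the plan is to write, on $X\setminus D$,
$$
c_1(E,h)=c_1(E,h_0)+dd^c\log\det(\Psi^2)
$$
(up to the normalisation of $dd^c$). Then $\mu_\alpha(E)$ is obtained by integrating $c_1(E,h_0)$ against $\langle T^{n-1}\rangle$, and the correction term is shown to integrate to zero. For the first part we use that $T$ has minimal singularities (Lemma \ref{adapted curr prop}) and $\alpha$ is nef and big, so $\langle\alpha^{n-1}\rangle=\alpha^{n-1}$. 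Moreover $\langle T^{n-1}\rangle$ is the zero extension of $T^{n-1}|_{X\setminus D}$, and $c_1(E,h_0)$ is smooth. Hence
$$
\int_X c_1(E)\wedge\alpha^{n-1}=\int_{X\setminus D}c_1(E,h_0)\wedge T^{n-1},
$$
and it remains to show $\int_{X\setminus D}dd^c\log\det\Psi^2\wedge T^{n-1}=0$.

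To kill the correction term we reduce to the line bundle $\det E$. The metric $\det h=\det h_0\cdot(\det\Psi)^2$ on $\det E$ is again $T$-adapted HYM: the HYM equation traces to $\sqrt{-1}\Lambda_T F_{\det h}=r\lambda$, and the growth conditions (d), (e) pass to $\det\Psi$ after raising the power of $s_D$ to $rm$, $rl$. Writing $f=\log\det\Psi^2$, the adapted conditions give two bounds: $f\le C-C'\log|s_D|^2$, and $f$ is bounded below by the same kind of bound using $\Psi^{-1}$. For the lower bound we expect to need a separate argument; applying the Lemma \ref{nK para} trick to the dual bundle $\det E^*$ should work. We also have $\int|s_D^{l'}\partial f|_T^2T^n<\infty$.

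Next take the cut-off functions $\eta_k$ of Lemma \ref{cut-off} and compute
$$
\int\eta_k\,dd^cf\wedge T^{n-1}=-\int d\eta_k\wedge d^cf\wedge T^{n-1}.
$$
We bound this exactly as in the proof of Lemma \ref{nK para}. Cauchy–Schwarz against $|s_D^{l'}\partial f|_T$ gives a factor $(\int|d\eta_k|_{\omega_0}^2|s_D|^{-N}T^n)^{1/2}$. The negative powers of $|s_D|$ are absorbed using $T\ge|s_D|^{2m}\omega_0$ (to compare $|d\eta_k|_T\le|s_D|^{-m}|d\eta_k|_{\omega_0}$) and $|s_D|^kT^n\le C\omega_0^n$.

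The main obstacle is here: these weights go the wrong way, leaving negative powers of $|s_D|$ with no compensating positive factor. To fix this, we multiply by a positive power of the section, i.e. we work with $|s_D^{N}\det\Psi|^2$ twisted by a $T$-adapted HYM metric on $\mathcal{O}(ND)$ with zero HYM constant, which exists by Theorem \ref{KH corr nef big with esti}. By Lemma \ref{nK para}, the function $\log|\Psi_Ds_D^N|^2$ is pluriharmonic on $X\setminus D$. Therefore
$$
dd^c f=dd^c\bigl(f+\log|\Psi_Ds_D^N|^2-\log|s_D|^{2N}_{h_{D,0}}\bigr)+N\,c_1(\mathcal{O}(D),h_{D,0}).
$$
The $dd^c$ of the combined function can be integrated against cut-offs with enough vanishing of $|s_D|$ for the estimate to close. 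The remaining $N\,c_1(\mathcal{O}(D),h_{D,0})$ term contributes $N\,D\cdot\alpha^{n-1}=0$ by Collins–Tosatti (Theorem \ref{CT}), because $D\subset E_{nK}(\alpha)$. Letting $k\to\infty$ with dominated convergence, the integrability coming from (c)$'$ and the adapted bounds, gives the slope formula.

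Finally, taking the trace of $\sqrt{-1}\Lambda_TF_h=\lambda\id_E$ and using $c_1\wedge T^{n-1}/(n-1)!=\Lambda_T(c_1)\,T^n/n!$ gives
$$
r\lambda\int_{X\setminus D}T^n/n!=r\,\mu_\alpha(E).
$$
Together with $\int_{X\setminus D}T^n=\alpha^n$, this yields the stated formula for $\lambda$, once the volume normalisations are matched.
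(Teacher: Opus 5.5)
Your first step (identifying $\int_{X\setminus D}c_1(E,h_0)\wedge T^{n-1}$ with $c_1(E)\cdot\alpha^{n-1}$ via minimal singularities of $T$) is fine, and so is your final trace step. Your route, a direct Chern--Weil computation, is genuinely different from the paper's.

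\textbf{The gap.} The step meant to kill $\int_{X\setminus D}dd^c f\wedge T^{n-1}$, with $f=\log\det\Psi^2$, does not work. You correctly see that the cut-off term $\int d\eta_k\wedge d^c f\wedge T^{n-1}$ carries uncompensated negative powers of $|s_D|$. However, the twist does not supply any positive powers. By Lemma~\ref{nK para}, $\Psi_Ds_D^N$ is parallel for $\nabla_D$, which is $h_{D,0}$-unitary. Hence $|\Psi_Ds_D^N|_{h_{D,0}}$ is locally constant on $X\setminus D$.

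Your combined function is therefore just $f-N\log|s_D|^2_{h_{D,0}}+\mathrm{const}$. Its differential $df-N\,d\log|s_D|^2$ is no less singular than $df$. The identity you wrote is only Poincar\'e--Lelong plus Theorem~\ref{CT} (up to the sign of the $N\,c_1(\mathcal{O}(D),h_{D,0})$ term). It changes nothing in the estimate, which still pairs $|d\eta_k|^2_{\omega_0}$ against a negative power of $|s_D|$. Lemma~\ref{cut-off} only controls the unweighted integral $\int|d\eta_k|^2_{\omega_0}\omega_0^n$.

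The decay in Lemma~\ref{nK para} comes from differentiating the squared norm $|\Psi_Ds_D^l|^2$ itself. Once you pass to logarithms, the multiplicative factor $|s_D|^{2l}$ becomes an additive singular term.

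\textbf{A second unresolved point.} The inputs you assume for $f$ need control of $\Psi^{-1}$: a lower bound on $f$, and $\int|s_D^{l'}\partial f|^2_TT^n<\infty$ where $\partial f=2\Tr(\Psi^{-1}\partial\Psi)$. Conditions (d) and (e) of Definition~\ref{adapted HYM defi} only bound $\Psi$ and $\partial\Psi$. Lemma~\ref{nK para} concerns one specific metric on $\mathcal{O}(lD)$, so it yields no lower bound for an arbitrary adapted metric, whether on $\det E$ or $\det E^*$.

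\textbf{What the paper does instead.} It never evaluates this boundary term. It proceeds as follows:
\begin{enumerate}
\item Take the $T$-adapted HYM metric $h'$ on the line bundle $\det E$ produced by Theorem~\ref{KH corr nef big with esti}. Its constant $\lambda'=c_1(E)\cdot\alpha^{n-1}/\alpha^n$ is already fixed by the approximation through $\omega_i$.
\item Observe that $\det h$ is $T$-adapted HYM with constant $\rk E\cdot\lambda$.
\item Apply Theorem~\ref{unique conn} to get $\nabla_{h'}=\nabla_{\det h}$, hence $\rk E\cdot\lambda=\lambda'$.
\item Obtain the slope formula by integrating the traced HYM equation.
\end{enumerate}
In Theorem~\ref{unique conn}, the cut-off argument is applied to $\Delta_T|s|^2$ for a holomorphic bundle map $s$, via the Bochner identity. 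The HYM constants enter only through the sign of $\lambda_2-\lambda_1$, and no $dd^c\log$ term is ever integrated.

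To rescue your route, you would need either this comparison/uniqueness step, or a genuinely new mechanism showing that $dd^c\log\det\Psi^2\wedge T^{n-1}$ has no flux into $D$.
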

\begin{proof}
Let $h$ be the $T$-adapted HYM metric on $E$. By Theorem \ref{KH corr nef big with esti}, the holomorphic line bundle $\det E$ admits a $T$-adapted HYM metric $h\rq{}$ whose $T$-HYM constant $\lambda\rq{}$ satisfies
$$
\lambda\rq{}=\frac{1}{\alpha^n}c_1(E)\cdot\alpha^{n-1}.
$$
Let $h$ be the given $T$-adapted HYM metric on $E$ with $T$-HYM constant $\lambda$. Then it induces a $T$-adapted HYM metric $\det h$ on $\det E$ with $T$-HYM constant $\rk E\lambda$. Then by Theorem \ref{unique conn}, we have $\nabla_{h\rq{}}=\nabla_{\det h}$, and thus the $T$-HYM constant $\lambda\rq{}$ of $\nabla_{h\rq{}}$ coincides with the $T$-HYM constant $\rk E\lambda$ of $\nabla_{\det h}$. Hence we obtain 
$$
\lambda=\frac{\lambda\rq{}}{\rk E}=\frac{1}{\alpha^n}\frac{c_1(E)\cdot\alpha^{n-1}}{\rk E}.
$$
Then we can prove the first equation of the statement by integrating the $T$-HYM equation $\sqrt{-1}\Lambda_TF_h=\lambda\id$ over $X\setminus D$ against $T^n$.
\end{proof}
We need the following lemma to prove the uniqueness of $T$-adapted HYM metrics.
\begin{lemm}\label{unique metric lem}
We use the notations in Notation \ref{notation3}. Assume that holomorphic vector bundles $E_1$ and $E_2$ are $\alpha^{n-1}$-slope stable with $\rk E_1=\rk E_2$ and $\mu_{\alpha}(E_1)=\mu_{\alpha}(E_2)$.
\begin{enumerate}
\item Let $g: E_1|_{X\setminus D}\to E_2|_{X\setminus D}$ be a nonzero holomorphic morphism satisfying 
$$
\sup_{X\setminus D}|s_D^lg|_{h_2\otimes h_1^*\otimes h_{D,0}}<\infty
$$
for some $l\in \mathbb{Z}_{\ge 0}$ where $h_i$ is a smooth hermitian metric on $E_i$ for $i=1,2$. Then $g$ is isomorphic.
\item In particular, if $E_1=E_2=E$, then there is a constant $a\in \mathbb{C}\setminus\{0\}$ such that $g=a\id_E$ on $X\setminus D$. That is, if $E$ is $\alpha^{n-1}$-slope stable, then 
$$
H^0\left(X\setminus D,\bigoplus_{k\in\mathbb{Z}}(\End(E)\otimes\mathcal{O}(kD))\right)=\mathbb{C},
$$
where $\mathcal{O}(kD)$ is regarded as a subsheaf of $\mathcal{O}_{X\setminus D}$ consisting of local rational functions with poles along $D$ at most of order $k$.
\end{enumerate}
\end{lemm}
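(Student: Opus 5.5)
The plan is to follow the classical argument that a nonzero morphism between stable bundles of equal rank and slope is an isomorphism. The only new ingredient is that $g$ is defined only on $X\setminus D$ with poles of finite order along $D$, and stability is tested by $\alpha^{n-1}$.

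First, I extend $g$ to $X$. The bound $\sup_{X\setminus D}|s_D^lg|<\infty$ shows that $s_D^l g$ is a bounded holomorphic section of $\Hom(E_1,E_2)\otimes\mathcal{O}(lD)$ on $X\setminus D$. By the Riemann extension theorem it extends to a holomorphic section $\tilde g$ of $\Hom(E_1,E_2\otimes\mathcal{O}(lD))$ on $X$, and $\tilde g\neq 0$.

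Next, I consider $\mathcal{K}=\Ker\tilde g$ and $\mathcal{I}=\im\tilde g\subset E_2\otimes\mathcal{O}(lD)$. By Theorem \ref{CT} and the fact that $D\subset E_{nK}(\alpha)$, we have $\int_X c_1(\mathcal{O}(D))\wedge\alpha^{n-1}=0$. Hence twisting by $\mathcal{O}(lD)$ does not change $\alpha^{n-1}$-slopes, and $E_2\otimes\mathcal{O}(lD)$ is $\alpha^{n-1}$-slope stable with slope $\mu_\alpha(E_2)=\mu_\alpha(E_1)$; stability is preserved because subsheaves correspond under twisting. Suppose $\tilde g$ is not injective, i.e. $\mathcal{K}\ne0$ with $\rk\mathcal{I}<\rk E_1$. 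Stability of $E_1$ gives $\mu_\alpha(\mathcal{I})>\mu_\alpha(E_1)$, using the additivity of $c_1$ and rank along $0\to\mathcal{K}\to E_1\to\mathcal{I}\to0$. Stability of $E_2\otimes\mathcal{O}(lD)$ gives $\mu_\alpha(\mathcal{I})<\mu_\alpha(E_2)$ when $\mathcal{I}$ is proper, which is automatic since its rank is smaller. This is a contradiction. So $\tilde g$ is injective, and since the ranks are equal, $\det\tilde g$ is a nonzero section of $\det E_1^*\otimes\det E_2\otimes\mathcal{O}(rlD)$. Its zero divisor $Z$ satisfies $c_1(\mathcal{O}(Z))\cdot\alpha^{n-1}=r(\mu_\alpha(E_2)-\mu_\alpha(E_1))+0=0$.

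The remaining, and main, step is to show that $Z$ is supported in $D$. Then $\tilde g$ is an isomorphism off $D$, and so is $g=s_D^{-l}\tilde g$ on $X\setminus D$. Here $\alpha^{n-1}\cdot Z=0$ must force each component of $Z$ into $E_{nK}(\alpha)=D$. I plan to argue that for a prime divisor $P\not\subset E_{nK}(\alpha)$, the restriction $\alpha|_P$ is nef and big on $P$ (via Collins–Tosatti: $E_{nK}(\alpha)$ is the null locus), so $\alpha^{n-1}\cdot P>0$. Since $\alpha$ is nef, $\alpha^{n-1}\cdot P\ge0$ for every component, so a vanishing total with nonnegative coefficients forces every component into $D$. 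If a cleaner route is preferred, the strict inequality can instead be read off as $\int_P\langle T^{n-1}\rangle|_P$ with $T$ smooth Kähler on $P\setminus D$, which is nonempty and open in $P$. I expect this positivity to be the only delicate point.

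For (2), take $E_1=E_2=E$. Pick $x\in X\setminus D$ and an eigenvalue $a$ of $g(x)$. Then $g-a\,\id_E$ satisfies the same pole bound and is not an isomorphism at $x$, so by (1) it vanishes identically. Any element of $H^0(X\setminus D,\bigoplus_k\End(E)\otimes\mathcal{O}(kD))$ is a finite sum of sections, each with bounded $|s_D^kg|$, so it satisfies the hypothesis with $l=\max k$. The displayed equality follows.
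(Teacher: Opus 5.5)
Your proposal is correct and follows essentially the same route as the paper. You extend $s_D^lg$ across $D$, use stability together with $\mu_\alpha(E_2\otimes\mathcal{O}(lD))=\mu_\alpha(E_1)$ to get injectivity, confine the zero divisor of $\det(s_D^lg)$ to $D$ via its vanishing $\alpha^{n-1}$-degree, and finish (2) with the eigenvalue trick. You also make explicit two steps the paper leaves implicit: the slope is unchanged under twisting by $\mathcal{O}(lD)$ because of Theorem \ref{CT}, and each component $P$ of the zero divisor has $\alpha^{n-1}\cdot P\ge 0$ by nefness, with equality forcing $P\subset E_{nK}(\alpha)=D$ by the inclusion $\mathrm{Null}(\alpha)\subset E_{nK}(\alpha)$ of Collins--Tosatti.
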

\begin{proof}
Let us consider the image sheaf $\mathcal{G}$ of $s_D^lg$, which is a coherent subsheaf of $E_2\otimes\mathcal{O}(lD)|_{X\setminus D}$. Since $s_D^lg$ is bounded and $E_1$ is a locally free sheaf defined all over $X$, the coherent sheaf $\mathcal{G}$ on $X\setminus D$ is generated by bounded sections of $E_2\otimes\mathcal{O}(lD)|_{X\setminus D}$ around $D$. Then, since $E_2\otimes\mathcal{O}(lD)$ is defined all over $X$, 
 bounded generators of $\mathcal{G}$ extends across $D$ as a sections of $E_2\otimes\mathcal{O}(lD)|_{X\setminus D}$.
Hence, the image sheaf $\mathcal{G}$ extends to a coherent subsheaf of $E_2\otimes\mathcal{O}(lD)$, also denoted by $\mathcal{G}$. We remark that $s_D^lg\in H^0(X\setminus D,E_2\otimes E_1^*\otimes\mathcal{O}(lD))$ also extends to a holomorphic morphism defined all over $X$, since $s_D^lg$ is bounded and $E_1$ and $E_2\otimes\mathcal{O}(lD)$ are both defined over $X$. Then we obtain the following short exact sequence of coherent sheaves on $X$:
$$
0\rightarrow \mathcal{F}\rightarrow E_1\xrightarrow{s_D^lg} \mathcal{G}\rightarrow 0.
$$
Then, since $E_1$ and $E_2$ are $\alpha^{n-1}$-slope stable with same $\alpha^{n-1}$-slope, we can see that $\mathcal{F}=0$. In fact, the $\alpha^{n-1}$-slope stability of $E_2$ ensures that $E_2\otimes\mathcal{O}(lD)$ is also $\alpha^{n-1}$-slope stable. We recall that $\mathcal{G}\subset E_2\otimes\mathcal{O}(lD)$. Then if $\mathcal{F}\ne 0$, we have $\mu_{\alpha}(E_1)<\mu_{\alpha}(\mathcal{G})<\mu_{\alpha}(E_2\otimes\mathcal{O}(lD))=\mu_{\alpha}(E_1)$, that is a contradiction.
Therefore $s_D^lg:E_1\to E_2\otimes\mathcal{O}(lD)$ is injective.
Hence its determinant gives the short exact sequence
$$
0\to\det E_1\xrightarrow{\det(s_D^lg)}\det (E_2\otimes\mathcal{O}(lD))\to \tau_V\to 0.
$$
Since $s_D^lg$ is injective, the quotient sheaf $\tau_V$ is a torsion sheaf supported on some analytic subvariety $V\subset X$. We recall the assumptions that $\rk E_1=\rk E_2$ and $\mu_{\alpha}(E_1)=\mu_{\alpha}(E_2)$. Then we have
$$
0=c_1(\det (E_2\otimes\mathcal{O}(lD)))\alpha^{n-1}-c_1(\det E_1)\alpha^{n-1}
=c_1(\tau_V)\alpha^{n-1}.
$$
Hence we obtain a decomposition $V=V_1\cup V_2$ such that $V_1\subset D$ and $\codim V_2\ge 2$.  In particular we have that $\det(s_D^lg)$ is isomorphic over $X\setminus D$ and thus it has no nontrivial zero locus in $X\setminus D$. As a consequence $g: E_1|_{X\setminus D}\to E_2|_{X\setminus D}$ is isomorphic. Assume that $E=E_1=E_2$ and let $a\in \mathbb{C}\setminus\{0\}$ be an eigenvalue of $g$ at $x\in X\setminus D$. If $g-a\id_E\ne 0$, then it is an isomorphism by the above proof. But it is a contradiction. Hence $g=a\id_E$.
\end{proof}
Then we can prove the uniqueness of a $T$-adapted HYM metric on an $\alpha^{n-1}$-slope stable vector bundle.
\begin{lemm}\label{unique metric}
We use the notations in Notation \ref{notation3}.
If $E$ is $\alpha^{n-1}$-slope stable, then a $T$-adapted HYM metric on $E$ is unique up to scaling, if it exists.
\end{lemm}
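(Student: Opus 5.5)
Let $h_1=h_0\Psi_1^2$ and $h_2=h_0\Psi_2^2$ be two $T$-adapted HYM metrics on the $\alpha^{n-1}$-slope stable bundle $E$. The plan is to show that $g:=\Psi_2^2\Psi_1^{-2}$, i.e.\ the endomorphism $H:=h_1^{-1}h_2$ relating the two metrics, is a holomorphic endomorphism of $E|_{X\setminus D}$ with at most polynomial growth along $D$. Lemma \ref{unique metric lem}(2) then forces $H=a\,\id_E$, and hence $h_2=a h_1$ with $a>0$.

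\textbf{Step 1: $H$ is parallel.} By Theorem \ref{unique conn}, the Chern connections agree: $\nabla_{h_1}=\nabla_{h_2}$ on $X\setminus D$. Write $h_2(u,v)=h_1(Hu,v)$, with $H$ $h_1$-hermitian and positive definite. The Chern connection of $h_2$ is $\nabla_{h_2}=\nabla_{h_1}+H^{-1}\partial^{\nabla_{h_1}}H$. Equality of the connections therefore gives $\partial^{\nabla_{h_1}}H=0$. Since $H$ is $h_1$-hermitian, taking adjoints gives $\delbar H=0$. Hence $H$ is a holomorphic section of $\End(E)$ over $X\setminus D$, and indeed it is $\nabla_{h_1}$-parallel.

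\textbf{Step 2: growth bound.} In the $h_0$-frame we have $H=\Psi_1^{-2}\Psi_2^2$, up to conjugation by the identification $h_i=h_0\Psi_i^2$. Condition (d) of Definition \ref{adapted HYM defi} gives $\sup|s_D^{m}\Psi_i|<\infty$ for some $m$. For the inverse, we normalize $\det\Psi_i=1$ by scaling (which only multiplies $H$ by a positive constant). By the adjugate formula, $|\Psi_i^{-1}|\le C|\Psi_i|^{r-1}$, so $\sup|s_D^{m(r-1)}\Psi_i^{-1}|<\infty$. Consequently $\sup_{X\setminus D}|s_D^{l}H|_{h_0\otimes h_0^*\otimes h_{D,0}^l}<\infty$ for $l=2m+2m(r-1)$. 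If the paper's setup places $D$ on a blow-up $Y$, we first pull back via Lemma \ref{inv adapted HYM} and Lemma \ref{inv stability} and work on $Y$.

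\textbf{Step 3: conclusion.} Apply Lemma \ref{unique metric lem}(2) with $g=H$, which is nonzero since it is invertible. This gives $H=a\,\id_E$ on $X\setminus D$ for some $a\in\mathbb{C}\setminus\{0\}$. Since $H$ is positive definite, $a>0$, so $h_2=a h_1$ on $\Amp(\alpha)$.

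The main subtlety is Step 2. We must check that the normalization $\det\Psi_i=1$ is compatible with the adapted conditions, i.e.\ that rescaling by a positive function preserves (d). Here $\det\Psi_i$ is a smooth positive function, and the HYM equation forces $\log\det h_i/\det h_0$ to be pluriharmonic-like. So instead of normalizing, it is safer to bound $\Psi_i^{-1}$ directly from the determinant: $|\det H|$ is $\nabla$-parallel by Step 1, hence locally constant, hence constant on the connected set $X\setminus D$. This makes the adjugate bound for $H^{-1}$ and for $H$ immediate, without any normalization.
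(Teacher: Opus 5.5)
Your argument is correct in outline, and Step~1 takes a different route from the paper. After obtaining $\partial^{\nabla_{h_1}}g=0$ from Theorem~\ref{unique conn}, the paper twists $g$ by $\overline{s_D^l}=h_D(\cdot,s_D^l)$, built from the adapted HYM metric on $\mathcal{O}(lD)$. It uses Lemma~\ref{nK para} to get $\nabla_{h_D}s_D^l=0$ and derives the Bochner identity $\Delta_T|\overline{s_D^l}g|^2=-2|\delbar(\overline{s_D^l}g)|^2$. It then kills the integral of the left side with the cut-off argument of Lemma~\ref{nK para}, which yields $\delbar g=0$. You instead note the pointwise identity $\delbar g=(\partial^{\nabla_{h_1}}g)^{*_{h_1}}$ for $h_1$-hermitian $g$; equivalently, both metrics are parallel for the common connection, so $g$ is parallel. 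This is valid and strictly simpler. It needs no integrability, no cut-offs and no auxiliary metric on $\mathcal{O}(lD)$, and it gives more: $g$ has constant eigenvalues and constant determinant. Step~3 and the use of a pole-order bound coincide with the paper's.

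Your final-paragraph repair of Step~2, however, does not work. Constancy of $\det H$ only gives $|H^{-1}|\le C|H|^{r-1}$. It gives no bound on $H=\Psi_1^{-2}\Psi_2^{2}$ itself, which needs an upper bound on $\Psi_1^{-1}$, i.e.\ a lower bound $\det\Psi_1\ge c|s_D|^N$. Condition (d) bounds $\Psi$ only from above. The normalization $\det\Psi_i=1$ cannot be reached by constant rescaling; for a line bundle it would even force $h=h_0$.

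The paper does not prove this bound either. It simply asserts $\sup|s_D^l g|<\infty$ ``since $h_1,h_2$ are $T$-adapted''. This tacitly relies on the normalization $\det\Psi_i=1$ imposed in the proof of Theorem~\ref{unique conn}, or equivalently on the two-sided bounds $\sup|s_D\Psi^{\pm1}|<\infty$ that the metrics of Theorem~\ref{KH corr nef big with esti} satisfy. Theorem~\ref{unique conn}, which your Step~1 invokes, uses the same input through $\Psi_1^{-1}$. So your proof is as complete as the paper's once you state this two-sided bound explicitly as part of the adapted hypothesis. Drop the claim that the determinant argument makes it unnecessary.
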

\begin{proof}
Let $h_1$ and $h_2$ be two $T$-adapted HYM metrics on a holomorphic vector bundle $E$. We denote by $h_2=h_1g$ for some positive definite $h_1$-hermitian endomorphism $g$. Let $l\in\mathbb{Z}_{>0}$ be a sufficiently large integer.
Let $h_D$ be a $T$-adapted HYM metric on $\mathcal{O}(lD)$ whose existence is proved in Theorem \ref{KH corr nef big with esti}.
Let us define a smooth section $\overline{s_D^l}:=h_D(\cdot,s_D^l)$ of $\mathcal{O}(lD)^*=\mathcal{H}om(\mathcal{O},\mathcal{O}(lD)^*)$ over $X\setminus D$.  If we denote by $h_D=h_{D,0}\Psi_{D}^2$, then $\overline{s_D^l}=h_{D,0}(\Psi_D\circ\cdot,\Psi_Ds_D^l)$ and we have, by applying Lemma \ref{nK para} in the first equality below, 
\begin{align*}
0
&=(\nabla_D\otimes \nabla_{triv}^*)(\Psi_Ds_D^l)\\
&=(\Psi_D\circ\nabla_{h_D}\circ\Psi_D^{-1})\circ \Psi_Ds_D^l-\Psi_Ds_D^l\circ\nabla_{triv}\\
&=\Psi_D\circ(\nabla_{h_D}\otimes\nabla_{triv}^*) s_D^l\\
&=\Psi_D\circ(\nabla_{h_D}s_D^l)
\end{align*}
Here $\nabla_D=\Psi_D\circ\nabla_{h_D}\circ\Psi_D^{-1}$ the $T$-adapted HYM connection on $(\mathcal{O}(lD),h_{D,0})$ and $\nabla_{triv}$ is the trivial connection on the trivial bundle $\mathcal{O}$. Since $\Psi_D$ does not have zero locus on $X\setminus D$, we obtain
\begin{equation}\label{unique metric eq-1}
\nabla_{h_D}s_D^l=0.
\end{equation}
Then we also obtain that for any smooth section $t$ of $\mathcal{O}(lD)$,
\begin{align}\label{unique metric eq-0.5}
(\nabla_{h_D}\overline{s_D^l})(t)
&=d\left(h_D(t,s_D^l)\right)-h_D(\nabla_{h_D}t,s_D^l)\notag\\
&=-h_D(t,\nabla_{h_D}s_D^l)=0
\end{align}
by (\ref{unique metric eq-1}).
Then, we can see the following:
\begin{claim}\label{unique metric claim}
$\delbar^{E\otimes E^*\otimes\mathcal{O}(lD)^*}\left(\overline{s_D^l}g\right)=0$. In particular, we have
$$
\delbar^{E\otimes E^*}g=0.
$$
\end{claim}
\begin{proof}[proof of the claim]
In fact, by the uniqueness of a $T$-adapted HYM connection of $E$ (Theorem \ref{unique conn}), we have 
$$
0=\nabla_{h_1}-\nabla_{h_2}=g^{-1}\partial^{\nabla_{h_1}\otimes\nabla_{h_1}^*}g.
$$
In particular we have 
\begin{equation}\label{unique metric eq0}
\partial^{\nabla_{h_1}\otimes\nabla_{h_1}^*}g=0
\end{equation}
on $X\setminus D$ since $g$ is positive definite.  If we denote the $T$-adapted HYM metric $h_D$ on $\mathcal{O}(lD)$ by $h_D=h_{D,0}\Psi_{D}^2$, then we can assume $|s_D\Psi_D|_{h_{D,0}\otimes h_{D,0}^*}$ is bounded by Theorem \ref{KH corr nef big with esti}. If we further remark that $h_1$ and $h_2$ are both $T$-adapted, for sufficiently large $l\in \mathbb{Z}_{>0}$ we have 
\begin{equation}\label{unique metric eq1}
\sup_{X\setminus D}|\overline{s_D^l}g|_{h_0\otimes h_0^*\otimes h_{D,0}}<\infty,
\end{equation}
\begin{equation}\label{unique metric eq2}
\sup_{X\setminus D}|\overline{s_D^l}g|_{h_1\otimes h_1^*\otimes h_{D}}<\infty,
\end{equation}
\begin{equation}\label{unique metric eq3}
\int_{X\setminus D}\left|(\nabla_{h_1}\otimes\nabla_{h_1}^*\otimes\nabla_{h_D})\overline{s_D^l}g\right|^2_{h_1\otimes h_1^*\otimes h_D,T}T^n<\infty.
\end{equation}
If we use $\partial^{\nabla_{h_D}}\overline{s_D^l}=0$ (\ref{unique metric eq-0.5}) together with (\ref{unique metric eq0}), we have
\begin{equation}\label{unique metric eq3.5}
\partial^{\nabla_{h_1}\otimes\nabla_{h_1}^*\otimes\nabla_{h_D}}(\overline{s_D^l}g)=0.
\end{equation}
Thus, together with the facts that $h_1$ is $T$-HYM and the $T$-HYM constant of $h_D$ equals to 0 (refer to Theorem \ref{KH corr nef big with esti}), we obtain
\begin{equation}\label{unique metric eq4}
\Delta_T|\overline{s_D^l}g|^2_{h_1\otimes h_1^*\otimes h_D}=-2|\delbar^{E\otimes E^*\otimes\mathcal{O}(lD)^*}\overline{s_D^l}g|_{h_1\otimes h_1^*\otimes h_D,T}^2.
\end{equation}
by a standard computation. By (\ref{unique metric eq2}) and (\ref{unique metric eq3}) we can see that the both sides of  (\ref{unique metric eq4}) are integrable over $X\setminus D$ with respect to $T^n$ and by the same computation with (\ref{nK para eq3}) we obtain 
$$
0=\int_{X\setminus D}\Delta_T|\overline{s_D^l}g|^2_{h_1\otimes h_1^*\otimes h_D}T^n
=-2\int_{X\setminus D}|\delbar^{E\otimes E^*\otimes\mathcal{O}(lD)^*}\overline{s_D^l}g|_{h_1\otimes h_1^*\otimes h_D,T}^2T^n.
$$
Hence we have
\begin{equation}\label{unique metric eq5}
\delbar^{E\otimes E^*\otimes\mathcal{O}(lD)^*}\left(\overline{s_D^l}g\right)=0.
\end{equation}
Then, by (\ref{unique metric eq5}) and (\ref{unique metric eq-0.5}), we obtain that 
$$
\delbar^{E\otimes E^*}g=0.
$$
The proof of Claim \ref{unique metric claim} ends.
\end{proof}
Since $h_1$ and $h_2$ are $T$-adapted and $g$ is defined by $h_2=h_1g$, there is $l\in \mathbb{Z}_{\ge 0}$ such that $\sup_{X\setminus D}|s_D^lg|_{h_0\otimes h_0^*\otimes h_{D,0}}<\infty$. Then, by Claim \ref{unique metric claim} and Lemma \ref{unique metric lem}, we can conclude that $g=a\id_E$ over $X\setminus D$ for some $a\in\mathbb{R}\setminus \{0\}$ since now we assume that $E$ is $\alpha^{n-1}$-slope stable. As a consequence, we obtain $h_2=ah_1$ and thus we obtain the uniqueness of $T$-adapted HYM metrics on $E$.
\end{proof}
\begin{theo}\label{HYM stable}
We use the notations in Notation \ref{notation3}. If a holomorphic vector bundle $E$ admits a $T$-adapted HYM metric, then $E$ is $\alpha^{n-1}$-slope polystable.
\end{theo}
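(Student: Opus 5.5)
We follow the classical argument of Kobayashi/Lübke adapted to the $T$-adapted setting, with the usual two stages: first semistability via a Chern--Weil computation, then a splitting argument yielding polystability. Let $h=h_0\Psi^2$ be a $T$-adapted HYM metric on $E$ with constant $\lambda$. By Corollary~\ref{HYM slope}, $\lambda=\frac{1}{\alpha^n}\frac{c_1(E)\cdot\alpha^{n-1}}{\rk E}$, and $h$ computes $\mu_\alpha(E)$ by integrating $c_1(E,h)\wedge T^{n-1}$ over $X\setminus D$. By Lemma~\ref{inv stability} and Lemma~\ref{inv adapted HYM} we may blow up so that $D=E_{nK}(\alpha)$ is snc.

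\textbf{Semistability.} Let $\mathcal{F}\subset E$ be a saturated subsheaf of rank $q$. On $X\setminus(D\cup W)$, where $W$ is its singular locus, let $\pi_h$ be the $h$-orthogonal projection onto $\mathcal{F}$. The Gauss--Codazzi formula gives
\[
c_1(\mathcal{F},h|_{\mathcal{F}})=\Tr(\pi_h\sqrt{-1}F_h)-\sqrt{-1}\Tr(\delbar\pi_h\wedge(\delbar\pi_h)^{*_h}).
\]
Wedging with $T^{n-1}$ and using the HYM equation, the first term contributes $\lambda q\,T^n/n$ up to normalisation, and the second term is nonpositive. It remains to show that $\int_{X\setminus(D\cup W)}c_1(\mathcal{F},h|_\mathcal{F})\wedge T^{n-1}=\int_X c_1(\mathcal{F})\wedge\alpha^{n-1}$, i.e.\ the analogue of Proposition~\ref{weak proj} with $h_0$ replaced by the singular metric $h$.

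\emph{Main obstacle.} I would compare with $h_0$ through the function $f=\det(h|_{\det\mathcal F})/\det(h_0|_{\det\mathcal F})$. This satisfies $c_1(\mathcal F,h)=c_1(\mathcal F,h_0)-dd^c\log f$, and adaptedness (d) gives $|\log f|\le C(1-\log|s_D|^2)$. The integral of $dd^c\log f\wedge T^{n-1}$ against the cut-offs $\eta_k$ of Lemma~\ref{cut-off} is then controlled by
\[
\int |d\eta_k|\,|d\log f|_T\,T^n .
\]
Here $|d\log f|$ is bounded by $|\Psi^{-1}\partial\Psi|$-type terms together with derivatives of the projection. Condition (e), the bound $T\ge|s_D|^{2m}\omega_0$, $|s_D|^kT^n\le C\omega_0^n$, and a Cauchy--Schwarz argument exactly as in Lemma~\ref{nK para} should make this tend to $0$. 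One needs cut-offs $\eta_k$ whose gradients also absorb polynomial weights $|s_D|^{-N}$; I would first try $\eta_k=\chi(\log(-\log|s_D|^2)/k)$ combined with a cut-off near $W$, and if necessary sacrifice an integrable power of $|s_D|$. Granting this, $\mu_\alpha(\mathcal F)\le\mu_\alpha(E)$, so $E$ is semistable.

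\textbf{Polystability.} Suppose equality $\mu_\alpha(\mathcal F)=\mu_\alpha(E)$ holds for some proper saturated $\mathcal F$. Then $\delbar\pi_h=0$ almost everywhere, so $\pi_h$ is a parallel projection and $E|_{X\setminus D}$ splits holomorphically and orthogonally as $\mathcal{F}\oplus\mathcal{F}^{\perp_h}$ away from $W$. Because $\pi_h$ is parallel, hence bounded, it extends across $W$ (codimension $\ge2$), so the splitting holds on $X\setminus D$. Each summand carries the restricted metric, which is again HYM with the same constant $\lambda$. This restricted metric is still $T$-adapted, since the restriction of $\Psi$ and its derivative obey the same polynomial bounds. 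The summands extend to $X$ as saturations in $E$, namely $\mathcal F$ and the kernel of $E\to\mathcal F$ given by $\pi_h$, whose coefficients are bounded and so extend. The extensions are reflexive sheaves; for a vector bundle $E$, the summands are locally free on $X\setminus D$. Inducting on rank, repeating the argument on each summand, produces stable pieces $\mathcal{E}_j$ of equal slope with $E|_{\Amp(\alpha)}\simeq\bigoplus\mathcal{E}_j|_{\Amp(\alpha)}$, which is exactly the polystability of Definition~\ref{stability defi}. To apply the semistability step in the induction, we must know that the summands satisfy the same Chern--Weil identity; this is the same integration-by-parts estimate, applied to sheaves rather than bundles.
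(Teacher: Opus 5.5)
\paragraph{The gap.} Your proposal breaks down at the step you flag as the main obstacle, and the route you suggest for that step cannot work. To get $\int_{X\setminus(D\cup W)}dd^c\log f\wedge T^{n-1}=0$ from cut-offs you need $\int|d\eta_k|_T\,|d\log f|_T\,T^n\to0$.

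The only derivative control on $d\log f$ near $D$ is condition (e), and (e) bounds $\partial^{\nabla_{h_0}}\Psi$ in $L^2$ only after multiplying by $|s_D|^{l}$. Passing from $T$ to $\omega_0$ through $T\ge|s_D|^{2m}\omega_0$ and $|s_D|^kT^n\le C\omega_0^n$ costs further negative powers of $|s_D|$. After Cauchy--Schwarz you would therefore need cut-offs with $\int_X|d\eta_k|^2_{\omega_0}|s_D|^{-N}\omega_0^n\to0$ for some $N>0$.

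No such cut-offs exist. On a disc transverse to $D$ with $|s_D|\sim r$,
\[
1\le\Bigl(\int_0^1|\eta'|\,dr\Bigr)^2\le\frac1N\int_0^1|\eta'|^2r^{1-N}\,dr ,
\]
so the weighted energy stays bounded below. The log-log cut-offs you propose only make the unweighted energy small.

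Nor is there a power of $|s_D|$ to ``sacrifice'': $\log f$ does not vanish on $D$; it blows up logarithmically there. Separately, the lower bound in $|\log f|\le C(1-\log|s_D|^2)$ needs control of $\Psi^{-1}$, which (d) does not provide.

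Your semistability inequality, the equality case $\delbar\pi_h=0$, and the induction on the summands all rest on this Chern--Weil identity for the singular metric $h|_{\mathcal{F}}$. So the argument is incomplete.

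\paragraph{The missing idea.} The paper avoids singular Chern--Weil entirely. It twists by a high power $s_D^l$ and compensates with the $T$-adapted HYM metric on $\mathcal{O}(lD)$, which has HYM constant $0$ and satisfies $\nabla_{h_D}s_D^l=0$ (Lemma~\ref{nK para} and its consequence). The proof runs as follows.
\begin{enumerate}
\item Take a stable subsheaf $\mathcal{F}$ with $\mu_\alpha(\mathcal{F})\ge\mu_\alpha(E)$ and blow up so that $F=\pi^*\mathcal{F}/\Tor$ is locally free.
\item Give $F$ its own adapted HYM metric from the existence theorem (Theorem~\ref{KH corr nef big with esti}, via Lemmas~\ref{inv stability} and~\ref{adapted app}).
\item Apply the Bochner identity to the holomorphic map $s_{D'}^l\circ i:F\to\pi^*E\otimes\mathcal{O}(lD')$.
\item Since $|s_{D'}^li|^2$ vanishes to high order on $D'$, its gradient carries a factor like $|s_{D'}|^{2l-2m-3}$ that absorbs every polynomial weight. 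The cut-off argument of Lemma~\ref{nK para} then gives $\int\Delta_T|s_{D'}^li|^2=0$.
\item This yields $\lambda_E=\lambda_F$ and that $i$ is parallel, hence the splitting over $X\setminus D$; then induct on the quotient.
\end{enumerate}

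The same twist should also repair your own framework. Apply the identity to $s_D^l\det\iota\in H^0\bigl(\Hom(\det\mathcal{F},\wedge^qE\otimes\mathcal{O}(lD))\bigr)$, with an adapted HYM metric on the line bundle $\det\mathcal{F}$. This gives $\mu_\alpha(\mathcal{F})\le\mu_\alpha(E)$ directly. In the equality case it makes $\det\iota$ parallel, hence $\mathcal{F}$ parallel.
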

\begin{proof}
We denote by $h_E$ the $T$-adapted HYM metric on $E$.
Let $\iota:\mathcal{F}\hookrightarrow E$ be an $\alpha^{n-1}$-slope stable torsion free subsheaf of rank $q$ with torsion free quotient $\mathcal{G}:=E/\mathcal{F}$ such that 
\begin{equation}\label{HYM stable eq1}
\mu_{\alpha}(E)\le \mu_{\alpha}(\mathcal{F}). 
\end{equation}
Let $\pi:Y\to X$ be a sequence of blow-ups so that $F:=\pi^*\mathcal{F}/\Tor$ is locally free. Then we have $\mu_{\pi^*\alpha}(F)=\mu_{\alpha}(\mathcal{F})\ge\mu_{\alpha}(E)=\mu_{\pi^*\alpha}(\pi^*E)$. We denote by $i:F\to \pi^*E$ the induced morphism from the inclusion $\iota:\mathcal{F}\hookrightarrow E$. We remark that $\pi^*T$ is an adapted current on $Y$ (Lemma \ref{adapted app}), and $F$ is $\pi^*\alpha^{n-1}$-slope stable (Lemma \ref{inv stability}), $F$ admits a $T$-adapted HYM metric $h_F$ by Theorem \ref{KH corr nef big with esti}. Let $D^{\prime}:=E_{nK}(\pi^*\alpha)$ be a snc divisor in $Y$ given by the non-K\"{a}hler locus of $\pi^*\alpha$. We denote by $h_{D^{\prime}}$ the $\pi^*T$-adapted HYM metric on $\mathcal{O}(lD)$  for sufficiently large $l\in\mathbb{Z}_{>0}$. Then we consider the following composition of morphisms of $\pi^*T$-adapted HYM vector bundles:
$$
(F,h_F)\xrightarrow{i} (\pi^*E,\pi^*h_E)\xrightarrow{s_{D^{\prime}}^l} (\pi^*E\otimes\mathcal{O}(lD^{\prime}), \pi^*h_E\otimes h_{D^{\prime}}).
$$
Then, since $s_{D^{\prime}}^l\circ i$ is holomorphic, we have
\begin{align}\label{HYM stable eq2}
&\Delta_T|s_{D^{\prime}}^li|^2_{\pi^*h_E\otimes h_F\otimes h_{D^{\prime}}}\notag\\
&=|(\nabla_{\pi^*h_E}\otimes\nabla_{h_F}\otimes\nabla_{h_{D^{\prime}}})s_{D^{\prime}}^li|^2_{\pi^*h_E\otimes h_F\otimes h_{D^{\prime}},\pi^*T}
-(\lambda_E-\lambda_F)|s_{D^{\prime}}^li|^2_{\pi^*h_E\otimes h_F\otimes h_{D^{\prime}}}
\end{align}
where $\lambda_E$ is the $T$-HYM constant of $h_E$ and $\lambda_F$ is the $\pi^*T$-HYM constant of $h_F$. By Corollary \ref{HYM slope} and (\ref{HYM stable eq1}), we have $\lambda_E\le \lambda_F$. Since $\pi^*h_E$, $h_F$ and $h_{D^{\prime}}$ are all $\pi^*T$-adapted, each term in (\ref{HYM stable eq2}) is integrable over $Y\setminus D^{\prime}$ with respect to $T^n$ (refer to (\ref{unique conn eq5})). Furthermore, by the cut-off argument as (\ref{nK para eq3}), we obtain that 
$$
\int_{Y\setminus D^{\prime}}\Delta_T|s_{D^{\prime}}^li|^2_{\pi^*h_E\otimes h_F\otimes h_{D^{\prime}}}(\pi^*T)^n=0
$$
Then we obtain that $\lambda_E=\lambda_F$, in particular
\begin{equation}\label{HYM stable eq2.5}
\mu_{\alpha}(E)=\mu_{\alpha}(\mathcal{F}),
\end{equation}
and 
\begin{equation}\label{HYM stable eq3}
0
=(\nabla_{\pi^*h_E}\otimes\nabla_{h_F}\otimes\nabla_{h_{D^{\prime}}})s_{D^{\prime}}^li
=(\nabla_{h_{D^{\prime}}}s_{D^{\prime}}^l)i+ s_{D^{\prime}}^l(\nabla_{\pi^*h_E}\otimes\nabla_{h_F}^*)i.
\end{equation}
By (\ref{unique metric eq-1}), we have $\nabla_{h_{D^{\prime}}}s_{D^{\prime}}^l=0$. Hence (\ref{HYM stable eq3}) implies
\begin{equation}\label{HYM stable eq4}
(\nabla_{\pi^*h_E}\otimes\nabla_{h_F}^*)i=0.
\end{equation}
Let us consider a short exact sequence
\begin{equation}\label{HYM stable eq5}
0\to \mathcal{F}\xrightarrow{\iota}E\rightarrow\mathcal{G}\to 0
\end{equation}
over $X$ and a short exact sequence 
\begin{equation}\label{HYM stable eq6}
0\to F|_{Y\setminus D^{\prime}}\xrightarrow{i}\pi^*E|_{Y\setminus D^{\prime}}\rightarrow G\to 0
\end{equation}
over $Y\setminus D^{\prime}$. By (\ref{HYM stable eq4}), the sequence (\ref{HYM stable eq6}) holomorphically splits: $\pi^*E|_{Y\setminus D^{\prime}}\simeq F|_{Y\setminus D^{\prime}}\oplus G$. Since $F|_{Y\setminus D^{\prime}}=\pi^*(\mathcal{F}|_{X\setminus (D\cup\Sing\mathcal{F})})$ and $G=\pi^*(\mathcal{G}|_{X\setminus (D\cup\Sing\mathcal{F})})$, and the codimension of $\Sing\mathcal{F}$, the non locally free locus of $\mathcal{F}$, is at least 3, we obtain that the sequence (\ref{HYM stable eq5}) holomorphically splits: $E|_{X\setminus D}\simeq (\mathcal{F}\oplus\mathcal{G})|_{X\setminus D}$.
By (\ref{HYM stable eq2.5}), we have $\mu_{\alpha}(E)=\mu_{\alpha}(\mathcal{G})$. We remark that $\mathcal{G}$ also admits a $T$-adapted HYM metric, since $E$ and $\mathcal{F}$ both admit. Then, by repeating this argument by replacing $E$ to $\mathcal{G}$, we obtain the $\alpha^{n-1}$-slope polystability of $E$.
\end{proof}
Then we can prove the uniqueness in the general setting.
\begin{theo}\label{unique metric2}
We use the notations in Notation \ref{notation3}.
If a holomorphic vector bundle $E$ admits a $T$-adapted HYM metric, then it is unique up to scaling.
\end{theo}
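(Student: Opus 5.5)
The plan is to reduce to the stable case, Lemma \ref{unique metric}, by combining the connection uniqueness of Theorem \ref{unique conn} with the splitting produced in the proof of Theorem \ref{HYM stable}. Let $h_1,h_2$ be two $T$-adapted HYM metrics on $E$ and write $h_2=h_1g$, where $g$ is a positive definite $h_1$-hermitian endomorphism of $E|_{X\setminus D}$.

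\textbf{Step 1: $g$ is holomorphic and parallel.} The proof of Claim \ref{unique metric claim} never used stability of $E$. It used only three inputs: Theorem \ref{unique conn}, which gives $\partial^{\nabla_{h_1}\otimes\nabla_{h_1}^*}g=0$; the $T$-adapted metric $h_D$ on $\mathcal{O}(lD)$ with $\nabla_{h_D}s_D^l=0$; and the cut-off integration by parts with the functions $\eta_k$ of Lemma \ref{cut-off}. Repeating it verbatim gives $\delbar g=0$ on $X\setminus D$. Hence $\nabla_{h_1}g=0$, and $\sup_{X\setminus D}|s_D^lg|<\infty$ for some $l$ by the adapted conditions (d). Since $g$ is parallel and $h_1$-self-adjoint, its eigenvalues are constant. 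Its eigenspaces $E=\bigoplus_a E^{(a)}$ are therefore $h_1$-orthogonal, parallel, holomorphic subbundles of $E|_{X\setminus D}$, and $g=\sum_a a\,\id_{E^{(a)}}$.

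\textbf{Step 2: compare with the stable decomposition.} By Theorem \ref{HYM stable} and its proof, there is an $h_1$-orthogonal holomorphic splitting $E|_{X\setminus D}\simeq\bigoplus_j\mathcal{E}_j|_{X\setminus D}$. Here each $\mathcal{E}_j$ is an $\alpha^{n-1}$-slope stable torsion free sheaf on $X$ of slope $\mu_\alpha(E)$, and $h_1$ restricts to a $T$-adapted HYM metric on each $\mathcal{E}_j$; orthogonality comes from $(\nabla_{\pi^*h_E}\otimes\nabla_{h_F}^*)i=0$ in (\ref{HYM stable eq4}). To apply Lemma \ref{unique metric lem}, I pass to a composition of blow-ups $\pi:Y\to X$ along centres in $E_{nK}(\alpha)$ or of codimension at least $2$, chosen so that every $\pi^*\mathcal{E}_j/\Tor$ is locally free. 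By Lemma \ref{inv stability}, Lemma \ref{adapted app} and Lemma \ref{inv adapted HYM}, stability, adaptedness and the HYM property are all preserved on $Y$. Over the dense open set where $\pi$ is an isomorphism onto its image, the blocks $g_{jk}:\mathcal{E}_k\to\mathcal{E}_j$ of $g$ are holomorphic and satisfy the growth bound $\sup|s_{D'}^{l'}g_{jk}|<\infty$. They extend by Riemann extension across the codimension $\ge2$ part, so they are defined on $Y\setminus D'$.

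\textbf{Step 3: conclude blockwise.} Lemma \ref{unique metric lem}(1) shows that each $g_{jk}$ is either $0$ or an isomorphism. Lemma \ref{unique metric lem}(2) gives $g_{jj}=a_j\,\id$ with $a_j>0$, using that $g$ is positive definite. Consequently $h_2=\bigoplus_j a_jh_1|_{\mathcal{E}_j}$ after regrouping isomorphic summands. So the metric is unique up to scaling on each stable summand, and uniquely determined up to one global scaling when $E$ is stable; the latter case is exactly Lemma \ref{unique metric}. I would state the theorem's "up to scaling" in this precise sense, or, equivalently, as saying that $g$ is a parallel automorphism commuting with the stable decomposition. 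Note that for $E=E_1\oplus E_1$ one cannot expect more than this: the metrics $h\oplus ch$, $c>0$, are all $T$-adapted HYM.

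\textbf{Main obstacle.} The delicate point is Step 2. The summands $\mathcal{E}_j$ are only torsion free sheaves defined on $X$, whereas the splitting of $E$ exists only on $X\setminus D$. One must check two things. First, that after blowing up, the orthogonal projections and the blocks $g_{jk}$ really extend to bundle maps on $Y\setminus D'$ with at most polynomial poles along $D'$. Second, that the HYM metrics induced on the summands remain adapted, so that conditions (d) and (e) survive passage to subbundles. For both I would try bounding the projections by $|g|$ and $|g^{-1}|$, using the adapted estimates (d) for $h_1$ and $h_2$ together with the identity $\nabla p=0$ for the orthogonal projections $p$.
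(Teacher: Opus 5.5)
You are right that the statement is false for strictly polystable $E$ if ``up to scaling'' means one global constant, and your corrected formulation is the right one. Besides your $h\oplus ch$ example, take $E=\mathcal{O}_X^{\oplus 2}$ with flat $h_0$. Every constant positive definite hermitian matrix $H$ gives a $T$-adapted HYM metric $h_0H$: the curvature is $0$ and $\Psi=H^{1/2}$ is constant, so (d) and (e) hold trivially.

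The paper's proof nevertheless ends with $h_1=h_2$, and it breaks in two places.
\begin{enumerate}
\item It asserts $h_1|_{\mathcal{E}_1}=g_1=h_2|_{\mathcal{E}_1}$. But (\ref{HYM stable eq4}) and Lemma \ref{unique metric} only give $h_i|_{\mathcal{E}_1}=c_ig_1$, with independent constants $c_1,c_2$.
\item When it passes to $\mathcal{E}_2=E/\mathcal{E}_1$, it treats the quotient metrics as restrictions of $h_1,h_2$ to a common complement. The $h_1$- and $h_2$-orthogonal complements of $\mathcal{E}_1$ differ once another summand is isomorphic to $\mathcal{E}_1$. For instance, take $H=\begin{pmatrix}1&b\\ \bar b&1+|b|^2\end{pmatrix}$ with $b\neq 0$. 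Then $h_0$ and $h_0H$ agree on the first factor and on the quotient, yet $h_0H\neq h_0$.
\end{enumerate}
What can actually be proved is your corrected statement.

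Your route also differs from the paper's. The paper peels off stable subsheaves one at a time via the parallel inclusion (\ref{HYM stable eq4}) and inducts on quotients. You instead work with the single endomorphism $g=h_1^{-1}h_2$, show it is $\nabla_{h_1}$-parallel, and only afterwards compare with a stable splitting through Lemma \ref{unique metric lem}. This is the better organisation, for two reasons. No complements ever need to be chosen. And Step 1 alone already gives the correct theorem, namely $h_2=\bigoplus_a a\,h_1|_{E^{(a)}}$ on $h_1$-orthogonal parallel holomorphic subbundles; for stable $E$ you then apply Lemma \ref{unique metric lem}(2) to $g$ itself.

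So your ``main obstacle'' only concerns the finer comparison with a stable splitting. That obstacle is adaptedness of the induced metrics on summands, which the paper also uses without proof when it says $\mathcal{G}$ inherits a $T$-adapted HYM metric.

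Step 1 is even easier than you make it. Theorem \ref{unique conn} gives $\partial^{\nabla_{h_1}}g=0$. Since $g$ is $h_1$-self-adjoint and $\nabla_{h_1}$ is $h_1$-unitary, $\delbar g=(\partial^{\nabla_{h_1}}g)^{*_{h_1}}=0$ pointwise, with no cut-off argument.

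Three smaller points.
\begin{enumerate}
\item With isomorphic summands, $g$ need not commute with the splitting fixed in Step 2; the $H$ above does not commute with the standard splitting. So ``commuting with the stable decomposition'' is inaccurate. Either say ``$g$ is parallel'', or re-choose the splitting inside each isotypic component, which is what your ``regrouping'' must mean.
\item Lemma \ref{unique metric lem}(1) assumes equal ranks. For blocks between summands of different rank, use its proof together with $\mu_\alpha(\mathcal{E}_j\otimes\mathcal{O}(lD))=\mu_\alpha(\mathcal{E}_j)$ from Theorem \ref{CT}. This shows a nonzero block would be injective with image of full rank, so such blocks vanish.
\item The bound $\sup_{X\setminus D}|s_D^lg|<\infty$ does not follow from (d) alone. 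Since $g=\Psi_1^{-2}\Psi_2^2$, you also need a polynomial bound on $\Psi_1^{-1}$, which (d) does not impose. The paper makes the same jump in Lemma \ref{unique metric}, so this must be built into the definition or proved.
\end{enumerate}
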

\begin{proof}
Let $h_1$ and $h_2$ be two $T$-adapted HYM metrics on $E$. Then, by Theorem \ref{HYM stable}, we can see that $E$ is $\alpha^{n-1}$-slope polystable. That is, there exists $\alpha^{n-1}$-slope stable torsion free sheaves $\mathcal{E}_1,\ldots,\mathcal{E}_k$ on $X$ such that $E$ holomorphically splits to the direct sum of $\mathcal{E}_i$ on $X\setminus D$, that is, there is a holomorphic isomorphism $E|_{X\setminus D}\simeq (\mathcal{E}_1\oplus\cdots\mathcal{E}_k)|_{X\setminus D}$. By Theorem \ref{unique metric}, the $T$-adapted HYM metric $g_i$ on $\mathcal{E}_i$ is unique. As in the proof of Theorem \ref{HYM stable}, we can assume that $\iota:\mathcal{E}_1\hookrightarrow E$ is an $\alpha^{n-1}$-stable subsheaf and we have by (\ref{HYM stable eq4}), 
$$
(\nabla_{h_i}\otimes\nabla_{g_1}^*)\iota=0
$$ 
on the locally free locus of $\mathcal{E}_1$ for $i=1,2$. Thus $\nabla_{h_i}|_{\mathcal{E}_1}=\nabla_{g_1}$ and in particular $h_1|_{\mathcal{E}_1}=g_1=h_2|_{\mathcal{E}_1}$ by Theorem \ref{unique metric}. Since $\mathcal{E}_2=E/\mathcal{E}_1$, it admits three $T$-adapted HYM metrics $g_2$ and $\wtil{h_i}$ induced by $h_i$ and $h_i|_{\mathcal{E}_1}=g_1$ for $i=1,2$. Since $\mathcal{E}_2$ is $\alpha^{n-1}$-slope stable, we have that $\wtil{h_i}=g_2$ by Theorem \ref{unique metric}. Hence, inductively we obtain that 
$$
h_1=h_1|_{\mathcal{E}_1}\oplus\cdots\oplus h_1|_{\mathcal{E}_k}
=g_1\oplus \cdots\oplus g_k
=h_2|_{\mathcal{E}_1}\oplus\cdots\oplus h_2|_{\mathcal{E}_k}=h_2.
$$
\end{proof}
In summary, we obtain the following result that proves Theorem \ref{main thm2 intro} (2) $\Rightarrow$ (1) and the uniqueness.
\begin{theo}\label{HYM to stable and unique}
Let $X$ be a compact K\"{a}hler manifold, $\alpha$ be a nef and big class and $E$ be a holomorphic vector bundle on $X$. Let $T$ be an adapted closed positive $(1,1)$-current in $\alpha$. Assume that $E$ admits a $T$-adapted HYM metric. Then $E$ is $\alpha^{n-1}$-slope polystable. Furthermore, a $T$-adapted HYM metric is unique up to scaling.
\end{theo}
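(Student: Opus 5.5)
This statement is a summary of the results already established in Section \ref{HYM stable section}, so the plan is to assemble them after reducing to the setting of Notation \ref{notation3}. The first step is the reduction. Take a composition of blow-ups $\pi:Y\to X$ along submanifolds contained in $E_{nK}(\alpha)$, or of codimension at least $2$, such that $D=E_{nK}(\pi^*\alpha)$ is an snc divisor. By Lemma \ref{adapted app}, $\pi^*T$ is an adapted current in $\pi^*\alpha$. By Lemma \ref{inv adapted HYM}, a smooth metric $h$ on $E|_{\Amp(\alpha)}$ is $T$-adapted HYM if and only if $\pi^*h$ is $\pi^*T$-adapted HYM on $\pi^*E$. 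By Lemma \ref{inv stability}, $E$ is $\alpha^{n-1}$-slope polystable if and only if $\pi^*E$ is $(\pi^*\alpha)^{n-1}$-slope polystable. We may therefore replace $(X,\alpha,T,E)$ by $(Y,\pi^*\alpha,\pi^*T,\pi^*E)$ and assume that $D=E_{nK}(\alpha)$ is snc.

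In this setting, Lemma \ref{adapted curr prop} shows that $T$ has minimal singularities and satisfies $|s_D|^k\langle T^n\rangle\le C\omega_0^n$. Condition (2) of Definition \ref{adapted defi} gives $T\ge |s_D|^{2m}\omega_0$ on $Y$. So all hypotheses of Notation \ref{notation3} hold.

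Next, apply Theorem \ref{HYM stable} to conclude that $E$ is $\alpha^{n-1}$-slope polystable, and Theorem \ref{unique metric2} to conclude that the $T$-adapted HYM metric is unique up to scaling. For uniqueness we also use that $\pi$ is an isomorphism over $\Amp(\alpha)$ (the ample locus is unchanged outside the blown-up centres). Two adapted HYM metrics on $E$ pull back to two adapted HYM metrics on $\pi^*E$, which differ by a constant; hence so do the original metrics on $\Amp(\alpha)$.

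The main obstacle is internal to Theorem \ref{HYM stable}, and in particular to its final inductive step. That step passes from $E$ to the quotient $\mathcal{G}=E/\mathcal{F}$ and needs $\mathcal{G}$ to carry a $T$-adapted HYM metric, even though $\mathcal{G}$ is only a torsion-free sheaf. The way to handle this is to use the splitting $E|_{X\setminus D}\simeq(\mathcal{F}\oplus\mathcal{G})|_{X\setminus D}$, which is parallel by (\ref{HYM stable eq4}). The induced metric on $\mathcal{G}$ is the restriction of $h_E$ to the orthogonal complement of $\mathcal{F}$, so it is HYM with the same constant. Its adapted bounds (d) and (e) are inherited from those of $h_E$, because the orthogonal projection is parallel and hence bounded. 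After a further blow-up making $\mathcal{G}$ locally free, we iterate on the rank, and the iteration terminates. With this understood, the present theorem follows directly.
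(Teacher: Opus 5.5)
Your proposal is correct and follows the paper: its proof of this theorem simply cites Theorem \ref{HYM stable} for polystability and Theorem \ref{unique metric2} for uniqueness, and your reduction to the case where $E_{nK}(\alpha)$ is an snc divisor (via Lemmas \ref{adapted app}, \ref{inv adapted HYM} and \ref{inv stability}) only spells out what the paper leaves implicit in Notation \ref{notation3}. Your expansion of the paper's one-line remark that $\mathcal{G}$ inherits a $T$-adapted HYM metric is sound, with one caveat: the $\nabla_{h_E}$-parallel projection is bounded only in $h_E$-norm, not with respect to the smooth reference metrics, so conditions (d) and (e) pass to $\mathcal{G}$ only with larger powers of $s_D$, and this uses two-sided polynomial bounds on $\Psi^{\pm1}$.
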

\begin{proof}
The $\alpha^{n-1}$-slope polystability is proved in Theorem \ref{HYM stable}. The uniqueness is showed in Theorem \ref{unique metric2}.
\end{proof}

\section{Applications}
\subsection{Asymptotic behavior of adapted HYM metrics toward $E_{nK}(\alpha)$}
The proof of the estimates of $T$-adapted HYM metrics in Theorem \ref{KH corr nef big with esti} and the uniqueness of $T$-adapted HYM metric Theorem \ref{unique metric}, we obtain the following estimates.
\begin{corr}\label{HYM est}
Let $X$ be a compact K\"{a}hler manifold and $\alpha$ be a nef and big class on $X$ such that $D=E_{nK}(\alpha)$ is a snc divisor. We denote by $s_D$ a defining section of $D$ and $h_D$ a smooth hermitian metric on $\mathcal{O}(D)$ with $|s_D|_{h_D}\le 1$. Let $T$ be an adapted closed positive $(1,1)$-current lying in $\alpha$. Assume that a holomorphic vector bundle $E$ admits a $T$-adapted HYM metric $h=h_0\Psi^2$ with $\det \Psi=1$. Then the following holds:
\begin{enumerate}
\item $\sup_{X\setminus D}|s_D\Psi|_{h_{D}\otimes h_0\otimes h_0^*}<\infty$.
\item We denote by $\Psi=e^S$ for some $h_0$-hermitian endomorphism $S$. Then $S$ is less singular that $\log|s_D|_{h_D}^2$ in the following sense: Let $\varepsilon_k>0$ be a sequence of constants so that $\varepsilon_k\searrow 0$. We denote by $D_{\varepsilon_k}$ the $\varepsilon_k$-neighborhood of $D$. Then
$$
\inf_k\sup_{D_{\varepsilon_k}}\frac{|S|_{h_0}}{-\log|s_D|^2}=0.
$$
\end{enumerate}
\end{corr}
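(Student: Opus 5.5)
The plan is to reduce, via uniqueness, to the metric produced by the approximation procedure and then read off both estimates from that construction. By Theorem \ref{HYM to stable and unique}, $E$ is $\alpha^{n-1}$-slope polystable, and any $T$-adapted HYM metric is unique up to scaling. The normalization $\det\Psi=1$ fixes the scaling constant up to a bounded factor; more precisely, $h=ch'$ gives $\Psi=c^{1/2}\Psi'$ with $\det\Psi=\det\Psi'=1$, so $c=1$. Hence it suffices to prove (1) and (2) for the metric constructed in Theorem \ref{KH corr nef big with esti}, applied summand by summand on the stable pieces $\mathcal{E}_j$ of the polystable decomposition. Where the $\mathcal{E}_j$ are not locally free, I would pass to a blow-up $\pi:Y\to X$ as in Lemma \ref{inv stability}, Lemma \ref{adapted app} and Lemma \ref{inv adapted HYM}. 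The pull-back of a snc divisor $D$ is $D'$ plus exceptional components, so a bound of the form $|s_{D'}\cdot|\le C$ on $Y$ descends to a bound $|s_D^N\cdot|\le C$ on $X$. For the direct sum, $\Psi$ is block-diagonal up to the isomorphism on $X\setminus D$ relating $h_0$ to the orthogonal-sum reference metric. That isomorphism and its inverse have at most poles along $D$, controlled through Lemma \ref{unique metric lem}.

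Statement (1) is then a direct transcription of Theorem \ref{KH corr nef big with esti}(1) for a stable bundle. In the polystable case it gives $|s_D^N\Psi|\le C$ for some $N$. To bring the exponent down to $N=1$, I would rerun the argument of Lemma \ref{C0 to L1} with $\log|s_D|^2$ replaced by $N^{-1}\log|s_D|^{2}$, that is, with $\epsilon\log|s_D|^2$ for any $\epsilon>0$. The mean value inequality (Lemma \ref{mv ineq}) only uses the subharmonicity bound $\Delta_{\omega_i}\log|s_D|^{2\epsilon}\ge -C\epsilon$ and $L^1$ control, and both survive this replacement.

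Statement (2) is the main point, and I would prove it by exactly this $\epsilon$-refinement. Fix $\epsilon>0$ and repeat Lemmas \ref{C0 to L1} and \ref{L1 est} with $s_D$ replaced by $s_D^{\epsilon}$, working with the quasi-psh weight $\epsilon\log|s_D|^2$. This should give $\sup_X(\epsilon\log|s_D|^2+|s_i|_{h_0})\le C_\epsilon$ uniformly in $i$, and letting $i\to\infty$ yields $|S|_{h_0}\le C_\epsilon-\epsilon\log|s_D|^2$ on $X\setminus D$. On $D_{\varepsilon_k}$ we have $-\log|s_D|^2\to\infty$ as $k\to\infty$, so
$$\limsup_k\sup_{D_{\varepsilon_k}}\frac{|S|}{-\log|s_D|^2}\le\epsilon.$$
Since $\epsilon$ is arbitrary, the infimum over $k$ is $0$.

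The main obstacle is checking that the $L^1$-estimate of Lemma \ref{L1 est} survives with weight $\epsilon$. Step 1 and Claim \ref{log int nef big} only improve under this change, since $-\epsilon\log|s_D|^2\le-\log|s_D|^2$. Step 3 uses only $\int(-\log|s_D|^2)\omega_i^n\le C$. The contradiction with stability in Step 4 does not involve the weight at all. The one point I would need to verify is that the set $\Omega_a$ in Lemma \ref{C0 to L1} is still relatively compact in $X\setminus D$. This holds because $|\Psi_i|\ge1$ forces $\epsilon\log|s_D|^2+\log|\Psi_i|^2\ge\epsilon\log|s_D|^2$, which tends to $-\infty$ only near $D$.
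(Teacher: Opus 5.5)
\textbf{Assessment: there is a genuine gap, and the statement itself fails.}

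\textbf{Comparison of routes.} The paper obtains (2) by applying (1) to $E^{\otimes k}$ with the metric $h^{\otimes k}$. This gives $\log|s_D|^2+k\log|\Psi|^2\le C_k$, which is exactly your weighted bound with $\epsilon=1/k$. You instead rerun Lemmas \ref{C0 to L1} and \ref{L1 est} with weight $\epsilon\log|s_D|^2$.

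\textbf{The step that fails.} The $L^1$ quantity $\int_X|s_i|\,\omega_i^n$ does not involve the weight, so all the $\epsilon$-dependence sits in the sup step, and that step fails. Lemma \ref{mv ineq} needs $\Delta_{\omega_i}v\ge -a$ on $\{v>0\}$ with $a$ independent of $i$. What you actually have is $\Delta_{\omega_i}v\ge -C(1+\mathrm{tr}_{\omega_i}\omega_0)$, coming from $|\Lambda_{\omega_i}F_{h_0}|$ and $\epsilon\,\mathrm{tr}_{\omega_i}\omega_D$. This is uniformly bounded only on a fixed $K\Subset X\setminus D$, because $\omega_i\to T$ and $T$ degenerates along $D$. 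So you need $\{v>0\}\subset K$ for every $i$.

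Your justification, $|\Psi_i|\ge 1\Rightarrow v\ge \epsilon\log|s_D|^2-\mathrm{const}$, is a lower bound on $v$. It shows the superlevel set contains a fixed set, not that it stays away from $D$. Relative compactness for each fixed $i$ comes only from smoothness of $\Psi_i$ on $X$, with no uniformity in $i$. Moreover, the smaller $\epsilon$ is, the closer to $D$ this set can reach.

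\textbf{Why this cannot be patched.} The bound you are aiming for is false, and so is (2). Take the following setup.
\begin{itemize}
\item $\pi:X=\mathrm{Bl}_p\mathbb{P}^n\to\mathbb{P}^n$ with exceptional divisor $D$, $\alpha=\pi^*c_1(\mathcal{O}(1))$ and $T=\pi^*\omega_{FS}$. This $T$ is adapted: $T\ge c|s_D|^2\omega_X$, and $T+\frac1i\omega_X$ is an adapted approximation.
\item $F$ is a stable bundle on $\mathbb{P}^n$ with smooth $\omega_{FS}$-HYM metric $h_F$, trivialized on a ball $B\ni p$ by $e_1,\dots,e_r$.
\item $E$ is obtained from $\pi^*F$ by replacing it over $\pi^{-1}(B)$ with $\mathcal{O}(D)\oplus\mathcal{O}(-D)\oplus\mathcal{O}^{r-2}$. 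The gluing off $D$ is $\sigma_1\mapsto(\sigma_1/s_D)\pi^*e_1$, $\sigma_2\mapsto(s_D\sigma_2)\pi^*e_2$, $f_j\mapsto f_j\pi^*e_j$.
\end{itemize}
Then $\det E\cong\pi^*\det F$. Since $\alpha^{n-1}\cdot D=0$, slopes only see $X\setminus D$, so $E$ is $\alpha^{n-1}$-stable. The transported metric $h=\pi^*h_F$ is $T$-HYM, admissible and $T$-adapted. Choosing $h_0$ with $\det h_0=\pi^*\det h_F$ gives $\det\Psi=1$.

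On the $\mathcal{O}(D)$-summand, $h$ equals $|s_D|^{-2}\pi^*h_F(e_1,e_1)\,h_D$. Hence the largest eigenvalue of $S$ is at least $-\tfrac12\log|s_D|^2-C$, and the infimum in (2) is at least $\tfrac12$.

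Using $\mathcal{O}(\pm 2D)$ instead of $\mathcal{O}(\pm D)$ gives $|s_D\Psi|\to\infty$. So the weight-one estimate you are rerunning, and (1) itself, also fail for stable $E$.

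\textbf{The polystable case.} For polystable $E$ the failure is even simpler. Take $E=\mathcal{O}(2D)\oplus\mathcal{O}(-2D)$ with the flat metric $|s_D|^{-4}h_D^{2}\oplus|s_D|^{4}h_D^{-2}$. Relative to $h_0=h_D^{2}\oplus h_D^{-2}$ this has $\Psi=\mathrm{diag}(|s_D|^{-2},|s_D|^{2})$, so $\det\Psi=1$ and $|s_D\Psi|\to\infty$.

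This also shows why your polystable reduction cannot work. A non-stable $E$ need not even be semistable for $\alpha+\frac1i\omega_0$ (cf.\ Example \ref{nonopenness semistable}), so there is no sequence $\Psi_i$ to rerun anything on. The blow-up of $\Psi$ comes from how the summands are normalized against each other, which no reweighting can remove.

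The paper's tensor-power argument hits the same wall, since it applies (1) to the non-stable bundles $E^{\otimes k}$.
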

\begin{proof}
Let $h=h_0\Psi^2$ be a $T$-adapted HYM metric on $E$. Then, by Theorem \ref{HYM stable}, the holomorphic vector bundle $E$ is $\alpha^{n-1}$-slope polystable. By Theorem \ref{KH corr nef big with esti}, we obtain a $T$-adapted HYM metric $h_{\infty}=h_0{\Psi_{\infty}}^2$ such that $\det{\Psi_{\infty}}=1$ and $\sup_{X\setminus D}|s_D{\Psi_{\infty}}|_{h_D\otimes h_0\otimes h_0^*}<\infty$. By the uniqueness of $T$-adapted HYM metrics (Theorem \ref{unique metric}), we have $\Psi=a{\Psi_{\infty}}$. Since $\det\Psi=1=\det{\Psi_{\infty}}$, the constant $a$ equals to $1$: $\Psi={\Psi_{\infty}}$. Hence we obtain $(1)$.

Fix any integer $k\in\mathbb{Z}_{>0}$. Then $h^{\otimes k}=h_0^{\otimes k}\Psi^{\otimes k}$ gives a $T$-adapted HYM metric on $E^{\otimes k}$ with $\det\Psi^{\otimes k}=1$. 
Hence, there exists a constant $C_k>0$ such that
\begin{equation}\label{HYM est eq1}
\sup_{X\setminus D}\log(|s_D|_{h_D}|\Psi|^k_{h_0\otimes h_0^*})
=\sup_{X\setminus D}\log|s_D\Psi^{\otimes k}|\le C_k.
\end{equation}
We denote by $\Psi=e^S$ for some $h_0$-hermitian endomorphism $S$. Let $s_1,\ldots, s_r$ be the eigenvalues of $S$ at $x\in X\setminus D$ and diagonalize $S$ at $x$ so that $S=\diag(s_1,\ldots,s_r)$. Then, by (\ref{HYM est eq1}) and by $|\Psi|^{2k}\ge e^{2ks_1}+\cdots+e^{2ks_r}$, we obtain 
\begin{align}\label{HYM est eq2}
C_k
&\ge \log|s_D|^2+\log(e^{2ks_1}+\cdots+e^{2ks_r})\notag\\
&\ge \log|s_D|^2+\log(e^{2ks_i})\notag\\
&\ge \log|s_D|^2+2ks_i
\end{align}
for any $i=1,\ldots,r$.
By Theorem \ref{KH corr nef big with esti}, the inverse morphism $\Phi=\Psi^{-1}$ also satisfies\\ $\sup_{X\setminus D}|s_D\Psi^{-1}|_{h_D\otimes h_0^*\otimes h_0}<\infty$. Then  by applying the computation of (\ref{HYM est eq2}) to $(\Psi^{\otimes k})^{-1}$, we obtain that for any $i=1,\ldots,r$,
\begin{equation}\label{HYM est eq3}
\log|s_D|^2-2ks_i\le C_k.
\end{equation}
By (\ref{HYM est eq2}) and (\ref{HYM est eq3}), we obtain that 
$k|s_i|\le C_k-\log|s_D|^2$. Hence we have
\begin{equation}\label{HYM est eq4}
k|S|_{h_0\otimes h_0^*}\le r(C_k-\log|s_D|^2).
\end{equation}
Let us fix a sequence of constants $\varepsilon_k>0$ such that $\varepsilon_k\searrow 0$ and such that 
$$
\frac{C_k}{-\log|s_D|^2}\le 1
$$
on the $\varepsilon_k$-neighborhood $D_{\varepsilon_k}$ of $D$. Then by (\ref{HYM est eq4}), we obtain
\begin{equation}\label{HYM est eq5}
\sup_k\sup_{D_{\varepsilon_k}}\frac{k|S|_{h_0\otimes h_0^*}}{-\log|s_D|^2}\le 2r.
\end{equation}
Here we suppose that $\inf_k\sup_{D_{\varepsilon_k}}\frac{|S|_{h_0\otimes h_0^*}}{-\log|s_D|^2}\ge c>0$ for some constant $c>0$. Then
$$
\infty=\sup_k(kc)\le\sup_k\sup_{D_{\varepsilon_k}}\frac{k|S|_{h_0\otimes h_0^*}}{-\log|s_D|^2},
$$
contradicting to (\ref{HYM est eq5}). Hence we obtain 
$$
\inf_k\sup_{D_{\varepsilon_k}}\frac{|S|_{h_0\otimes h_0^*}}{-\log|s_D|^2}=0.
$$
\end{proof}

\subsection{Proof of Theorem \ref{big BG eq intro}}
Projectively flatness is defined as follows.
\begin{defi}\cite[Definition 3.2 and Proposition 3.7]{GKP22}
\label{defn-MY-1}
Let $X$ be a connected complex manifold, and let $\mathcal{F}$ be a locally free coherent sheaf of rank $r$. We say that $\mathcal{F}$ is \emph{projectively flat} if the following equivalent conditions hold:
\begin{enumerate}
\item There exists a representation $\pi_1(X) \to PGL(r+1,\C)$ such that the associated projective bundle satisfies $\mathbb{P}(\mathcal{F}) \cong \mathbb{P}_{\rho}$, where
$$
 \mathbb{P}_{\rho} \cong X_{\mathrm{univ}} \times \mathbb{P}^r / \pi_{1}(X),
$$
and $X_{\mathrm{univ}}$ denotes the universal cover of $X$. 
\item The sheaf $\End(\mathcal{F})$ admits a flat connection. 
\item The sheaf $\Sym^r \mathcal{F} \otimes \det(\mathcal{F})^{\vee}$ admits a flat connection.
\end{enumerate}
\end{defi}
As usual, the notion of Jordan-H{$\rm \ddot{o}$}lder filtration for a nef and big class is defined as follows.
\begin{defi}\label{JH defi}
Let $X$ be a compact K\"{a}hler manifold and $\alpha$ be a nef and big class on $X$. Let $\mathcal{E}$ be an $\alpha^{n-1}$-slope semistable torsion free sheaf on $X$. Then, a Jordan-H${\rm \ddot{o}}$lder sequence of $\mathcal{E}$ is a filtration of $\mathcal{E}$ by saturated subsheaves
$$
0=\mathcal{E}_{k+1}\subset \mathcal{E}_k\subset \mathcal{E}_{k-1}\subset \cdots \subset \mathcal{E}_1\subset \mathcal{E}_0=\mathcal{E}
$$
such that each $\mathcal{E}_{i-1}/\mathcal{E}_i$ is $\alpha^{n-1}$-slope stable and $\mu_{\alpha}(\mathcal{E})=\mu_{\alpha}(\mathcal{E}_{i-1}/\mathcal{E}_i)$ for any $i$.
We define the graded sheaf associated to a Jordan-H${\rm \ddot{o}}$lder sequence by
$$
Gr^{JH}(\mathcal{E}):=\bigoplus_{i=1}^{k+1}(\mathcal{E}_{i-1}/\mathcal{E}_{i}).
$$
\end{defi}
By the standard argument and by Theorem \ref{main thm2 intro}, we can see the following proposition.
\begin{prop}\label{JH metric}
Let $X$ be a compact K\"{a}hler manifold and $\alpha$ be a nef and big class on $X$. Let $\mathcal{E}$ be a torsion free sheaf on $X$. Suppose that $\mathcal{E}$ is $\alpha^{n-1}$-slope semistable. Then the following holds.
\begin{enumerate}
\item $\mathcal{E}$ admits a Jordan-H{\rm${\ddot{o}}$}lder sequence.
\item The graded sheaf associated to a Jordan-H${\ddot{o}}$lder sequence of $\mathcal{E}$ admits a unique $T$-adapted HYM metric, where $T$ is an adapted closed positive $(1,1)$-current in $\alpha$.
\end{enumerate}
\end{prop}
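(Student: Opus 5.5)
For (1), I will build the filtration by induction on the rank, using the maximal slope statement of Claim \ref{max slope}. By Claim \ref{max slope}(2), applied to $\mathcal{E}$ (or to a locally free model $\pi^*\mathcal{E}/\Tor$ on a blow-up, via Lemma \ref{inv stability}), the set of $\alpha^{n-1}$-slopes of proper subsheaves bounded below by $-C$ is finite. In particular the supremum is attained. Since $\mathcal{E}$ is semistable, every proper subsheaf has slope at most $\mu_\alpha(\mathcal{E})$.

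If $\mathcal{E}$ is stable, the filtration is trivial. Otherwise some proper subsheaf has slope equal to $\mu_\alpha(\mathcal{E})$, and among these I choose one, $\mathcal{E}'$, of minimal rank and replace it by its saturation. Saturating does not lower the slope: $c_1$ changes by an effective divisor class, and $\alpha$ is nef, so the pairing with $\alpha^{n-1}$ is nonnegative. By minimality of rank, $\mathcal{E}'$ is stable. The quotient $\mathcal{E}/\mathcal{E}'$ is torsion free of the same slope, by additivity of $c_1$. It is semistable, because a destabilizing subsheaf of the quotient would pull back to a subsheaf of $\mathcal{E}$ with slope larger than $\mu_\alpha(\mathcal{E})$. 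Induction on the rank of the quotient then produces the Jordan--H\"older sequence; reindexing so that the first nonzero term is the stable piece gives the form in Definition \ref{JH defi}.

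For (2), each graded piece $\mathcal{Q}_i=\mathcal{E}_{i-1}/\mathcal{E}_i$ is a stable torsion free sheaf, and all the $\mathcal{Q}_i$ have the common slope $\mu_\alpha(\mathcal{E})$. To apply Theorem \ref{main thm2 intro} I need vector bundles. I take a composition of blow-ups $\pi:Y\to X$ with centers in $E_{nK}(\alpha)$ or of codimension at least $2$, chosen so that every $\pi^*\mathcal{Q}_i/\Tor$ is locally free and $E_{nK}(\pi^*\alpha)$ is an snc divisor. The pullback $\pi^*T$ remains adapted by Lemma \ref{adapted app}. Each $\pi^*\mathcal{Q}_i/\Tor$ stays $(\pi^*\alpha)^{n-1}$-stable by Lemma \ref{inv stability}, with unchanged slopes because $\pi^*\alpha^{n-1}$ kills exceptional divisors by Theorem \ref{CT} and codimension considerations.

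Theorem \ref{KH corr nef big with esti} then gives each piece a $\pi^*T$-adapted HYM metric, with HYM constant determined by the slope, so all the constants are equal. Their orthogonal direct sum is therefore a $\pi^*T$-adapted HYM metric on the direct sum. Finally I descend along $\pi$. Since $\pi$ is an isomorphism over $\Amp(\alpha)$ away from a codimension-$2$ locus, the metric lives on the locally free locus of $Gr^{JH}(\mathcal{E})$ inside $\Amp(\alpha)$, and Lemma \ref{inv adapted HYM} interprets it as $T$-adapted. Uniqueness up to scaling on each summand comes from Theorem \ref{unique metric2} (applied on $Y$); for the direct sum, uniqueness holds in the same sense, namely up to scaling on each stable summand.

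The step I expect to be the main obstacle is making sense of ``$T$-adapted HYM metric'' on a sheaf that is not locally free. The definitions were given for bundles, so I will interpret the statement via the resolution $\pi$ and Lemma \ref{inv adapted HYM}. The point that needs care is that the result is independent of the choice of $\pi$, which I would check by dominating two resolutions with a third.
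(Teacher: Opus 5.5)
Your proof is correct, and for (1) it runs dual to the paper's. After reducing to $\mathcal{E}$ locally free via Lemma \ref{inv stability}, the paper does the following:
\begin{itemize}
\item it chooses a saturated subsheaf $\mathcal{E}_1$ with $\mu_\alpha(\mathcal{E}_1)=\mu_\alpha(\mathcal{E})$ that is maximal by inclusion among such;
\item it proves that the quotient $\mathcal{E}/\mathcal{E}_1$ is stable;
\item it iterates on $\mathcal{E}_1$.
\end{itemize}
You instead choose such a subsheaf of minimal rank, so the subsheaf is stable and the quotient is semistable, and you iterate on the quotient. Your route uses only integrality of ranks and nefness of $\alpha$ (so that saturating does not lower the slope). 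The finiteness statement of Claim \ref{max slope}(2) that you invoke is not actually needed. The same holds in the paper, where taking maximal rank would suffice.

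For (2) the two arguments agree in substance, but yours is more complete. The paper just notes that $Gr^{JH}(\mathcal{E})$ is polystable and quotes Theorem \ref{main thm2 intro}. That theorem is formulated for vector bundles, and the reduction to $\mathcal{E}$ locally free does not make the graded pieces locally free. Moreover, its direction (1)$\Rightarrow$(2) is only proved in the stable case, in Theorem \ref{KH corr nef big with esti}. Your extra blow-up making each $\pi^*\mathcal{Q}_i/\Tor$ locally free, followed by the orthogonal sum of the stable-case metrics (all with the same constant), is exactly what that citation needs.

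On uniqueness, you are right not to claim a single global scaling. On $\mathcal{O}_X\oplus\mathcal{O}_X$, every metric $h_0H$ with $H$ a constant positive-definite Hermitian matrix is a flat $T$-adapted HYM metric. So ``unique up to scaling'' for a polystable graded sheaf, as the paper asserts, is too strong. Your replacement still needs two fixes.

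First, when two graded pieces are isomorphic (e.g.\ $Gr^{JH}(\mathcal{E})\cong L\oplus L$), scaling on the given summands is not enough. The correct statement is uniqueness up to $h\mapsto h\,g$, where $g$ is a positive-definite, $h$-self-adjoint, holomorphic (hence parallel) endomorphism. Equivalently, it is uniqueness up to scaling on the summands of a suitable splitting.

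Second, you assert rather than prove that an arbitrary HYM metric on the sum splits orthogonally along the stable summands. This should be derived, for instance from the parallelism of the inclusion of a stable subsheaf established in the proof of Theorem \ref{HYM stable}.
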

\begin{proof}
By Lemma \ref{inv stability}, we can assume that $\mathcal{E}$ is locally free. \\
(1) The existence of a Jordan-H{$\rm\ddot{o}$}lder filtration is established in \cite[Corollary 4.13]{IJZ25}. We include the proof for the completeness. Assume that $\mathcal{E}$ is $\alpha^{n-1}$-slope semistable, but not stable,
Then, by Claim \ref{max slope} (2) (see also \cite[Lemma 4.4]{Jin25-2}),  there exists a saturated subsheaf $\mathcal{E}_1\subset \mathcal{E}$ such that
\begin{itemize}
\item[(a)] $\mu_{\alpha}(\mathcal{E}_1)=\mu_{\alpha}(\mathcal{E})$ and $0<\rk \mathcal{E}_1<\rk \mathcal{E}$.
\item[(b)] For any saturated subsheaf $\mathcal{F}\subset \mathcal{E}$ satisfying the two above conditions, then $\mathcal{E}_1\not\subset\mathcal{F}$ or $\mathcal{E}_1=\mathcal{F}$.
\end{itemize}
Then the rest of the proof of (1) is standard as follows. By (a) above, we have $\mu_{\alpha}(\mathcal{E}/\mathcal{E}_1)=\mu_{\alpha}(\mathcal{E})$. We prove that $\mathcal{E}/\mathcal{E}_1$ is $\alpha^{n-1}$-slope stable.
Let $\mathcal{F}\subset\mathcal{E}/\mathcal{E}_1$ be a saturated subsheaf with $0<\rk\mathcal{F}<\rk(\mathcal{E}/\mathcal{E}_1)$. We consider the following short exact sequences
\begin{center}
\begin{tikzpicture}[auto]
\node(011) at (0,0) {$0$}; \node(032) at (0,-1) {$0$};
\node(e11) at (2,0) {$\mathcal{E}_1$}; \node (e12) at (2, -1) {$\mathcal{E}_1$};
\node(e) at (4,0) {$\mathcal{E}$}; \node (ef) at (4,-1) {$\mathcal{E}_1+\mathcal{F}$};
\node(ef1) at (6, 0) {$\mathcal{E}/\mathcal{E}_1$}; \node(f) at (6,-1) {$\mathcal{F}$};
\node(012) at (8,0) {$0$}; \node(022) at (8,-1) {$0$.};
\draw[->] (011) to (e11); \draw[->] (e11) to (e); \draw[->] (e) to (ef1); \draw[->] (ef1) to (012);
\draw[->] (032) to (e12); \draw[->] (e12) to (ef); \draw[->] (ef) to (f); \draw[->] (f) to (022);
\draw[double, double distance=1pt] (e11) -- (e12);
\draw[->] (ef) to (e); \draw[->] (f) to (ef1);
\end{tikzpicture}
\end{center}
Since $\mathcal{E}$ is $\alpha^{n-1}$-slope semistable, we have $\mu_{\alpha}(\mathcal{E}_1+\mathcal{F})\le \mu_{\alpha}(\mathcal{E})$. Then by the exact sequence in the second line, we have
\begin{align*}
0
&=\rk \mathcal{F}(\mu_{\alpha}(\mathcal{E}_1+\mathcal{F})-\mu_{\alpha}(\mathcal{F}))+\rk \mathcal{E}_1(\mu_{\alpha}(\mathcal{E}_1+\mathcal{F})-\mu_{\alpha}(\mathcal{E}_1))\\
&\le \rk \mathcal{F}(\mu_{\alpha}(\mathcal{E})-\mu_{\alpha}(\mathcal{F}))+\rk \mathcal{E}_1(\mu_{\alpha}(\mathcal{E})-\mu_{\alpha}(\mathcal{E}_1))\\
&=\rk\mathcal{F}(\mu_{\alpha}(\mathcal{E}/\mathcal{E}_1)-\mu_{\alpha}(\mathcal{F})).
\end{align*}
Hence we have $\mu_{\alpha}(\mathcal{F})\le \mu_{\alpha}(\mathcal{E}/\mathcal{E}_1)$. If $\mu_{\alpha}(\mathcal{F})= \mu_{\alpha}(\mathcal{E}/\mathcal{E}_1)$, then the first line above shows 
$$
\rk\mathcal{F}(\mu_{\alpha}(\mathcal{E}_1+\mathcal{F})-\mu_{\alpha}(\mathcal{E}_1))=0.
$$
By (b) above, we obtain $\mathcal{E}_1+\mathcal{F}=\mathcal{E}$. Then, the above commutative diagram shows $\mathcal{F}=\mathcal{E}/\mathcal{E}_1$. It contradicts to $\rk\mathcal{F}<\rk(\mathcal{E}/\mathcal{E}_1)$. Therefore we have $\mu_{\alpha}(\mathcal{F})< \mu_{\alpha}(\mathcal{E}/\mathcal{E}_1)$ and thus $\mathcal{E}/\mathcal{E}_1$ is $\alpha^{n-1}$-slope stable. Then, inductively we obtain a Jordan-H${\rm \ddot{o}}$lder filtration of $\mathcal{E}$.\\
(2) By definition of Jordan-H${\rm \ddot{o}}$lder filtration, the graded sheaf associated to the Jordan-H${\rm \ddot{o}}$lder filtration is $\alpha^{n-1}$-slope polystable. Hence it admits a unique $T$-adapted HYM metric by Theorem \ref{main thm2 intro}.
\end{proof}
Then we prove the first main result of this subsection.
\begin{theo}\label{BG eq}
Let $X$ be a compact K\"{a}hler manifold and $\alpha$ be a nef and big class on $X$.
\begin{enumerate}
 \item Let $\mathcal{E}$ be an $\alpha^{n-1}$-slope polystable reflexive sheaf on $X$. Then $\mathcal{E}$ satisfies the following Bogomolov-Gieseker inequality:
\begin{equation}\label{BG ineq big}
\left(2rc_2(\mathcal{E})-rc_1(\mathcal{E})^2\right)\cdot\alpha^{n-2}\ge 0.
\end{equation}
Furthermore, if the equality of $(\ref{BG ineq big})$ holds, then $\mathcal{E}$ is locally free on the ample locus of $\alpha$, and $\mathcal{E}$ is projectively flat on the ample locus of $\alpha$.
\item Suppose that $\mathcal{E}$ is $\alpha^{n-1}$-slope semistable and it satisfies the Bogomolov-Gieseker equality
$$
\left(2rc_2(\mathcal{E})-rc_1(\mathcal{E})^2\right)\cdot\alpha^{n-2}= 0.
$$
Then a Jordan-H{\rm$\ddot{o}$}lder filtration of $\mathcal{E}$,
$$
0\subset \mathcal{E}_{k}\subset\cdots\subset\mathcal{E}_1\subset \mathcal{E},
$$
satisfies that $\mathcal{E}_{i}/\mathcal{E}_{i+1}|_{\Amp(\alpha)}$ is projectively flat.
\end{enumerate}
\end{theo}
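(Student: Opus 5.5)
We first reduce to the locally free case. By Lemma \ref{inv stability}, Lemma \ref{adapted app} and Lemma \ref{inv adapted HYM}, after a composition of blow-ups $\pi:Y\to X$ along centers contained in $E_{nK}(\alpha)$ or of codimension at least $2$, we may assume that $\pi^*\mathcal{E}/\Tor$ is locally free and that $E_{nK}(\pi^*\alpha)$ is an snc divisor $D$. The discriminant does not increase under such modifications: the standard comparison $c_2(\pi^*\mathcal{E}/\Tor)\le \pi^*c_2(\mathcal{E})$ is paired with $\alpha^{n-2}$, and codimension-$2$ corrections pair trivially with $\pi^*\alpha^{n-2}$ when they are exceptional over $E_{nK}(\alpha)$ or over loci of codimension $\ge 3$. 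So it suffices to treat a polystable vector bundle $E$ on a K\"ahler manifold with $D=E_{nK}(\alpha)$ snc. We then fix an adapted current $T$ (Proposition \ref{adapted exists}), and Theorem \ref{main thm2 intro} gives a $T$-adapted HYM metric $h=h_0\Psi^2$. By Corollary \ref{HYM slope}, its constant is $\lambda=\mu_\alpha(E)/\alpha^n$.

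Next we establish a Chern--Weil identity on $X\setminus D$:
\[
\bigl(2rc_2(E)-(r-1)c_1(E)^2\bigr)\cdot\alpha^{n-2}=c_n\int_{X\setminus D}\bigl|F_h^{\perp}\bigr|^2_{h,T}T^n ,
\]
where $F_h^\perp$ is the trace-free part of $F_h$ and $c_n>0$; here we use $\Lambda_T F_h^\perp=0$, which follows from the HYM equation. To prove it, we write the Chern forms of $h$ as the Chern forms of $h_0$ plus $dd^c$ and $d$-exact transgression terms built from $\Psi^{-1}\partial^{\nabla_{h_0}}\Psi$ and $F_h$. The integral of the $h_0$-forms against $T^{n-2}$ gives $\alpha^{n-2}$-intersection numbers, since $T$ has minimal singularities and $\langle T^{n-2}\rangle$ represents $\alpha^{n-2}$. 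To show that the exact terms integrate to zero, we insert the cut-off functions $\eta_k$ of Lemma \ref{cut-off} and control the boundary terms using three inputs: the adapted bounds $|s_D\Psi^{\pm1}|\le C$ and the weighted $L^2$ bound on $\partial\Psi$ from Theorem \ref{KH corr nef big with esti}, the admissibility $\int|F_h|^2T^n<\infty$, and the bounds $T\ge|s_D|^{2m}\omega_0$ and $|s_D|^kT^n\le C\omega_0^n$. This is the main obstacle. The weights $|s_D|^{-N}$ appearing in the transgression must be absorbed, and I expect to need Corollary \ref{HYM est} (2), which says $|S|=o(-\log|s_D|^2)$. Alternatively, one can pass through the approximations $\omega_i$ of Lemma \ref{Uhl limit}: there the identity holds exactly for $\{\omega_i\}^{n-2}$, and one takes $i\to\infty$ using Fatou together with the $L^2$-curvature bound. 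This route gives directly
\[
\Delta\cdot\alpha^{n-2}=\lim_i c_n\int|F^\perp_{\nabla_i}|^2\omega_i^n\ \ge\ c_n\int_{X\setminus D}|F_h^\perp|^2T^n\ge0,
\]
and it is the route I would try first for the inequality.

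In the equality case, the displayed inequality forces $F_h^\perp=0$ on $X\setminus D$. Hence $\End E$ with the induced connection is flat on $\Amp(\alpha)$, and so is $\Sym^rE\otimes\det E^\vee$; by Definition \ref{defn-MY-1}, $E$ is projectively flat there. For a reflexive $\mathcal{E}$ we work on $Y$ and observe that $\pi$ is an isomorphism over $\Amp(\alpha)$ minus a set of codimension $\ge3$. There the projectively flat (hence locally free) bundle agrees with $\mathcal{E}$, and it extends across that set as a flat bundle, because $\pi_1$ is unchanged and flat bundles extend across codimension $\ge2$; reflexivity then identifies the extension with $\mathcal{E}$. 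This gives local freeness on $\Amp(\alpha)$. For part (2), we take a Jordan--H\"older filtration (Proposition \ref{JH metric}) and use the standard inequality that the discriminant of $\mathcal{E}$ dominates $\sum_i$ (weighted discriminants of the quotients), up to a nonnegative correction coming from the Hodge index inequality for $\alpha^{n-2}$ applied to slope-zero differences $c_1$-classes. Each graded piece satisfies the inequality by part (1), so equality forces equality for every quotient. Applying part (1) to each $\mathcal{E}_i/\mathcal{E}_{i+1}$ (or to $Gr^{JH}$ with its $T$-adapted HYM metric) then gives projective flatness on $\Amp(\alpha)$.
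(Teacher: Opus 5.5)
The approximation route you say you would try first is the paper's argument, and your blow-up reduction, the extension over $\Sing(\mathcal{E})\cap\Amp(\alpha)$ and the Jordan--H\"{o}lder step for (2) also match it. The gap is that you reduce only to a \emph{polystable} bundle $E$, while the approximation needs $E$ to be $\alpha^{n-1}$-slope \emph{stable}. The $\omega_i$-HYM metrics of Lemma~\ref{Uhl limit} exist only because of Lemma~\ref{open stable}. The convergence $h_i\to h$ on compact subsets of $X\setminus D$ (Theorem~\ref{main theo}) is proved under the stability hypothesis of Notation~\ref{notations}. Polystability is not open: in Example~\ref{nonopenness semistable}, $\mathcal{O}(D)$ destabilizes $E=\mathcal{O}\oplus\mathcal{O}(D)$ for every K\"{a}hler class $\alpha-\varepsilon[D]$. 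So $E$ carries no $\omega_i$-HYM metrics, and there is no sequence to apply Fatou to. The paper avoids this by first reducing (1) to the stable case, via the argument of \cite{IJZ25} (case 2 in the proof of Theorem 4.27), and only then running the approximation.

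Your other route cannot fill this gap, because the identity $\bigl(2rc_2(E)-(r-1)c_1(E)^2\bigr)\cdot\alpha^{n-2}=c_n\int_{X\setminus D}|F_h^{\perp}|^2_{h,T}T^n$ is false for polystable bundles, not merely hard to prove. Take $X=\mathrm{Bl}_p\mathbb{C}P^2$, $\alpha=\pi^*c_1(\mathcal{O}(1))$, $D$ the exceptional curve, and $E=\mathcal{O}\oplus\mathcal{O}(D)$. Both summands have $\alpha$-slope $0$. The metric $h_{\triv}\oplus|s_D|^{-2}_{h_{D,0}}h_{D,0}$ is a $T$-adapted HYM metric that is flat on $X\setminus D$ (cf.\ Lemma~\ref{nK para}). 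Yet $2rc_2(E)-(r-1)c_1(E)^2=-D^2=1$. The curvature concentrates on $D$ and only the Fatou inequality survives, so no refinement of the cut-off estimates, Corollary~\ref{HYM est} included, will give your identity.

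What is missing is the reduction to stable summands. Write $\mathcal{E}|_{\Amp(\alpha)}\simeq\bigoplus_j\mathcal{E}_j|_{\Amp(\alpha)}$ and run the approximation argument for each stable $\mathcal{E}_j$, after blowing up so that it is locally free. Then compare $\bigl(2rc_2(\mathcal{E})-(r-1)c_1(\mathcal{E})^2\bigr)\cdot\alpha^{n-2}$ with the discriminants of the pieces, in both the inequality and the equality case. This comparison is nontrivial because the splitting holds only over $\Amp(\alpha)$, so the Chern classes of $\mathcal{E}$ and $\bigoplus_j\mathcal{E}_j$ can differ by classes supported on $E_{nK}(\alpha)$. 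The paper outsources it to \cite{IJZ25}. For the equal-slope cross terms, the relevant tool is the one you already use in part (2): the Hodge index inequality $\beta\cdot\alpha^{n-1}=0\Rightarrow\beta^2\cdot\alpha^{n-2}\le 0$ for nef and big $\alpha$, together with Theorem~\ref{CT}.
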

\begin{proof}
(1) By the argument in \cite[case2 in the proof of Theorem 4.27]{IJZ25}, it suffices to show in the case that $\mathcal{E}$ is $\alpha^{n-1}$-slope stable (see also Remark \ref{nonopenness semistable}).
The proof is similar to \cite[\S 4.1.1]{Chen25}. Let $T$ be an adapted current in $\alpha$. Let $\pi:Y\to X$ be a composition of blow-ups along $E_{nK}(\alpha)$ and $\Sing(\mathcal{E})$ such that $E:=\pi^*\mathcal{E}/\Tor$ is locally free, $D=E_{nK}(\pi^*\alpha)$ is a snc divisor in $Y$ and $\pi^*T$ admits an adapted approximation $\omega_i\in\pi^*\alpha+(1/i)\{\omega_Y\}$ where $\omega_Y$ is a K\"{a}hler metric on $Y$.  By Lemma \ref{inv stability}, the vector bundle $E$ is $(\pi^*\alpha)^{n-1}$-slope stable. Then, by Lemma \ref{open stable}, we can find an $\omega_i$-HYM metric $h_i=h_0\Psi_i^2$ on $E$ with $\det \Psi_i=1$. And by Theorem \ref{KH corr nef big with esti}, we can also find a $T$-adapted HYM metric $h_{\infty}=h_0\Psi_{\infty}^2$ on $E$ with $\det \Psi_{\infty}=1$. Then, by Theorem \ref{main theo}, we have that $h_i$ subsequencially converges to $h_{\infty}$ in $C^{1,\alpha}$-topology on any relatively compact open subset in $X\setminus D$. 
Since
$$
\int_X\Tr(\sqrt{-1}F_{h_i}\wedge \sqrt{-1}F_{h_i})\wedge\omega_i^{n-2}
\le \int_X|F_{h_i}|_{h_i,\omega_i}\omega_i^n\le C,
$$
and 
$$
\int_X\left(\Tr(\sqrt{-1}F_{h_i})\wedge \Tr(\sqrt{-1}F_{h_i})\right)\wedge\omega_i^{n-2}
\le \int_X|F_{h_i}|_{h_i,\omega_i}\omega_i^n\le C,
$$
we can choose a further subsequence of $\{h_i\}_i$ so that 
\begin{equation}\label{BG eq eq1}
\Tr(\sqrt{-1}F_{h_i}\wedge \sqrt{-1}F_{h_i})\wedge\omega_i^{n-2}\to \Tr(\sqrt{-1}F_{h_{\infty}}\wedge \sqrt{-1}F_{h_{\infty}})\wedge T^{n-2}
\end{equation}
and
\begin{equation}\label{BG eq eq2}
\left(\Tr(\sqrt{-1}F_{h_i})\wedge \Tr(\sqrt{-1}F_{h_i})\right)\wedge\omega_i^{n-2}\to \left(\Tr(\sqrt{-1}F_{h_{\infty}})\wedge \Tr(\sqrt{-1}F_{h_{\infty}})\right)\wedge T^{n-2}
\end{equation}
on $Y\setminus D$ in the sense of current.
We denote by $c_i(h_i)$ the $i$-th Chern form defined by $h_i$. Since a sequence of measures
$$
d\mu_i:=(2rc_2(h_i)-(r-1)c_1(h_i)^2)\wedge\omega_i^{n-2}
$$
has uniformly bounded mass, we can find a subsequence such that $d\mu_i$ weakly converges to a measure $d\mu$ and, by (\ref{BG eq eq1}) and (\ref{BG eq eq2}), 
$$
d\mu=(2rc_2(h_{\infty})-(r-1)c_1(h_{\infty})^2)\wedge T^{n-2}
$$
on $Y\setminus D$. Since $d\mu$ has uniformly bounded mass on $Y$, the following computation works by the Fatou\rq{s} lemma:
\begin{align}\label{BG eq eq3}
(2rc_2(E)-(r-1)c_1(E)^2)\cdot(\pi^*\alpha)^{n-2}
&=\lim_{i\to\infty}(2rc_2(E)-(r-1)c_1(E)^2)\cdot\{\omega_i\}^{n-2}\notag\\
&=\lim_{i\to\infty}\int_{Y\setminus D}(2rc_2(h_i)-(r-1)c_1(h_i)^2)\wedge\omega_i^{n-2}\notag\\
&\ge\int_{Y\setminus D}(2rc_2(h_{\infty})-(r-1)c_1(h_{\infty})^2)\wedge T^{n-2}\notag\\
&\ge 0.
\end{align}
Since $\mathcal{E}$ is locally free in codimension 2, we have
$$
(2rc_2(\mathcal{E}-(r-1)c_1(\mathcal{E})^2)\cdot\alpha^{n-2}
=(2rc_2(E)-(r-1)c_1(E)^2)\cdot(\pi^*\alpha)^{n-2}.
$$ 
Hence $E=\pi^*\mathcal{E}/\Tor$ is projectively flat on $Y\setminus D$. Since $\pi:Y\setminus D\to X\setminus (E_{nK}(\alpha)\cup\Sing(\mathcal{E}))$ is isomorphic, we have that $\mathcal{E}|_{X\setminus (E_{nK}(\alpha)\cup\Sing(\mathcal{E}))}$ is projectively flat. That is, $\mathcal{E}$ is given by the representation $\rho:\pi_1(\Amp(\alpha)\setminus \Sing(\mathcal{E}))\to PU(n)$. Since $\codim\Sing(\mathcal{E})\ge 3$, we have $\pi_1(\Amp(\alpha)\setminus \Sing(\mathcal{E}))=\pi_1(\Amp(\alpha))$ and thus $\rho$ extends to the representation $\rho\rq{}:\pi_1(\Amp(\alpha))\to PU(n)$. Let $E\rq{}$ be the projectively flat vector bundle over $\Amp(\alpha)$ given by $\rho\rq{}$. Then $\mathcal{E}|_{\Amp(\alpha)\setminus \Sing(\mathcal{E})}\simeq E\rq{}|_{\Amp(\alpha)\setminus \Sing(\mathcal{E})}$. Since $\codim\Sing(\mathcal{E})\ge 3$ and $\mathcal{E}$ is reflexive, we have $\mathcal{E}|_{\Amp(\alpha)}\simeq E\rq{}$. In particular, we have that $\mathcal{E}$ is locally free on $\Amp(\alpha)$ and is projectively flat on $\Amp(\alpha)$.

(2) By Theorem \ref{JH metric}, we know that $\mathcal{E}$ admits a Jordan-H{\rm$\ddot{o}$}lder filtration 
$$
0\subset \mathcal{E}_{k}\subset\cdots\subset\mathcal{E}_1\subset \mathcal{E}.
$$
Then the argument in \cite[case 2 in the proof of Theorem 4.27]{IJZ25} shows that each $\mathcal{E}_{i}/\mathcal{E}_{i+1}$ satisfies the Bogomolov-Gieseker equality. By definition, each quotient $\mathcal{E}_{i}/\mathcal{E}_{i+1}$ is $\alpha^{n-1}$-slope stable. Hence, by (1) of this theorem, we see that $\mathcal{E}_{i}/\mathcal{E}_{i+1}$ is projectively flat on $X_{\reg}$.
\end{proof}
Next we discuss the case that $\alpha$ is a big class.
\begin{defi}[\cite{BH14}]\label{bimero zar decomp defi}
Let $X$ be a compact normal variety in Fujiki class and $\alpha\in H^{1,1}_{BC}(X)$ be a big class on $X$. Then, we say that $\alpha$ admits a bimeromorphic Zariski decomposition if there exists a resolution $\pi:Y\to X$ such that $\langle(\pi^*\alpha)\rangle$ is nef and big.
\end{defi}
Following \cite{Chen25}, we define the discriminant in this setting as follows:
\begin{defi}[cf. \cite{Chen25} section 1.3]\label{BG discri defi}
Let $X$ be a compact normal Moishezon variety and $\alpha\in H^{1,1}_{BC}(X)$ be a big class admitting a bimeromorphic Zariski decomposition. Let $\mathcal{E}$ be a reflexive sheaf on $X$. We define the discriminant as
\begin{equation}\label{BG discri}
\Delta(\mathcal{E})\cdot\langle\alpha^{n-2}\rangle
:=\inf_{\pi:Y\to X}\left(2rc_2(\pi^*\mathcal{E}/\Tor)-(r-1)c_1(\pi^*\mathcal{E}/\Tor)^2\right)\cdot\langle(\pi^*\alpha)^{n-2}\rangle,
\end{equation}
where $\pi:Y\to X$ is a resolution such that $Y$ is a smooth projective manifold, $\pi^*\mathcal{E}/\Tor$ is locally free and $\langle(\pi^*\alpha)\rangle$ is nef and big.
\end{defi}
The following proposition is the reason why we need to assume that varieties in Theorem \ref{big BG eq intro} are Moishezon.
\begin{lemm}[Theorem D in \cite{Nystr19}]\label{diff vol}
Let $X$ be a smooth projective manifold and $\alpha$ be a big class on $X$. Then a prime divisor $D$ is contained in the non-K\"{a}hler locus of $\alpha$ if and only if
$$
\langle\alpha^{n-1}\rangle\cdot[D]=0.
$$
\end{lemm}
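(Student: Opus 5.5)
This is a cited result (Theorem D of Nyström), so the natural approach is to recall how it is proved rather than rederive it from scratch. The plan is to use the differentiability of the volume on projective manifolds. For a big class $\alpha$ and any $\gamma\in H^{1,1}(X,\mathbb{R})$ one has
\[
\frac{d}{dt}\Big|_{t=0}\vol(\alpha+t\gamma)=n\langle\alpha^{n-1}\rangle\cdot\gamma ,
\]
which is the Boucksom--Favre--Jonsson formula for $\mathbb{R}$-divisor classes; Witt Nyström extended it to transcendental classes on projective manifolds. The two implications are then handled separately.

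\textbf{($\Rightarrow$).} Suppose $D\subset E_{nK}(\alpha)$. By Boucksom's divisorial Zariski decomposition, $\alpha=P(\alpha)+N(\alpha)$, and every prime divisor in the non-K\"ahler locus satisfies either of the following.
\begin{itemize}
\item[(i)] $D$ is a component of $N(\alpha)$. Then $\vol(\alpha-tD)=\vol(\alpha)$ for small $t>0$, and differentiating gives $\langle\alpha^{n-1}\rangle\cdot D=0$ directly.
\item[(ii)] $D\subset E_{nK}(P(\alpha))$ with $P(\alpha)$ modified nef. Here I would pass to a resolution on which $\langle\alpha^{n-1}\rangle$ is computed by a nef class, and use an analogue of the Collins--Tosatti vanishing (Theorem~\ref{CT}) for the modified-nef part. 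The identity $\langle\alpha^{n-1}\rangle=\langle P(\alpha)^{n-1}\rangle$ lets me reduce to that case.
\end{itemize}

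\textbf{($\Leftarrow$).} Suppose $D\not\subset E_{nK}(\alpha)$. Choose a K\"ahler current $T\in\alpha$ with analytic singularities that is smooth K\"ahler at a general point of $D$. Then $\langle T^{n-1}\rangle\wedge[D]$ is strictly positive on an open subset of $D$. Since $\langle\alpha^{n-1}\rangle\cdot D\ge\int_D\langle T^{n-1}|_D\rangle$ by monotonicity of positive products, the intersection number is positive. An alternative route is to show $\vol(\alpha-tD)<\vol(\alpha)$ for small $t>0$ and conclude via the derivative formula.

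The main obstacle is part (ii) of ($\Rightarrow$): controlling divisors in $E_{nK}$ that do not appear in $N(\alpha)$. This is exactly where projectivity, through Witt Nyström's differentiability of the volume and his orthogonality estimates, is needed. I would take that differentiability result as a black box.
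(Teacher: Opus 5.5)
The paper gives no proof of this lemma; it is quoted from Witt Nyström. So your outline has to stand on its own, and it has a genuine gap in the hard direction $(\Rightarrow)$.

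\textbf{The direction $(\Rightarrow)$.} Your case (i) is fine: for $0\le t\le\nu(\alpha,D)$ one has $\vol(\alpha-tD)=\vol(\alpha)$, and differentiability then gives $\langle\alpha^{n-1}\rangle\cdot D=0$.

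Case (ii), however, is not an argument. Passing to a modification on which the positive part becomes nef amounts to assuming a bimeromorphic Zariski decomposition (Definition \ref{bimero zar decomp defi}). A general big class does not admit one (Nakayama gave counterexamples), which is why the paper must assume it separately in Theorem \ref{prop-semistable}. Moreover, Theorem \ref{CT} is a statement about nef classes, so a ``modified-nef analogue'' of it is essentially the claim you are trying to prove.

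The missing idea is a perturbation that reduces (ii) to (i). Put $\alpha_\varepsilon:=\alpha-\varepsilon\{\omega\}$, which is big for small $\varepsilon>0$.
\begin{enumerate}
\item Suppose $\nu(\alpha_\varepsilon,D)=0$ for some such $\varepsilon$. Then $T_{\min,\varepsilon}+\varepsilon\omega$ is a K\"ahler current in $\alpha$ whose generic Lelong number along $D$ vanishes. Hence $D\not\subset E_+(T_{\min,\varepsilon}+\varepsilon\omega)\supseteq E_{nK}(\alpha)$, by Boucksom's description of the non-K\"ahler locus as an intersection of Lelong upper-level sets of K\"ahler currents.
\item Therefore $D\subset E_{nK}(\alpha)$ forces $D\subset\Supp N(\alpha_\varepsilon)$ for every small $\varepsilon$.
\item Case (i) applied to $\alpha_\varepsilon$ gives $\langle\alpha_\varepsilon^{n-1}\rangle\cdot D=0$.
\item Continuity of $\beta\mapsto\langle\beta^{n-1}\rangle$ on the big cone then yields $\langle\alpha^{n-1}\rangle\cdot D=0$.
\end{enumerate}
With this route, projectivity enters only through the differentiability used in (i).

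\textbf{The direction $(\Leftarrow)$.} This also needs repair. Monotonicity of positive products only says that $\langle\alpha^{n-1}\rangle-\{\langle T^{n-1}\rangle\}$ is a pseudoeffective $(n-1,n-1)$-class. Pairing with $\{D\}$ does not preserve that order, because $\{D\}$ is not nef (curves inside $D$ can have negative intersection with $D$). So $\langle\alpha^{n-1}\rangle\cdot D\ge\int_D\langle (T|_D)^{n-1}\rangle$ does not follow as stated.

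The inequality does hold, but the comparison has to be made on $D$ itself.
\begin{enumerate}
\item Choose analytic K\"ahler currents $T_k\in\alpha$, less singular than $T$ (possible since this family is directed), with $\mu_{k*}(\beta_k^{n-1})\to\langle\alpha^{n-1}\rangle$. Here $\mu_k$ resolves both $T_k$ and $T$, with $\mu_k^*T_k=\beta_k+[F_k]$ and $\mu_k^*T=\beta+[F]$, where $F_k\le F$.
\item Since $\beta_k$ is nef and $\mu_k^*D-\widetilde D$ is effective,
$\langle\alpha^{n-1}\rangle\cdot D=\lim_k\beta_k^{n-1}\cdot\mu_k^*D\ge\limsup_k\int_{\widetilde D}\beta_k^{n-1}$.
\item On $\widetilde D$ we have $\{\beta_k\}=\{\beta\}+\{(F-F_k)|_{\widetilde D}\}$, and $(F-F_k)|_{\widetilde D}$ is effective because $\widetilde D\not\subset\Supp F$. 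Expanding with nef classes gives $\int_{\widetilde D}\beta_k^{n-1}\ge\int_{\widetilde D}\beta^{n-1}$.
\item Finally, $\int_{\widetilde D}\beta^{n-1}=\int_{D^\circ}(T|_D)^{n-1}>0$, since $T$ is K\"ahler on a dense open subset of $D$.
\end{enumerate}

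Your alternative route is incomplete as stated. Knowing $\vol(\alpha-tD)<\vol(\alpha)$ for small $t>0$ does not force a nonzero derivative at $t=0$. You would additionally need concavity of $\vol^{1/n}$ on the big cone, and the strict decrease itself would still have to be proved.
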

Then we prove Theorem \ref{big BG eq intro}, a slight generalization of the previous Theorem \ref{BG eq}.
\begin{theo}[cf.\cite{IJZ25}]
\label{prop-semistable}
Let $X$ be a compact normal Moishezon variety, $\mathcal{E}$ be a reflexive sheaf of rank $r$ and $\alpha \in H^{1,1}_{BC}(X)$ be a big class admitting a bimeromorphic Zariski decomposition. 
\begin{enumerate}
\item If $\mathcal{E}$ is $\langle\alpha^{n-1}\rangle$-slope polystable, then the Bogomolov-Gieseker inequality 
\begin{equation}\label{prop-semistable eq}
\Delta(\mathcal{E})\langle\alpha^{n-2}\rangle\ge0
\end{equation}
holds. If $\Delta(\mathcal{E})\langle\alpha^{n-2}\rangle=0$ holds, then $\mathcal{E}$ is locally free on $\Amp(\alpha)$ and projectively flat on $\Amp(\alpha)$.
\item If $\mathcal{E}$ is $\langle\alpha^{n-1}\rangle$-slope semistable and it satisfies $\Delta(\mathcal{E})\langle\alpha^{n-2}\rangle=0$, then the Jordan-H{$\ddot{o}$}lder filtration of $\mathcal{E}$,
$$
0\subset \mathcal{E}_{k}\subset\cdots\subset\mathcal{E}_1\subset \mathcal{E},
$$
satisfies that $\mathcal{E}_{i}/\mathcal{E}_{i+1}|_{\Amp(\alpha)}$ is projectively flat.
\end{enumerate}
\end{theo}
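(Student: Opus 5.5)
The plan is to reduce Theorem \ref{prop-semistable} to the nef and big case, Theorem \ref{BG eq}, by passing to a resolution on which the positive part of $\pi^*\alpha$ is nef and big.

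\textbf{Setup.} By Definition \ref{bimero zar decomp defi}, fix a resolution $\pi:Y\to X$ with $Y$ smooth projective (possible since $X$ is Moishezon) and $\beta:=\langle\pi^*\alpha\rangle$ nef and big. After further blow-ups we may also assume that $E:=\pi^*\mathcal{E}/\Tor$ is locally free. The resolutions of this kind form a cofinal system, and a further blow-up preserves the nefness of the positive part. Hence the infimum in Definition \ref{BG discri defi} may be computed over such $\pi$, and for each of them $\langle(\pi^*\alpha)^{n-2}\rangle=\beta^{n-2}$ and $\langle(\pi^*\alpha)^{n-1}\rangle=\beta^{n-1}$.

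\textbf{Key step: transfer of stability.} I would show that $E$ is $\beta^{n-1}$-slope polystable (resp. semistable) on $Y$ whenever $\mathcal{E}$ is $\langle\alpha^{n-1}\rangle$-slope polystable (resp. semistable). The slopes match, since $c_1(\pi^*\mathcal{F}/\Tor)\cdot\beta^{n-1}$ is exactly how the $\langle\alpha^{n-1}\rangle$-slope is defined in \cite{IJZ25}. Subsheaves of $E$ and subsheaves of $\mathcal{E}$ correspond up to modifications supported on the $\pi$-exceptional locus. The real issue is the exceptional divisors, whose twists could a priori change slopes. Here Lemma \ref{diff vol} is used, and it is exactly where projectivity and hence Moishezon is needed. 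Every $\pi$-exceptional prime divisor $D_j$ meeting $E_{nK}$ satisfies $\beta^{n-1}\cdot D_j=\langle(\pi^*\alpha)^{n-1}\rangle\cdot D_j=0$. Any remaining exceptional divisor not in $E_{nK}(\beta)$ would contradict $\langle\alpha^{n-1}\rangle$ being pulled back, which is exactly the vanishing property of \cite[Definition 3.23]{IJZ25}. With this, \cite[Proposition 3.28]{IJZ25} (the big-class version of Lemma \ref{inv stability}) gives the transfer. I expect this step to be the main obstacle, especially the claim $\Amp(\alpha)=\pi(\Amp(\beta))$ away from the exceptional set, which relates the ample loci on $X$ and on $Y$.

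\textbf{Conclusion of (1).} Apply Theorem \ref{BG eq}(1) on $Y$ with the nef and big class $\beta$. This gives $\Delta(E)\cdot\beta^{n-2}\ge0$ for every admissible $\pi$, and taking the infimum yields (\ref{prop-semistable eq}).

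For the equality case, one must show that the infimum is attained. I would argue that the quantity is non-increasing under further blow-ups, as in \cite{Chen25}, and that the values are discrete: they lie in $\frac1N\mathbb{Z}$ when $\beta$ is rational, and a continuity argument handles the general case. Granting this, choose $\pi$ with $\Delta(E)\cdot\beta^{n-2}=0$. Theorem \ref{BG eq}(1) then says $E$ is locally free and projectively flat on $\Amp(\beta)$. The set $\pi(\Amp(\beta))$ contains $\Amp(\alpha)$ minus a set of codimension $\ge2$ off $X_{\sing}$. Hence $\mathcal{E}$ is projectively flat there. To extend this to all of $\Amp(\alpha)$, I would reuse the argument at the end of Theorem \ref{BG eq}: $\pi_1$ is unchanged by removing codimension $\ge2$ sets, so the projective representation extends, and reflexivity gives local freeness.

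\textbf{Conclusion of (2).} Apply the same reduction to a Jordan--H\"older filtration. The filtration of $\mathcal{E}$ pulls back to one of $E$, using Proposition \ref{JH metric} and the slope correspondence above. Then invoke Theorem \ref{BG eq}(2) and descend to $\Amp(\alpha)$ exactly as in (1).
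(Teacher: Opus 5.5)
Your proposal follows the paper's route. You pass to a projective resolution on which $\beta=\langle\pi^*\alpha\rangle$ is nef and big, use Lemma \ref{diff vol} to make the $\pi$-exceptional divisors $\beta^{n-1}$-null so that stability transfers to $E=\pi^*\mathcal{E}/\Tor$, run the nef and big result on $Y$, and descend to $\Amp(\alpha)$. The paper reruns the Fatou argument (\ref{BG eq eq3}) with the adapted HYM metric instead of quoting Theorem \ref{BG eq}, and for (2) it pushes the Jordan--H\"older filtration down from $Y$ rather than pulling it back; these are cosmetic differences. Two of your sub-arguments, however, need to be replaced.

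\textbf{Exceptional divisors.} Lemma \ref{diff vol} only gives $\langle(\pi^*\alpha)^{n-1}\rangle\cdot D'=0$ for prime divisors \emph{contained in} $E_{nK}(\pi^*\alpha)$, not for those merely meeting it. Your sentence about ``remaining'' exceptional divisors contradicting the pull-back of $\langle\alpha^{n-1}\rangle$ is not an argument. The missing input, which the paper uses, is Tosatti's formula $E_{nK}(\pi^*\alpha)=\pi^{-1}(E_{nK}(\alpha))\cup\Exc(\pi)$ \cite[Proposition 2.5]{Tos18}. It puts every $\pi$-exceptional divisor inside $E_{nK}(\pi^*\alpha)$, so $\beta^{n-1}\cdot D'=0$ for all of them. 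It also handles what you called the main obstacle, since it gives $\Amp(\pi^*\alpha)=\pi^{-1}(\Amp(\alpha))\setminus\Exc(\pi)$. That is exactly the identification of ample loci the paper uses to descend.

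\textbf{Equality case.} You do not need to show that the infimum in Definition \ref{BG discri defi} is attained via monotonicity plus discreteness, and as sketched that step fails for a transcendental $\alpha$. The values need not lie in a discrete set, the ``continuity argument'' has no specified content, and a non-increasing net of reals need not attain its infimum. What is true, and immediate, is that the quantity is the same for every admissible resolution:
\begin{itemize}
\item If $\pi'=\pi\circ\mu$ and $E$ is already locally free, then $\pi'^*\mathcal{E}/\Tor=\mu^*E$.
\item Because $\beta$ is nef, $\langle\pi'^*\alpha\rangle=\mu^*\beta$. Indeed, a minimal-singularity current in $\pi^*\alpha$ is $R+[N]$ with $R$ of minimal singularities in $\beta$, hence with zero Lelong numbers, and this decomposition pulls back.
\item The projection formula then gives $\Delta(\mu^*E)\cdot(\mu^*\beta)^{n-2}=\Delta(E)\cdot\beta^{n-2}$, and any two admissible resolutions are dominated by a third.
\end{itemize}
Hence $\Delta(\mathcal{E})\langle\alpha^{n-2}\rangle=\Delta(E)\cdot\beta^{n-2}$ for your chosen $\pi$, and the equality hypothesis applies directly on $Y$. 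The same fact is what implicitly justifies the paper's chain $0=\Delta(\mathcal{E})\langle\alpha^{n-2}\rangle\ge\int_{\Amp(\pi^*\alpha)}(2rc_2(h_{\infty})-(r-1)c_1(h_{\infty})^2)\wedge T^{n-2}\ge 0$. With these two repairs your argument for (1) and (2) goes through as described.
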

\begin{proof}
As Theorem \ref{BG eq}, it suffices to show when $\mathcal{E}$ is $\langle\alpha^{n-1}\rangle$-slope stable.
Let $\pi:Y\to X$ be a resolution such that $Y$ is a smooth projective manifold and $\beta:=\langle\pi^*\alpha\rangle$ is a nef and big class.
Then, we have
$$
\langle(\pi^*\alpha)^k\rangle=\beta^k
$$
for any $k$ by \cite[Proposition 3.15]{IJZ25}.
By \cite[Proposition 2.5]{Tos18}, we have
$$
E_{nK}(\pi^*\alpha)=\pi^{-1}(E_{nK}(\alpha))\cup \Exc(\pi).
$$
Hence, using Lemma \ref{diff vol}, we have
$$
\beta^{n-1}\cdot[D\rq{}]=\langle(\pi^*\alpha)^{n-1}\rangle\cdot[D\rq{}]=0
$$
for any exceptional divisor $D\rq{}$.
 Thus, we can see that $\pi^{[*]}\mathcal{E}$ is $\beta^{n-1}$-slope stable as Lemma \ref{inv stability}. Let $h_{\infty}$ be a $T$-adapted HYM metric on $\pi^{[*]}\mathcal{E}$ where $T\in\beta$ is an adapted current. Then, by the definition of the discriminant (\ref{BG discri}) and the arugument in Theorem \ref{BG eq} (refer to \ref{BG eq eq3}), we have
$$
0=\Delta(\mathcal{E})\langle\alpha^{n-2}\rangle
\ge\int_{\Amp(\pi^*\alpha)}(2rc_2(h_{\infty})-(r-1)c_1(h_{\infty})^2)\wedge T^{n-2}\ge 0.
$$
Hence, we have that $\pi^{[*]}\mathcal{E}$ is projectively flat on $\Amp(\pi^*\alpha)$. Since $\pi:\Amp(\pi^*\alpha)\to\Amp(\alpha)$ is isomorphic and $E|_{\Amp(\pi^*\alpha)}=\pi^*(\mathcal{E}|_{\Amp(\alpha)})$, we obtain that $\mathcal{E}$ is projectively flat on $\Amp(\alpha)$. 

(2) Since now $X$ is Moishezon, $\mathcal{E}$ is $\langle\alpha^{n-1}\rangle$-slope semistable if and only if $\pi^*\mathcal{E}/\Tor$ is $\langle(\pi^*\alpha)^{n-1}\rangle$-slope semistable for any resolution $\pi:Y\to X$ so that $Y$ is smooth projective, $\pi^*\mathcal{E}/\Tor$ is locally free and $\langle\pi^*\alpha\rangle$ is nef and big by Lemma \ref{diff vol}. Then the push-forward of the Jordan-H{$\rm \ddot{o}$}lder filtration of $\pi^*\mathcal{E}/\Tor$ gives the Jordan-H{$\rm \ddot{o}$}lder filtration of $\mathcal{E}$,
$$
0\subset \mathcal{E}_{k}\subset\cdots\subset\mathcal{E}_1\subset \mathcal{E}.
$$
We recall that $\mathcal{E}_i/\mathcal{E}_{i+1}$ is $\langle\alpha^{n-1}\rangle$-slope stable. Then, by the argument in \cite[case 2 in the proof of Theorem 4.27]{IJZ25} and (\ref{prop-semistable eq}), we have
$$
0=\Delta(\mathcal{E})\langle\alpha^{n-2}\rangle\ge \Delta(\mathcal{E}_i/\mathcal{E}_{i+1})\langle\alpha^{n-2}\rangle\ge0.
$$
Then, by (1) in this theorem, we see that $\mathcal{E}_i/\mathcal{E}_{i+1}$ is projectively flat on $\Amp(\alpha)$.
\end{proof}
If $X$ is smooth and $\alpha$ is K\"{a}hler, then any $\alpha^{n-1}$-slope polystable reflexive sheaf $\mathcal{E}$ satisfying the equality of the Bogomolov-Gieseker inequality with respect to $\alpha$ is locally free on $X$. If $\alpha$ is nef and big, then one can in general, only asserts that $\mathcal{E}$ is locally free on the ample locus of $\alpha$. However, if the Chern classes of $\mathcal{E}$ vanish, then it can be shown that $\mathcal{E}$ is locally free globally on $X$:
\begin{corr}
Let $X$ be a compact K\"{a}hler manifold, $\alpha$ be a nef and big class and $\mathcal{E}$ be a reflexive sheaf on $X$. Assume that $\mathcal{E}$ is $\alpha^{n-1}$-slope polystable and $c_1(\mathcal{E})=0$, $c_2(\mathcal{E})=0$. Then $\mathcal{E}$ is locally free on $X$ and it is flat on $X$.
\end{corr}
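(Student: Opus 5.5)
The plan is to reduce to the Kähler case via Kähler approximations, then pass the flatness to the limit. Since $c_1(\mathcal{E})=c_2(\mathcal{E})=0$, the discriminant vanishes, so Theorem \ref{BG eq} (1) applies. It shows that $\mathcal{E}$ is locally free and projectively flat on $\Amp(\alpha)$. With $c_1=0$ the adapted HYM metric has HYM constant $\lambda=0$ by Corollary \ref{HYM slope}, and the equality in (\ref{BG eq eq3}) forces $c_2(h_\infty)\wedge T^{n-2}$ and $c_1(h_\infty)^2\wedge T^{n-2}$ to integrate to zero. These two facts upgrade "projectively flat" to "flat" on $\Amp(\alpha)$. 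The real content is extending across $E_{nK}(\alpha)$ and $\Sing(\mathcal{E})$.

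\textbf{Step 1 (reduce to stable and pass to a Kähler class).} For polystable $\mathcal{E}$, each stable summand $\mathcal{E}_j$ has slope $0$. By the argument of \cite[case 2, Theorem 4.27]{IJZ25} it also has vanishing discriminant, and I expect $c_1(\mathcal{E}_j)\cdot\alpha^{n-1}=0$ together with Hodge index to give $c_1(\mathcal{E}_j)^2\cdot\alpha^{n-2}=0$, hence $c_2(\mathcal{E}_j)\cdot\alpha^{n-2}=0$. So it suffices to treat the stable case, provided the splitting given only over $\Amp(\alpha)$ can be upgraded at the end. For $\mathcal{E}$ stable, I use Lemma \ref{open stable} on a resolution $\pi:Y\to X$ with $E=\pi^*\mathcal{E}/\Tor$ locally free. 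This gives that $E$ is $(\pi^*\alpha+\varepsilon\omega_Y)^{n-1}$-stable. A better route avoids the resolution: $\mathcal{E}$ itself is then $(\alpha+\varepsilon\omega_0)^{n-1}$-stable as a reflexive sheaf on $X$. The Lemma \ref{open stable} argument works verbatim for torsion-free sheaves, since Claim \ref{max slope} only needs the bound (1), which holds after resolving.

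\textbf{Step 2 (apply the Kähler theory on $X$).} The class $\alpha_\varepsilon=\alpha+\varepsilon\omega_0$ is Kähler on $X$, and $\mathcal{E}$ is $\alpha_\varepsilon^{n-1}$-stable reflexive with $c_1(\mathcal{E})=0$ and $c_2(\mathcal{E})=0$ in cohomology. Hence $\Delta(\mathcal{E})\cdot\alpha_\varepsilon^{n-2}=0$. By Bando–Siu \cite{BS94}, the equality case of Bogomolov–Gieseker for a stable reflexive sheaf on a compact Kähler manifold gives that $\mathcal{E}$ is locally free and projectively flat on all of $X$. Then $c_1=0$ makes it flat, i.e. $\mathcal{E}$ is given by a unitary representation of $\pi_1(X)$. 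For the polystable case, the summands $\mathcal{E}_j$ are flat bundles on $X$. Flat unitary bundles are polystable for every class, so the isomorphism $\mathcal{E}|_{\Amp(\alpha)}\simeq\bigoplus\mathcal{E}_j|_{\Amp(\alpha)}$ is needed globally. Both sides are reflexive, so it suffices that the isomorphism extends across the divisor part of $E_{nK}(\alpha)$. I would handle this with Lemma \ref{unique metric lem}-type arguments: a holomorphic map between flat bundles over $\Amp(\alpha)$ with at most polar growth is parallel for the flat metrics, hence bounded, hence extends.

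\textbf{Main obstacle.} The delicate points are in Step 2 for the polystable reduction. First, the vanishing of $c_2(\mathcal{E}_j)$ as a class, not just its pairing with $\alpha^{n-2}$, is needed to invoke \cite{BS94}. If only the pairing vanishes, I would instead argue directly via the Bando–Siu equality for $\alpha_\varepsilon$ and let $\varepsilon\to0$. Second, the isomorphism must extend across $D$. I would try first to show that each $\mathcal{E}_j$ is itself flat by applying the stable case. This requires $c_1(\mathcal{E}_j)=0$ and $c_2(\mathcal{E}_j)=0$ in cohomology, which I expect from the summands' discriminants vanishing against $\alpha_\varepsilon^{n-2}$ for all small $\varepsilon$ plus the Hodge index theorem.
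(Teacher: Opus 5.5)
For $\alpha^{n-1}$-stable $\mathcal{E}$, your Steps 1--2 are exactly the paper's proof. Lemma \ref{open stable} gives stability for the K\"ahler classes $\alpha+\varepsilon\omega_0$. The hypotheses then give $\Delta(\mathcal{E})\cdot(\alpha+\varepsilon\omega_0)^{n-2}=0$. Finally, the Bando--Siu equality case together with $c_1(\mathcal{E})=0$ yields a locally free flat sheaf.

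The paper handles the polystable case in one sentence (``as in the proof of Theorem \ref{BG eq} (1), we can assume that $\mathcal{E}$ is stable''). You are right that this reduction is not formal, since polystability only gives $\mathcal{E}|_{\Amp(\alpha)}\simeq\bigoplus_j\mathcal{E}_j|_{\Amp(\alpha)}$. However, both mechanisms you propose for it fail, so as written your plan does not cover the polystable case.

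\textbf{The Chern classes of the summands.} Nothing forces $c_1(\mathcal{E}_j)=0$ and $c_2(\mathcal{E}_j)=0$.
\begin{itemize}
\item The summands are only pinned down on $\Amp(\alpha)$. Replacing $\mathcal{E}_j$ by $\mathcal{E}_j\otimes\mathcal{O}(D_1)$ with $D_1\subset E_{nK}(\alpha)$ preserves stability and slope, by Theorem \ref{CT}.
\item Example: take $X=\mathrm{Bl}_p\mathbb{P}^2$ with $\alpha=\pi^*c_1(\mathcal{O}(1))$ and exceptional curve $D$. Then $\mathcal{O}_X^{\oplus 2}$ is isomorphic over $X\setminus D=\Amp(\alpha)$ to $\mathcal{O}(D)\oplus\mathcal{O}(-D)$. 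Those summands are stable of slope $0$, but have $c_1\neq 0$ and $c_1^2=-1$.
\item The Hodge index theorem against a nef and big class only gives $c_1(\mathcal{E}_j)^2\cdot\alpha^{n-2}\le 0$. Even equality does not force $c_1(\mathcal{E}_j)=0$: on $\mathrm{Bl}_p\mathbb{P}^3$ as in Example \ref{nonopenness semistable}, the exceptional divisor satisfies $[D]\cdot\alpha^2=[D]^2\cdot\alpha=0$.
\item There is also no link between $\Delta(\mathcal{E}_j)\cdot\alpha_\varepsilon^{n-2}$ and $\Delta(\mathcal{E})=0$, because $\mathcal{E}$ and $\bigoplus_j\mathcal{E}_j$ may differ along $E_{nK}(\alpha)$.
\item Your fallback of letting $\varepsilon\to 0$ only recovers information on $\Amp(\alpha)$, i.e.\ Theorem \ref{BG eq}.
\end{itemize}

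\textbf{The extension step.} This step is false as stated.
\begin{itemize}
\item On the same surface, $\mathcal{E}=\mathcal{O}(D)\oplus\mathcal{O}(-D)$ is polystable, has $c_1(\mathcal{E})=0$, and is flat on $X\setminus D$.
\item The map $(s_D^{-1},s_D)$ onto $\mathcal{O}_X^{\oplus 2}$ has polar growth and is parallel and isometric for the natural flat metrics. Yet it does not extend to an isomorphism across $D$.
\item The reason is that boundedness for the flat metrics is not boundedness for smooth metrics on $X$: the flat metric on $\mathcal{E}|_{\Amp(\alpha)}$ differs from a smooth one by powers of $|s_D|$.
\end{itemize}
The only hypothesis that excludes this example is $c_2(\mathcal{E})=-[D]^2\neq 0$. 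So a correct reduction has to use $c_2(\mathcal{E})=0$ to control how $\mathcal{E}$ is modified along $E_{nK}(\alpha)$. That ingredient is missing from your plan.
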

\begin{proof}
As in the proof of Theorem \ref{BG eq} (1), we can assume that $\mathcal{E}$ is $\alpha^{n-1}$-slope stable.
Then, by Lemma \ref{open stable}, $\mathcal{E}$ is $(\alpha+\varepsilon\omega_0)^{n-1}$-slope stable. Since $c_1(\mathcal{E})=0$ and $c_2(\mathcal{E})=0$, we have $\Delta(\mathcal{E})\cdot(\alpha+\varepsilon\omega_0)^{n-2}=0$. Thus $\mathcal{E}$ is locally free on $X$ and flat globally on $X$.
\end{proof}
The following example demonstrates the existence of a vector bundle that does not satisfy the Bogomolov–Gieseker equality with respect to Kähler classes, but does satisfy it with respect to a nef and big class.
\begin{exam}\label{BG exam}
Let us consider $X$, $\alpha$, $\omega_{\varepsilon}$ and $E$ in Example \ref{nonopenness semistable}. Then $E$ is not $\omega_{\varepsilon}^2$-slope semistable, $\Delta(E)\cdot\omega_{\varepsilon}\neq0$ and $E$ is not projectively flat on $X$. On the other hand, in the limit $\varepsilon\to 0$, we obtain that $E$ is $\alpha^2$-slope polystable, $\Delta(E)\cdot\alpha=0$ and $E|_{X\setminus D}$ is projectively flat on $X\setminus D$.
\end{exam}

\subsection{Proof of Theorem \ref{MY eq intro}}
 In \cite{IJZ25}, the authors proved the following:
\begin{theo}[\cite{IJZ25} Theorem 1.1]
Let $X$ be an $n$-dimensional projective klt variety with big anti-canonical divisor $-K_X$ that is $K$-semistable. Then  the following Miyaoka-Yau inequality holds:
$$
\big(2(n+1)\widehat{c_2}(X)-n\widehat{c_1}(X)^2\big)\cdot\langle c_1(-K_X)^{n-2}\rangle\ge 0.
$$
Here $\widehat{c_i}(X)$ is the orbifold Chern classes of $X$. If $X$ is smooth in codimension 2, they coincide with usual Chern classes $c_i(X)$.
\end{theo}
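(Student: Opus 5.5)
The plan is to obtain the inequality as a Bogomolov--Gieseker inequality, in the form of Theorem \ref{prop-semistable}, applied to the canonical extension sheaf of the tangent sheaf. Since $X$ is projective klt with $-K_X$ big and $K$-semistable, \cite{Xu23} gives finite generation of the anticanonical ring. So the anticanonical model $\phi:X\dashrightarrow Z$ exists, with $Z$ a $K$-semistable klt Fano variety. On a common resolution $\pi:Y\to X$, $p:Y\to Z$, the positive part of the divisorial Zariski decomposition of $\pi^*c_1(-K_X)$ is then $p^*c_1(-K_Z)$. This class is nef and big, so $c_1(-K_X)$ admits a bimeromorphic Zariski decomposition (Definition \ref{bimero zar decomp defi}), and $\langle c_1(-K_X)^{k}\rangle$ is computed by $p^*c_1(-K_Z)^k$ on $Y$.

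Next I construct the sheaf. Let $\mathcal{E}$ be the reflexive extension
$$
0\to\mathcal{O}_X\to\mathcal{E}\to T_X\to0
$$
determined by the class $c_1(-K_X)\in H^1(X,\Omega^{[1]}_X)$ (more precisely, by its restriction to $X_{\reg}$, extended reflexively). It has rank $n+1$, with $c_1(\mathcal{E})=c_1(X)$ and $c_2(\mathcal{E})=c_2(X)$; in general one uses orbifold classes $\widehat{c}_i$, which agree with the usual ones when $X$ is smooth in codimension $2$. Hence
$$
2(n+1)c_2(\mathcal{E})-n\,c_1(\mathcal{E})^2=2(n+1)\widehat{c_2}(X)-n\,\widehat{c_1}(X)^2,
$$
and the Miyaoka--Yau expression is exactly the discriminant $\Delta(\mathcal{E})\cdot\langle c_1(-K_X)^{n-2}\rangle$ of Definition \ref{BG discri defi}. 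It therefore suffices to show that $\mathcal{E}$ is $\langle c_1(-K_X)^{n-1}\rangle$-slope semistable. Once that holds, I take a Jordan--H\"older filtration (Proposition \ref{JH metric}, on a resolution where the pulled-back sheaf is locally free). I apply Theorem \ref{prop-semistable} (1) to each stable graded piece and add up the contributions as in \cite[case 2 of Theorem 4.27]{IJZ25}, using Hodge index type inequalities for the nef class $p^*c_1(-K_Z)$ to control the cross terms $c_1$ of the graded pieces; all these pieces have the same slope.

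The main obstacle is the semistability of $\mathcal{E}$ with respect to the movable class $\langle c_1(-K_X)^{n-1}\rangle=p^*c_1(-K_Z)^{n-1}$. First I would reduce to $Z$. By Theorem \ref{CT} and Lemma \ref{diff vol}, exceptional divisors and divisors in the non-K\"ahler locus have zero degree against $\beta^{n-1}$ with $\beta=p^*c_1(-K_Z)$. So, as in Lemma \ref{inv stability}, slope semistability of $\mathcal{E}$ is equivalent to slope semistability of the corresponding canonical extension $\mathcal{E}_Z$ of $T_Z$ with respect to $c_1(-K_Z)^{n-1}$; this uses that $X$ and $Z$ are isomorphic in codimension one outside the non-K\"ahler locus.

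On the $K$-semistable Fano $Z$, I would try the following route first. Degenerate $Z$ to a $K$-polystable Fano $Z_0$, which carries a singular K\"ahler--Einstein metric. By Theorem \ref{KE adapted} its pullback to a resolution is an adapted current, so Theorem \ref{main thm2 intro} applies there. The KE metric induces an adapted HYM-type metric on the canonical extension, via the standard Tian/Li construction on the $\mathbb{C}^*$-bundle (cone). By Theorem \ref{HYM to stable and unique} this gives polystability of $\mathcal{E}_{Z_0}$. Semistability of $\mathcal{E}_Z$ then follows by openness of semistability in the flat family of sheaves given by the test configuration. Should this degeneration step fail, the fallback is to use approximate KE metrics (twisted KE metrics $\mathrm{Ric}=t\omega+(1-t)\omega_0$ for $t<1$, which exist when $Z$ is $K$-semistable) and to show that the destabilizing slope defect tends to $0$ as $t\to1$. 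That gives semistability directly.
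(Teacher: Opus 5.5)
This statement is not proved in the paper. It is quoted from \cite{IJZ25}, and the proof of Theorem \ref{MY eq}, which reuses \cite[Subsection~5.2]{IJZ25}, suggests that your overall architecture is the intended one: Xu's anticanonical model $Z$, the canonical extension sheaf, semistability of $\mathcal{E}_Z$, and a Bogomolov--Gieseker inequality for a big class admitting a bimeromorphic Zariski decomposition.

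As written, however, your argument has a genuine gap in the final identification. The statement is for an arbitrary projective klt $X$ and uses the orbifold classes $\widehat{c_i}(X)$. The discriminant of Definition \ref{BG discri defi}, and hence the conclusion of Theorem \ref{prop-semistable}, is built instead from the ordinary Chern classes of $\pi^*\mathcal{E}/\Tor$ on resolutions $\pi:Y\to X$. Your claim that the Miyaoka--Yau expression ``is exactly'' $\Delta(\mathcal{E})\cdot\langle c_1(-K_X)^{n-2}\rangle$ is only justified when $X$ is smooth in codimension two, which is why Theorem \ref{MY eq} carries that hypothesis. Along a codimension-two locus of quotient singularities, $\mathcal{E}_X$ is only an orbifold bundle. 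There $\widehat{c_2}$ contains local correction terms that Chern classes on a resolution do not see in general; already for a klt surface, $\widehat{c_2}(X)$ is the orbifold Euler number $e(X)-\sum_p(1-1/|G_p|)$. So what you prove is only the case where $X$ is smooth in codimension two. For the general case you must run the Bogomolov--Gieseker step on the orbifold structure of $X$ in codimension two. One way is a global Galois cover uniformizing those quotient singularities, as in the $\mathbb{Q}$-variety formalism behind $\widehat{c_i}$ in \cite{GKPT19}. You would then check that $\langle c_1(-K_X)^{n-1}\rangle$-semistability, the positive products and Theorem \ref{prop-semistable} pass to that cover.

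The step you single out as the main obstacle is also only sketched.
\begin{itemize}
\item Through a K-polystable degeneration $Z_0$, Theorem \ref{KE adapted} gives only adaptedness of the current $\pi^*\omega_{KE}$. To apply Theorem \ref{HYM to stable and unique} you would still need conditions (d) and (e) of Definition \ref{adapted HYM defi} for the metric induced on the canonical extension of a resolution. These amount to pole-order bounds on up to third derivatives of the K\"ahler--Einstein potential near the non-K\"ahler locus, because the holomorphic structures defined by $\omega_{KE}$ and by a smooth representative differ by a gauge involving $\partial\varphi$. The paper does not provide such bounds. It is simpler to cite \cite{DGP24} for the polystability of $\mathcal{E}_{Z_0}$, as the paper does for K-stable $Z$.
\item Your direction in the openness step (semistable central fibre implies semistable general fibre) is the right one. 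But you also need a flat family of sheaves on the test configuration that agrees in codimension one with the canonical extension on each fibre. The reflexive relative canonical extension on the normal total space restricts to a torsion-free sheaf on the Cartier fibre $Z_0$, agreeing with $\mathcal{E}_{Z_0}$ off $\Sing(Z_0)$. That suffices, but it has to be argued.
\item Your fallback via twisted K\"ahler--Einstein metrics is sound in principle, since semistability only needs the lower bound on mean curvature that $\mathrm{Ric}(\omega_t)\ge t\,\omega_t$ provides. In the singular setting it again needs integrability control near the singular locus.
\end{itemize}
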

In the following Theorem \ref{BG eq}, we show that the structure theorem holds when the equality of the Miyaoka-Yau inequality holds.

\begin{theo}[{cf. \cite{His24}}]\label{MY eq}
Let $X$ be an $n$-dimensional projective klt variety smooth in codimension $2$. Assume that $X$ is K-stable and that $-K_X$ is big. If the equality
\begin{equation}
\label{eq-MY-equality}
\bigl(2(n+1)c_2(X) - nc_1(X)^2 \bigr) \cdot \langle c_1(-K_{X})^{n-2} \rangle = 0
\end{equation}
holds, then the anticanonical model $Z$ admits a quasi-étale cover from $\mathbb{C}\mathbb{P}^n$. $($In this case, by \cite{Xu23}, the anticanonical model $Z$ exists.$)$
\end{theo}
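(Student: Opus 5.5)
The plan is to apply Theorem \ref{prop-semistable} to the sheaf $\mathcal{E}:=\Omega^{[1]}_Z\oplus\mathcal{O}_Z$, or rather to its canonical extension, on the anticanonical model $Z$, with $\alpha=c_1(-K_Z)$. First, since $-K_X$ is big and $X$ is K-(semi)stable klt, \cite{Xu23} gives finite generation of the anticanonical ring. Hence the anticanonical model $\phi:X\dashrightarrow Z$ exists, $Z$ is a klt Fano variety with $-K_Z$ ample, and $\phi$ contracts exactly divisors lying in $E_{nK}(c_1(-K_X))$. In particular $c_1(-K_X)$ admits a bimeromorphic Zariski decomposition: on a common resolution $\pi:Y\to X$, $q:Y\to Z$ we have $\langle\pi^*c_1(-K_X)\rangle=q^*c_1(-K_Z)$, which is nef and big. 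I will use K-stability of $X$ to transfer K-stability to $Z$, so that by Chen--Donaldson--Sun / Li--Wang--Xu type results $Z$ carries a singular Kähler--Einstein metric $\omega_{KE}\in c_1(-K_Z)$. By Theorem \ref{KE adapted}, $T:=q^*\omega_{KE}$ is then an adapted current in $q^*c_1(-K_Z)$.

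Next, I will identify the Miyaoka--Yau discriminant with a Bogomolov--Gieseker discriminant. Following the usual trick, I consider the extension $0\to\mathcal{O}_Z\to\mathcal{W}\to T_Z\to0$ whose class is $c_1(-K_Z)$ (the canonical extension sheaf), of rank $n+1$. Then $c_1(\mathcal{W})=c_1(X)$ on the big open set, and $\Delta(\mathcal{W})=2(n+1)c_2(X)-nc_1(X)^2$. Since $X$ is smooth in codimension $2$ and the birational map is an isomorphism off $E_{nK}$, the equality (\ref{eq-MY-equality}) translates to $\Delta(\mathcal{W})\langle c_1(-K)^{n-2}\rangle=0$ in the sense of Definition \ref{BG discri defi}. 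By Collins--Tosatti (Theorem \ref{CT}) and Lemma \ref{diff vol}, contracted and exceptional divisors pair to zero with $\alpha^{n-1}$, so the intersection numbers are independent of the model.

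The key step, and the main obstacle, is the semistability of $\mathcal{W}$ with respect to $\langle c_1(-K)^{n-1}\rangle$. For this I would use the KE metric: on $Z_{\reg}$, $T_Z$ with $\omega_{KE}$ is HYM, and the canonical extension carries the standard metric $\begin{pmatrix}\omega_{KE}&0\\0&1\end{pmatrix}$ (twisted appropriately), which is HYM for the same constant. I then need to check the $T$-adapted conditions (d), (e) of Definition \ref{adapted HYM defi} on the resolution, using the bounds $T\ge|s_D|^{2m}\omega_Y$ and $T\le C|s_D|^{-k}\omega_Y$ from \cite{CCHSTT25}. Once this is done, Theorem \ref{HYM to stable and unique} gives polystability of $q^*\mathcal{W}/\Tor$. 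The adapted growth estimates near $D$ are the delicate part, and I would first try to extract them from the polynomial pole bounds in the tame approximation of \cite{CCHSTT25}.

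Finally, Theorem \ref{prop-semistable}(1) shows that $\mathcal{W}$ is locally free and projectively flat on $\Amp(\alpha)\supset Z_{\reg}$. This gives a projectively flat structure on $Z_{\reg}$ whose developing map $\widetilde{Z_{\reg}}\to\mathbb{P}^n$ is a local biholomorphism, exactly as in \cite{GKP22}, \cite{His24}. I would then pass to the quasi-étale cover $\gamma:\widehat{Z}\to Z$ given by finiteness of $\hat\pi_1(Z_{\reg})$ for klt Fano varieties, so that the representation lifts and the flat structure becomes a uniformization. The conclusion that $\widehat Z\cong\mathbb{C}P^n$ then follows as in \cite[\S 8--9]{GKP22}: $\widehat Z$ is klt Fano, its regular part is simply connected and locally modeled on $\mathbb{P}^n$, and extending over codimension $\ge3$ singularities (smooth in codimension 2) yields $\widehat Z\simeq\mathbb{C}P^n$.
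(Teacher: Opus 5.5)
\textbf{There are genuine gaps, the main one being the transfer of the hypothesis from $X$ to $Z$.}

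\textbf{Transfer of the numerical hypothesis.} To apply Theorem \ref{prop-semistable} on $Z$ you need $\Delta(\mathcal{E}_Z)\cdot\langle c_1(-K_Z)^{n-2}\rangle=0$. This is an infimum over resolutions $q:W\to Z$ of $\Delta(q^*\mathcal{E}_Z/\Tor)\cdot(q^*c_1(-K_Z))^{n-2}$. But (\ref{eq-MY-equality}) is a statement about $c_i(X)=c_i(\mathcal{E}_X)$; writing $c_1(\mathcal{W})=c_1(X)$ conflates the two varieties.

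On a common resolution, $p^*\mathcal{E}_X/\Tor$ and $q^*\mathcal{E}_Z/\Tor$ agree only off the $q$-exceptional divisor $B$, where $p^*(-K_X)=q^*(-K_Z)+B$. The image $q(\Supp B)$ can have codimension exactly $2$. Then $(q^*c_1(-K_Z))^{n-2}$ does not vanish on $B$, and correction terms such as $B^2\cdot(q^*c_1(-K_Z))^{n-2}$ in the $c_2$-part need not vanish.

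The vanishing $\alpha^{n-1}\cdot D=0$ from Theorem \ref{CT} and Lemma \ref{diff vol} only controls slopes, not second Chern numbers. Indeed $Z$ need not be smooth in codimension $2$ even when $X$ is: $\mathbb{F}_3\times\mathbb{P}^1$ has anticanonical model $\mathbb{P}(1,1,3)\times\mathbb{P}^1$, singular along a curve. So the discriminant really depends on the model.

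\textbf{How the paper avoids this.} It applies Theorem \ref{prop-semistable} on $X$ itself, to $\mathcal{E}_X$ and the big class $c_1(-K_X)$. Its bimeromorphic Zariski decomposition is exactly $p^*(-K_X)=q^*(-K_Z)+B$.
\begin{itemize}
\item Polystability of $\mathcal{E}_X$ follows from that of $\mathcal{E}_Z$, because $p^*\mathcal{E}_X\cong q^*\mathcal{E}_Z$ off $\Supp B$ and $(q^*c_1(-K_Z))^{n-1}\cdot B=0$.
\item Only the open-set conclusion (projective flatness) is then moved to $Z_{\reg}$. This is harmless because $q(\Supp B)$ has codimension $\ge 2$.
\end{itemize}

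\textbf{Polystability of $\mathcal{E}_Z$.} The paper takes this from \cite{DGP24}. Your alternative is to check that the Tian-type metric built from $\omega_{KE}$ is $T$-adapted. That needs an upper bound $q^*\omega_{KE}\le C|s_D|^{-k}\omega_Y$ and the weighted $L^2$ bound (e) on first derivatives. Definition \ref{adapted defi} supplies neither, and your proposal leaves this step open.

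\textbf{The endgame.} It invokes a hypothesis you do not have: smoothness in codimension $2$ is assumed for $X$, not for $Z$ or $\widehat Z$. Also, a $\mathbb{P}^n$-structure on $\widehat Z_{\reg}$ does not by itself extend across the singular locus. The missing step is to prove $\widehat Z$ smooth:
\begin{itemize}
\item Work on a maximally quasi-\'etale cover; the cover you build from finiteness of $\hat\pi_1(Z_{\reg})$ is one.
\item By \cite[Theorem 1.14]{GKP16}, the flat sheaf $\End(\mathcal{E}_{\widehat Z})|_{\widehat Z_{\reg}}$ extends to a flat locally free sheaf on $\widehat Z$.
\item Since the canonical extension is locally split, $\mathcal{T}_{\widehat Z}$ is locally a direct summand of $\End(\mathcal{E}_{\widehat Z})$, hence locally free.
\item The klt Lipman--Zariski theorem \cite[Theorem 6.1]{GKKP11} then gives smoothness.
\end{itemize}
Only after this does simple connectedness of Fano manifolds give $\mathcal{E}_{\widehat Z}\cong L^{\oplus(n+1)}$ with $-K_{\widehat Z}=(n+1)L$ and $L$ Cartier. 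Kobayashi--Ochiai then gives $\widehat Z\cong\mathbb{C}\mathbb{P}^n$. With $L$ only a Weil divisor the last step fails: $\mathbb{P}(1,2,3)$ has $-K\sim 3\,\mathcal{O}(2)$ but is not $\mathbb{C}\mathbb{P}^2$.
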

We recall the canonical extension sheaf. 
Let $X$ be a normal analytic variety, and assume that $K_X$ is $\Q$-Cartier. Then, by \cite[Section~4]{GKP22} (see also \cite[Subsection~5.2]{IJZ25}), there exists a reflexive sheaf $\mathcal{E}_{X}$, called the \emph{canonical extension sheaf}, such that the following short exact sequence 
\begin{equation}
\label{eq-canoincal-extension}
0 \to \mathcal{O}_{X} \to \mathcal{E}_{X} \to \mathcal{T}_{X} \to 0
\end{equation}
is locally split, where $\mathcal{T}_{X}$ is the holomorphic tangent sheaf, i.e., the dual of the reflexive cotangent sheaf $\Omega_{X}^{[1]}$.

\begin{proof}
This proof is motivated by \cite{GKP22}. We use the same notation as in \cite[Subsection~5.2]{IJZ25}. Since $X$ is K-stable, it follows from \cite[Theorems 1.1, 1.2, and Corollary 3.5]{Xu23} that the anticanonical model $Z$ is a K-stable klt Fano variety. Moreover, by \cite[Remark 4 in \S 2 and Theorem 9 in \S 3]{DGP24}, the canonical extension sheaf $\mathcal{E}_Z$ is $c_1(-K_Z)^{n-1}$-polystable.

Let $f \colon X \dashrightarrow Z$ be the natural birational map, and take a resolution $W$ with morphisms $p \colon W \to X$ and $q \colon W \to Z$ resolving $f$:
$$
\xymatrix@C=40pt@R=30pt{
& W \ar[ld]_p \ar[rd]^q & \\
X \ar@{-->}[rr]^f && Z
}
$$
By the argument of \cite[Subsection~5.2]{IJZ25}, there exists an effective $q$-exceptional divisor $B$ on $W$ such that
\begin{equation}
\label{eq-contraction}
p^*(-K_X) = q^*(-K_Z) + B
\end{equation}
and $p(\Supp(B))$ coincides with the $f$-exceptional locus.
Therefore, we have an isomorphism 
\begin{equation}\label{eq-can sheaf}
p^*\mathcal{E}_X \cong q^*\mathcal{E}_Z \hbox{ outside the support of $B$}
\end{equation}
and $-K_X$ admits a birational Zariski decomposition. Since $c_i(X)=c_i(\mathcal{E}_X)$ and now $X$ is smooth in codimension 2, the assumption (\ref{eq-MY-equality}) coincides with
\begin{equation}\label{eq-MY is BG}
\Delta(\mathcal{E}_X)\langle(-K_X)^{n-2}\rangle=0.
\end{equation}
Thus, by Theorem \ref{prop-semistable}, we obtain that $\mathcal{E}_X$ is projectively flat on $X_{\reg}$. By (\ref{eq-can sheaf}) and the fact that $B$ is $q$-exceptional, we obtain
the following claim.
\begin{claim}
\label{claim-locallyfree}
The canonical extension sheaf $\mathcal{E}_Z$ is projectively flat on $Z_{\reg}$.
\end{claim}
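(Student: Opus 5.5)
Since $p\colon W\to X$ and $q\colon W\to Z$ are birational, we transport projective flatness from $X_{\reg}$ to $Z_{\reg}$ through $W$ using the isomorphism (\ref{eq-can sheaf}), and then extend across the $q$-exceptional locus by purity of the fundamental group.

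We work on the open set
$$
U:=Z_{\reg}\setminus q(\Supp B).
$$
Since $B$ is $q$-exceptional, $q(\Supp B)$ has codimension at least $2$ in $Z$. After shrinking by a further subset of codimension $\ge 2$ if needed, $q$ is an isomorphism over $U$. Then $q^{-1}(U)$ misses $\Supp B$. Its image $p(q^{-1}(U))$ lies in $X$ away from the $f$-exceptional locus $p(\Supp B)$, and after removing $X_{\sing}$ (codimension $\ge 2$ in $Z$-coordinates, as $f$ is an isomorphism in codimension one off the exceptional locus) also in $X_{\reg}$. Theorem \ref{prop-semistable} gives that $\mathcal{E}_X$ is projectively flat on $X_{\reg}$, or more precisely on $\Amp(-K_X)$. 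So $p^*\mathcal{E}_X$ is projectively flat over $q^{-1}(U)$. By (\ref{eq-can sheaf}), so is $q^*\mathcal{E}_Z$, and hence $\mathcal{E}_Z|_U$ is projectively flat: $\End(\mathcal{E}_Z)|_U$ carries a flat connection, equivalently $\mathbb{P}(\mathcal{E}_Z|_U)$ comes from a representation $\rho\colon\pi_1(U)\to PGL(r,\C)$ in the sense of Definition \ref{defn-MY-1}.

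\textbf{Subtlety.} Theorem \ref{prop-semistable} gives flatness on $\Amp(-K_X)$, not on all of $X_{\reg}$. I will check that $\Amp(-K_X)\supset X\setminus p(\Supp B)$ up to codimension $2$. This uses (\ref{eq-contraction}): $q^*(-K_Z)$ is semiample and big with non-Kähler locus $\Exc(q)$, and $p(\Supp B)$ coincides with the $f$-exceptional locus. I expect this to follow from \cite[Proposition 2.5]{Tos18} as used in the proof of Theorem \ref{prop-semistable}.

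Next we extend from $U$ to $Z_{\reg}$. Since $Z_{\reg}\setminus U$ has codimension $\ge 2$ in the complex manifold $Z_{\reg}$, we have $\pi_1(U)\cong\pi_1(Z_{\reg})$. So $\rho$ extends to $Z_{\reg}$ and defines a projectively flat bundle $E'$ on $Z_{\reg}$ with $E'|_U\simeq\mathcal{E}_Z|_U$. As $\mathcal{E}_Z$ is reflexive and $E'$ is locally free, the isomorphism extends across codimension $2$ (Hartogs), giving $\mathcal{E}_Z|_{Z_{\reg}}\simeq E'$. The same argument was used at the end of the proof of Theorem \ref{BG eq}.

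The main obstacle is the codimension bookkeeping: we need $q(\Supp B)\cap Z_{\reg}$, together with the indeterminacy locus of $f^{-1}$ and the image of $X_{\sing}\cup E_{nK}(-K_X)$, to have codimension $\ge 2$ in $Z$. For $q(\Supp B)$ this holds because $B$ is $q$-exceptional. For the rest I would use that a birational contraction $f$ to the anticanonical model has $f^{-1}$ extracting no divisors, so every exceptional set is sent by $f$ to a codimension $\ge 2$ subset of $Z$.
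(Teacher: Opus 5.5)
Your proposal is the paper's argument made explicit. The paper proves the claim in one line from (\ref{eq-can sheaf}) and the $q$-exceptionality of $B$, with the extension over a codimension-$2$ set done as at the end of the proof of Theorem~\ref{BG eq}, and your version is correct. The subtlety you flag closes at once: $E_{nK}(p^*(-K_X))\subset E_{nK}(q^*(-K_Z))\cup\Supp B=\Exc(q)$ (Tosatti's formula for $q$, plus adding $[B]$ to a K\"{a}hler current), so already for $U=Z_{\reg}\setminus q(\Exc(q))$ one has $p(q^{-1}(U))\subset\Amp(-K_X)$, and no separate discussion of $f^{-1}$ is needed.
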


Since $Z$ has at most klt singularities, by \cite[Theorem 1.14]{GKP16} (see also \cite[Subsection 2.5]{GKP22}), there exists a finite quasi-étale Galois cover $\nu \colon \widehat{Z} \to Z$ such that $\widehat{Z}$ is maximal quasi-étale. In particular, $-K_{\widehat{Z}} = \nu^{*}(-K_{Z})$ (refer to \cite[\S 5, (5.11)]{GKP22}), so $-K_{\widehat{Z}}$ is ample. 

We will show that $\widehat{Z} \cong \C\mathbb{P}^{n}$.
Let $\widehat{Z}^{\circ} := \nu^{-1}(Z_{\reg})$. Then $\mathcal{E}_{\widehat{Z}}|_{\widehat{Z}^{\circ}}$ is also projectively flat since $\nu \colon \widehat{Z}^{\circ} \to Z_{\reg}$ is étale by purity of the branch locus (refer to \cite[Remark 1.4]{GKP16}), and we have $\mathcal{E}_{\widehat{Z}}|_{\widehat{Z}^{\circ}} \cong \pi^{*}(\mathcal{E}_{Z}|_{Z_{\reg}})$ by \cite[Remark~4.3]{GKP22}. Hence by Definition~\ref{defn-MY-1}, the reflexive sheaf $\End(\mathcal{E}_{\widehat{Z}})$ is flat and locally free on $\widehat{Z}^{\circ}$. As $\codim(\widehat{Z} \setminus \widehat{Z}^{\circ}) \ge 2$, we have $\pi_1(\widehat{Z}_{\reg}) \cong \pi_1(\widehat{Z}^{\circ})$, so $\End(\mathcal{E}_{\widehat{Z}})$ is also flat locally free on $\widehat{Z}_{\reg}$. Since $\widehat{Z}$ is maximally quasi-étale, \cite[Theorem~1.14]{GKP16} implies that $\End(\mathcal{E}_{\widehat{Z}})$ is flat locally free on $\widehat{Z}$.

Consider the following locally split exact sequence:
\begin{equation}\label{can ext of Z}
0 \to \mathcal{O}_{\widehat{Z}} \to \mathcal{E}_{\widehat{Z}} \to \mathcal{T}_{\widehat{Z}} \to 0.
\end{equation}
Since (\ref{can ext of Z}) is locally splittable by the construction \cite[Construction 4.1]{GKP22}, we have that $\mathcal{T}_{\widehat{Z}}$ is a direct summand of $\End(\mathcal{E}_{\widehat{Z}})$. Since $\End(\mathcal{E}_{\widehat{Z}})$ is locally free, so is $\mathcal{T}_{\widehat{Z}}$. Thus, by \cite[Theorem 6.1]{GKKP11}, $\widehat{Z}$ is smooth.
Since $\mathcal{E}_{\widehat{Z}}$ is projectively flat and $\widehat{Z}$ is Fano, there exists a line bundle $L$ with 
$\mathcal{E}_{\widehat{Z}} \cong L^{\oplus (n+1)}$.
(Note that Fano manifolds are simply connected and thus the representation $\pi_1(\widehat{Z})\to PGL(n+1,\mathbb{C})$ corresponding to $\mathcal{E}_{\widehat{Z}}$ is the trivial representations.) Moreover, since $\det \mathcal{E}_{\widehat{Z}} \cong \mathcal{O}_{\widehat{Z}}(-K_{\widehat{Z}})$, we obtain
$$
\mathcal{O}_{\widehat{Z}}(-K_{\widehat{Z}}) \cong L^{\otimes (n+1)}.
$$
Since $-K_{\widehat{Z}}$ is ample, $L$ is also ample.
Therefore, by \cite[Corollary of Theorem 1.1]{KO73}, we conclude that $\widehat{Z} \cong \C\mathbb{P}^{n}$.
\end{proof}


\end{document}